\pgfplotsset{compat=1.9}
\renewcommand{\tocsection}[3]{%
  \indentlabel{\@ifnotempty{#2}{\bfseries\ignorespaces#1 #2\quad}}\bfseries#3}
\renewcommand{\tocsubsection}[3]{%
  \indentlabel{\@ifnotempty{#2}{\ignorespaces#1 #2\quad}}#3}
\newcommand\@dotsep{4.5}
\def\@tocline#1#2#3#4#5#6#7{\relax
  \ifnum #1>\c@tocdepth 
  \else
    \par \addpenalty\@secpenalty\addvspace{#2}%
    \begingroup \hyphenpenalty\@M
    \@ifempty{#4}{%
      \@tempdima\csname r@tocindent\number#1\endcsname\relax
    }{%
      \@tempdima#4\relax
    }%
    \parindent\z@ \leftskip#3\relax \advance\leftskip\@tempdima\relax
    \rightskip\@pnumwidth plus1em \parfillskip-\@pnumwidth
    #5\leavevmode\hskip-\@tempdima{#6}\nobreak
    \leaders\hbox{$\m@th\mkern \@dotsep mu\hbox{.}\mkern \@dotsep mu$}\hfill
    \nobreak
    \hbox to\@pnumwidth{\@tocpagenum{\ifnum#1=1\bfseries\fi#7}}\par
    \nobreak
    \endgroup
  \fi}
\renewcommand\csname r@tocindent0\endcsname{0pt}
\def\l@subsection{\@tocline{2}{0pt}{2.5pc}{5pc}{}}
\newenvironment{spaceleft}
	{\list{}{\rightmargin0pt \leftmargin-1.2cm}
		\item\relax}
	{\endlist}
\newcommand{\N}{{\mathbb N}}
\newcommand{\Z}{{\mathbb Z}}
\newcommand{\Q}{{\mathbb Q}}
\newcommand{\C}{{\mathbb C}}
\newcommand{\R}{{\mathbb R}}
\newcommand{\T}{{\mathbb T}}
\newcommand{\Ps}{{\mathbb P}}
\newcommand{\V}{{\mathbb V}}
\newcommand{\K}{{\mathbb K}}
\newcommand{\F}{{\mathbb F}}
\DeclareMathAlphabet{\pazocal}{OMS}{zplm}{m}{n}
\newcommand{\calA}{{\pazocal A}}
\newcommand{\calB}{{\pazocal B}}
\newcommand{\calC}{{\pazocal C}}
\newcommand{\calD}{{\pazocal D}}
\newcommand{\calE}{{\pazocal E}}
\newcommand{\calG}{{\pazocal G}}
\newcommand{\calM}{{\pazocal M}}
\newcommand{\calN}{{\pazocal N}}
\newcommand{\calO}{{\pazocal O}}
\newcommand{\calP}{{\pazocal P}}
\newcommand{\calQ}{{\pazocal Q}}
\newcommand{\calR}{{\pazocal R}}
\newcommand{\calS}{{\pazocal S}}
\newcommand{\calU}{{\pazocal U}}
\newcommand{\calW}{{\pazocal W}}
\newcommand{\calX}{{\pazocal X}}
\newcommand{\pazB}{{\mathcal B}}
\newcommand{\pazE}{{\mathcal E}}
\newcommand{\pazF}{{\mathcal F}}
\newcommand{\pazP}{{\mathcal P}}
\newcommand{\pazQ}{{\mathcal Q}}
\newcommand{\pazT}{{\mathcal T}}
\newcommand{\gotR}{{\mathfrak R}}
\newcommand{\ra}{\rightarrow}
\newcommand{\act}{\curvearrowright}
\newcommand{\ol}{\overline}
\newcommand{\ul}{\underline}
\newcommand{\wh}{\widehat}
\newcommand{\wt}{\widetilde}
\newcommand{\tr}{\operatorname{tr}}
\newcommand{\Tr}{\operatorname{Tr}}
\newcommand{\rk}{\operatorname{rk}}
\newcommand{\Rk}{\operatorname{Rk}}
\newcommand{\Id}{\operatorname{Id}}
\newcommand{\soc}{\operatorname{soc}}
\newcommand{\xddots}{%
  \raise 4pt \hbox {.}
  \mkern 6mu
  \raise 1pt \hbox {.}
  \mkern 6mu
  \raise -2pt \hbox {.}
}
\numberwithin{equation}{section}
\theoremstyle{plain}
\newtheorem{theorem}{Theorem}[section]
\newtheorem*{theorem*}{Theorem}
\newtheorem{lemma}[theorem]{Lemma}
\newtheorem{proposition}[theorem]{Proposition}
\newtheorem{corollary}[theorem]{Corollary}
\theoremstyle{definition}
\newtheorem{definition}[theorem]{Definition}
\newtheorem*{definition*}{Definition}
\newtheorem{example}[theorem]{Example}
\newtheorem{remark}[theorem]{Remark}
\newtheorem*{remark*}{Remark}
\newtheorem*{question*}{Question}
\newtheorem*{sac*}{Strong Atiyah Conjecture}
\newtheorem*{assumption*}{Assumption}
\newtheorem{observation}[theorem]{Observation}
\title[The group algebra of the lamplighter]{Approximating the group algebra of the lamplighter by infinite matrix products}
\author{Pere Ara}
\address[P. Ara]{Departament de Matem\`atiques, Edifici Cc, Universitat Aut\`onoma de Barcelona, 08193 Cerdanyola del Vall\`es (Barcelona), Spain, and}
\address{Centre de Recerca Matem\`atica, Edifici Cc, Campus de Bellaterra, 08193 Cerdanyola del Vall\`es (Barcelona), Spain.}
\email{para@mat.uab.cat}
\author{Joan Claramunt}
\address[J. Claramunt]{Department of Mathematics and Statistics, Lancaster University, Bailrigg, Lancaster LA1 4YW, United Kingdom.}
\email{j.claramunt@lancaster.ac.uk}
\subjclass[2010]{Primary 16E50; Secondary 16S35, 37A05, 16D70}
\keywords{$*$-regular closure, $\ell^2$-Betti number, Atiyah conjecture, rank function, lamplighter}
\thanks{Both authors were partially supported by DGI-MINECO-FEDER through the grants MTM2017-83487-P and PID2020-113047GB-I00, and by the Generalitat de Catalunya through the grant 2017-SGR-1725. The first named author was also partially supported by the Spanish State Research Agency, through the Severo Ochoa and Mar\'ia de Maeztu Program for Centers and Units of Excellence in R$\&$D (CEX2020-001084-M). The second named author was also partially supported by DGI-MINECO-FEDER through the Grant BES-2015-071439, by the research funding Brazilian agency CAPES and by the ERC Starting Grant “Limits of Structures in Algebra and Combinatorics” No. 805495.}
\date{\today}
\begin{document}

\pagestyle{plain}
 
\begin{abstract}
 In this paper, we introduce a new technique in the study of the $*$-regular closure of some specific group algebras $KG$ inside $\calU(G)$, the $*$-algebra of unbounded operators affiliated to the group von Neumann algebra $\calN(G)$. 
 The main tool we use for this study is a general approximation result for a class of crossed product algebras of the form $C_K(X)\rtimes _T \Z$, where $X$ is a totally disconnected compact metrizable space, $T$ is a homeomorphism of $X$, and $C_K(X)$ stands for the algebra of locally constant functions on $X$ with values on an arbitrary field $K$. The connection between this class of algebras and a suitable class of group algebras is provided by Fourier transform. Utilizing this machinery, we study an explicit approximation for the lamplighter group algebra. This is used in another paper by the authors to obtain a whole family of $\ell^2$-Betti numbers arising from the lamplighter group, most of them transcendental. 
\end{abstract}

\maketitle

\tableofcontents

\normalsize

\section{Introduction}\label{section-introduction.Atiyah}
For a discrete group $G$ and a subfield $K$ of $\C$ closed under complex conjugation, the group algebra $KG$ can be naturally seen as a $*$-subalgebra of the von Neumann algebra $\calN (G)$ of $G$. The following problem, known as the {\it Strong Atiyah Conjecture} (SAC), was solved in the negative by Grigorchuk and Żuk \cite{GZ} (see also \cite{GLSZ} and \cite{DiSc}):

{\bf Strong Atiyah Conjecture:} If $T$ is an $m \times  n$ matrix over $K G$, then $$\dim_{\calN(G)}(\text{ker}\,T) \in \sum_H \frac{1}{|H|}\Z ,$$
where $H$ ranges over the family of all the finite subgroups of $G$, and $\text{ker}\, T$ denotes the kernel of the operator $\ell^2(\Gamma)^n \to \ell^2(\Gamma )^m$ given by left multiplication by $T$ (see e.g. \cite[page 369]{Luck}).

Here $\dim_{\calN(G)}$ stands for the von Neumann dimension, and the real number $\dim_{\calN(G)}(\text{ker}\,T)$ is called the {\it $\ell^2$-Betti number of $T$}.  The first counterexample to the SAC was found in \cite{GZ} and is given by the {\it lamplighter group}
$$\Gamma = \Z_2 \wr \Z = \Big(\bigoplus_{\Z}\Z_2\Big) \rtimes \Z,$$
where $\Z$ acts on $\bigoplus_{\Z}\Z_2$ by translation.  More precisely, denoting by $t$ the generator corresponding to $\Z$ and by $a_i$ the generator corresponding to the $i^{\text{th}}$ copy of $\Z_2$ in $G$, it was shown in \cite{GZ} that the self-adjoint element $T=s+s^*$, where $s=\frac{1}{2}(1+a_0)t$, satisfies that $\dim_{\calN(\Gamma)}(\text{ker}\,T) = 1/3$. This gives a counterexample for the SAC, because all the finite subgroups of $\Gamma$ have order a power of $2$, but does not solve the original question posed by Atiyah which asks whether all the $\ell^2$-Betti numbers $\dim_{\calN(G)}(\text{ker}\,T)$ are rational whenever $T \in \Z G$. This question was solved, also in the negative by Austin \cite{Aus} in a non-constructive way, and then other authors gave further counterexamples, see \cite{Gra14}, \cite{Gra16} and \cite{PSZ}. It is important to remark that Grabowski gave in \cite{Gra16} an example of an irrational (indeed transcendental) $\ell^2$-Betti number associated to the lamplighter group. Using, among other things, the techniques developed in the present paper, we build in \cite{AC3} a large family of real numbers, most of them transcendental, which arise as $\ell^2$-Betti numbers of the lamplighter group. We emphasize that the techniques developed here are essential for the construction in \cite{AC3} of the first examples of irrational algebraic virtual $\ell^2$-Betti numbers associated to the lamplighter group (see \cite[Question 1.7]{Gra14} and \cite[Problem 4]{Gra16}). 

The von Neumann algebra $\calN (G)$ can be naturally embedded in the $*$-algebra $\calU (G)$ of unbounded operators affiliated to $\calN (G)$. This algebra can be realized algebraically as the classical ring of quotients of $\calN (G)$, meaning that every element in $\calU (G)$ can be represented as a ``fraction'' $ab^{-1}$ where $a,b\in \calN (G)$ and $b$ has trivial kernel. The study of $\ell^2$-Betti numbers can be done by using the notion of a Sylvester matrix rank function. Such a function assigns to each matrix (of finite size) $A$ over a ring $R$ a real number $\rk (A)$ satisfying some axioms, and it is a generalization of the usual rank in matrices over a field.  (See Section \ref{section-preliminaries} for the precise definitions of these concepts.)

The canonical trace $\tr$ on $\calN (G)$ induces a canonical Sylvester matrix rank function $\rk$ on $\calU (G)$ by the rule $\rk(A)=\tr(P)$, where $A\in M_n(\calU (G))$ and  $P\in M_n(\calN (G))$ is its support projection. We have the formula $\rk (A) = n- \dim_{\calN(G)}(\text{ker}\,A)$ connecting the rank of $A$ with the von Neumann dimension of the kernel of $A$ (thought as an operator acting on $\ell^2(G)^n$).

The $*$-algebra $\calU (G)$ is a $*$-regular ring (see \cite{Berb} and \cite{Rei}) and therefore there is a smallest $*$-regular subalgebra $\calR_{KG}$ of $\calU (G)$ containing $KG$ (\cite{AG} and \cite{LS12}). The algebra $\calR_{KG}$ is called the {\it $*$-regular closure} of $KG$ in $\calU (G)$. The structure of $\calR_{KG}$ has been recently investigated in connection with the validity of the Strong Atiyah Conjecture and the L\"uck Approximation Conjecture in several papers, including \cite{AG}, \cite{Jaik-survey}, \cite{Jaik} and \cite{JL2}. An interesting result in this respect is given by Jaikin-Zapirain in \cite[Lemma 6.2]{Jaik}, which implies that for any $*$-subring $S$ of a $*$-regular ring $\calU$ such that $\calU$ agrees with the $*$-regular closure of $S$ in $\calU$, and every Sylvester matrix rank function $\rk$ on $\calU$, the subgroup $\calG (S)$ of $(\R,+)$ generated  by $\{\rk(A) \mid A\in M_n(S)\}$ coincides with $\phi (K_0(\calU))$, where $\phi $ is the state of $K_0(\calU)$ induced by $\rk$. 
 Applying this to the canonical rank function $\rk$ restricted to $\calR_{KG}$ one immediately recovers the known fact that $\calG (KG)=\Z$ if and only if $\calR_{KG}$ is a division ring (in which case, obviously, it must agree with the division closure of $KG$ in $\calU (G)$, see \cite[Lemma 3]{Schick}). Another fact one can easily show from the Jaikin-Zapirain result is that if the SAC holds for $G$, and there is a bound on the orders of the finite subgroups of $G$, then $\calR_{KG}$ must be an Artinian semisimple ring (see \cite[Theorem 1.5]{LS12} for a stronger related result).

In this paper we begin a systematic study of the $*$-regular algebra $\calR_{K\Gamma}$ of the lamplighter group $\Gamma$, and indeed of a much more general class of $*$-regular closures, as follows. We consider groups of the form $G = H\rtimes_{\rho} \Z$, the semidirect product of a countable discrete torsion abelian group $H$ by an action $\rho$ of $\Z$ on $H$. The first step in our strategy consists in using the Fourier transform to express the group algebra $KG$ in the form $C_K(X)\rtimes_{\hat{\rho}} \Z$, where $X = \widehat{H}$ is the Pontryagin dual of $H$, $C_K(X)$ is the algebra of continuous functions $f\colon X\to K$, where $K$ is endowed with the {\it discrete topology}, and $\hat{\rho}\colon  \Z\act \widehat{H}$ is the dual action. This process works smoothly for any field $K$ of characteristic $0$ containing all the $n^{\text{th}}$ roots of $1$, where $n$ ranges over the orders of the elements of $H$. In characteristic $p>0$ we have to impose in addition the condition that $p$ is coprime with all the orders of elements of $H$. (See Section \ref{section-group.alg} for details.)

Since $H$ is a discrete countable torsion abelian group, the space $X = \widehat{H}$ is a totally disconnected compact metrizable space. We can thus generalize the above setting by studying $*$-algebras $\calA = C_K(X)\rtimes_T \Z$, where $T$ is a homeomorphism of a totally disconnected infinite compact metrizable space $X$ and $K$ is any field endowed with a positive definite involution. This is precisely the situation studied in \cite{AC}, where it was shown that given a full $T$-invariant ergodic measure $\mu$ on $X$, there is a {\it unique} Sylvester matrix rank function $\rk$ on $\calA$ such that $\rk (\chi_U)= \mu (U)$ for each clopen subset $U$ of $X$. Here $\chi_U\in C_K(X)$ denotes the characteristic function of $U$. It is worth to mention here that, using the methods of \cite{virili} and \cite{Li2}, it is possible to obtain a Sylvester matrix rank function on $\calA$ satisfying the above condition. Although the method of \cite{AC} is quite different from the methods employed in those articles, these rank functions must agree by the uniqueness result mentioned above. Moreover by \cite{AC} the completion $\gotR_{\rk}$ of $\calA$ with respect to the metric topology induced by $\rk$ is a $*$-regular ring, which is $*$-isomorphic to the von Neumann continuous regular factor $\calM_K$. We show in Proposition \ref{proposition-measure.H} that for a group algebra $KG$, where $G = H\rtimes_{\rho}\Z$ is as above, the canonical rank function $\rk_{KG}$ corresponds under Fourier transform to the rank function $\rk_{\wh{\mu}}$, where $\wh{\mu}$ is the Haar measure on $\widehat{H}$. If in addition $\rk_{KG}$ is extremal in the compact convex set of Sylvester matrix rank functions, then $\wh{\mu}$ is ergodic and we are exactly in the setting of \cite{AC}. Moreover we show in Theorem \ref{theorem-identify.*reg.closure} that, with the above hypothesis, there is a rank-preserving $*$-isomorphism  $\calR_{KG}\cong \calR_{\calA}$, where $\calR_{\calA}$ is the $*$-regular closure of $\calA=C_K(\widehat{H})\rtimes _{\hat{\rho}} \Z$ in $\gotR_{\rk}$. It is well-known that the Haar measure on the group $\bigoplus _{\Z} \Z_2$ is a full  invariant ergodic measure and so the lamplighter group $\Gamma $ falls under the umbrella of our theory.

In the following, we fix a totally disconnected infinite compact metrizable space $X$ and a homeomorphism $T$ of $X$. We also fix a full $T$-invariant ergodic measure $\mu$ on $X$, and write $\calA:=C_K(X)\rtimes _T \Z$, where $K$ is a field with positive definite involution.  The basic idea to obtain approximating algebras for $\calA$ was developed in the setting of $C^*$-algebras by Putnam et al, mainly in the case where $T$ is a minimal homeomorphism, see e.g. \cite{HPS92}, \cite{P89}, \cite{P90}. We do not require in this paper that $T$ is minimal. Given a non-empty clopen subset $E$ of $X$ and a (finite) partition $\calP$ of $X \backslash E$ into clopen subsets, let $\calA (E,\calP)$ be the unital $*$-subalgebra of $\calA$ generated by the partial isometries $\chi_Zt$, where $Z\in \calP$. Then $\calA (E,\calP)$ can be embedded in a (possibly infinite) direct product $\gotR$ of finite matrix algebras over $K$. Now fixing a point $y\in X$ and considering sequences $(E_n,\calP_n)$ such that $E_n\supseteq E_{n+1}$ for all $n$, $\bigcap_{n\ge 1} E_n = \{ y\}$, each partition $\calP_{n+1}\cup \{E_{n+1}\}$ refines $\calP_n\cup \{E_n\}$, and such that $\bigcup_{n\ge1} (\calP_n\cup\{E_n\})$ generate the topology of $X$, we obtain a sequence of approximating $*$-algebras $\calA_n=\calA(E_n, \calP_n)$, each of them embedded in a direct product of finite matrix algebras $\gotR_n$. We set $\calA_{\infty}=\bigcup_{n\ge 1} \calA_n$ and $\gotR_{\infty}=\bigcup_{n\ge 1}\gotR_n$.

With this notation at hand, we can summarize our main results in the following theorem (see Propositions \ref{proposition-lim.of.reg.closure} and \ref{proposition-periodic.point.Rinfty}, and Theorem \ref{theorem-periodic.point.RA}).

\begin{theorem}
Let $\calA = C_K(X)\rtimes_T \Z$, $T$ and $\mu$ as stated above.  Then the nested sequence of approximating $*$-subalgebras $\calA_n$ of $\calA$ satisfies the following properties:
\begin{enumerate}
\item[(i)] For $n\in \N$, each algebra $\calA_n$ can be embedded in a $*$-regular ring $\gotR_n$ which is a (possibly infinite) direct product of finite matrix algebras over $K$.
\item[(ii)] There are natural inclusion maps $\gotR_n\hookrightarrow \gotR_{n+1}$ so that the following diagram is commutative:
\begin{equation*}
\vcenter{
\xymatrix{
\calA_n \ar@{^{(}->}[r] \ar@{^{(}->}[d] & \calA_{n+1} \ar@{^{(}->}[d] \ar@{^{(}->}[r] & \calA_{n+2} \ar@{^{(}->}[d] \ar@{^{(}->}[r] & \cdots \ar@{^{(}->}[r] & \calA_{\infty} \ar@{^{(}->}[d] \ar@{^{(}->}[r] & \calA \ar@{^{(}->}[d] \\
\calR_n \ar@{^{(}->}[r] \ar@{^{(}->}[d] & \calR_{n+1} \ar@{^{(}->}[d] \ar@{^{(}->}[r] & \calR_{n+2} \ar@{^{(}->}[d] \ar@{^{(}->}[r] & \cdots \ar@{^{(}->}[r] & \calR_{\infty} \ar@{^{(}->}[d] \ar@{^{(}->}[r] & \calR_{\calA} \ar@{^{(}->}[d] \\
\gotR_n \ar@{^{(}->}[r] & \gotR_{n+1} \ar@{^{(}->}[r] & \gotR_{n+2} \ar@{^{(}->}[r] & \cdots \ar@{^{(}->}[r] & \gotR_{\infty} \ar@{^{(}->}[r] & \gotR_{\rk},
}
}
\end{equation*} 
where, $\gotR_{\rk}$ is the rank-completion of $\calA$, $\calR_{\calA}$ is the $*$-regular closure of $\calA$ in $\gotR_{\rk}$, and for each $n\in \N \cup \{\infty\}$, $\calR_n$ is the $*$-regular closure of $\calA_n$ in $\gotR_n$.
\item[(iii)] Assume that $y$ is a periodic point for $T$ of period $l$. Then there is an ideal $I$ of $\calA_{\infty}$ which is also an ideal of $\calA$ such that
$$\calA_{\infty}/I\cong M_l(K) \quad \text{ and } \quad \calA/I\cong M_l(K[t,t^{-1}]).$$
\item[(iv)] The extension $\widetilde{I}$ of $I$ to $\calR_{\infty}$ satisfies that $\calR_{\infty}/\widetilde{I}\cong M_l(K)$. Moreover the ideal $\widetilde{I}$ can be further extended to an ideal $\ol{I}$ of $\calR_{\calA}$ such that $\calR_{\calA}/\ol{I}\cong M_l(K(t))$. In particular, $\ol{I}$ is a maximal ideal of the $*$-regular closure $\calR_{\calA}$.
\end{enumerate}
\end{theorem}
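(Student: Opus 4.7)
The plan is to prove the four parts in sequence, using a Kakutani--Rokhlin tower decomposition of $X$ relative to $E_n$ and the first-return map of $T$ as the main tool for (i) and (ii). I would partition $\calP_n$ into disjoint ``columns'' $\calT_\alpha=\{Z_0^\alpha,\ldots,Z_{h_\alpha-1}^\alpha\}$ with $Z_{i+1}^\alpha=T(Z_i^\alpha)$, $Z_0^\alpha\subseteq E_n$, and $T(Z_{h_\alpha-1}^\alpha)\subseteq E_n$. The partial isometries $e_{ij}^\alpha:=\chi_{Z_i^\alpha}t^{i-j}$ form a system of $h_\alpha\times h_\alpha$ matrix units generating a copy of $M_{h_\alpha}(K)$ inside $\calA_n$, and assembling them yields a $*$-injective embedding $\calA_n\hookrightarrow\prod_\alpha M_{h_\alpha}(K)=:\gotR_n$, which is $*$-regular as a direct product of $*$-regular rings. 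Refining $\calP_n\cup\{E_n\}$ to $\calP_{n+1}\cup\{E_{n+1}\}$ subdivides each column of $\calA_n$ into several taller columns in $\calA_{n+1}$, producing canonical diagonal embeddings $\gotR_n\hookrightarrow\gotR_{n+1}$ compatible with $\calA_n\hookrightarrow\calA_{n+1}$. The limit embedding $\gotR_\infty\hookrightarrow\gotR_{\rk}$ is forced by identifying the matrix units in the ambient $*$-regular ring and summing the resulting orthogonal projections, and the inclusions $\calR_n\hookrightarrow\calR_{n+1}\hookrightarrow\calR_\calA$ are automatic from the minimality of the $*$-regular closure inside nested $*$-regular ambient rings.

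For part (iii), I would choose $n_0$ with $E_{n_0}\cap\{Ty,T^2 y,\ldots,T^{l-1}y\}=\emptyset$, so that for every $n\ge n_0$ the column through $y$ has height exactly $l$ and yields a distinguished block $M_l^{(y)}(K)\subseteq\gotR_n$. The projections $\pi_n\colon\calA_n\twoheadrightarrow M_l(K)$ onto this block are compatible with the inclusions and glue to a surjection $\pi_\infty\colon\calA_\infty\twoheadrightarrow M_l(K)$ whose kernel is the desired ideal $I$. A direct computation shows that $a=\sum_k f_k t^k$ lies in $I$ precisely when $f_k(T^j y)=0$ for all $k$ and $0\le j\le l-1$; this characterization is stable under two-sided multiplication by $\calA$ because the orbit of $y$ is $T$-invariant, so $I$ is an ideal of $\calA$. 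The quotient $\calA/I\cong C_K(\calO_y)\rtimes_T\Z$ with $\calO_y=\{y,Ty,\ldots,T^{l-1}y\}$ is the standard skew group algebra $K^l\rtimes\Z$ attached to the cyclic $\Z$-action of period $l$, which by the classical identification (using that $l\Z$ acts trivially while $\Z/l\Z$ permutes the components) equals $M_l(K[s,s^{-1}])$ with $s=t^l$; relabelling recovers $M_l(K[t,t^{-1}])$.

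For part (iv), the projections $\gotR_n\twoheadrightarrow M_l^{(y)}(K)$ stabilize to a surjection $\pi\colon\gotR_\infty\twoheadrightarrow M_l(K)$; setting $\widetilde I:=\ker(\pi)\cap\calR_\infty$ gives an ideal with $\widetilde I\cap\calA_\infty=I$, so $\calR_\infty/\widetilde I$ sits between the image of $\calA_\infty/I\cong M_l(K)$ and $M_l(K)$ itself, forcing $\calR_\infty/\widetilde I\cong M_l(K)$. For $\bar I\subseteq\calR_\calA$: because $M_l(K(t))$ is $*$-regular (with involution $t^*=t^{-1}$) and is the $*$-regular closure of $M_l(K[t,t^{-1}])$ inside any sufficiently large ambient $*$-regular ring, I would lift the surjection $\calA\twoheadrightarrow M_l(K[t,t^{-1}])$ to a $*$-homomorphism $\Phi\colon\calR_\calA\to M_l(K(t))$; taking $\bar I:=\ker\Phi$, the image equals $M_l(K(t))$ by minimality of the $*$-regular closure, and $\bar I$ is maximal because $M_l(K(t))$ is simple. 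The main obstacle will be constructing and controlling $\Phi$: one has to match the rank on $\calR_\calA$ inherited from $\gotR_{\rk}$ against a natural rank on $M_l(K(t))$ coming from integration over the unit circle, and exploit that the orbit of $y$ has $\mu$-measure zero (any finite $T$-invariant set must be null for an ergodic measure whose support is infinite) to ensure that the quotient map does not collapse and that $\bar I\cap\calA=I$.
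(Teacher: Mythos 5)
Your treatment of (i)--(iii) and of the first assertion of (iv) follows essentially the route the paper takes: the Kakutani--Rokhlin ``columns'' you describe are exactly the translates $W, T(W),\dots,T^{|W|-1}(W)$ of the quasi-partition of Section \ref{section-approx.crossed.product} (with the caveat that $\V_n$ is in general countably infinite and covers $X$ only up to a $\mu$-null set, so injectivity of $\pi_n$ uses fullness of $\mu$); the inclusions $\calR_n\subseteq\calR_{n+1}\subseteq\calR_{\calA}$ are obtained in Proposition~\ref{proposition-lim.of.reg.closure} by intersecting $\calR_{n+1}$ with $\gotR_n$ and observing that this intersection is $*$-regular, which is the precise form of your ``automatic from minimality''; your ideal of elements whose coefficients vanish on the orbit of $y$ coincides with the ideal generated by $C_{c,K}(X\setminus\{y,\dots,T^{l-1}(y)\})$ of Proposition~\ref{proposition-periodic.point.A}; and the identification $\calR_\infty/\wt{I}\cong M_l(K)$ via stabilized block projections is the content of Claims 1--3 in the proof of Proposition~\ref{proposition-periodic.point.Rinfty}.

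The genuine gap is in the second half of (iv). You propose to \emph{lift} the surjection $\calA\twoheadrightarrow M_l(K[t,t^{-1}])$ to a $*$-homomorphism $\Phi\colon\calR_{\calA}\to M_l(K(t))$ and set $\ol{I}=\ker\Phi$, but no such lift is available for free: $\calR_{\calA}$ is built by iteratively adjoining relative inverses inside $\gotR_{\rk}$, and a homomorphism defined on $\calA$ extends to this closure only if one already knows that relative inverses are sent to relative inverses, i.e.\ only after one controls the ideal generated by $I$ at every stage of the closure --- which is the statement to be proved. Your fallback of matching $\rk_{\calR_{\calA}}$ against a rank on $M_l(K(t))$ cannot supply this control, because the ideal in question is rank-dense: it contains the projections $1-h_{W_n}$, whose ranks $1-l\mu(W_n)$ tend to $1$, so the quotient map is maximally discontinuous for the rank metric and no rank-theoretic or completion argument can detect the quotient (your correct observation that $\calO(y)$ is $\mu$-null works against you here, not for you). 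The paper's actual argument is a bottom-up construction occupying Lemmas~\ref{lemma-local.unit.ideal} and~\ref{lemma-ideal.with.rel.inv} and Theorem~\ref{theorem-periodic.point.RA}: one forms the ideal $\ol{I}_0$ generated by $\wt{I}$ in the $*$-algebra generated by $\wt{I}$, $h_{W_M}\calA_M$ and $K(t)$, proves it admits a local unit consisting of the idempotents $p(t^l)^{-1}(1-h_{W_n})p(t^l)$ (using that the matrix units $e_{ij}(W_M)$ commute with $t^l$ modulo $\wt{I}$ and that $K(t)$ has degree $l$ over $K(t^l)$), then adjoins relative inverses step by step to obtain a chain $\ol{I}_0\subseteq\ol{I}_1\subseteq\cdots$ whose union $\ol{I}_{\infty}$ is a $*$-regular ideal with $*$-regular quotient $M_l(K(t^l))\cong M_l(K(t))$; regularity of the ideal and of the quotient then forces the algebra they generate to be $*$-regular and hence equal to $\calR_{\calA}$. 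None of this machinery is present in, or replaceable by, the sketch you give.
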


Parts (i) and (iii) come from \cite{AC} and are listed here for completeness. Observe that (iii) means that the $*$-subalgebra $\calA_{\infty}$ is large in $\calA$. Similarly (iv) indicates that the $*$-regular subalgebra $\calR_{\infty}$ is large in the $*$-regular algebra $\calR_{\calA}$. 

Using this result we can infer the following corollary, which provides useful information on the ideal structure of $\calR_{\calA}$:

\begin{corollary}\label{corollary-maximal.ideals}
With the above hypothesis and notation, suppose that $T$ has some periodic point. Then there is an injective map from the set of finite orbits for the action of $T$ to the set of maximal ideals of $\calR_{\calA}$. In particular, $\calR_{\calA}$ is not a simple ring.
\end{corollary}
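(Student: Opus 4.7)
The plan is to use part (iv) of the main theorem to attach, to each finite orbit $O$, a maximal ideal $\ol{I}_O$ of $\calR_{\calA}$, and then to separate distinct orbits by means of clopen subsets of $X$.

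Given a finite $T$-orbit $O$ of cardinality $l$, I would pick any representative $y\in O$ and choose a nested sequence $(E_n,\calP_n)$ with $\bigcap_{n\ge 1} E_n=\{y\}$ as in the setup preceding the main theorem; part (iv) then produces a maximal ideal $\ol{I}_O$ of $\calR_{\calA}$ with $\calR_{\calA}/\ol{I}_O\cong M_l(K(t))$. To see that $\ol{I}_O$ depends only on $O$ (and not on $y$ or on the auxiliary sequence), I would argue that the commutative diagram in part (ii), combined with the isomorphism $\calA/I\cong M_l(K[t,t^{-1}])$ of (iii), forces the composition $C_K(X)\hookrightarrow \calA\twoheadrightarrow M_l(K[t,t^{-1}])$ to land in the diagonal subalgebra $K^l$ and to be a unital $T$-equivariant $*$-homomorphism. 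Any such homomorphism is evaluation at a $T$-invariant $l$-point subset of $X$, which is necessarily the orbit $O$. Hence $I=I_O:=\ker\!\bigl(\calA\to M_l(K[t,t^{-1}])\bigr)$ is intrinsic to $O$, and so is its extension $\ol{I}_O$, which equals the kernel of $\calR_{\calA}\to M_l(K(t))$.

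For injectivity, I would take distinct finite orbits $O_1\neq O_2$. Because $X$ is totally disconnected compact Hausdorff and both orbits are finite and disjoint, there exists a clopen $U\subseteq X$ with $O_2\subseteq U$ and $U\cap O_1=\emptyset$. The characteristic function $\chi_U\in C_K(X)\subseteq \calA$ lies in $I_{O_1}$ (it vanishes on $O_1$) but not in $I_{O_2}$ (where it takes the constant value $1$, hence maps to the identity of $M_l(K)$). Since the inclusion $M_l(K[t,t^{-1}])\hookrightarrow M_l(K(t))$ is injective and compatible with the quotient maps from $\calA$ and $\calR_{\calA}$ provided by (iii)--(iv), one has $\ol{I}_O\cap \calA=I_O$; consequently $\ol{I}_{O_1}\neq \ol{I}_{O_2}$.

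The main technical point is the identification of the $C_K(X)$-part of the quotient as evaluation on the orbit; this is what pins $\ol{I}_O$ down intrinsically and allows the clopen-separation argument to distinguish orbits. Non-simplicity of $\calR_{\calA}$ is then immediate: each $\ol{I}_O$ is a proper two-sided ideal since $\calR_{\calA}/\ol{I}_O\cong M_l(K(t))$ is nonzero, and by hypothesis at least one finite orbit exists.
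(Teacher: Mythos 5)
Your overall strategy is the same as the paper's: attach to each finite orbit the maximal ideal produced by part (iv), and separate two distinct orbits by a clopen set $U$ containing one orbit and missing the other, using $\chi_U$ as the distinguishing element. Where you genuinely diverge is in how you certify that $\chi_U\notin\ol{I}_{O_2}$. The paper (Lemma \ref{lemma-orbits.ideals}) works entirely inside $\calR_{\calA}$: it uses the explicit local units $p={\rm LP}\bigl(p(t^{l_2})^{-1}(1-h_{W_N})p(t^{l_2})\bigr)$ of $\ol{I}_{\infty}(y)$ from Theorem \ref{theorem-periodic.point.RA}(1), deduces $h_{W_N}p(t^{l_2})\chi_U=0$ from $p\chi_U=\chi_U$, and reaches a contradiction by cutting down with $\chi_V$ for $V=U\cap T^{l_2}(U)\cap\cdots$ and invoking invertibility of $p(t^{l_2})$. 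You instead contract the ideals back to $\calA$ via the claim $\ol{I}_O\cap\calA=I_O$ and compare the visibly distinct ideals $I_{O_1}\neq I_{O_2}$. Your route is conceptually cleaner, but that contraction claim is the one step you assert rather than prove: parts (iii) and (iv) as stated give two separate isomorphisms $\calA/I\cong M_l(K[t,t^{-1}])$ and $\calR_{\calA}/\ol{I}\cong M_l(K(t))$, and do not by themselves assert that the second restricts to the first along $\calA\hookrightarrow\calR_{\calA}$; a priori $\ol{I}\cap\calA$ could be strictly larger than $I$. The claim is true, and can be repaired either by tracing the compatibility of the isomorphisms through Proposition \ref{proposition-periodic.point.Rinfty} and Theorem \ref{theorem-periodic.point.RA}, or more cheaply by noting that every nonzero Laurent polynomial in $t$ is invertible in $\calR_{\calA}$ (Lemma \ref{lemma-invert.poly.RA}), so the center $K[t^{l},t^{-l}]$ of $\calA/I$ injects into $\calR_{\calA}/\ol{I}$, and since the kernel of $M_l(K[t^l,t^{-l}])\cong\calA/I\to\calR_{\calA}/\ol{I}$ is $M_l(J)$ for an ideal $J$ of the center, it must vanish. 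The paper's local-unit argument is precisely what lets it avoid this issue.

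Two smaller remarks. First, the well-definedness discussion is unnecessary for the corollary: to produce an injective map you may simply fix, for each orbit, a representative and an approximating sequence; in any case $I_O$ is by definition generated by $C_{c,K}(X\setminus O)$, which is manifestly intrinsic to $O$, so no diagonalization argument is needed. Second, your claim that the image of $C_K(X)$ in $M_l(K[t,t^{-1}])$ is ``forced'' to be the diagonal $K^l$ acting by evaluation on the orbit is correct but more delicate than you present (one must rule out non-diagonal commuting families of idempotents and argue surjectivity onto $K^l$); the clean statement is Proposition \ref{proposition-periodic.point.A}(ii), which identifies $\chi_{T^i(W_n)}$ with $e_{ii}$ and gives the evaluation description directly. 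With the contraction claim properly justified, your proof is valid.
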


See Lemma \ref{lemma-orbits.ideals} for the proof of Corollary \ref{corollary-maximal.ideals}. The algebra $\calR_{\calA}$ might be simple if $T$ has no periodic points, see \cite{AC3}.

Although the above theorem reduces in principle the study of the $*$-regular closure $\calR_{\calA}$ to the determination of the $*$-regular subalgebras $\calR_n$, it is a challenging problem to elucidate the structure of these algebras. For a given $*$-subalgebra $\calB:=\calA(E,\calP)$ associated to a pair $(E,\calP)$ as above, we start the investigation of the $*$-regular closure  $\calR_{\calB}$  of $\calB$ in $\gotR$. For this, we are guided by the study in \cite{AG} of a particular case, which corresponds to the choices $E_0 = [\ul{0}]$ and $\calP_0=\{[\ul{1}]\}$, where $\calA= C_K(\{0,1\}^{\Z})\rtimes_T \Z\cong K \Gamma$ is the lamplighter group algebra and $[\ul{\epsilon}] = \{(x_n) \in \{0,1\}^{\Z} \mid x_0 = \epsilon\}$ for $\epsilon = 0,1$. In that paper, the structure of the $*$-regular closure $\calR_0$ of $\calA_0:=\calA(E_0,\calP_0)$ in its corresponding direct product of finite matrix algebras $\gotR_0$ was completely determined, see \cite[Theorem 6.13]{AG}. For a general algebra $\calB = \calA(E,\calP)$, we find in the present paper a $*$-subalgebra $\calE$ of $\calR_{\calB}$ which provides a generalization of the construction in \cite{AG} (see Subsection \ref{subsection-analysis.A.G.example} for the exact relation between both constructions). We expect that $\calE$ will generally be only a proper $*$-subalgebra of $\calR_{\calB}$, but there are indications that the algebra $\calE$ is very large, see below.

The algebra $\calB$ has a description as a {\it partial crossed product} over $\Z$ \cite{Exel}, and we can write $\calB = \bigoplus_{i\in \Z} \calB_it^i$, where $\calB_0$ is a $*$-regular subalgebra of $C_K(X)$ and each $\calB_i=e_i\calB_0$ is an ideal of $\calB_0$ generated by an idempotent $e_i$ (see \cite{AC} and Subsection \ref{subsection-study.RB}). We observe that we can restrict the injective representation map $\pi\colon \calB\hookrightarrow \gotR$ to a representation $\pi_+\colon \calB_+=\bigoplus_{i\ge 0}\calB_it^i \to \gotR$ by lower triangular matrices. Considering now a skew partial formal power series algebra 
$$\calB_0[[t;T]] := \Big\{ \sum_{i\ge 0} b_it^i \mid b_i\in \calB_i \text{ for all } i\ge 0 \Big\} ,$$
we can extend $\pi_+$ to an injective representation $\pi_+\colon \calB_0[[t;T]]\to \gotR$ by lower triangular matrices.   Similarly we obtain a representation $\pi_-\colon \calB_0[[t^{-1}, T^{-1}]]\to \gotR$ by upper triangular matrices. The algebra $\calD_+$ is defined as the {\it division closure} of $\calB_+$ in $\calB_0[[t;T]]$; that is, $\calD_+$ is the smallest subalgebra of $\calB_0[[t;T]]$ containing $\calB_+$ and closed under inversion. Similarly we obtain a corresponding subalgebra $\calD_-$ of $\calB_0[[t^{-1},T^{-1}]]$. It turns out that $\pi_+(\calD_+)\subseteq \calR_{\calB}$ and $\pi_-(\calD_-)\subseteq \calR_{\calB}$, and that $\pi_+(\calD_+)^*=\pi_-(\calD_-)$  (see Proposition \ref{proposition-charac.division.closure}). Since $\calR_{\calB}$ is closed under the involution, the $*$-algebra $\calD$ generated by $\pi_+(\calD_+)$  must also be contained in $\calR_{\calB}$. 
The situation is summarized in the following diagram:

\begin{equation*}
\xymatrix{
	& \calB_+ \ar@{^{(}->}[rr] \ar@{^{(}->}[rd] \ar@{<->}[dd]_{*} & & \calD_+ \ar@{^{(}->}[rd] \ar@{<->}[dd]_{\rotatebox{90}{\text{ } \text{ } \text{ }\rotatebox{-90}{$*$}}} & & \\
	\calB_0 \ar@{^{(}->}[ru] \ar@{^{(}->}[rd] & & \calB \ar@{^{(}.>}[rr] & & \calD \ar@{^{(}->}[r] & \calR_{\calB} \\
	& \calB_- \ar@{^{(}->}[rr] \ar@{^{(}->}[ru] & & \calD_- \ar@{^{(}->}[ru] & &
}
\end{equation*}

Finally by using the somewhat technical notion of {\it special terms} (see Definition \ref{definition-special.terms}) we are able to enlarge the algebra $\calD$ by considering a certain $*$-regular subalgebra $\Psi (\calQ)p_E$
of $p_E\gotR p_E$, where $p_E:=\pi (\chi_E)$. The $*$-algebra $\calE$ is then defined as the subalgebra of $\gotR$ generated by $\calD$ and  $\Psi (\calQ)p_E$ (Definition \ref{definition-algebraQ}). It is shown in Proposition \ref{proposition-algebraQ.inside.calR} that we have inclusions of $*$-algebras $\calB\subseteq \calD\subseteq \calE \subseteq \calR_{\calB}$. 

In the last section of the paper, we give a specific approximation sequence $\{\calA_n\}$ for the lamplighter group algebra $\calA := C_K(X)\rtimes _T \Z \cong K \Gamma$, where here $X=\{0,1\}^{\Z}$ and $T$ is the shift $T(x)_i= x_{i+1}$ for $x\in X$. We use as a sequence of approximations the algebras $\calA_n:= \calA(E_n,\calP_n)$ associated to partitions given by cylinder sets   
$$[\epsilon_{-n} \ldots \underline{\epsilon_0} \cdots \epsilon_n]=\{ x=(x_i) \in X \mid x_{-n} = \epsilon_{-n},..., x_0= \epsilon_0,...,x_n = \epsilon_n \},$$
where $\epsilon_{-n},\dots ,\epsilon_n\in \{0,1\}$. For $n=0$, we recover the algebra $\calA_0$ studied in \cite{AG}. It is worth mentioning here that, for  $n\ge 1$, the algebra $\calR_n:=\calR_{\calA_n}$ contains a copy of the algebra $K_{\text{rat}}\langle  X\rangle $ of non-commutative rational series in infinitely many indeterminates (see Proposition \ref{proposition-rational.series}). This has potential applications for the computation of $\ell^2$-Betti numbers for the lamplighter, as explained in \cite{AC3}.

\medskip

The paper is organized as follows. Section \ref{section-preliminaries} contains preliminary definitions and results, and Section \ref{section-approx.crossed.product} contains details on the basic construction used in the paper, which is fully developed in \cite{AC}. We undertake in Section \ref{section-*.regular.closure} the general study of the $*$-regular closure $\calR_{\calA}$ of a crossed product algebra $\calA=C_K(X)\rtimes _T \Z$, where $X$ is a totally disconnected compact metrizable space, and $T$ is a homeomorphism of $X$. We first obtain the results about the approximation of $\calR_{\calA}$ by a sequence of $*$-regular algebras $\calR_n$, including the study about the difference between $\calR_{\calA}$ and $\calR_{\infty}=\bigcup_{n\ge 1} \calR_n$ in case $y$ is a periodic point of $T$. In Subsection \ref{subsection-study.RB}, we start our general study of the $*$-regular algebra
$\calR_{\calB}$ associated to an algebra $\calB = \calA(E,\calP)$, by identifying several interesting subalgebras therein. In Section \ref{section-group.alg}, we explain the connection between the crossed product algebras $C_K(X) \rtimes_T \Z$ and group algebras $KG$, where $G = H \rtimes_{\rho} \Z$ is a semidirect product with $H$ a countable torsion abelian group. The connection uses essentially the Fourier transform, but, since we impose only minimal conditions on our base field $K$, we need to work out some additional details. Finally Section \ref{section-lamplighter.algebra} contains our study of the group algebra $K\Gamma$ of the lamplighter group $\Gamma$. Using a concrete sequence $\{(E_n,\calP_n) \mid  n\in \Z^+\}$ of partitions of $X=\{0,1\}^{\Z}$, we are able to concretely compute several of the objects introduced in Section \ref{section-*.regular.closure} in the general setting. In particular we show in Proposition \ref{proposition-rational.series} that, for $n\ge 1$, the $*$-regular algebras $\calR_n$ contain a copy of the algebra of non-commutative rational series in infinitely many indeterminates. We conclude the paper by determining the exact relation of our theory with the algebra studied in \cite[Section 6]{AG}.

\section{Background and preliminaries}\label{section-preliminaries}

Here we collect background definitions, concepts, and results needed during the course of the paper.

\subsection{Von Neumann regular rings, \texorpdfstring{$*$}{}-regular rings and rank functions}\label{subsection-regular.rings.*.rank.functions}

A ring $R$ is called \textit{regular}, or \textit{von Neumann regular}, if for every element $x \in R$ there exists $y \in R$ such that $x = xyx$. In this case, the element $e = xy$ is an idempotent and generates the same right ideal as $x$. In fact, a characterization for regular rings is that every finitely generated right ideal of $R$ is generated by a single idempotent \cite[Theorem 1.1]{Goo91}.

A \textit{$*$-regular ring} is a regular ring $R$ endowed with an involution $*$ which is \textit{proper}, meaning that the equation $x^*x = 0$ implies $x = 0$. The involution is called \textit{positive definite} in case, for each $n \geq 1$, the equation $\sum_{i=1}^n x_i^*x_i = 0$ implies $x_i = 0$ for all $1 \leq i \leq n$. If $R$ is a $*$-regular ring with a positive definite involution, then $M_n(R)$ is also a $*$-regular ring when endowed with the $*$-transpose involution.

In a $*$-regular ring $R$ every principal right/left ideal is generated by unique projections. Specifically, for every $x \in R$ there exist unique projections $e, f \in R$ (usually denoted by $\mathrm{LP}(x)$ and $\mathrm{RP}(x)$, and termed the \textit{left} and \textit{right projections} of $x$, respectively) satisfying that $xR = eR$ and $Rx = Rf$. In this setting, there also exists a unique element $\ol{x} \in fRe$ such that $x\ol{x} = e$ and $\ol{x}x = f$. The element $\ol{x}$ is called the \textit{relative inverse} of $x$. We refer the reader to \cite{Ara87, Berb} for further information on $*$-regular rings.


For any subset $S \subseteq R$ of a unital $*$-regular ring $R$, there exists a smallest unital $*$-regular subring of $R$ containing $S$ (\cite[Proposition 6.2]{AG}, see also \cite[Proposition 3.1]{LS12}). This $*$-regular ring is called the \textit{$*$-regular closure} of $S$ in $R$, and is denoted by $\calR(S,R)$. In fact,
$$\calR(S,R) = \bigcup_{n \geq 0} \calR_n(S,R),$$
where $\calR_0(S,R)$ is the unital $*$-subring of $R$ generated by the set $S$, and $\calR_{n+1}(S,R)$ is generated by $\calR_n(S,R)$ and the relative inverses in $R$ of the elements of $\calR_n(S,R)$. It was observed in \cite{Jaik} that $\calR_{n+1}(S,R)$ can be described as the subring of $R$ generated by the elements of $\calR_n(S,R)$ and the relative inverses of the elements of the form $x^*x$ for $x \in \calR_n(S,R)$.

A \textit{pseudo-rank function} on a unital regular ring $R$ is a map $\rk : R \ra [0,1]$ that satisfies the following properties:
\begin{enumerate}[a),leftmargin=1cm]
\item $\rk(0) = 0$, $\rk(1) = 1$;
\item $\rk(xy) \leq \min\{\rk(x),\rk(y)\}$ for every $x,y \in R$;
\item if $e, f \in R$ are orthogonal idempotents, then $\rk(e+f) = \rk(e) + \rk(f)$.
\end{enumerate}
If $\rk$ satisfies the additional property
\begin{enumerate}[d),leftmargin=1cm]
\item $\rk(x) = 0$ only if $x = 0$,
\end{enumerate}
then $\rk$ is called a \textit{rank function} on $R$. For general properties of pseudo-rank functions over regular rings one can consult \cite[Chapter 16]{Goo91}.

Every (pseudo-)rank function $\rk$ on a regular ring $R$ defines a (pseudo-)metric $d$ on $R$ by the rule $d(x,y) = \rk(x-y)$. Since the ring operations are continuous with respect to this (pseudo-)metric, one can consider the completion $\ol{R}$ of $R$ with respect to $d$. It is also a regular ring, and $\rk$ can be uniquely extended continuously to a rank function $\ol{\rk}$ on $\ol{R}$ such that $\ol{R}$ is also complete with respect to the metric induced by $\ol{\rk}$. 


Regular rings are also of great interest since every (pseudo-)rank function $\rk$ on $R$ can be uniquely extended to a (pseudo-)rank function on matrices over $R$ (see e.g. \cite[Corollary 16.10]{Goo91}). This is no longer true if we do not assume $R$ to be regular. The definition that seems to fit in the general setting is the notion of a Sylvester matrix rank function.

\begin{definition}\label{definition-sylvester.rank}
Let $R$ be a unital ring, and set $M(R) = \bigcup_{n \geq 1} M_n(R)$. A \textit{Sylvester matrix rank function} on $R$ is a map $\rk : M(R) \ra \R^+$ satisfying the following conditions:
\begin{enumerate}[a),leftmargin=1cm]
\item ${\rm rk} (M)= 0$ if $M$ is a zero matrix, and ${\rm rk}(1)= 1$;
\item ${\rm rk} (M_1M_2) \leq \min\{{\rm rk}(M_1), {\rm rk}(M_2)\}$ for any matrices $M_1$ and $M_2$ of appropriate sizes;
\item ${\rm rk} \begin{pmatrix} M_1 & 0 \\ 0 & M_2 \end{pmatrix} = {\rm rk} (M_1) + {\rm rk}(M_2) $ for any matrices $M_1$ and $M_2$;
\item ${\rm rk} \begin{pmatrix} M_1 & M_3 \\ 0 & M_2  \end{pmatrix} \ge {\rm rk}(M_1) + {\rm rk}(M_2)$ for any matrices $M_1$, $M_2$ and $M_3$ of appropriate sizes.
\end{enumerate}
\end{definition}

Sylvester matrix rank functions were first introduced by Malcolmson in \cite{Mal} in order to study homomorphisms to division rings.
For more theory and properties about Sylvester matrix rank functions we refer the reader to \cite{Jaik}, \cite{Li1}, \cite{Li2} and \cite[Part I, Chapter 7]{Sch}.  

We denote by $\Ps(R)$ the compact convex set of Sylvester matrix rank functions on $R$. By \cite[Proposition 16.20]{Goo91} this space coincides with the space of pseudo-rank functions on $R$ when $R$ is a regular ring.

As in the case of pseudo-rank functions on a regular ring, a Sylvester matrix rank function $\rk$ on a unital ring $R$ gives rise to a pseudo-metric by the rule $d(x,y) = \rk(x-y)$. We call the Sylvester matrix rank function \textit{faithful} if its kernel, defined as the set of all element $x \in R$ with zero rank, is exactly $\{ 0 \}$. In this case $d$ becomes a metric on $R$. Again, the ring operations are continuous with respect to $d$, so one can consider the completion $\ol{R}$ of $R$ with respect to $d$, and $\rk$  can be uniquely extended continuously to a Sylvester matrix rank function $\ol{\rk}$ on $\ol{R}$.

A very useful result connecting the $*$-regular closure $\calR(S,R)$ and possible values of a Sylvester matrix rank function defined on $R$ is given in the following proposition, which can be thought of as an analogue of the classical Cramer's rule.

\begin{proposition}[Corollary 6.2 of \cite{Jaik}]\label{proposition-cramer.rule.reg.closure}
Let $S$ be a unital $*$-subring of a $*$-regular ring $R$, and let $\calR = \calR(S,R)$ be the $*$-regular closure of $S$ in $R$.

Then for any matrices $r_1,...,r_k \in M_{n \times m}(\calR)$, there exists a matrix $M \in M_{a \times b}(S)$ and matrices $A_1,...,A_k \in M_{n \times b}(S)$ such that, for any other square-matrices $t_1,...,t_k \in M_n(S)$ and any Sylvester matrix rank function $\rk$ defined on $\calR$,
$$\rk(t_1 r_1 + \cdots + t_k r_k) = \rk \begin{pmatrix}
M \\
t_1 A_1 + \cdots + t_k A_k
\end{pmatrix} - \rk(M).$$
In particular, any Sylvester matrix rank function on $\calR$ is completely determined by its values on matrices over $S$.
\end{proposition}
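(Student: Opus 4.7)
The plan is to induct on the depth $n$ of the filtration $\calR = \bigcup_{n \geq 0} \calR_n(S,R)$ recalled just above this proposition, proving the statement for all $r_1, \ldots, r_k \in M_{n \times m}(\calR_n(S,R))$. We may assume $S$ is already a unital $*$-subring, so $\calR_0 = S$. The base case $n = 0$ is trivial: take $A_i := r_i$ and let $M$ be the empty $0 \times m$ matrix, so that the identity reduces to $\rk(\sum t_i r_i) = \rk(\sum t_i A_i)$.

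For the inductive step, assume the result at level $n$. Each entry of any $r_i \in M_{n \times m}(\calR_{n+1})$ is a non-commutative polynomial with $\calR_n$-coefficients in finitely many relative inverses $y_j = \overline{x_j^* x_j}$, $x_j \in \calR_n$. The central technical step is to eliminate these inverses one at a time using a Schur-complement-type rank identity in the $*$-regular ring $R$. For a single $y = \overline{w}$ with $w = x^*x$, self-adjointness of $w$ forces $p := wy = yw$ to be a projection with $p = \mathrm{LP}(w) = \mathrm{RP}(w)$; combined with the defining relations $wyw = w$ and $ywy = y$, elementary row and column operations lead to a formula of the shape
\begin{equation*}
\rk(b + cyd) \;=\; \rk \begin{pmatrix} b & c \\ -d & w \end{pmatrix} - \rk(w) + \big(\text{defect involving } c(1-p) \text{ and } (1-p)d\big),
\end{equation*}
whose defect can itself be re-expressed as a rank over $\calR_n$ by iterating the same device on the idempotent $1-p$. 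Applying this reduction to every occurrence of every $y_j$ appearing in the $r_i$, one rewrites $\rk(t_1 r_1 + \cdots + t_k r_k)$ as a $\Z$-linear combination of ranks of block matrices over $\calR_n$, each depending linearly on the $t_i$. Applying the inductive hypothesis entry-by-entry and then assembling the pieces using the block-diagonal additivity $\rk(X \oplus Y) = \rk(X) + \rk(Y)$ yields a $\Z$-linear combination of ranks of matrices over $S$. A final standard manipulation, absorbing subtracted ranks by enlarging $M$ along the block diagonal, collapses this combination into the desired Cramer form $\rk\binom{M}{\sum t_i A_i} - \rk(M)$.

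The main obstacle is twofold. First, the Schur-complement identity displayed above must be formulated with enough care to handle the projection defect $1-p$ cleanly: this is exactly where the $*$-regular (as opposed to merely regular) hypothesis on $R$ is essential, since self-adjointness of $x^*x$ collapses what would otherwise be independent one-sided defects into a single symmetric one, enabling a uniform enlargement of the ambient block matrix. Second, the combinatorial bookkeeping that reduces a $\Z$-linear combination of ranks into a single expression of the precise form $\rk\binom{M}{\sum t_i A_i} - \rk(M)$ requires some dexterity but goes through by the block-diagonal identity $[\rk\binom{A}{B} - \rk(A)] + [\rk\binom{C}{D} - \rk(C)] = \rk\binom{A \oplus C}{B \oplus D} - \rk(A \oplus C)$ and similar tricks. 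Once the induction is complete, the final sentence of the proposition is immediate: for any $N \in M_{n \times m}(\calR)$, applying the proposition with $k = 1$, $r_1 := N$, and $t_1 := I_n$ produces $\rk(N) = \rk\binom{M}{A_1} - \rk(M)$ for some $M, A_1$ over $S$, so $\rk(N)$ is determined entirely by the values of $\rk$ on $M(S)$.
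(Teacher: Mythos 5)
A preliminary remark: the paper offers no proof of this proposition --- it is imported verbatim as Corollary 6.2 of \cite{Jaik} --- so there is no internal argument to compare yours against. Measured against the proof in that source, your overall strategy is the standard and correct one: induct along the tower $\calR = \bigcup_{n\ge 0}\calR_n(S,R)$, eliminate the relative inverses $y=\overline{x^*x}$ adjoined at each stage by augmented-matrix (Cramer/Schur) rank identities, and reassemble via block-diagonal additivity. The base case, the description of $\calR_{n+1}$, the bookkeeping identity for merging two Cramer-form expressions, and the deduction of the final sentence are all fine.

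The gap is in the pivotal elimination step. Your displayed identity, $\rk(b+cyd)=\rk\left(\begin{smallmatrix} b & c \\ -d & w\end{smallmatrix}\right)-\rk(w)+(\text{defect})$, is not a rank identity in any literal sense: carrying out the row and column operations you indicate turns the augmented matrix into one of the form $\left(\begin{smallmatrix} w & (1-p)d \\ c(1-p) & -(b+cyd)\end{smallmatrix}\right)$, and the rank of this matrix does not split as $\rk(w)+\rk(b+cyd)$ plus a separate additive correction; the clean Schur identity holds only after replacing $c,d$ by $cp,pd$, which puts $p=x^*x\cdot\overline{x^*x}$ back into the matrix entries. This makes your proposed remedy, ``iterate the same device on the idempotent $1-p$,'' circular as stated: $1-p$ lives in $\calR_{n+1}$, not in $\calR_n$, so iterating does not lower the level of the entries. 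What is actually required --- and what constitutes the real content of Proposition 6.1 and Corollary 6.2 of \cite{Jaik} --- is a collection of identities eliminating $p$ in favour of $w$ alone, for instance $\rk(c(1-p))=\rk\left(\begin{smallmatrix} w \\ c\end{smallmatrix}\right)-\rk(w)$, proved via the row operation $c\mapsto c-(cy)w=c(1-p)$ together with the factorization $\left(\begin{smallmatrix} w & 0 \\ 0 & c(1-p)\end{smallmatrix}\right)=\left(\begin{smallmatrix} w \\ c(1-p)\end{smallmatrix}\right)\left(\begin{smallmatrix} p & 1-p\end{smallmatrix}\right)$, and then a mechanism for splicing such identities into the main expression while preserving its linear dependence on the $t_i$. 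Until that machinery is set up and shown to terminate, the induction does not close.
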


\subsection{Division closure and rational closure}\label{subsection-noncommutative.localization} In this subsection we recall the well-known concepts of the division closure and the rational closure.
We also introduce the notion of universal localization in a special case. See \cite{Cohn85} for a detailed treatment.

Let $R$ be a unital subring of a ring $T$. The {\it division closure} $\calD (R,T)$ of $R$ in $T$ is the smallest subring of $T$ containing $R$ and closed under inverses in $T$. So we have $R\subseteq \calD (R,T)\subseteq T$ and $d^{-1}\in \calD (R,T)$ whenever $d\in \calD (R,T)$ is invertible in $T$. The {\it rational closure} of $R$ in $T$ is the smallest subring $\calR at (R,T)$ of $T$ containing $R$ and closed under inverses of square matrices. We thus have $R\subseteq \calR at (R,T)\subseteq T$, and whenever a matrix $A\in M_n(\calR at (R,T))$ is invertible in $M_n(T)$, then all the entries of $A^{-1}$ belong to $\calR at (R,T)$. 

We summarize in the next lemma some properties concerning the rational, division and $*$-regular closures. Its proof is straightforward, so we omit it.

\begin{lemma}\label{lemma-div.rat.closure}
Let $R$ be a unital subring of $T$. Then the following properties hold:
\begin{enumerate}[i),leftmargin=1cm]
\item $\calD(\calD(R,T),T) = \calD(R,T) = \calD(R,\calD(R,T))$.
\item $\calR at( \calR at(R,T), T) = \calR at(R,T) = \calR at(R,\calR at(R,T))$.
\item $\calD(R,T) \subseteq \calR at(R,T)$.
\end{enumerate}
Moreover, if $T$ is $*$-regular, then the $*$-regular closure of $R$ in $T$, which we denote by $\calR (R,T)$, contains the rational closure $\calR at(R,T)$.
\end{lemma}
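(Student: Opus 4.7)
The plan is to exploit the minimality built into the definition of each closure: the ``small side'' containment is automatic because the candidate is a subring contained in the bigger closure, and the ``big side'' containment reduces to checking that the candidate is already closed under the operation defining that closure. The only substantive step will be the final assertion comparing $\calR at(R,T)$ to the $*$-regular closure.

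For part (i), the first equality $\calD(\calD(R,T),T) = \calD(R,T)$ is immediate, since $\calD(R,T)$ already contains itself and is closed under inverses in $T$, hence equals its own division closure. For $\calD(R,\calD(R,T)) = \calD(R,T)$, the inclusion $\subseteq$ is trivial because the left-hand side is a subring of $\calD(R,T)$. For $\supseteq$ I would set $S := \calD(R,\calD(R,T))$ and verify that $S$ is closed under inverses in $T$: if $s \in S$ admits an inverse $s^{-1} \in T$, then $s^{-1}$ lies in $\calD(R,T)$ by closure of the latter, so $s$ is in fact invertible inside $\calD(R,T)$, and hence $s^{-1} \in S$ by definition of $S$. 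Minimality of $\calD(R,T)$ then forces $\calD(R,T) \subseteq S$. Part (ii) goes through with the identical argument after replacing ``invertible element in $T$'' by ``invertible matrix in $M_n(T)$''. Part (iii) is immediate: a $1 \times 1$ matrix is just a ring element, so $\calR at(R,T)$ is already a subring containing $R$ and closed under inverses in $T$, hence contains $\calD(R,T)$ by minimality.

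The main step is the inclusion $\calR at(R,T) \subseteq \calR(R,T)$ under the assumption that $T$ is $*$-regular. By minimality of $\calR at(R,T)$ it suffices to show that $\calR(R,T)$ is itself closed under inverting square matrices that happen to be invertible in $M_n(T)$. Given such a matrix $A \in M_n(\calR(R,T))$, I would use that $\calR(R,T)$ is regular (being $*$-regular), hence so is $M_n(\calR(R,T))$, and therefore there exists a quasi-inverse $X \in M_n(\calR(R,T))$ with $A X A = A$. Multiplying on the right by the inverse of $A$ available in the ambient ring $M_n(T)$ yields $A X = I_n$, and a further left multiplication identifies $X$ with $A^{-1}$; in particular every entry of $A^{-1}$ lies in $\calR(R,T)$.

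The closest thing to an obstacle here is that the hypothesis only asks $T$ to be $*$-regular, not that its involution be positive definite, so one cannot freely invoke $*$-regularity of $M_n(\calR(R,T))$. My argument sidesteps this by relying solely on the fact that matrix rings over a regular ring are regular, together with the observation that any quasi-inverse in $M_n(\calR(R,T))$ must coincide with the actual inverse whenever the latter exists in $M_n(T)$.
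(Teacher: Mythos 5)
Your proof is correct; the paper itself omits the argument as straightforward, and your reasoning (minimality on one side, verifying closure under the relevant inversions on the other, and the quasi-inverse trick $AXA=A \Rightarrow X=A^{-1}$ for the final inclusion) is exactly the standard route the authors intend. Your remark that only plain regularity of $M_n(\calR(R,T))$ is needed — avoiding any positive-definiteness assumption on the involution — is a correct and welcome precaution.
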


We will need in Section \ref{section-lamplighter.algebra} the notion of universal localization of a ring, but only with respect to elements of the ring.
Let $\Sigma$ be a subset of a unital ring $R$. Then the {\it universal localization} of $R$ with respect to $\Sigma$ is the ring $\Sigma^{-1}R$ obtained by universally adjoining to $R$ inverses of elements of $\Sigma$. There is a canonical ring homomorphism $\iota_{\Sigma}\colon R\to \Sigma^{-1}R$ satisfying the following universal property: for each ring homomorphism $\varphi\colon R\to S$ such that $\varphi (s)$ is invertible for all $s\in \Sigma$, there exists a unique homomorphism $\tilde{\varphi}\colon \Sigma^{-1} R\to S$ such that $\varphi= \tilde{\varphi}\circ \iota_{\Sigma}$.

\subsection{\texorpdfstring{$\ell^2$}{}-Betti numbers for group algebras and the Atiyah Conjecture}\label{subsection-Betti.numbers.Atiyah.problem}
Here we recall some basic facts on $\ell^2$-Betti numbers associated to a group. We refer the reader to \cite{Luck} for more information on this subject.

For a discrete countable group $G$ and a subfield $K$ of $\C$ closed under complex conjugation we consider the group $*$-algebra $KG$ acting on the Hilbert space $\ell^2(G)$ by left multiplication. 
We denote by $\calN(G)$ the weak-completion of $\C G$ in $\calB(\ell^2(G))$, which is commonly known as the \textit{group von Neumann algebra of $G$}. An equivalent algebraic definition can be given: it consists exactly of those bounded operators $T : \ell^2(G) \ra \ell^2(G)$ that are $G$-equivariant, i.e. $T(\xi g) = T(\xi)g$ for $\xi\in \ell^2(G)$ and $g\in G$. 

The algebra $\calN(G)$ is endowed with a normal, positive and faithful trace, defined as
$$\tr_{\calN(G)}(T) := \langle T(\xi_e),\xi_e \rangle_{\ell^2(G)} \quad \text{for } T \in \calN(G),$$
where $\{\xi_g\}_{g\in G}$ is the canonical orthonormal basis of $\ell^2(G)$.
Note that for an element $T = \sum_{\gamma \in G} a_{\gamma} \gamma \in \C G$, its trace is simply the coefficient $a_e$.

All the above constructions can be extended to $k \times k$ matrices: the $*$-algebra $M_k(K G)$ acts faithfully on $\ell^2(G)^k$ by left multiplication. We denote by $\calN_k (G)$ the weak-completion of $M_k(\C G)$ inside $\calB (\ell^2(G)^k)$, which coincides with $M_k(\calN (G))$. The previous trace can be extended to an (unnormalized) trace in $\calN_k (G)$ by setting, for a matrix $T = (T_{ij}) \in \calN_k(G)$,
$$\Tr_{\calN_k (G)}(T) := \sum_{i=1}^k \tr_{\calN (G)}(T_{ii}).$$

Every matrix operator $A \in M_k(K G)$ gives rise to an \textit{$\ell^2$-Betti number}, in the following way. Consider $A$ as an operator $A : \ell^2(G)^k \ra \ell^2(G)^k$ acting on the left, and take $p_A \in \calN_k (G)$ to be the projection onto the kernel of $A$. One can then define the von Neumann dimension $\dim_{\calN(G)}(\text{ker}\,A)$ of $\text{ker}\,A$ as the trace of the projection $p_A$.

\begin{definition}\label{definition-l2bettinumber.grouprings}
A real positive number $r$ is called an \textit{$\ell^2$-Betti number arising from $G$ with coefficients in $K$} if for some integer $k \geq 1$, there exists a matrix operator $A \in M_k(K G)$ such that
$$\dim_{\calN(G)}(\text{ker}\,A) := \Tr_{\calN_k (G)}(p_A) = r.$$
We denote the set of all $\ell^2$-Betti numbers arising from $G$ with coefficients in $K$ by $\calC(G,K)$. It should be noted that this set is always a subsemigroup of $(\R^+,+)$. The subgroup of $(\R,+)$ generated by $\calC(G,K)$ will be denoted throughout the article by $\calG(G,K)$.
\end{definition}

It is also possible to define the von Neumann dimension by means of a \textit{rank function}, as follows. Let $\calU (G)$ be the classical ring of quotients of $\calN (G)$. It is a $*$-regular ring possessing a Sylvester matrix rank function $\rk$ defined by
$$\rk (A) := \Tr_{\calN_k(G)}(\mathrm{LP}(A)) = \Tr_{\calN_k(G)}(\mathrm{RP}(A))$$
for any matrix $A \in M_k(\calU(G))$, where $\mathrm{LP}(A)$ and $\mathrm{RP}(A)$ are the left and right projections of $A$ inside $M_k(\calU(G))$, respectively. In particular, we obtain by restriction a Sylvester matrix rank function $\rk_{KG}$ on $K G$ , and we have the equality
\begin{equation}\label{equation-vN.dim.rank.UG}
\dim_{\calN(G)}(\text{ker}\,A) = k - \rk_{KG} (A)
\end{equation}
for each matrix operator $A \in M_k(K G)$.

\section{A dynamical approximation for crossed product algebras}\label{section-approx.crossed.product}

Let us recall the general construction used in \cite{AC} for providing an essentially unique Sylvester matrix rank function on an algebraic crossed product algebra $\calA := C_K(X) \rtimes_T \Z$, where $T : X \ra X$ is a homeomorphism of a totally disconnected, compact metrizable space $X$, which we also assume to be infinite (e.g. one can take $X$ to be the Cantor space). Here $K$ is an arbitrary field with an involution $\lambda \mapsto \ol{\lambda}$, and $C_K(X)$ is the $*$-algebra of continuous functions $f : X \ra K$ where $K$ is endowed with the discrete topology; equivalently, is the $*$-algebra of locally constant functions $f : X \ra K$. For the construction, a $T$-invariant, ergodic and full probability measure $\mu$ on $X$ is also needed. We refer the reader to \cite[Section 3]{AC}. This construction is used throughout the paper.

For $\emptyset \neq E \subseteq X$ any clopen subset and $\calP$ any (finite) partition of the complement $X \backslash E$ into clopen subsets, define $\calB$ to be the unital $*$-subalgebra of $\calA$ generated by the family of partial isometries $\{\chi_Z t\}_{Z \in \calP}$. By \cite[Lemma 3.4]{AC}, the $*$-algebra $\calB_0 = C_K(X) \cap \calB$ is linearly spanned by the unit $1$ and the projections of the form $\chi_C$, being $C$ a non-empty clopen subset of $X$ of the form
\begin{equation}\label{equation-form.of.bs}
T^{-r}(Z_{-r}) \cap \cdots \cap Z_0 \cap \cdots \cap T^{s-1}(Z_{s-1}),
\end{equation}
where $Z_{-r},...,Z_0,...,Z_{s-1} \in \calP$ and $r,s \geq 0$. Here $\chi_U$ denotes the characteristic function of the clopen subset $U \subseteq X$. We have the decomposition
$$\calB = \bigoplus_{i \in \Z} \calB_0 (\chi_{X \backslash E}t)^i = \bigoplus_{i \in \Z} \calB_i t^i$$
with $\calB_i = \chi_{X \backslash(E \cup \cdots \cup T^{i-1}(E))}\calB_0$ and $\calB_{-i} = \chi_{X \backslash(T^{-1}(E) \cup \cdots \cup T^{-i}(E))}\calB_0$ for $i > 0$. By \cite[Lemma 3.8]{AC}, for $i > 0$ the algebra $\calB_i$ is linearly spanned by the terms $\chi_C$, where $C$ is of the form \eqref{equation-form.of.bs} with $s \geq i$, and similarly $\calB_{-i}$ is linearly spanned by the terms $\chi_C$, where $C$ is of the form \eqref{equation-form.of.bs} with $r \geq i$.

There exists a quasi-partition of $X$ (i.e. a countable family of non-empty, pairwise disjoint clopen subsets whose union has full measure) given by the $T$-translates of clopen subsets $W$ of the form
\begin{equation}\label{equation-Wexpression}
W = E \cap T^{-1}(Z_1) \cap \cdots \cap T^{-k+1}(Z_{k-1}) \cap T^{-k}(E) \quad \text {for some $k \geq 1$ and $Z_i \in \calP$},
\end{equation}
whenever these are non-empty. In fact, if we write $|W| := k$ (the length of $W$) and $\V := \{W \neq \emptyset \text{ as above}\}$, then for a fixed $W \in \V$ the characteristic functions $\{\chi_W, ..., \chi_{T^{|W|-1}(W)}\}$ belong to $\calB$, and moreover the set of elements $e_{ij}(W) = (\chi_{X \backslash E}t)^i \chi_W (t^{-1} \chi_{X \backslash E})^j$, $0 \leq i,j \leq |W|-1$, forms a set of $|W| \times |W|$ matrix units in $\calB$ (i.e. they satisfy $e_{ij}(W) e_{ts}(W) = \delta_{j,t} e_{is}(W)$ for all allowable indices $i,j,t,s$). In addition, the element $h_W := e_{00}(W) + \cdots + e_{|W|-1,|W|-1}(W)$ is central in $\calB$ and, by \cite[Proposition 3.11]{AC}, we have
\begin{equation}\label{equation-iso.central}
h_W \calB \cong M_{|W|}(K).
\end{equation}
From now on, and slightly abusing language, we will identify $h_W \calB$ with $M_{|W|}(K)$ whenever convenient. In this way one constructs an injective $*$-representation $\pi : \calB \hookrightarrow \prod_{W \in \V} M_{|W|}(K) =: \gotR$ defined by $\pi(a) = (h_W \cdot a)_W$ \cite[Proposition 3.13]{AC}.

The $*$-algebra $\calB$ corresponding to $(E,\calP)$ as above will be denoted by $\calA(E,\calP)$.\\

Take now $\{E_n\}_{n \geq 1}$ to be a decreasing sequence of clopen sets of $X$ together with a family $\{\calP_n\}_{n \geq 1}$, being each $\calP_n$ a (finite) partition of $X \backslash E_n$ into clopen subsets, such that:
\begin{enumerate}[a),leftmargin=1cm]
\item the intersection of all the $E_n$ consists of a single point $y \in X$;
\item $\calP_{n+1} \cup \{E_{n+1}\}$ is a partition of $X$ finer than $\calP_n \cup \{E_n\}$;
\item $\bigcup_{n \geq 1} (\calP_n \cup \{E_n\})$ generates the topology of $X$.
\end{enumerate}
By writing $\V_n$ for the (non-empty) sets \eqref{equation-Wexpression} corresponding to the pair $(E_n,\calP_n)$ and setting $\calA_n := \calA(E_n,\calP_n)$, we get injective $*$-representations $\pi_n$ of $\calA_n$ into $\gotR_n := \prod_{W \in \V_n} M_{|W|}(K)$, in such a way that the diagrams
\begin{equation}
\vcenter{
	\xymatrix{
	\calA_n \ar@{^{(}->}[r]^{\iota_n} \ar@{^{(}->}[d]^{\pi_n} & \calA_{n+1} \ar@{^{(}->}[d]^{\pi_{n+1}} \ar@{^{(}->}[r]^{\iota_{n+1}} & 	\calA_{n+2} \ar@{^{(}->}[d]^{\pi_{n+2}} \ar@{^{(}->}[r] & \cdots \ar@{^{(}->}[r] & \calA_{\infty} \ar@{^{(}->}[d]^{\pi_{\infty}} \\
	\gotR_n \ar@{^{(}->}[r]^{j_n} & \gotR_{n+1} \ar@{^{(}->}[r]^{j_{n+1}} & \gotR_{n+2} \ar@{^{(}->}[r] & \cdots \ar@{^{(}->}[r] & \gotR_{\infty}
	}
}\label{diagram-comm.diag.1}
\end{equation}
commute. Here $\iota_n$ is the natural embedding $\iota_n(\chi_Z t) = \sum_{Z'} \chi_{Z'}t$ where the sum is taken with respect to all $Z' \in \calP_{n+1}$ satisfying $Z' \subseteq Z$, the maps $j_n : \gotR_n \hookrightarrow \gotR_{n+1}$ are the embeddings given in \cite[Proposition 4.2]{AC}, and $\calA_{\infty}, \gotR_{\infty}$ are the inductive limits of the direct systems $(\calA_n,\iota_n), (\gotR_n,j_n)$, respectively. In fact, the algebra $\calA_{\infty}$ can be explicitly described in terms of the crossed product, as follows. For $U \subseteq X$ an open set, denote by $C_{c,K}(U)$ the ideal of $C_K(X)$ generated by the characteristic functions $\chi_V$, where $V$ ranges over the clopen subsets $V \subseteq X$ contained in $U$. By \cite[Lemma 4.3]{AC}, $\calA_{\infty}$ coincides with the $*$-subalgebra of $\calA = C_K(X) \rtimes_T \Z$ generated by $C_K(X)$ and $C_{c,K}(X \backslash \{y\}) t$. This will be used in the next section when describing the \textit{$*$-regular closure} of $\calA$.

The following easy example might be useful to understand the concepts introduced above. We will develop the main example of this paper, corresponding to the group algebra of the {\it lamplighter group}, in Section \ref{section-lamplighter.algebra}.  

\begin{example}[Odometer]\label{example-odometer}
Let $X$ be the compact space $X = \prod_{i \in \N}\{0,1\}$ and let $T : X \ra X$ be the homeomorphism given by the odometer, namely for $x = (x_i)_i \in X$, $T$ is given by
$$T(x) = \begin{cases} (1,x_2,x_3,...) & \text{ if } x_1 = 0, \\ (0,...,0,1,x_{n+2},...) & \text{ if } x_i = 1 \text{ for } 1 \leq i \leq n \text{ and } x_{n+1} = 0, \\ (0,0,...) & \text{ if } x_i = 1 \text{ for all } i \in \N. \end{cases}$$
Note that the odometer action is just addition of $(1,0,0,...)$ by carry-over. The algebra $\calO := C_K(X) \rtimes_T \Z$ is known as the \textit{odometer algebra}.

Given $\epsilon_{1},...,\epsilon_l \in \{0,1\}$, the cylinder set $\{ x = (x_i) \in X \mid x_{1} = \epsilon_{1},...,x_l = \epsilon_l \}$ will be denoted by $[\epsilon_1 \cdots \epsilon_l]$. It is then clear that a basis for the topology of $X$ is given by the collection of clopen sets consisting of all the cylinder sets.

We have a natural measure $\mu$ on $X$ given by the usual product measure, where we take the $\big( \frac{1}{2}, \frac{1}{2} \big)$-measure on each component $\{0,1\}$. It is well-known (cf. \cite[Section VIII.4]{Dav}) that $\mu$ is an ergodic, full and $T$-invariant probability measure on $X$.

For $n \geq 1$, we take $E_n = [0 \cdots 0]$ (with $n$ zero's) for the sequence of clopen sets, whose intersection gives the point $y = (0,0,0,...) \in X$. For each $n\ge 1$, we take the partition $\calP_n$ of the complement $X \backslash E_n$ to be the obvious one, namely
$$\calP_n = \{[\epsilon_1 \epsilon_2 \cdots \epsilon_n] \text{ } \mid \text{ } \epsilon_i \in \{0,1\}\} \backslash \{E_n\}.$$
Write $\calO_n := \calA(E_n,\calP_n)$. The quasi-partition $\ol{\calP}_n$ is really simple in this case: write $Z_{n,l} = T^l(E_n)$ for $1 \leq l \leq 2^n-1$. Note that these clopen sets form exactly the partition $\calP_n$, and that $T(Z_{n,2^n-1}) = E_n$. Therefore there is only one possible $W \in \V_n$, which has length $2^n$ and is given by
$$W = E_n \cap T^{-1}(Z_{n,1}) \cap T^{-2}(Z_{n,2}) \cap \cdots \cap T^{-2^n+1}(Z_{n,2^n-1}) \cap T^{-2^n}(E_n) = E_n.$$
The representations $\pi_n : \calO_n \ra \gotR_n$ become $*$-isomorphisms, where here $\gotR_n = M_{2^n}(K)$. Under this identification, the embeddings $\iota_n : \calO_n \hookrightarrow \calO_{n+1}$ become the block-diagonal embeddings
$$j_n : M_{2^n}(K) \hookrightarrow M_{2^{n+1}}(K), \quad x \mapsto \text{diag}(x,x).$$
So here $\calO_{\infty} \cong \varinjlim_n M_{2^n}(K)$.\qed
\end{example}

Returning to the general situation, observe that we can define a Sylvester matrix rank function $\rk_n$ on each $\gotR_n$ by the rule
$$\rk_n(M) = \sum_{W \in \V_n} \mu(W) \Rk(M_W) \quad \text{ for } M = (M_W)_W \in \gotR_n,$$
being $\Rk$ the usual rank of matrices. These rank functions are compatible with respect to the embeddings $j_n$, so they give rise to a well-defined rank function $\rk_{\infty}$ on $\gotR_{\infty}$. Using this rank function it is possible to define a Sylvester matrix rank function over $\calA$, unique with respect to a certain compatibility property concerning the measure $\mu$, as the following theorem states.

\begin{theorem}\cite[Theorem 4.7 and Proposition 4.8]{AC}\label{theorem-rank.function}
If $\gotR_{\rk}$ denotes the rank-completion of $\gotR_{\infty}$ with respect to its rank function $\rk_{\infty}$, then we can embed $\calA \hookrightarrow \gotR_{\rk}$ in such a way that it coincides with the embedding $\calA_{\infty} \hookrightarrow \gotR_{\infty}$ when restricted to $\calA_{\infty}$, and the element $t$ is sent to $\lim_n \pi_n(\chi_{X \backslash E_n}t)$.

Moreover, the Sylvester matrix rank function $\rk_{\calA}$ induced by restriction of $\ol{\rk_{\infty}}$ (the extension of $\rk_{\infty}$ to $\gotR_{\rk}$) on $\calA$ is extremal, and unique with respect to the following property:
$$\rk_{\calA}(\chi_U) = \mu(U) \quad \text{ for every clopen subset } U \subseteq X.$$

Finally, the rank-completion of $\calA$ with respect to $\rk_{\calA}$ gives back $\gotR_{\rk}$.
\end{theorem}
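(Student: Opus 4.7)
The plan is to build the embedding $\pi\colon\calA\hookrightarrow\gotR_{\rk}$ by first defining a target for $t$, verify the crossed-product relations, and then establish uniqueness, extremality, and rank-completeness as separate subarguments.

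First I would define $\wt t:=\lim_n \pi_\infty(\chi_{X\backslash E_n}t)\in\gotR_{\rk}$. The sequence is Cauchy because the consecutive differences
$$\pi_\infty(\chi_{X\backslash E_{n+1}}t)-\pi_\infty(\chi_{X\backslash E_n}t)=\pi_\infty(\chi_{E_n\backslash E_{n+1}}t)$$
have rank bounded by $\mu(E_n\backslash E_{n+1})\le \mu(E_n)\to 0$ (the atomlessness $\mu(\{y\})=0$ is forced by fullness, ergodicity, and the infiniteness of $X$). One then verifies $\wt t\wt t^{*}=\wt t^{*}\wt t=1$ in rank using $(\chi_{X\backslash E_n}t)(\chi_{X\backslash E_n}t)^{*}=\chi_{X\backslash E_n}\to 1$ and $(\chi_{X\backslash E_n}t)^{*}(\chi_{X\backslash E_n}t)=\chi_{X\backslash T^{-1}(E_n)}\to 1$, and checks the commutation $\wt t\,\pi_\infty(f)=\pi_\infty(f\circ T^{-1})\,\wt t$ for $f\in C_K(X)$ via the exact identity $(\chi_{X\backslash E_n}t)\chi_V=\chi_{T(V)}(\chi_{X\backslash E_n}t)$ on clopen $V$. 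The universal property of $C_K(X)\rtimes_T\Z$ then extends $\pi_\infty$ to a $*$-homomorphism $\pi\colon\calA\to\gotR_{\rk}$ sending $t\mapsto\wt t$.

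Next comes the uniqueness of the induced rank function, which is the crux of the argument. Let $\rk'$ be any Sylvester matrix rank function on $\calA$ with $\rk'(\chi_U)=\mu(U)$ on clopen $U$, and fix $n$. Inside $\calA_n$ the family $\{h_W\}_{W\in\V_n}$ consists of orthogonal central idempotents with $h_W\calA_n\cong M_{|W|}(K)$; since $M_{|W|}(K)$ is simple Artinian its Sylvester rank is (up to normalization) the classical matrix rank $\Rk$, so $\rk'|_{h_W\calA_n}=\frac{\rk'(h_W)}{|W|}\Rk$, with $\rk'(h_W)=\sum_{i=0}^{|W|-1}\rk'(\chi_{T^i(W)})=|W|\mu(W)$ by $T$-invariance of $\mu$. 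The equality $\sum_W|W|\mu(W)=1$ gives $\rk'(1-\sum_{W\in F}h_W)\to 0$ along increasing finite $F\subseteq\V_n$, so the finite-support truncations of any $a\in\calA_n$ converge to $a$ in the $\rk'$-metric, forcing $\rk'=\rk_{\calA}:=\overline{\rk_{\infty}}\circ\pi$ on $\calA_n$. Density of $\calA_{\infty}$ in $\calA$ in the rank metric (approximate $t$ by $\chi_{X\backslash E_n}t$) upgrades this to $\rk'=\rk_{\calA}$ on all of $\calA$. Injectivity of $\pi$ then follows by comparing $\rk_{\calA}$ with any faithful Sylvester rank on $\calA$ having the measure property (for instance the one induced from the trace $\sum f_it^i\mapsto \int f_0\,d\mu$ on the partial crossed product, cf.~\cite{virili,Li2}): uniqueness forces the two to agree and hence $\rk_{\calA}$ to be faithful.

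For extremality, if $\rk_{\calA}=\alpha\rk_1+(1-\alpha)\rk_2$ with $\alpha\in(0,1)$ and $\rk_1,\rk_2$ Sylvester ranks on $\calA$, the set functions $\mu_i(U):=\rk_i(\chi_U)$ are $T$-invariant (via $\rk_i(\chi_{T(U)})=\rk_i(\wt t\,\chi_U\,\wt t^{*})=\rk_i(\chi_U)$) and finitely additive on clopens, hence extend to $T$-invariant Borel probability measures satisfying $\mu=\alpha\mu_1+(1-\alpha)\mu_2$; ergodicity of $\mu$ forces $\mu_i=\mu$, and the uniqueness above yields $\rk_i=\rk_{\calA}$. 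For the rank-completion claim, note that the finite direct sum $\bigoplus_{W\in\V_n}M_{|W|}(K)\subseteq\calA_n$ is rank-dense in $\gotR_n=\prod_W M_{|W|}(K)$ by the same truncation bound $\sum_{W\notin F}|W|\mu(W)\to 0$, so $\calA$ is rank-dense in $\gotR_{\infty}$ and therefore in $\gotR_{\rk}$. The main obstacle will be the uniqueness step: the projections $h_W$ do not sum to $1$ algebraically in $\calA_n$ (only in rank), so the argument has to proceed genuinely through limits in the rank metric, leaning on the subadditivity $|\rk'(a)-\rk'(b)|\le\rk'(a-b)$; injectivity of $\pi$ also hinges on the \emph{a priori} existence of a faithful Sylvester rank with the measure property, which is the one place the proof imports input from outside the construction.
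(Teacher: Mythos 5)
This theorem is not proved in the paper at all: it is imported verbatim from \cite[Theorem 4.7 and Proposition 4.8]{AC}, so there is no in-paper argument to compare against. That said, your reconstruction follows what is essentially the architecture of the cited proof: define $\wt t$ as a rank-limit of the partial isometries $\pi_\infty(\chi_{X\backslash E_n}t)$ (your observation that $\mu(\{y\})=0$ follows from fullness, ergodicity and $|X|=\infty$ is correct and necessary), verify the covariance relation, pin down any rank function with $\rk'(\chi_U)=\mu(U)$ on each $h_W\calA_n\cong M_{|W|}(K)$ via $\rk'(h_W)=|W|\mu(W)$ and the uniqueness of the rank on a full matrix algebra, pass to limits using $\rk'(1-\sum_{W\in F}h_W)\to 0$ and the density of $\calA_\infty$ in $\calA$, and deduce extremality from the fact that ergodic measures are extreme points of the simplex of $T$-invariant Borel probability measures. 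All of these steps are sound, including the point you flag yourself, namely that the $h_W$ only sum to $1$ in rank and the argument must go through the metric.

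The one genuine gap is injectivity of $\pi$. Your route makes it hinge on the \emph{a priori} existence of a \emph{faithful} Sylvester matrix rank function on $\calA$ with $\rk'(\chi_U)=\mu(U)$; but the constructions you cite produce a rank function with the measure property, not obviously a faithful one, and the suggested trace $\sum f_it^i\mapsto\int f_0\,d\mu$ does not by itself yield a rank function (let alone a faithful one) over an arbitrary field $K$ with positive definite involution. If the imported rank is not faithful, your uniqueness argument only identifies it with $\rk_{\calA}$ and gives nothing about $\ker\pi$. The fix is internal and short: for every $n\neq 0$ the closed $T$-invariant set $\mathrm{Fix}(T^n)$ has $\mu$-measure $0$ (measure $1$ is excluded because $\mu$ is atomless, which you already established), so for $0\neq a=\sum_{i}f_it^i$ one picks $i_0$ with $f_{i_0}\neq 0$, a value $\lambda\neq 0$ with $U=f_{i_0}^{-1}(\lambda)$, an aperiodic-enough point $x\in T^{-i_0}(U)$, and a clopen $V\ni x$ with $V\subseteq T^{-i_0}(U)$ and $T^{i}(V)\cap T^{i_0}(V)=\emptyset$ for the finitely many other $i$ in the support of $a$; then $\chi_{T^{i_0}(V)}\,a\,\chi_V=\lambda\chi_{T^{i_0}(V)}t^{i_0}$, whose image under $\pi$ has rank $\mu(V)>0$ because $\wt t$ is invertible. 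Hence $\rk_{\calA}$ is faithful and $\pi$ is injective, with no appeal to external rank functions.
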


\section{The \texorpdfstring{$*$}{}-regular closure \texorpdfstring{$\calR_{\calA}$}{} and its ideal structure}\label{section-*.regular.closure}


In this section we undertake the study of the $*$-regular closure of the crossed product algebra $\calA = C_K(X) \rtimes_T \Z$ inside $\gotR_{\rk}$ (see Theorem \ref{theorem-rank.function}) when $(K,-)$ is a field with positive definite involution. In this situation, the direct product $\gotR_n = \prod_{W \in \V_n} M_{|W|}(K)$ has a structure of $*$-regular $K$-algebra, where each matrix algebra $M_{|W|}(K)$ is endowed with the $*$-transpose involution. Since this structure is compatible with the transition homomorphisms, we obtain a stucture of $*$-regular algebra on $\gotR_{\infty}$, and thus on its completion $\gotR_{\rk}$. Moreover, $\calA$ sits naturally as a $*$-subalgebra of $\gotR_{\rk}$. See \cite[Theorem 4.9]{AC} for more details.

Let $\calR_{\calA} := \calR(\calA,\gotR_{\rk})$ denote the $*$-regular closure of $\calA$ inside $\gotR_{\rk}$.
The structure of this $*$-algebra is closely related to the possible values of the rank function $\rk_{\calA}$; in other words, it is related with the set
\begin{equation}\label{equation-definition.CA}
\calC(\calA) := \rk_{\calA}\Big(\bigcup_{n \geq 1} M_n(\calA)\Big) \subseteq \R^+.
\end{equation}
As in Definition \ref{definition-l2bettinumber.grouprings}, this set has the structure of a semigroup, inherited from $(\R^+,+)$. We will denote by $\calG(\calA)$ the subgroup of $(\R,+)$ generated by $\calC(\calA)$. The set $\calC(\calA)$ can be thought of as the set of \textit{complementary} $\ell^2$-Betti numbers arising from $\calA$ (cf. Definition \ref{definition-l2bettinumber.grouprings} and \eqref{equation-vN.dim.rank.UG}).

 The exact relation between $\calG(\calA)$ and $\calR_{\calA}$ is given in the following proposition, which is motivated by Proposition \ref{proposition-cramer.rule.reg.closure}. Recall that, for a unital ring $\mathfrak A$, the abelian group $K_0(\mathfrak A)$ is the Grothendieck group of the monoid of isomorphism classes of finitely generated projective $\mathfrak A$-modules, see for instance \cite{Ros}.

\begin{proposition}\label{proposition-motivation.values.ranks}
The group $\calG(\calA)$ coincides with the subgroup of $(\R,+)$ generated by the set
$$\rk_{\calR_{\calA}}(\calR_{\calA}) = \{\rk_{\calR_{\calA}}(r) \mid r \in \calR_{\calA}\},$$
where $\rk_{\calR_{\calA}}$ is the restriction of $\ol{\rk_{\infty}}$ to $\calR_{\calA}$ (see Theorem \ref{theorem-rank.function}). Equivalently, it coincides with the image of the state
$$\phi : K_0(\calR_{\calA}) \ra \R, \quad [p] - [q] \mapsto \rk_{\calR_{\calA}}(p) - \rk_{\calR_{\calA}}(q).$$
\end{proposition}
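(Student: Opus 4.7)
The plan is to establish the chain of equalities
\[ \calG(\calA) \; = \; H \; = \; H' \; = \; \phi(K_0(\calR_{\calA})), \]
where $H$ denotes the subgroup of $(\R,+)$ generated by $\rk_{\calR_{\calA}}(\calR_{\calA})$ and $H'$ the subgroup generated by $\bigcup_{n \geq 1} \rk_{\calR_{\calA}}(M_n(\calR_{\calA}))$. To show $H = H'$, I would use that $\calR_{\calA}$ is regular: for any matrix $N \in M_n(\calR_{\calA})$, the standard decomposition theory of finitely generated projective modules over a regular ring, combined with the additivity and invariance properties of Sylvester matrix rank functions, yields $\rk_{\calR_{\calA}}(N) = \sum_{i=1}^m \rk_{\calR_{\calA}}(e_i)$ for some idempotents $e_i \in \calR_{\calA}$ (see \cite[Chapter 16]{Goo91}). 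Hence $H' \subseteq H$, and the reverse inclusion is trivial.

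For $\calG(\calA) = H$, the containment $\calG(\calA) \subseteq H' = H$ is immediate because $\calA \subseteq \calR_{\calA}$. For the reverse, fix $r \in \calR_{\calA}$ and apply Proposition \ref{proposition-cramer.rule.reg.closure} with $k=1$, $n=m=1$, $r_1 = r$ and $t_1 = 1$: this produces matrices $M$ and $A_1$ with entries in $\calA$ such that
\[ \rk_{\calR_{\calA}}(r) \; = \; \rk_{\calR_{\calA}} \begin{pmatrix} M \\ A_1 \end{pmatrix} - \rk_{\calR_{\calA}}(M). \]
Both matrices on the right have entries in $\calA$, so their ranks lie in $\calC(\calA)$, giving $\rk_{\calR_{\calA}}(r) \in \calG(\calA)$; generating the subgroup yields $H \subseteq \calG(\calA)$.

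Finally, to identify $\phi(K_0(\calR_{\calA}))$ with $H$, I would invoke the standard fact that, for a unital regular ring $R$ with pseudo-rank function $\rk$, the group $K_0(R)$ is generated by classes $[e]$ of idempotents $e \in M_n(R)$, and the assignment $[e] \mapsto \rk(e)$ extends uniquely to a state on $K_0(R)$ (see \cite[Chapter 16]{Goo91}). Applied to $\calR_{\calA}$, the image $\phi(K_0(\calR_{\calA}))$ is the subgroup generated by the ranks of all idempotent matrices over $\calR_{\calA}$, which coincides with $H'$ since every matrix in a regular ring shares its rank with its support idempotent, and therefore with $H$.

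The single substantive input is Proposition \ref{proposition-cramer.rule.reg.closure}; the remaining steps are bookkeeping within the theory of rank functions on regular rings. I anticipate no serious obstacle, the crucial mechanism being that the Cramer's-rule formula applied to a single element of $\calR_{\calA}$ immediately pushes the set of possible ranks back inside $\calG(\calA)$.
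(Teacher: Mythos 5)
Your proposal is correct and follows essentially the same route as the paper's proof: the inclusion of the rank group of $\calR_{\calA}$ into $\calG(\calA)$ via Proposition \ref{proposition-cramer.rule.reg.closure}, the reverse inclusion by replacing a matrix with a projection of equal rank and reducing to a diagonal of idempotents in $\calR_{\calA}$ (your $H=H'$ step is exactly the paper's diagonal-reduction argument, just packaged as a separate equality), and the identification with $\phi(K_0(\calR_{\calA}))$ by the standard description of $K_0$ of a regular ring. No gaps.
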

\begin{proof}
Write $\calS_1$ for the subgroup generated by $\rk_{\calR_{\calA}}(\calR_{\calA})$. By Proposition \ref{proposition-cramer.rule.reg.closure}, we have the inclusion $\calS_1 \subseteq \calG(\calA)$.

For the other inclusion note first that, since $\calR_{\calA}$ is a $*$-regular ring with positive definite involution, each matrix algebra $M_n(\calR_{\calA})$ is $*$-regular too. So for each $A \in M_n(\calR_{\calA})$ there exists a projection $P \in M_n(\calR_{\calA})$ such that $\rk_{\calR_{\calA}}(A) = \rk_{\calR_{\calA}}(P)$. We conclude that $\calC(\calA)$ is contained in the set of positive real numbers of the form $\rk_{\calR_{\calA}}(P)$, where $P$ ranges over matrix projections with coefficients in $\calR_{\calA}$. Now each such projection $P$ is equivalent to a diagonal one \cite[Proposition 2.10]{Goo91}, that is, one of the form $\text{diag}(p_1,...,p_r)$
for some projections $p_1,...,p_r \in \calR_{\calA}$, so that $\rk_{\calR_{\calA}}(P) = \rk_{\calR_{\calA}}(p_1) + \cdots + \rk_{\calR_{\calA}}(p_r) \in \calS_1$, and $\calG(\calA) \subseteq \calS_1$.

Now the last part of the proposition follows easily, since $\phi(K_0(\calR_{\calA})) = \calS_1$.
\end{proof}

Before continuing, it is worth to mention that we can completely determine the rank-completion of $\calR_{\calA}$: it is the well-known \textit{von Neumann continuous factor} $\calM_K$, defined as the completion of $\varinjlim_n M_{2^n}(K)$ with respect to its unique rank function (see \cite{AC2,Elek16} for details). Here the direct limit is taken with respect to the block diagonal embeddings $x \mapsto \text{diag}(x,x)$.

\begin{proposition}\label{proposition-reg.closure.completion}
With the above notation, we have that $\ol{\calR_{\calA}}^{\rk_{\calR_{\calA}}} \cong \calM_K$ as $*$-algebras.
\end{proposition}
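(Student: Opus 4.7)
The plan is to reduce the statement to the already-cited identification $\gotR_{\rk} \cong \calM_K$ from \cite{AC}, by showing that the rank-completion of $\calR_{\calA}$ is nothing but $\gotR_{\rk}$ itself. So essentially the proof is a two-line sandwiching argument plus invocation of a known theorem.

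First, I would record the chain of $*$-subalgebra inclusions
\[
\calA \;\hookrightarrow\; \calR_{\calA} \;\hookrightarrow\; \gotR_{\rk},
\]
in which $\rk_{\calR_{\calA}}$ is, by construction, the restriction of $\ol{\rk_{\infty}}$ to $\calR_{\calA}$. By the last assertion of Theorem \ref{theorem-rank.function}, the rank-completion of $\calA$ with respect to $\rk_{\calA}$ is precisely $\gotR_{\rk}$; in other words, $\calA$ is rank-dense in $\gotR_{\rk}$. Since $\calR_{\calA}$ contains $\calA$, a fortiori $\calR_{\calA}$ is also rank-dense in $\gotR_{\rk}$.

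Next, because $\gotR_{\rk}$ is already complete with respect to the rank metric induced by $\ol{\rk_{\infty}}$, the rank-completion $\ol{\calR_{\calA}}^{\rk_{\calR_{\calA}}}$ sits inside $\gotR_{\rk}$ and, by rank-density, must equal the whole of $\gotR_{\rk}$ as $*$-regular rings. Finally, the identification $\gotR_{\rk} \cong \calM_K$ as $*$-algebras is exactly the content of the main result of \cite{AC} that was recalled in the introduction of this paper (the completion of $\calA$ is $*$-isomorphic to the von Neumann continuous regular factor $\calM_K$). Composing the two identifications yields the claim.

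There is no serious obstacle here: the only minor point to be careful about is that the restriction of $\ol{\rk_{\infty}}$ to $\calR_{\calA}$ genuinely agrees with the rank used to take the completion (hence no ambiguity in the statement), and that the $*$-isomorphism quoted from \cite{AC} is indeed a $*$-algebra isomorphism, which it is. Everything else is formal manipulation with dense subalgebras of a complete $*$-regular ring.
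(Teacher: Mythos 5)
Your argument is correct and is essentially identical to the paper's own proof: both use the sandwich $\calA \subseteq \calR_{\calA} \subseteq \gotR_{\rk}$ together with the fact that $\ol{\calA}^{\rk_{\calA}} = \gotR_{\rk} \cong \calM_K$ (Theorem \ref{theorem-rank.function} and \cite[Theorems 4.7 and 4.9]{AC}) to conclude that the completion of $\calR_{\calA}$ is $\gotR_{\rk}$. No gaps.
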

\begin{proof}
Here $\calM_K$ has the involution induced from the $*$-transpose involution on each matrix algebra $M_{2^n}(K)$. Since $\calA \subseteq \calR_{\calA} \subseteq \gotR_{\rk}$ and $\ol{\calA}^{\rk_{\calA}} = \gotR_{\rk} \cong \calM_K$ due to \cite[Theorems 4.7 and 4.9]{AC}, the result follows.
\end{proof}

Our strategy is to make use of our sequence $\{\calA_n\}_{n \geq 1}$ of approximating algebras from Section \ref{section-approx.crossed.product} to approximate $\calR_{\calA}$ in a suitable way. As already mentioned, in our present setting the algebras $\gotR_n$, $\gotR_{\infty}$ and $\gotR_{\rk}$ become $*$-regular algebras, and all the connecting maps in the commutative diagram \eqref{diagram-comm.diag.1} become $*$-homomorphisms. 
We denote by $\calR_n = \calR(\calA_n, \gotR_n)$ the $*$-regular closure of $\calA_n$ inside $\gotR_n$. Similarly, $\calR_{\infty} = \calR(\calA_{\infty}, \gotR_{\infty})$.

\begin{proposition}\label{proposition-lim.of.reg.closure}
We have inclusions $\calR_n \subseteq \calR_{n+1}$, and moreover $\bigcup_{n \geq 1} \calR_n = \calR_{\infty}$. Therefore the diagram \eqref{diagram-comm.diag.1} extends to a commutative diagram
\begin{equation}
\vcenter{
	\xymatrix{
	\calA_n \ar@{^{(}->}[r] \ar@{^{(}->}[d] & \calA_{n+1} \ar@{^{(}->}[d] \ar@{^{(}->}[r] & \calA_{n+2} \ar@{^{(}->}[d] \ar@{^{(}->}[r] & \cdots \ar@{^{(}->}[r] & \calA_{\infty} \ar@{^{(}->}[d] \ar@{^{(}->}[r] & \calA \ar@{^{(}->}[d] \\
	\calR_n \ar@{^{(}->}[r] \ar@{^{(}->}[d] & \calR_{n+1} \ar@{^{(}->}[d] \ar@{^{(}->}[r] & \calR_{n+2} \ar@{^{(}->}[d] \ar@{^{(}->}[r] & \cdots \ar@{^{(}->}[r] & \calR_{\infty} \ar@{^{(}->}[d] \ar@{^{(}->}[r] & \calR_{\calA} \ar@{^{(}->}[d] \\
	\gotR_n \ar@{^{(}->}[r] & \gotR_{n+1} \ar@{^{(}->}[r] & \gotR_{n+2} \ar@{^{(}->}[r] & \cdots \ar@{^{(}->}[r] & \gotR_{\infty} \ar@{^{(}->}[r] & \gotR_{\rk}.
	}
}\label{diagram-comm.diag.2}
\end{equation}
\end{proposition}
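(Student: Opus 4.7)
The plan is to reduce everything to the inductive description $\calR(S,R)=\bigcup_{k\ge 0}\calR_k(S,R)$ of the $*$-regular closure recalled in Subsection \ref{subsection-regular.rings.*.rank.functions}, together with a single transfer observation: if $j\colon \gotS\hookrightarrow \gotS'$ is a unital $*$-embedding between two $*$-regular rings and $x\in \gotS$, then the relative inverse $\overline{x}$ of $x$ computed inside $\gotS$ is also the relative inverse of $j(x)$ inside $\gotS'$. This is immediate from the fact that the defining equations $x\overline{x}x=x$, $\overline{x}x\overline{x}=\overline{x}$, $(x\overline{x})^*=x\overline{x}$ and $(\overline{x}x)^*=\overline{x}x$ are preserved by $j$ and uniquely determine the relative inverse in any $*$-regular overring (see Subsection \ref{subsection-regular.rings.*.rank.functions}).

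For $\calR_n\subseteq \calR_{n+1}$, I would argue by induction on $k$ that $\calR_k(\calA_n,\gotR_n)\subseteq \calR_{n+1}$. The base case $k=0$ is immediate, since $\calA_n\subseteq \calA_{n+1}\subseteq \calR_{n+1}$ and $\calR_{n+1}$ is a unital $*$-subring of $\gotR_{n+1}$. For the inductive step, every element of $\calR_{k+1}(\calA_n,\gotR_n)$ is built from $\calR_k(\calA_n,\gotR_n)$ via ring operations, involution and relative inverses in $\gotR_n$ of elements of $\calR_k(\calA_n,\gotR_n)$; by the transfer observation these relative inverses coincide with the ones computed in $\gotR_{n+1}$ under $j_n$, so they lie in $\calR_{n+1}$, which is already closed under ring operations and the involution.

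The equality $\bigcup_n\calR_n=\calR_\infty$ then splits into two inclusions. The inclusion $\subseteq$ follows from exactly the same induction, this time with $\gotR_{n+1}$ and $\calR_{n+1}$ replaced by $\gotR_\infty$ and $\calR_\infty=\calR(\calA_\infty,\gotR_\infty)$. For the reverse inclusion, by minimality of the $*$-regular closure it suffices to check that $\bigcup_n\calR_n$ is a unital $*$-regular subring of $\gotR_\infty$ containing $\calA_\infty$. As a directed union of unital $*$-subrings it is a unital $*$-subring, and it contains $\calA_\infty=\bigcup_n\calA_n$. For von Neumann regularity, given $x\in\bigcup_n\calR_n$ one picks $n$ with $x\in \calR_n$, takes its relative inverse $\overline{x}\in \calR_n$, and applies the transfer observation once more to conclude that $x\overline{x}x=x$ holds in $\gotR_\infty$ with $\overline{x}\in\bigcup_n\calR_n$. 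Properness of the involution is inherited from $\gotR_\infty$, so the union is $*$-regular. Commutativity of the extended diagram \eqref{diagram-comm.diag.2} is then automatic, because all the new vertical arrows are restrictions of the already-commutative inclusions between the $\calA$'s and the $\gotR$'s.

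No step of this plan presents a serious obstacle; the only substantive ingredient is the transfer observation on relative inverses, which is a direct consequence of their uniqueness in a $*$-regular ring. Once that is in place, both assertions of the proposition reduce to bookkeeping with the inductive description of the $*$-regular closure.
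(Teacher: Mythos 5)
Your argument is correct and rests on the same essential ingredient as the paper's proof, namely that relative inverses are preserved along unital $*$-embeddings of $*$-regular rings by their uniqueness. The paper merely packages your level-by-level induction more compactly: it observes that $\calR_{n+1}\cap\gotR_n$ (respectively $\calR_{\calA}\cap\gotR_{\infty}$) is a $*$-regular ring containing $\calA_n$ (respectively $\calA_{\infty}$) and invokes minimality of the $*$-regular closure, a step which implicitly uses exactly your transfer observation; the treatment of $\bigcup_n\calR_n=\calR_{\infty}$ is the same in both.
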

\begin{proof}
Since $\calA_n \subseteq \calA_{n+1} \cap \gotR_n \subseteq \calR_{n+1} \cap \gotR_n \subseteq \gotR_n$, and $\calR_{n+1} \cap \gotR_n$ is $*$-regular, we have $\calR_n \subseteq \calR_{n+1} \cap \gotR_n \subseteq \calR_{n+1}$. In particular, this shows the commutativity of the left sides of the diagram. The proof for the right sides is similar: $\calA_{\infty} \subseteq \calA \cap \gotR_{\infty} \subseteq \calR_{\calA} \cap \gotR_{\infty} \subseteq \gotR_{\infty}$, and since $\calR_{\calA} \cap \gotR_{\infty}$ is $*$-regular, we have again $\calR_{\infty} \subseteq \calR_{\calA} \cap \gotR_{\infty} \subseteq \calR_{\calA}$.

To prove the equality $\bigcup_{n \geq 1} \calR_n = \calR_{\infty}$, note that each $\calA_n \subseteq \calR_n \subseteq \bigcup_{n \geq 1} \calR_n \subseteq \gotR_{\infty}$, hence $\calA_{\infty} \subseteq \bigcup_{n \geq 1} \calR_n \subseteq \gotR_{\infty}$. It is easy to check, using that $\calR_n \subseteq \calR_{n+1}$, that $\bigcup_{n \geq 1} \calR_n$ is $*$-regular, so by definition $\calR_{\infty} \subseteq \bigcup_{n \geq 1} \calR_n$. The other inclusion is trivial because each $\calR_n \subseteq \calR_{\infty}$.
\end{proof}

The following lemma gives some examples of elements that appear inside $\calR_{\calA}$.

\begin{lemma}\label{lemma-invert.poly.RA}
Take $p(x) = \lambda_0 + \lambda_1 x + \cdots + \lambda_k x^k \in K[x]$ a polynomial with $\lambda_0 \neq 0$. Then $p(t) \in \calA$ is invertible in $\calR_{\calA}$. Moreover, $\calR_{\calA}$ contains a copy of the rational function field $K(t)$.
\end{lemma}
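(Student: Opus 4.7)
The plan is to show that $\rk_{\calA}(p(t)) = 1$. Once this is established, then $p(t) \in \calR_{\calA}$ has trivial left and right projections in the $*$-regular ring $\calR_{\calA}$, so its relative inverse in $\calR_{\calA}$ is in fact an honest two-sided inverse, giving invertibility in $\calR_{\calA}$ (and $p(t)^{-1} \in \calR_{\calA}$).

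First, I would exploit the approximation $\chi_{X\backslash E_n}t \to t$ in the rank topology. This follows immediately from
$$\rk_{\calA}\bigl(t - \chi_{X \backslash E_n}t\bigr) = \rk_{\calA}(\chi_{E_n}t) = \rk_{\calA}(\chi_{E_n}) = \mu(E_n) \xrightarrow[n \to \infty]{} 0.$$
A telescoping identity $t^i - (\chi_{X \backslash E_n}t)^i = \sum_{j=0}^{i-1}(\chi_{X\backslash E_n}t)^j(t - \chi_{X\backslash E_n}t)t^{i-1-j}$ together with Sylvester matrix rank inequalities yields $p(\chi_{X\backslash E_n}t) \to p(t)$ in rank.

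Second, I would analyze $p(\chi_{X \backslash E_n}t)$ explicitly as an element of $\gotR_n = \prod_{W \in \V_n}M_{|W|}(K)$. Under the identification $h_W \calA_n \cong M_{|W|}(K)$ afforded by the matrix units $e_{ij}(W)$, one checks directly that $\pi_n(\chi_{X\backslash E_n}t)|_W$ is the nilpotent shift $N_W = \sum_{i=0}^{|W|-2} e_{i+1,i}(W)$: the $(i+1,i)$-coefficient arises because $T^{i+1}(W) \subseteq X \backslash E_n$ for $0 \le i \le |W|-2$, and the top level vanishes since $T^{|W|}(W) \subseteq E_n$. Therefore in each block we obtain
$$p(N_W) = \lambda_0 I_{|W|} + \lambda_1 N_W + \cdots + \lambda_k N_W^k,$$
a lower triangular matrix with $\lambda_0 \neq 0$ on the diagonal, hence invertible in $M_{|W|}(K)$. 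Consequently $p(\chi_{X\backslash E_n}t)$ is invertible in $\gotR_n$, so $\rk_n(p(\chi_{X\backslash E_n}t)) = 1$ for every $n$. By continuity of the rank in the rank topology, $\rk_{\calA}(p(t)) = 1$ as desired, completing the proof of invertibility of $p(t)$ in $\calR_{\calA}$.

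For the second assertion, any nonzero $q(t) \in K[t,t^{-1}]$ factors uniquely as $q(t) = t^m p(t)$ with $p(x) \in K[x]$ having nonzero constant term; since $t$ is a unitary in $\calR_{\calA}$ and $p(t)$ is invertible by the first part, $q(t)$ is invertible in $\calR_{\calA}$. By the universal property of localization, the embedding $K[t,t^{-1}] \hookrightarrow \calR_{\calA}$ extends to an embedding of its field of fractions $K(t)$ into $\calR_{\calA}$. The only step requiring care is the identification of $\pi_n(\chi_{X\backslash E_n}t)$ as the nilpotent subdiagonal shift in each matrix block, but this is a direct unwinding of the matrix unit construction recalled in Section 3.
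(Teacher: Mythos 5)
Your proof is correct and follows essentially the same route as the paper: identify $t$ with $\lim_n \pi_n(\chi_{X\backslash E_n}t)$, compute each block of $p(\pi_n(\chi_{X\backslash E_n}t))$ as an invertible lower-triangular matrix with $\lambda_0$ on the diagonal, and pass to the limit. Your detour through $\rk_{\calA}(p(t))=1$ merely makes explicit the justification the paper leaves implicit in the phrase ``and so is its limit,'' so it is a welcome refinement of, rather than a departure from, the paper's argument.
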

\begin{proof}
Inside $\gotR_{\rk}$ we identify the element $t \in \calA$ with $\lim_n \pi_n(\chi_{X \backslash E_n}t)$ (cf. Theorem \ref{theorem-rank.function}). Hence $p(t) = \lim_n  p(\pi_n(\chi_{X \backslash E_n}t))$. Note that $\pi_n(\chi_{X \backslash E_n}t) = ( h_W \cdot \chi_{X \backslash E_n}t)_W$. We compute
\begin{align*}
h_W \cdot \chi_{X \backslash E_n}t = e_{10}(W) + \cdots + e_{|W|-1,|W|-2}(W) =: u_W,
\end{align*}
so
$$p(\pi_n(\chi_{X \backslash E_n}t) ) = (\lambda_0 \Id_{W} + \lambda_1 u_W + \cdots + \lambda_k u_W^k)_W.$$
These are all lower triangular matrices inside each matrix algebra $M_{|W|}(K)$, and in fact invertible since $\lambda_0 \neq 0$. Hence $p(\pi_n(\chi_{X \backslash E_n}t) )$ is invertible inside $\gotR_n \subseteq \gotR_{\rk}$ for each $n \geq 1$, and so is its limit $\lim_n p(\pi_n(\chi_{X \backslash E_n}t) ) = p(t)$. Since $t$ is already invertible in $\calR_{\calA}$, it follows that $K(t) \subseteq \calR_{\calA}$.
\end{proof}

In \cite{AC3}, the $*$-regular closure of the odometer algebra (cf. Example \ref{example-odometer}) is explicitly computed.

\subsection{Difference between \texorpdfstring{$\calR_{\calA}$}{} and \texorpdfstring{$\calR_{\infty}$}{} assuming existence of a periodic point}\label{subsection-difference.RA.Rinfty}

We determine in this subsection the exact relationship between $\calR_{\infty}$ and $\calR_{\calA}$ in case $y$ is a periodic point. To start, we recall the following proposition from \cite{AC}, which determines how big is the subalgebra $\calA_{\infty}$ inside the algebra $\calA$ in this case of interest.

\begin{proposition}\cite[Proposition 4.5]{AC}\label{proposition-periodic.point.A}
Let us assume the above notation. Suppose that $y$ is a periodic point for $T$ with period $l$. Let $I$ be the ideal of $\calA$ generated by $C_{c,K}(X \backslash \{y, \dots, T^{l-1}(y)\})$. Then:
\begin{enumerate}[(i),leftmargin=1cm]
\item $I$ is also an ideal of $\calA_{\infty}$, and we have $*$-algebra isomorphisms
$$\calA / I \cong M_l(K[s,s^{-1}]), \qquad \calA_{\infty} / I \cong M_l(K).$$
\item There exists some $M \geq 0$ such that for each $n \geq M$ there is exactly one $W_n \in \V_n$ of length $l$ and containing $y$, and such that the isomorphism $h_{W_n} \calA_n \cong M_l(K)$ given in \eqref{equation-iso.central} coincides with the restriction of the projection map $q : \calA_{\infty} \ra \calA_{\infty}/I$ on $h_{W_n} \calA_n$. That is, the diagram
\begin{equation*}
\vcenter{
	\xymatrix{
	\calA_{\infty} \ar[r]^{q} & \calA_{\infty}/I \ar[d]^{\cong} \\
	h_{W_n} \calA_n \ar[r]^{\cong} \ar@{^{(}->}[u] & M_l(K)
	}
}\label{diagram-comm.diag.3}
\end{equation*}
commutes, where the right isomorphism comes from $(i)$. Moreover, $h_W \in I$ for all $W \in \V_n$, $W \neq W_n$.
\item $\calA_n / (I \cap \calA_n) \cong \calA_{\infty} / I \cong M_l(K)$ and $(1-h_{W_n}) \calA_n = I \cap \calA_n$ for every $n \geq M$.
\end{enumerate}
\end{proposition}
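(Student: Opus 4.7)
For part (i), I would begin by noting that the orbit $O := \{y, T(y), \dots, T^{l-1}(y)\}$ is $T$-invariant, so the ideal $J := C_{c,K}(X \backslash O)$ of $C_K(X)$ is $T$-stable. A direct computation using the commutation relation $t^k f = T^k(f) t^k$ shows that every element of $I$ can be written as $\sum_k h_k t^k$ with $h_k \in J$, so $I = \bigoplus_k J t^k$ and in particular $I \cap C_K(X) = J$. Point evaluation on the $l$ points of $O$ yields $C_K(X)/J \cong K^l$, whence $\calA/I$ inherits the structure of the crossed product $K^l \rtimes_T \Z$ with $\Z$ acting by cyclic rotation of period $l$. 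A Morita-style identification (or, more concretely, the explicit construction of matrix units $E_{ij}$ using the idempotents $e_i$ coming from singletons of $O$ together with appropriate powers of $t$, in which $t$ is sent to $\sum_{j<l-1} E_{j+1,j} + sE_{0,l-1}$) then gives a $*$-algebra isomorphism $\calA/I \cong M_l(K[s,s^{-1}])$, with $s$ corresponding to $t^l$. The containment $I \subseteq \calA_\infty$ is immediate from the description of $\calA_\infty$ after diagram \eqref{diagram-comm.diag.1} as generated by $C_K(X)$ and $C_{c,K}(X\backslash\{y\})t$: a generator $\chi_V t^k$ of $I$ has $V$ disjoint from $O$, so every $T$-translate $T^i(V)$ also avoids $O$ (in particular avoids $\{y,T(y),\dots,T^{|k|-1}(y)\}$), and so $\chi_V t^k$ factors through the allowed generators.

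To establish $\calA_\infty/I \cong M_l(K)$, I would identify the image of $\calA_\infty$ inside $\calA/I \cong M_l(K[s,s^{-1}])$. An element $ft^k \in \calA_\infty$ with $1 \le k \le l-1$ satisfies that $f$ vanishes on $\{y,T(y),\dots,T^{k-1}(y)\}$; a direct matrix-unit computation shows that this vanishing exactly cancels the ``wrap-around'' contributions involving $s$, so the image of $ft^k$ lies in $\sum_{i\ge k} K \cdot E_{i,i-k} \subset M_l(K)$. Conversely, by varying $V$ and $k$ (and taking adjoints for the superdiagonal entries) one realizes every matrix unit of $M_l(K)$. Elements with $|k| \ge l$ have $f$ vanishing on all of $O$ and thus lie in $I$. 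This finishes part (i). For part (ii), for $n$ large enough $E_n$ is disjoint from the orbit points $T(y),\dots,T^{l-1}(y)$, so the first return of $y$ to $E_n$ under $T$ occurs at time exactly $l$; this uniquely determines a $W_n \in \V_n$ containing $y$, of length $l$. For every other $W \in \V_n$, the disjointness of the quasi-partition together with $O \subseteq \bigsqcup_{i=0}^{l-1} T^i(W_n)$ implies each $T^i(W)$ avoids $O$, so $h_W = \sum_i \chi_{T^i(W)} \in J \subseteq I$. By contrast $h_{W_n}$ equals $1$ on $O$, so its image modulo $I$ is the unit of $\calA_\infty/I \cong M_l(K)$, and the composition $h_{W_n}\calA_n \cong M_l(K) \hookrightarrow \calA_\infty \twoheadrightarrow \calA_\infty/I \cong M_l(K)$ is a unital $K$-algebra homomorphism between simple rings, hence an isomorphism.

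Part (iii) follows formally. The surjectivity of $q|_{\calA_n}$ comes from that of its restriction to $h_{W_n}\calA_n$, giving $\calA_n/(I \cap \calA_n) \cong M_l(K)$. The inclusion $(1 - h_{W_n})\calA_n \subseteq I \cap \calA_n$ is immediate because $1 - h_{W_n} = \chi_{X \backslash \bigsqcup_i T^i(W_n)}$ is the indicator of a clopen subset disjoint from $O$, hence lies in $J$; conversely, for $a \in I \cap \calA_n$ the relation $q(h_{W_n}a) = q(h_{W_n})q(a) = 0$ combined with the injectivity of $q$ on $h_{W_n}\calA_n$ forces $h_{W_n}a = 0$, so $a = (1 - h_{W_n})a$. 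The step I expect to require the most care is the matrix-unit bookkeeping in part (i) pinning down the image of $\calA_\infty$ as precisely $M_l(K) \subset M_l(K[s,s^{-1}])$: checking that the vanishing conditions on $f$ in $ft^k$ exactly cancel the cyclic wrap-around $s$-factors is conceptually clear but numerically fiddly, and all the other steps reduce to it.
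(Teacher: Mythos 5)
The paper does not prove this statement itself; it imports it verbatim as \cite[Proposition 4.5]{AC}, so there is no in-paper proof to compare against. Your argument is correct and is the natural one: the reduction $\calA/I\cong (C_K(X)/J)\rtimes\Z\cong K^l\rtimes\Z\cong M_l(K[s,s^{-1}])$, the observation that degree-$k$ coefficients of $\calA_\infty$ vanish on $\{y,\dots,T^{k-1}(y)\}$ and hence kill the wrap-around $s$-entries, and the return-time argument for $W_n$ all check out, as does the formal derivation of (iii). The only place I would ask you to say one more sentence is in (ii): the proposition asserts not merely that the composite $h_{W_n}\calA_n\hookrightarrow\calA_\infty\twoheadrightarrow\calA_\infty/I$ is \emph{some} isomorphism onto $M_l(K)$, but that it agrees with the specific isomorphism of \eqref{equation-iso.central}; with your point-evaluation matrix units $E_{ij}=\epsilon_i t^{i-j}$ this is immediate, since $e_{ij}(W_n)=\chi_{T^i(W_n)}t^{i-j}$ maps to $E_{ij}$ because $\chi_{T^i(W_n)}$ takes the value $1$ at $T^i(y)$ and $0$ at the other points of the orbit, but it should be stated (and it is also what makes the identification independent of $n$, which is used later in Proposition \ref{proposition-periodic.point.Rinfty}).
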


\noindent We continue with a proposition concerning the structure of $\calR_{\infty}$.

\begin{proposition}\label{proposition-periodic.point.Rinfty}
Let us assume the same notation as in Proposition \ref{proposition-periodic.point.A}. Let $\wt{I}$ be the ideal of $\calR_{\infty}$ generated by $I$. Then:
\begin{enumerate}[(i),leftmargin=1cm]
\item $\wt{I} = \bigcup_{n \geq M} (1 - h_{W_n}) \calR_n$, and there is a $*$-isomorphism
$$\calR_{\infty} / \wt{I} \cong M_l(K).$$
\item If $\calR$ denotes the $*$-subalgebra of $\calR_{\calA}$ generated by $\wt{I}, h_{W_M}\calA_M$ and $K[t,t^{-1}]$, then $\wt{I}$ is also an ideal of $\calR$, $\calA$ is contained in $\calR$, and there is a $*$-isomorphism
$$\calR / \wt{I} \cong M_l(K[t^l,t^{-l}]).$$
\end{enumerate}
\end{proposition}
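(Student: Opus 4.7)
\emph{Part (i).} I would begin by observing that $h_{W_n}$ is the projection onto the $M_l(K)$-summand of $\gotR_n$ indexed by $W_n$, so it is central in $\gotR_n$, hence in $\calR_n$ and in $\calR_{\infty}$. Combined with $h_{W_n}\calA_n \cong M_l(K)$ being already $*$-regular, this yields the splitting $\calR_n = h_{W_n}\calA_n \oplus (1-h_{W_n})\calR_n$. I would then check that the chain $\{(1-h_{W_n})\calR_n\}_{n \geq M}$ is increasing (since $W_{n+1} \subseteq W_n$ implies $h_{W_{n+1}} \leq h_{W_n}$), and that its union is an ideal of $\calR_{\infty}$ containing $I = \bigcup_{n \geq M}(1-h_{W_n})\calA_n$ by Proposition~\ref{proposition-periodic.point.A}(iii). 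Because each $(1-h_{W_n}) \in I$, the reverse inclusion is automatic, so $\wt{I} = \bigcup_{n \geq M}(1-h_{W_n})\calR_n$. The isomorphism $\calR_{\infty}/\wt{I} \cong M_l(K)$ then follows by passing to the direct limit of the quotients $\calR_n/(1-h_{W_n})\calR_n \cong h_{W_n}\calA_n \cong M_l(K)$, with the compatibility of these identifications ensured by Proposition~\ref{proposition-periodic.point.A}(ii).

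\emph{Part (ii), inclusion $\calA \subseteq \calR$.} For a clopen characteristic function $\chi_U$, I would split $\chi_U = h_{W_M}\chi_U + (1-h_{W_M})\chi_U$: the off-orbit summand lies in $(1-h_{W_M})\calA \subseteq I \subseteq \wt{I} \subseteq \calR$, while for the on-orbit summand I pick $n \geq M$ large enough that $\chi_U \in \calA_n$, split further by $h_{W_n}$, and use Proposition~\ref{proposition-periodic.point.A}(ii) to write any $x \in h_{W_n}\calA_n$ as $y + (x-y)$ with $y \in h_{W_M}\calA_M \subseteq \calR$ and $x - y \in I \subseteq \calR$. Combined with $t \in K[t,t^{-1}] \subseteq \calR$, this yields $\calA \subseteq \calR$.

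\emph{Part (ii), $\wt{I}$ is an ideal of $\calR$.} The subtle point is stability under multiplication by $t^{\pm 1}$, since \emph{a priori} $t \notin \calR_{\infty}$. The centrality of $h_{W_n}$ in $\calA$ (which comes from $T^l(W_n) = W_n$) gives $(1-h_{W_n})t = t(1-h_{W_n})$, so it suffices to show $(1-h_{W_n})t \in \calR_{\infty}$. Writing $t = \chi_{X \setminus E_n}t + \chi_{E_n}t$ and using $(1-h_{W_n})\chi_{E_n} = \chi_{E_n \setminus W_n}$ (since $T^i(W_n) \cap E_n = \emptyset$ for $1 \leq i \leq l-1$), I choose $m \geq n$ with $E_m \subseteq W_n$; then both $X \setminus E_n$ and $E_n \setminus W_n$ decompose into elements of $\calP_m$, giving $(1-h_{W_n})t \in \calA_m \subseteq \calR_m$. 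Consequently $t(1-h_{W_n})\calR_n \subseteq (1-h_{W_n})\calR_m \subseteq (1-h_{W_m})\calR_m \subseteq \wt{I}$; a symmetric argument handles $t^{-1}$ and right multiplication, while $h_{W_M}\calA_M \subseteq \calR_{\infty}$ preserves $\wt{I}$ trivially.

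\emph{Identification of $\calR/\wt{I}$.} Having established $\calA \subseteq \calR$ and that $\wt{I}$ is an ideal of $\calR$, the subspace $\calA + \wt{I}$ is a $*$-subalgebra of $\calR_{\calA}$ containing all generators of $\calR$, hence $\calR = \calA + \wt{I}$ and $\calR/\wt{I} \cong \calA/(\calA \cap \wt{I})$. The key lemma is $\calA \cap \wt{I} = I$: via $x = h_{W_M}x + (1-h_{W_M})x$, this reduces to $h_{W_M}\calA \cap \wt{I} = h_{W_M}I$; for $h_{W_M}x$ in that intersection, write $h_{W_M}x = (1-h_{W_n})z$ with $z \in \calR_n$, deduce $h_{W_n}\cdot h_{W_M}x = 0$, and apply the orthogonal central decomposition $h_{W_M}\calA = h_{W_n}\calA \oplus (h_{W_M}-h_{W_n})\calA$ to conclude $h_{W_M}x \in (h_{W_M}-h_{W_n})\calA \subseteq I$ (since $h_{W_M}-h_{W_n}$ vanishes on the orbit of $y$). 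Therefore $\calR/\wt{I} \cong \calA/I \cong M_l(K[s,s^{-1}])$ by Proposition~\ref{proposition-periodic.point.A}(i), and under this identification $s$ corresponds to $h_{W_M}t^l \equiv t^l \pmod{\wt{I}}$, yielding the required $\calR/\wt{I} \cong M_l(K[t^l, t^{-l}])$. The main obstacle is precisely the stability of $\wt{I}$ under $t^{\pm 1}$: since $t \notin \calR_{\infty}$, one needs the refinement structure of the partitions $\calP_n \cup \{E_n\}$ to locate $(1-h_{W_n})t$ inside $\calR_{\infty}$; once that is achieved, the remaining steps reduce cleanly to Proposition~\ref{proposition-periodic.point.A}.
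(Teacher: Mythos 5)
Your overall architecture is sound, and your identification of the quotient is actually a genuinely different (and cleaner) route than the paper's: rather than re-deriving a complete system of matrix units in $\calR/\wt{I}$ and computing its centralizer, you show $\calR = \calA + \wt{I}$ and $\calA \cap \wt{I} = I$, and then quote $\calA/I \cong M_l(K[s,s^{-1}])$ from Proposition \ref{proposition-periodic.point.A}(i). Your argument for $\calA \cap \wt{I} = I$ (via $h_{W_n}\cdot h_{W_M}x = 0$ and $h_{W_M}-h_{W_n} \in I$) is correct, as is your proof of part (i) and of $\calA \subseteq \calR$, which match the paper's Claims 1--3 and 6 in substance.

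There is, however, one genuine error: the claim that $h_{W_n}$ is central in $\calA$ because $T^l(W_n) = W_n$. This is false in general. The point $y$ is fixed by $T^l$, but the clopen neighbourhood $W_n$ need not be: already for the lamplighter ($l=1$), $W_n = E_n \cap T^{-1}(E_n)$ is the cylinder with ones in positions $-n,\dots,n+1$, while $T(W_n)$ has ones in positions $-n-1,\dots,n$, so $T(W_n) \neq W_n$ and $t h_{W_n} \neq h_{W_n} t$. The element $h_{W_n}$ is central only in $\calA_n$ (it is the unit of a direct factor of $\gotR_n$), not in $\calA$. You use this false commutation to reduce the two-sided check to the single computation of $(1-h_{W_n})t$, and your chain $t(1-h_{W_n})\calR_n \subseteq (1-h_{W_n})\calR_m$ depends on it. The conclusion nevertheless survives, because each side can be handled by its own direct computation: $t(1-h_{W_n}) = \chi_{X\setminus(T(W_n)\cup\cdots\cup T^l(W_n))}\,t$, and since $y = T^l(y) \in T^l(W_n)$ this coefficient lies in $C_{c,K}(X\setminus\{y\})$, so $t(1-h_{W_n}) \in \calA_{\infty}$ and hence $t(1-h_{W_n}) = \bigl[t(1-h_{W_n})\bigr](1-h_{W_n}) \in \calR_{\infty}\,I \subseteq \wt{I}$; the element $(1-h_{W_n})t$ that you computed handles right multiplication in the same way. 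So you should replace the centrality claim by these two separate (non-equal) computations, which is exactly what the paper's Claim 4 does.
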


\begin{remark}\label{remark-periodic.point.Rinfty}
Since the ideal $\wt{I}$ is already $*$-regular and the quotient $\calR / \wt{I}$ is close to be $*$-regular, the $*$-subalgebra $\calR$ is not the $*$-regular closure $\calR_{\calA}$, but is a good approximation to it. We will see later how to enlarge $\calR$ in order to obtain the whole $\calR_{\calA}$. First, we prove Proposition \ref{proposition-periodic.point.Rinfty}.
\end{remark}

\begin{proof}[Proof of Proposition \ref{proposition-periodic.point.Rinfty}]
$(i)$ For $n \geq M$, let $I_n = I \cap \calA_n$ and let $\wt{I}_n$ be the ideal of $\calR_n$ generated by $I_n$.
\begin{enumerate}[label=\underline{Claim \arabic*}:,leftmargin=*,labelindent=0em]
\item $\wt{I} = \bigcup_{n \geq M} \wt{I}_n$. 
\begin{spaceleft}
\underline{Proof:} Clearly each $\wt{I}_n \subseteq \wt{I}$, so $\bigcup_{n \geq M} \wt{I}_n \subseteq \wt{I}$. For the other inclusion, recall that $I \subseteq \calA_{\infty}$ by Proposition \ref{proposition-periodic.point.A}. If we take $a$ to be an element of $\wt{I}$, then $a = \sum_{j=1}^m r_j b_j s_j$ for some $r_j, s_j \in \calR_{\infty}$ and $b_j \in I \subseteq \calA_{\infty}$. There exists then an index $n_0 \geq M$ such that $r_j, s_j \in \calR_{n_0}$ and $b_j \in I \cap \calA_{n_0} = I_{n_0}$ for all $1 \leq j \leq m$. Therefore $a = \sum_{j=1}^m r_j b_j s_j \in \calR_{n_0} I_{n_0} \calR_{n_0} = \wt{I}_{n_0}$, and we obtain the inclusion $\wt{I} \subseteq \bigcup_{n \geq M}\wt{I}_n$.\quad\qedsymbol
\end{spaceleft}

\item $\wt{I}_n = (1 - h_{W_n}) \calR_n$.
\begin{spaceleft}
\underline{Proof:} Since $I_n = (1-h_{W_n})\calA_n$ due to Proposition \ref{proposition-periodic.point.A}, and taking into account that $(1 - h_{W_n})$ is central in $\calR_n$, we compute
$$\wt{I}_n = \calR_n I_n \calR_n = \calR_n (1 - h_{W_n}) \calA_n \calR_n = \calR_n (1 - h_{W_n}) \calR_n = (1 - h_{W_n}) \calR_n,$$
as required.\quad\qedsymbol
\end{spaceleft}
\end{enumerate}
Using Claims $1$ and $2$, we get $\wt{I} = \bigcup_{n \geq M}\wt{I}_n = \bigcup_{n \geq M}(1-h_{W_n})\calR_n$. In particular, we see that the ideal $\wt{I}$ is $*$-regular.
\begin{enumerate}[label=\underline{Claim 3}:,leftmargin=*,labelindent=0em]
\item For $m \geq n \geq M$, we have isomorphisms $\calR_n / \wt{I}_n \cong \calR_m / \wt{I}_m \cong M_l(K)$, via
$$e_{ij}(W_n) + \wt{I}_n \mapsto e_{ij}(W_m) + \wt{I}_m \mapsto e_{ij}.$$
\begin{spaceleft}
\underline{Proof:} To see this, note first that $h_{W_n}\calR_n \cong M_l(K)$ through $e_{ij}(W_n) \mapsto e_{ij}$, since $\calA_n \subseteq \calR_n \subseteq \gotR_n$ and $h_{W_n} \calA_n = h_{W_n} \gotR_n \cong M_l(K)$. Now each $h_{W_n}$ is a central idempotent in $\calR_n$, so we have decompositions
$$\calR_n = (1-h_{W_n})\calR_n \oplus h_{W_n}\calR_n = \wt{I}_n \oplus h_{W_n} \calR_n.$$
Hence $\calR_n / \wt{I}_n \cong h_{W_n}\calR_n \cong M_l(K) \cong h_{W_m} \calR_m \cong \calR_m / \wt{I}_m$ through the cited maps.\quad\qedsymbol 
\end{spaceleft}
\end{enumerate}
Fix $n \geq M$ and consider the composition $\calR_n \hookrightarrow \calR_{\infty} \ra \calR_{\infty} / \wt{I}$. Since $\wt{I}_n \subseteq \wt{I}$, we get a $*$-homomorphism
$$\calR_n / \wt{I}_n \ra \calR_{\infty} / \wt{I}, \quad r + \wt{I}_n \mapsto r + \wt{I}.$$
From Claim $3$ it follows easily that, for $n, m \geq M$, the diagram
\begin{equation*}
\xymatrix{
\calR_n / \wt{I}_n \ar[d]_*[@!90]{\cong} \ar[r] & \calR_{\infty} / \wt{I} \\
\calR_m / \wt{I}_m \ar[ru] &
}
\end{equation*}
is commutative. This proves surjectivity of $\calR_n / \wt{I}_n \ra \calR_{\infty} / \wt{I}$, and injectivity also follows from this and Claim $1$. Thus we obtain the desired $*$-isomorphism $\calR_{\infty} / \wt{I} \cong \calR_n / \wt{I}_n \cong M_l(K)$.

$(ii)$ We show first that $\wt{I}$ is stable under multiplication by elements of $K[t,t^{-1}]$.

\begin{enumerate}[label=\underline{Claim 4}:,leftmargin=*,labelindent=0em]
\item $t \wt{I} = \wt{I}$.
\begin{spaceleft}
\underline{Proof:} Let's prove the inclusion $t \wt{I} \subseteq \wt{I}$, so take $a \in (1 - h_{W_n}) \calR_n$ for some $n \geq M$. Since $a = (1 - h_{W_n})a$, it is enough to show that $t(1 - h_{W_n}) \in \wt{I}$. But
$$t(1 - h_{W_n}) = t \chi_{X \backslash (W_n \cup T(W_n) \cup \cdots \cup T^{l-1}(W_n))} = \chi_{X \backslash (T(W_n) \cup T^2(W_n) \cup \cdots \cup T^l(W_n))}t$$
and $\chi_{X \backslash (T(W_n) \cup T^2(W_n) \cup \cdots \cup T^l(W_n))} \in C_{c,K}(X \backslash \{y\})$, so by the description of $\calA_{\infty}$ given in Section \ref{section-approx.crossed.product} we deduce that $t(1 - h_{W_n}) \in \calA_{\infty}$, hence $t(1 - h_{W_n}) \in \wt{I}$. To show the other inclusion it is enough to show that $t^{-1} \wt{I} \subseteq \wt{I}$, which in turn will follow once we show that $t^{-1}(1-h_{W_n}) \in \wt{I}$. This is obvious because $1 - h_{W_n} \in C_{c,K}(X \backslash \{y\})$.\quad\qedsymbol 

\end{spaceleft}
\end{enumerate}

\noindent As a consequence, we have $p(t) \wt{I}, \wt{I} p(t) \subseteq \wt{I}$ for any Laurent polynomial $p(t)$ in $t$, so $\wt{I}$ is an ideal of $\calR$.

\begin{enumerate}[label=\underline{Claim 5}:,leftmargin=*,labelindent=0em]
\item $\wt{I}$ is a proper ideal of $\calR$.
\begin{spaceleft}
\underline{Proof:} This follows from the fact that $\rk_{\calA}(1 - h_{W_n}) < 1$ for all $n \geq M$.\quad\qedsymbol
\end{spaceleft}
\end{enumerate}

\begin{enumerate}[label=\underline{Claim 6}:,leftmargin=*,labelindent=0em]
\item $\calA \subseteq \calR$.
\begin{spaceleft}
\underline{Proof:} We show by induction that $h_{W_n} \calA_n \subseteq \calR$ for all $n \geq M$. For $n = M$, this follows from the definition of $\calR$. Now assume that $h_{W_n} \calA_n \subseteq \calR$ for some $n \geq M$. Under the quotient map $\calA_{\infty} \ra \calA_{\infty} / I \cong M_l(K)$ the matrix units $e_{ij}(W_n)$ correspond to the matrix units $e_{ij}$ (Proposition \ref{proposition-periodic.point.A}), hence the differences $e_{ij}(W_{n+1}) - e_{ij}(W_n)$ belong to $I \subseteq \wt{I} \subseteq \calR$ for all $0 \leq i,j \leq l-1$. Since $e_{ij}(W_n) \in h_{W_n}\calA_n \subseteq \calR$, we deduce that $e_{ij}(W_{n+1}) \in \calR$, and so the whole algebra $h_{W_{n+1}}\calA_{n+1}$ lies inside $\calR$. Therefore $h_{W_n}\calA_n \subseteq \calR$ for all $n \geq M$. Hence
$$\calA_n = (1 - h_{W_n})\calA_n \oplus h_{W_n}\calA_n \subseteq \wt{I} + h_{W_n} \calA_n \subseteq \calR$$
for all $n \geq M$, so $\calA_{\infty} \subseteq \calR$. In particular $C_K(X) \subseteq \calR$, and since $K[t,t^{-1}] \subseteq \calR$ already, we obtain $\calA \subseteq \calR$, as claimed.\quad\qedsymbol
\end{spaceleft}
\end{enumerate}

\begin{enumerate}[label=\underline{Claim 7}:,leftmargin=*,labelindent=0em]
\item We have a $*$-isomorphism $\calR / \wt{I} \cong M_l(K[t^l,t^{-l}])$.
\begin{spaceleft}
\underline{Proof:} Since $1 - h_{W_M} \in \wt{I}$, the family $\{ e_{ij}(W_M) + \wt{I} \mid 0 \leq i,j \leq l-1 \}$ is a complete system of matrix units for $\calR / \wt{I}$, so there is an isomorphism $\calR / \wt{I} \cong M_l(T)$, being $T$ the centralizer of the family $\{ e_{ij}(W_M) + \wt{I} \mid 0 \leq i,j \leq l-1 \}$ in $\calR / \wt{I}$. The isomorphism is given explicitly by
$$s \mapsto \sum_{i,j=0}^{l-1} s_{ij} e_{ij}, \quad \text{ with } \quad s_{ij} = \sum_{k=0}^{l-1} e_{ki}(W_M) \cdot s \cdot e_{jk}(W_M) \in T,$$
which is also a $*$-isomorphism. We thus only need to prove that $T = K[t^l,t^{-l}]$. The inclusion $K[t^l,t^{-l}] \subseteq T$ is clear since, using that $e_{ij}(W) = \chi_{T^i(W)}t^{i-j}$ for $0 \leq i,j \leq |W|-1$, we get
$$t^l e_{ij}(W_M) - e_{ij}(W_M)t^l = ( \chi_{T^{i+l}(W_M)} - \chi_{T^i(W_M)} ) t^{i-j+l}$$
which belongs to $I \subseteq \wt{I}$ due to the fact that $\chi_{T^{i+l}(W_M)} - \chi_{T^i(W_M)}\in C_{c,K}(X \backslash \{y, \dots, T^{l-1}(y)\})$ since $y$ is a periodic point of period $l$ which belongs to $W_M$ (see Proposition \ref{proposition-periodic.point.A}). Therefore $\calR / \wt{I} \cong M_l(T) \supseteq M_l(K[t^l,t^{-l}])$. In order to prove the equality it is enough to check that the element $t + \wt{I} \in \calR/\wt{I}$ belongs to $M_l(K[t^l,t^{-l}])$ under the previous isomorphism. Note that we can write
$$t + \wt{I} = t h_{W_M} + \wt{I} = \sum_{i=0}^{l-2} e_{i+1,i}(W_M) + t^l e_{0,l-1}(W_M) + \wt{I}$$
which is mapped to the element $\sum_{i=0}^{l-2} e_{i+1,i} + t^l e_{0,l-1}$ under the previous isomorphism, and so it belongs to $M_l(K[t^l,t^{-l}])$.
We have thus obtained the desired $*$-isomorphism $\calR / \wt{I} \cong M_l(K[t^l,t^{-l}])$.\quad\qedsymbol
\end{spaceleft}
\end{enumerate}
This concludes the proof of the proposition.
\end{proof}

We now want to describe the $*$-regular closure $\calR_{\calA}$ of $\calA$ in $\gotR_{\rk}$ in terms of the $*$-algebra $\calR$ introduced in Proposition \ref{proposition-periodic.point.Rinfty}(ii). For that we need a couple of technical lemmas, together with a definition.

\begin{definition}\label{definition-local.unit}
Let $R$ be a non-unital ring. We say that a family $E \subseteq R$ of idempotents is a \textit{left local unit for $R$} if for every $r_1,...,r_n \in R$ there exists an idempotent $e \in E$ such that
$$e r_i = r_i \quad \text{for all } 1 \leq i \leq n.$$
The concept of \textit{right local unit} is defined analogously. A \textit{local unit} will be a right and left local unit.
\end{definition}
Note that, in the case that $R$ is a ring endowed with an involution $*$ and $E$ is a left local unit for $R$, then $E^* = \{e^* \mid e \in E\}$ is a right local unit for $R$.\\

Recall that $\wt{I}$ is the ideal of $\calR_{\infty}$ generated by $I$. We write $\calS_0$ for the $*$-subalgebra of $\calR_{\calA}$ generated by $\wt{I}, h_{W_M}\calA_M$ and $K(t)$ (compare with $\calR$ from Proposition \ref{proposition-periodic.point.Rinfty}). It may be the case that $\wt{I}$ is not an ideal of $\calS_0$ anymore; nevertheless, we have the following result.

\begin{lemma}\label{lemma-local.unit.ideal}
Denote by $\ol{I}_0$ the ideal of $\calS_0$ generated by $\wt{I}$, and consider
$$E = \{ p(t^l)^{-1}(1-h_{W_n})p(t^l) \in \ol{I}_0 \mid p(t) \in K[t] \backslash \{0\}, n \geq M\}.$$
Then $E$ is a left local unit for $\ol{I}_0$.
\end{lemma}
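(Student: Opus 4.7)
My plan for proving this lemma begins with a key reformulation. Since $p(t^l)\in K(t^l)\subseteq\calS_0$ is invertible (with inverse in $K(t^l)$), each $e = p(t^l)^{-1}(1-h_{W_n})p(t^l)\in E$ is an idempotent, and the condition $er=r$ is equivalent to $(1-e)r = p(t^l)^{-1}h_{W_n}p(t^l)\,r = 0$, i.e.\ to $h_{W_n}\,p(t^l)\,r = 0$. Thus I would reduce the problem to: for any finitely many $r_1,\dots,r_k\in\ol{I}_0$, produce $p\in K[t]\setminus\{0\}$ and $n\ge M$ with $h_{W_n}\,p(t^l)\,r_i = 0$ for all $i$.

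The main technical step I plan to establish is the following normal form: for every $s\in\calS_0$ there exist $p\in K[t]\setminus\{0\}$, $c\in\calR$, and finitely many $\alpha_j\in\wt{I}$ and $\beta_j\in\calS_0$ such that
\[
p(t^l)\,s \;=\; c + \sum_j \alpha_j\beta_j.
\]
This will be proved by induction on the length of a presentation of $s$ as a sum of products of elements of $\wt{I}\cup h_{W_M}\calA_M\cup K(t)$. Generators in $\wt{I}$ or $h_{W_M}\calA_M$ already lie in $\calR$; for a generator $p_1(t)/q_1(t)\in K(t)$, I would invoke that $K[t]$ is a free $K[t^l]$-module of rank $l$, so the norm of $q_1(t)$ yields $g(t)\in K[t]$ with $q_1(t)g(t) =: q(t^l)\in K[t^l]\setminus\{0\}$; then $q(t^l)\cdot p_1(t)/q_1(t) = p_1(t)g(t)\in\calR$. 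The inductive step relies on two inputs: (a) $p(t^l)$ is central in $\calR/\wt{I}\cong M_l(K[t^l,t^{-l}])$ by Claim 7 in the proof of Proposition \ref{proposition-periodic.point.Rinfty}, so $[p(t^l),c]\in\wt{I}$ for every $c\in\calR$; and (b) $\wt{I}$ is an ideal of $\calR$. Together these allow one to push $p'(t^l)p''(t^l)$ through a product $s's''$ and collect all commutator corrections as terms of the form $(\wt{I})\cdot(\calS_0)$.

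Applied to a single summand $r = syt\in\ol{I}_0$ with $s,t\in\calS_0$ and $y\in\wt{I}$, the normal form yields $p(t^l)\,r = cyt + \sum_j\alpha_j\beta_j yt$, and since $cy\in\calR\cdot\wt{I}\subseteq\wt{I}$ and each $\alpha_j\in\wt{I}$, the whole expression is a finite sum $\sum_i\eta_i\gamma_i$ with $\eta_i\in\wt{I}$, $\gamma_i\in\calS_0$. Using the description $\wt{I} = \bigcup_{m\ge M}(1-h_{W_m})\calR_m$ from the preceding proof, each $\eta_i$ lies in $(1-h_{W_{m_i}})\calR_{m_i}$ for some $m_i$; the inequality $h_{W_n}\le h_{W_{m_i}}$ for $n\ge m_i$ then gives $h_{W_n}\eta_i = 0$ once $n \ge \max_i m_i$, whence $h_{W_n}\,p(t^l)\,r = \sum_i(h_{W_n}\eta_i)\gamma_i = 0$. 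To handle finitely many $r_1,\dots,r_k$ simultaneously, I would take $p := \prod_i p_i$ and $n := \max_i n_i$; writing $p = p_i\tilde p_i$ with $\tilde p_i\in K[t^l]$, the same commutator identity (a) and the ideal property (b) show that $\tilde p_i(t^l)\cdot(p_i(t^l)r_i)$ remains a finite sum in $\wt{I}\cdot\calS_0$, hence is annihilated by $h_{W_n}$.

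The hard part will be the bookkeeping in the inductive proof of the normal form lemma: one must verify at each step that when a denominator $p(t^l)^{-1}$ is pushed across a $\calR$-generator, the resulting correction (which is of the shape $p(t^l)^{-1}[p(t^l),c]\,p(t^l)^{-1}$) can be absorbed into an expression with a $\wt{I}$-factor on the left multiplied by an element of $\calS_0$, rather than producing an uncontrolled cascade of further inverses. The ideal property of $\wt{I}$ in $\calR$, combined with the centrality of $K[t^l,t^{-l}]$ modulo $\wt{I}$, is precisely what ensures the iteration terminates in the prescribed form $c + \sum\alpha_j\beta_j$.
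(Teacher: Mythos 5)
Your proof is correct and follows essentially the same route as the paper's: both arguments clear the $K(t)$-denominators into $K[t^l]$ via the degree-$l$ extension $K(t)/K(t^l)$, commute the resulting polynomial in $t^l$ past the generators using its centrality modulo $\wt{I}$ together with the facts that $\wt{I}$ is an ideal of $\calR$ stable under $K[t,t^{-1}]$, and then annihilate the resulting left $\wt{I}$-factors by $1-h_{W_N}$ for $N$ large (using $h_{W_N}\le h_{W_m}$ for $N\ge m$). The only differences are organizational: you phrase the denominator-clearing step as a normal form $p(t^l)s\in\calR+\wt{I}\calS_0$ valid for all $s\in\calS_0$ and reformulate $er=r$ as the annihilation condition $h_{W_n}p(t^l)r=0$, whereas the paper normalizes the monomials of $\ol{I}_0$ directly to the form $q(t^l)^{-1}(1-h_{W_n})\wt{y}$ by an induction on their length; the driving commutator computation is identical in both versions.
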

\begin{proof}
Since $\wt{I}$ is closed under the involution, it follows that $\ol{I}_0$ is a $*$-ideal of $\calS_0$.

Note that every element of $\ol{I}_0$ is a sum of elements of the form
\begin{equation}\label{Ch2-equation-element.bigprod}
p_1(t) q_1(t)^{-1} e_{i_1,j_1}(W_M) \cdots p_s(t) q_s(t)^{-1} e_{i_s,j_s}(W_M) p_{s+1}(t) q_{s+1}(t)^{-1} (1 - h_{W_n}) y,
\end{equation}
where $p_k,q_k \in K[t]\backslash \{0\}$, $0 \leq i_k,j_k \leq l-1$, $n \geq M$ and $y \in \calS_0$. Since $\wt{I}$ is stable under multiplication by $K[t,t^{-1}]$, the product $p_{s+1}(t) (1 - h_{W_n})$ belongs to $\wt{I}$, so we can assume that $p_{s+1}(t) = 1$.

\begin{enumerate}[label=\underline{Claim}:,leftmargin=*,labelindent=0em]
\item Each element of the form \eqref{Ch2-equation-element.bigprod} can be further written as a sum of elements of the form
$$q(t^l)^{-1} (1 - h_{W_n}) \wt{y} \quad \text{ for some } q \in K[t^l] \backslash \{0\}, n \geq M \text{ and } \wt{y} \in \calS_0.$$
\begin{spaceleft}
\underline{Proof:} Since the field extension $K(t) / K(t^l)$ has degree $l$, with basis $\{1,t,...,t^{l-1}\}$, we can write $q_{s+1}(t)^{-1}$ as
$$q_{s+1}(t)^{-1} = \sum_{i = 0}^N t^i g_i(t^l)^{-1}$$
for some $N \geq 0$ and polynomials $g_i \in K[t^l] \backslash \{0\}$. Thus we can assume that $q_{s+1}$ is a polynomial in $t^l$.

Recall that, modulo the ideal $\wt{I}$, the matrix units $e_{ij}(W_M)$ commute with the element $t^l$. As a consequence the element $b_s := q_{s+1}(t^l) e_{i_s,j_s}(W_M) - e_{i_s,j_s}(W_M) q_{s+1}(t^l)$ belongs to $\wt{I}$, so there exists an integer $n_s \geq M$ such that $b_s = (1 - h_{W_{n_s}}) b_s$. Therefore
$$e_{i_s,j_s}(W_M) q_{s+1}(t^l)^{-1} - q_{s+1}(t^l)^{-1} e_{i_s,j_s}(W_M) = q_{s+1}(t^l)^{-1}(1 - h_{W_{n_s}}) b_s q_{s+1}(t^l)^{-1},$$
so that
\begin{align*}
p_1(t) q_1(t&)^{-1} e_{i_1,j_1}(W_M) \cdots p_s(t) q_s(t)^{-1} e_{i_s,j_s}(W_M) q_{s+1}(t^l)^{-1}(1 - h_{W_n}) y \\
& = p_1(t) q_1(t)^{-1} e_{i_1,j_1}(W_M) \cdots p_s(t) q_s(t)^{-1} q_{s+1}(t^l)^{-1} e_{i_s,j_s}(W_M) (1 - h_{W_n}) y \\
& \quad + p_1(t) q_1(t)^{-1} e_{i_1,j_1}(W_M) \cdots p_s(t) q_s(t)^{-1} q_{s+1}(t^l)^{-1} (1 - h_{W_{n_s}}) b_s q_{s+1}(t^l)^{-1} (1 - h_{W_n}) y.
\end{align*}
Since $e_{i_s,j_s}(W_M) \in \calR_M \subseteq \calR_n$ and $1 - h_{W_n}$ is central in $\calR_n$, the first term becomes
$$p_1(t) q_1(t)^{-1} e_{i_1,j_1}(W_M) \cdots p_s(t) \wt{q}_s(t)^{-1} (1 - h_{W_n}) y'$$
with $\wt{q}_s(t) = q_s(t) q_{s+1}(t^l) \in K[t] \backslash \{0\}$ and $y' = e_{i_s,j_s}(W_M) y \in \calS_0$, and the second term becomes
$$p_1(t) q_1(t)^{-1} e_{i_1,j_1}(W_M) \cdots p_s(t) \wt{q}_s(t)^{-1} (1 - h_{W_{n_s}}) y''$$
with now $y'' = b_s q_{s+1}(t^l)^{-1} (1 - h_{W_n}) y \in \calS_0$. Again, due to the fact that $K[t,t^{-1}] \wt{I} \subseteq \wt{I}$, we can assume that $p_s = 1$ in each of these terms. Now the claim follows
by induction on $s$.\quad\qedsymbol
\end{spaceleft}
\end{enumerate}
Let now $x_1,...,x_n \in \ol{I}_0$. By the above claim, we can assume that each $x_i$ is a monomial of the form $q_i(t^l)^{-1}(1 - h_{W_{n_i}}) y_i$ with $q_i \in K[t^l] \backslash \{0\}$, $n_i \geq M$ and 
$y_i \in \calS_0$. Consider the polynomial $q := q_1 \cdots q_n \in K[t^l] \backslash \{ 0 \}$. We see that, for each $1 \leq i \leq n$, the result of multiplying $x_i$ by $q$ to the left is always an element of the form $\wt{x}_i y_i$, where $\wt{x}_i \in \wt{I}$. Therefore there exists $N \geq M$ such that $(1 - h_{W_N}) q(t^l)x_i = q(t^l)x_i$ for all $1 \leq i \leq n$. The lemma follows by taking the idempotent $e := q(t^l)^{-1}(1 - h_{W_N})q(t^l)$.
\end{proof}

As a consequence of Lemma \ref{lemma-local.unit.ideal}, the ideal $\ol{I}_0$ must be a proper ideal of $\calS_0$, since for $e \in E$ we have $\rk_{\calR_{\calA}}(e) < 1$.

At this moment we could argue as in the proof of Proposition \ref{proposition-periodic.point.Rinfty} and compute the quotient $\calS_0/\ol{I}_0$. It turns out that this quotient is $*$-isomorphic to $M_l(K(t^l))$, which is a $*$-regular ring. The problem we encounter now is that the ideal $\ol{I}_0$ may not be $*$-regular. To fix this, we consider the non-unital subalgebra of $\calR_{\calA}$ generated by $\ol{I}_0$ and the relative inverses $\ol{x}$ of elements $x \in \ol{I}_0$, denoted by $\ol{I}_1$. It is in fact a $*$-subalgebra because of the equality $\ol{x^*} = \ol{x}^*$.

From now on, we let $\pazP$ be the set of all the left projections ${\rm LP}(e)$, for $e \in E$. So for each $p \in \mathcal P$ there is an idempotent $e \in E$ such that $p = {\rm LP}(e)$; in particular $ep = p$ and $pe = e$. Note that $\mathcal P \subseteq \ol{I}_1$.

\begin{lemma}\label{lemma-ideal.with.rel.inv}
The following statements hold:
\begin{enumerate}[i),leftmargin=1cm]
\item The set $\pazP$ is a local unit for $\ol{I}_1$.
\item If $\calS_1$ denotes the $*$-subalgebra of $\calR_{\calA}$ generated by $\ol{I}_1, h_{W_M} \calA_M$ and $K(t)$, then $\ol{I}_1$ is a proper ideal of $\calS_1$, and there is a $*$-isomorphism
$$\calS_1 / \ol{I}_1 \cong M_l(K(t^l)).$$
\end{enumerate}
\end{lemma}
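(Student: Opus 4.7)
The plan is to prove (i) first and then use it as the main technical tool for (ii). The overarching observation is that $\pazP \subseteq \ol{I}_1$, since $\mathrm{LP}(e) = e\ol{e}$ with $e \in E \subseteq \ol{I}_0 \subseteq \ol{I}_1$ and $\ol{e} \in \ol{I}_1$ by construction of $\ol{I}_1$. Moreover $\pazP$ is upward directed: given $p_i = \mathrm{LP}(e_i)$ for $i = 1, 2$, apply Lemma \ref{lemma-local.unit.ideal} to the pair $e_1, e_2 \in \ol{I}_0$ to obtain $e_3 \in E$ with $e_3 e_i = e_i$; then $p_3 := \mathrm{LP}(e_3)$ satisfies $p_3 e_i = e_i$, whence $p_3 p_i = p_i$ (and $p_i p_3 = p_i$ by taking adjoints).

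For part (i), if $ey = y$ with $y \in \ol{I}_0$ and $p = \mathrm{LP}(e)$, then $py = pey = ey = y$; hence $\pazP$ inherits the left local unit property from $E$ on $\ol{I}_0$. To extend to $\ol{I}_1$, by linearity and directedness it suffices to handle a single monomial $c = a_1 \cdots a_m$ with each $a_i \in \ol{I}_0 \cup \{\ol{y} : y \in \ol{I}_0\}$. If $a_1 \in \ol{I}_0$, the observation above applies directly to $a_1$. If $a_1 = \ol{y}$ with $y \in \ol{I}_0$, write $\ol{y} = \mathrm{RP}(y)\ol{y} = \mathrm{LP}(y^*)\ol{y}$; since $y^* \in \ol{I}_0$ (as $\ol{I}_0$ is $*$-closed), pick $p \in \pazP$ with $py^* = y^*$, and writing $\mathrm{LP}(y^*) = y^* r$ for some $r$ one computes $p\mathrm{LP}(y^*) = py^*r = y^*r = \mathrm{LP}(y^*)$, so $p\ol{y} = \ol{y}$ and hence $pc = c$. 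The right local unit statement follows symmetrically, noting that $E^*$ is a right local unit for the $*$-ideal $\ol{I}_0$.

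For part (ii), I first show $\ol{I}_1$ is a two-sided ideal of $\calS_1$. Using part (i), for $a \in \ol{I}_1$ and $\alpha \in \calS_0$ write $a = pa$ with $p = e\ol{e}$, $e \in E$; then $\alpha a = (\alpha e)\ol{e}a$, and $\alpha e \in \ol{I}_0$ (since $\ol{I}_0$ is an ideal of $\calS_0$), yielding $\alpha a \in \ol{I}_0 \cdot \ol{I}_1 \cdot \ol{I}_1 \subseteq \ol{I}_1$. An induction on the length of monomials in the generators of $\calS_1$ extends this to $\calS_1 \cdot \ol{I}_1 \subseteq \ol{I}_1$; the right-ideal property follows by taking adjoints, since $\ol{I}_1$ is $*$-closed. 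Properness is immediate from part (i): if $1 \in \ol{I}_1$, some $p \in \pazP$ satisfies $p \cdot 1 = 1$, forcing $p = 1$, which contradicts $\rk_{\calR_{\calA}}(p) = \rk_{\calR_{\calA}}(1 - h_{W_n}) < 1$. For the quotient identification, the cosets $\{e_{ij}(W_M) + \ol{I}_1 \mid 0 \leq i,j \leq l-1\}$ form a complete matrix unit system because $1 - h_{W_M} \in \wt{I} \subseteq \ol{I}_1$, so $\calS_1/\ol{I}_1 \cong M_l(T)$ with $T$ the centralizer. As in the proof of Proposition \ref{proposition-periodic.point.Rinfty}(ii), $t^l e_{ij}(W_M) - e_{ij}(W_M) t^l \in \wt{I}$, so $K[t^l] \subseteq T$; inverses of nonzero polynomials in $K[t^l]$ also commute with $e_{ij}(W_M)$ modulo $\ol{I}_1$, since $\ol{I}_1$ is stable under left and right multiplication by $K(t) \supseteq K(t^l)$. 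Hence $K(t^l) \hookrightarrow T$, yielding an injective $*$-homomorphism $\psi : M_l(K(t^l)) \to \calS_1/\ol{I}_1$ (injectivity from simplicity of the source). For surjectivity, the image of $\psi$ contains $t + \ol{I}_1 = \sum_{i=0}^{l-2} e_{i+1,i}(W_M) + t^l e_{0,l-1}(W_M) + \ol{I}_1$ (as in the proof of Proposition \ref{proposition-periodic.point.Rinfty}(ii)), and hence every polynomial in $t$; for nonzero $q(t) \in K[t]$, invertibility in $\calS_1$ (via $K(t)$) matches invertibility in $M_l(K(t^l))$ (its $K(t)/K(t^l)$-norm is nonzero), so $q(t)^{-1} + \ol{I}_1$ lies in the image too. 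Thus $K(t) + \ol{I}_1$ is contained in the image of $\psi$; combined with the decomposition $\calS_1 = \calS_0 + \ol{I}_1$ (any monomial in the generators of $\calS_1$ containing an $\ol{I}_1$-factor lies in $\ol{I}_1$ by the ideal property), $\psi$ is surjective.

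The main obstacle I anticipate is verifying that $K(t^l)$ embeds centrally into $\calS_1/\ol{I}_1$ with respect to the matrix units: this relies crucially on the ideal property of $\ol{I}_1$ being established first, which in turn needs part (i). A secondary subtlety is the correct identification of $t + \ol{I}_1$ as a specific matrix in $M_l(K(t^l))$, which is what ultimately closes the surjectivity argument.
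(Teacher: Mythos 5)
Your proof is correct and follows essentially the same route as the paper's: part (i) reduces to the first factor of each monomial via Lemma \ref{lemma-local.unit.ideal} together with the identity $\mathrm{LP}(y^*)=\mathrm{RP}(y)$ (the paper reaches the same conclusion $p\ol{r}=\ol{r}$ by a slightly different computation with the projection $\ol{r}r$), and part (ii) uses the local unit to absorb multipliers from $\calS_0$ into $\ol{I}_0$ before running the matrix-unit/centralizer computation of Proposition \ref{proposition-periodic.point.Rinfty}. The only noticeable variations are that you prove the ideal property uniformly by factoring $a=e\ol{e}a$ instead of splitting into the paper's two monomial forms (I) and (II), and that you spell out the surjectivity of $\calS_1/\ol{I}_1\cong M_l(K(t^l))$, which the paper delegates to the earlier proposition; both are fine.
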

\begin{proof}
For $i)$, let $x_1,...,x_n \in \ol{I}_1$. We can assume that each $x_i$ is a monomial of one of the forms

\begin{align*}
& (\text{I}) \quad r_1 \ol{r_2} \cdots \quad \text{ with } r_i \in \ol{I}_0; \qquad (\text{II}) \quad \ol{r_1} r_2 \cdots \quad \text{ with } r_i \in \ol{I}_0.
\end{align*}
Consider the sets
$$J_1 = \{r \in \ol{I}_0 \mid r \text{ appears as a first term in one of the } x_i \},$$
$$J_2 = \{r^* \in \ol{I}_0 \mid \ol{r} \text{ appears as a first term in one of the } x_i \},$$
so that $J = J_1 \cup J_2$ is a finite subset of $\ol{I}_0$. By Lemma \ref{lemma-local.unit.ideal} there exists an idempotent $e \in E$ such that $er = r$ for all $r \in J$. Take $p = {\rm LP}(e) \in \mathcal P$, so $pe = e$ and $ep = p$. Now for an element $r \in J_1$, we compute
$$pr = per = er = r.$$
Also for an element $r \in \ol{I}_0$ such that $r^* \in J_2$, we compute $pr^* = per^* = er^* = r^*$, so by taking $*$ we have $rp = r$. Multiplying to the left by the relative inverse $\ol{r}$ we get $(\ol{r}r)p = \ol{r}r$, which is a projection. Hence $\ol{r}r = (\ol{r}r)^* = (\ol{r}rp)^* = p\ol{r}r$, and
$$p\ol{r} = p\ol{r}r\ol{r} = \ol{r}r\ol{r} = \ol{r}.$$
We deduce from these computations that $px_i = x_i$ for all $1 \leq i \leq n$. Since $\pazP = \pazP^*$, part $i)$ follows.

$ii)$. By $i)$, it is immediate to check that $\ol{I}_1 \subseteq \calS_1$ is proper. To prove that it is an ideal, it is enough to show that $p(t) \ol{I}_1, \ol{I}_1 p(t) \subseteq \ol{I}_1$ for all $p(t) \in K(t)$ and that $e_{ij}(W_M)\ol{I}_1, \ol{I}_1e_{ij}(W_M) \subseteq \ol{I}_1$ for all $0 \leq i,j \leq l-1$. By taking $*$, we only need to show that $p(t) \ol{I}_1, e_{ij}(W_M) \ol{I}_1 \subseteq \ol{I}_1$.

Let $p(t) \in K(t)$ and $a \in \ol{I}_1$. We can assume that $a$ is a monomial of the form either (I) or (II). In the first case, $a = ra'$ for some $r \in \ol{I}_0$ and $a' \in \ol{I}_1$; then $p(t) a = p(t) r a' \in \ol{I}_1$ since $p(t) r \in \ol{I}_0$. In the second case, $a = \ol{r}a'$ for $r \in \ol{I}_0$ and $a' \in \ol{I}_1$. Consider $p \in \pazP$ such that $p \ol{r} = \ol{r}$ and $e \in E$ such that $p = {\rm LP}(e)$. Then $p(t) e \in \ol{I}_0$, so that
$$p(t) a = p(t) \ol{r} a' = p(t) p \ol{r} a' = p(t) e p \ol{r} a' = p(t) e a \in \ol{I}_1,$$
as required. Similar computations can be used to show that $e_{ij}(W_M) \ol{I}_1 \subseteq \ol{I}_1$.

The rest of the proof follows exactly the same arguments as in the proof of Proposition \ref{proposition-periodic.point.Rinfty}.
\end{proof}

We are now ready to determine the $*$-regular closure $\calR_{\calA}$.

\begin{theorem}\label{theorem-periodic.point.RA}
Following the previous assumptions and caveats, we define $\ol{I}_m$ to be the non-unital subalgebra of $\calR_{\calA}$ generated by $\ol{I}_{m-1}$ and the relative inverses of elements of $\ol{I}_{m-1}$, starting from our $\ol{I}_0$. Let also $\calS_m$ be the $*$-subalgebra of $\calR_{\calA}$ generated by $\ol{I}_m, h_{W_M} \calA_M$ and $K(t)$. Then:
\begin{enumerate}[(1),leftmargin=1cm]
\item $\ol{I}_m$ admits the set $\pazP$ as a local unit;
\item $\ol{I}_m$ is a proper $*$-ideal of $\calS_m$, and $\calS_m / \ol{I}_m \cong M_l(K(t^l))$ for all $m \geq 0$;
\item $\ol{I}_{\infty} = \bigcup_{m \geq 0} \ol{I}_m$ is a proper $*$-regular ideal of $\calR_{\calA}$, and
$$\calR_{\calA} / \ol{I}_{\infty} \cong M_l(K(t^l)).$$
Moreover, $\calR_{\calA}$ is generated as a $*$-algebra by $\ol{I}_{\infty}, h_{W_M} \calA_M$ and $K(t)$.
\end{enumerate}
\end{theorem}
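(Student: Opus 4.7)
The plan is to establish (1) and (2) by simultaneous induction on $m \geq 0$, with the base case $m=0$ already handled by Lemma \ref{lemma-ideal.with.rel.inv}. For the inductive step, assume $\ol{I}_m$ admits $\pazP$ as a local unit and is a proper $*$-ideal of $\calS_m$ with $\calS_m/\ol{I}_m \cong M_l(K(t^l))$.

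For (1) at level $m+1$: any finite collection $x_1,\dots,x_n \in \ol{I}_{m+1}$ can be expanded as a finite sum of monomials whose letters are either elements of $\ol{I}_m$ or relative inverses of such. Form the finite set $J \subseteq \ol{I}_m$ consisting of the first letter of each monomial (together with $r^*$ whenever the first letter is $\ol{r}$, to exploit $\pazP=\pazP^*$). By induction, there is $p \in \pazP$ that is a left unit for $J$, and the bookkeeping of Lemma \ref{lemma-ideal.with.rel.inv}(i) — in particular the identity $p\ol{r}=\ol{r}$ whenever $rp=r$, obtained from $\ol{r} = p\ol{r}r\ol{r}$ — shows $p$ left-absorbs each $x_i$. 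Right-absorption follows by taking adjoints. For (2) at level $m+1$: the involution passes to $\ol{I}_{m+1}$ via $\ol{x^*}=\ol{x}^*$. To prove it is an ideal of $\calS_{m+1}$ it suffices, by the argument of Lemma \ref{lemma-ideal.with.rel.inv}(ii), to absorb multiplication from the left by elements of $K(t)$ and of $h_{W_M}\calA_M$: for $a=\ol{r}a'$ with $r \in \ol{I}_m$, pick $p = {\rm LP}(e) \in \pazP$ with $p\ol{r}=\ol{r}$ and compute $qa = q e p \ol{r}a' = (qe) a$, where $qe \in \ol{I}_0$ and so $(qe)a \in \ol{I}_0 \cdot \ol{I}_{m+1} \subseteq \ol{I}_{m+1}$. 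Properness is automatic since $\rk_{\calR_\calA}(p)<1$ for any $p \in \pazP$.

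The quotient $\calS_{m+1}/\ol{I}_{m+1}$ is computed exactly as in Claim~7 of Proposition \ref{proposition-periodic.point.Rinfty}: since $1-h_{W_M} \in \wt{I} \subseteq \ol{I}_{m+1}$, the residues of $\{e_{ij}(W_M)\}_{0 \leq i,j \leq l-1}$ form a complete set of $l \times l$ matrix units, whose centralizer contains $K(t^l)$ (because $t^l$ commutes with each $e_{ij}(W_M)$ modulo $\wt{I}$), and the image of $t$ is expressed in terms of these matrix units and $t^l$ precisely as in that claim, forcing the centralizer to equal $K(t^l)$.

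Passing to unions, $\ol{I}_\infty = \bigcup_m \ol{I}_m$ inherits $\pazP$ as a local unit and is a $*$-ideal of $\calS_\infty = \bigcup_m \calS_m$. It is $*$-regular: for $x \in \ol{I}_m$ the relative inverse $\ol{x}$ (computed in $\calR_\calA$) lies in $\ol{I}_{m+1} \subseteq \ol{I}_\infty$ by the very definition. The quotient is the direct limit of the isomorphic system $\calS_m/\ol{I}_m \cong M_l(K(t^l))$ with compatible transition maps (coming from the common matrix units $e_{ij}(W_M)$ and the common copy of $K(t)$), so $\calS_\infty/\ol{I}_\infty \cong M_l(K(t^l))$.

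It remains to show $\calR_\calA = \calS_\infty$, which is the crux of the argument. The inclusion $\calS_\infty \subseteq \calR_\calA$ is built in, and reading the proof of Claim~6 of Proposition \ref{proposition-periodic.point.Rinfty} with $\ol{I}_0$ in place of $\wt{I}$ and $K(t)$ in place of $K[t,t^{-1}]$ yields $\calA \subseteq \calS_0 \subseteq \calS_\infty$. So by the minimality of the $*$-regular closure it suffices to prove $\calS_\infty$ is $*$-regular. The involution is proper since inherited from $\calR_\calA$, and $\calS_\infty$ is unital because $1 \in K(t) \subseteq \calS_\infty$. For regularity, given $x \in \calS_\infty$, lift a relative inverse of its image in $M_l(K(t^l))$ to some $y_0 \in \calS_\infty$; then $a := x - xy_0x \in \ol{I}_\infty$, and using that $\ol{I}_\infty$ is regular with local units $\pazP$ one corrects $y_0$ by an element of $\ol{I}_\infty$ to produce a genuine regularizer $y \in \calS_\infty$ with $xyx = x$. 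This is the main obstacle: verifying the standard lifting-of-regularity through a $*$-regular ideal with local units, using that one can choose $p \in \pazP$ simultaneously absorbing $a$, $y_0$, and the relative inverse of $a$, and then patch $y$ locally inside $p \calS_\infty p$ (a unital $*$-regular corner, since both $p \ol{I}_\infty p$ and the image of $p\calS_\infty p$ in the quotient are $*$-regular unital corners). This completes (3).
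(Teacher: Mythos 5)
Your proof is correct and follows essentially the same route as the paper's: the induction for (1)--(2) repeats the arguments of Lemmas \ref{lemma-local.unit.ideal} and \ref{lemma-ideal.with.rel.inv}, the quotient computation is that of Claim~7 of Proposition \ref{proposition-periodic.point.Rinfty}, and (3) reduces to the fact that a unital ring with a regular ideal and regular quotient is regular. For that last step --- which you flag as the main obstacle --- the paper simply cites \cite[Lemma 1.3]{Goo91}; concretely, the correction $y = y_0 + (1-y_0x)\,b\,(1-xy_0)$, with $b \in \ol{I}_{\infty}$ a relative inverse of $a = x - xy_0x$, already gives $xyx = x$ with $y \in \calS_{\infty}$ because $\ol{I}_{\infty}$ is a two-sided ideal of the unital ring $\calS_{\infty}$, so the local-unit and corner-patching machinery you invoke (in particular the choice of $p \in \pazP$ absorbing $y_0$, which need not exist since $y_0 \notin \ol{I}_{\infty}$ in general) is unnecessary.
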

\begin{proof}
We observe that each $\ol{I}_m$ is also a $*$-subalgebra of $\calR_{\calA}$. $(1)$ and $(2)$ follows easily by induction, taking into account that the same arguments as in the proof of Lemma \ref{lemma-ideal.with.rel.inv} apply here.

For $(3)$, let $\calS_{\infty}$ be the $*$-subalgebra of $\calR_{\calA}$ generated by $\ol{I}_{\infty}, h_{W_M} \calA_M$ and $K(t)$. Clearly $\ol{I}_{\infty} \subseteq \calS_{\infty}$ is proper since each $\ol{I}_m \subseteq \calS_m$ is so. To prove that $\ol{I}_{\infty}$ is an ideal of $\calS_{\infty}$ it is enough to show that $K(t) \ol{I}_{\infty} \subseteq \ol{I}_{\infty}$ and that $e_{ij}(W_M) \ol{I}_{\infty} \subseteq \ol{I}_{\infty}$ for all $0 \leq i,j \leq l-1$. For the first inclusion, take $p(t) \in K(t)$ and $a \in \ol{I}_{\infty}$. Then $a \in \ol{I}_m$ for some $m \geq 0$, so by $(2)$ we have $p(t)a \in \ol{I}_m \subseteq \ol{I}_{\infty}$. The second inclusion is obtained analogously.

By construction of our sequence $\{\ol{I}_m\}_{m \geq 0}$, it is straightforward to show that $\ol{I}_{\infty}$ is $*$-regular too. 
Therefore $\ol{I}_{\infty}$ is a $*$-regular ideal of $\calS_{\infty}$ and, just as before, its quotient $\calS_{\infty} / \ol{I}_{\infty}$ is $*$-isomorphic to $M_l(K(t^l))$, which is $*$-regular. It follows from \cite[Lemma 1.3]{Goo91} that $\calS_{\infty}$ is $*$-regular. Since $\calA_{\infty} \subseteq \calS_{\infty}$ by Claim 6 of Proposition \ref{proposition-periodic.point.Rinfty} and $t \in \calS_{\infty}$, we get $\calA \subseteq \calS_{\infty} \subseteq \calR_{\calA}$. We conclude that $\calS_{\infty} = \calR_{\calA}$.
\end{proof}

In conclusion, in the case that there exists a periodic point $y \in X$ of finite period $l$, we have been able to determine part of the ideal structure of the $*$-regular closure $\calR_{\calA}$: for each such point $y \in X$ one can apply the above process to construct a \textit{maximal} ideal $\ol{I}_{\infty}(y)$ of $\calR_{\calA}$, thus proving that, in particular, $\calR_{\calA}$ is not simple. In fact, the construction of the ideal $\ol{I}_{\infty}(y)$ not only depends on the periodic point $y \in X$, but on the whole orbit $\calO(y) = \{y,T(y),...,T^{l-1}(y)\}$. This defines a correspondence
$$\calO(y) \mapsto \ol{I}_{\infty}(y)$$
between the whole set of orbits of periodic points in $X$ and maximal ideals of $\calR_{\calA}$. The next lemma shows that this correspondence is in fact injective.

\begin{lemma}\label{lemma-orbits.ideals}
Let $x, y \in X$ be two periodic points of periods $l_1,l_2$, respectively (not necessarily equal). Suppose that $x \notin \calO(y)$. Then the maximal ideals $\ol{I}_{\infty}(x)$ and $\ol{I}_{\infty}(y)$ of $\calR_{\calA}$ are different.
\end{lemma}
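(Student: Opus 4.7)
The plan is to exhibit an element $\chi_V \in \calA$ that lies inside $\ol{I}_\infty(x)$ but outside $\ol{I}_\infty(y)$. Since $x \notin \calO(y)$ is equivalent to $\calO(x)\cap\calO(y)=\emptyset$, the two orbits are disjoint finite subsets of the totally disconnected Hausdorff space $X$. Hence by a standard separation argument I can choose a clopen neighbourhood $V$ of $y$ satisfying simultaneously
\[
V\cap\calO(x)=\emptyset\quad\text{and}\quad V\cap\calO(y)=\{y\}.
\]

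For the inclusion $\chi_V\in\ol{I}_\infty(x)$, the first condition gives $\chi_V\in C_{c,K}(X\backslash\calO(x))$, so $\chi_V$ belongs to the ideal $I(x)$ of $\calA$ produced by Proposition \ref{proposition-periodic.point.A} applied to $x$. Since the construction of the maximal ideal $\ol{I}_\infty(x)$ in Theorem \ref{theorem-periodic.point.RA} begins with $I(x)$ and only enlarges it, the membership follows immediately, and importantly this argument does not require fixing a specific approximating sequence adapted to $x$.

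For the non-inclusion $\chi_V\notin\ol{I}_\infty(y)$, I would work with a sequence of approximating partitions adapted to $y$, and compute the image of $\chi_V$ in the quotient $\calR_{\calA}/\ol{I}_\infty(y)\cong M_l(K(t^l))$. The clopen sets $W_n(y)$ from Proposition \ref{proposition-periodic.point.A}(ii) satisfy $W_n(y)\subseteq E_n(y)$ and $\bigcap_n E_n(y)=\{y\}$, so for $n\ge M$ sufficiently large one has $W_n(y)\subseteq V$. Then
\[
\chi_V-\chi_{W_n(y)}=\chi_{V\backslash W_n(y)},
\]
and the support $V\backslash W_n(y)$ is disjoint from $\calO(y)$: indeed $V$ meets $\calO(y)$ only at $y$, and $y\in W_n(y)$. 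Thus $\chi_V-\chi_{W_n(y)}\in I(y)\subseteq\ol{I}_\infty(y)$, so $\chi_V$ and $\chi_{W_n(y)}=e_{00}(W_n(y))$ represent the same class in the quotient. By Claim 3 of Proposition \ref{proposition-periodic.point.Rinfty} combined with the isomorphism of Theorem \ref{theorem-periodic.point.RA}(3), this class is the matrix unit $e_{00}\in M_l(K(t^l))$, which is nonzero. Hence $\chi_V\notin\ol{I}_\infty(y)$, and $\ol{I}_\infty(x)\neq\ol{I}_\infty(y)$.

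The main obstacle I anticipate is a notational one: the ideals $\ol{I}_\infty(x)$ and $\ol{I}_\infty(y)$ are each defined via some sequence of partitions adapted to the corresponding periodic point, and one must argue asymmetrically, invoking the explicit quotient structure only on the $y$-side while using only the intrinsic containment $I(x)\subseteq\ol{I}_\infty(x)$ on the $x$-side. Once this asymmetric setup is adopted, no further technical difficulty arises, since everything reduces to the previously established fact that $e_{ij}(W_n(y))$ maps to the standard matrix units in $M_l(K(t^l))$ for all $n\ge M$.
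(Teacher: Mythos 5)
Your proof is correct, and it takes a genuinely different route from the paper's. The paper also starts by separating the orbits with a clopen set $U$ containing $y$ and disjoint from $\calO(x)$, and uses the same easy containment $\chi_U\in C_{c,K}(X\setminus\calO(x))\subseteq \ol{I}_{\infty}(x)$; but for the non-membership in $\ol{I}_{\infty}(y)$ it argues by contradiction using part (1) of Theorem \ref{theorem-periodic.point.RA}: if $\chi_U\in\ol{I}_{\infty}(y)$ there is a local unit $p={\rm LP}(e)$ with $e=p(t^{l_2})^{-1}(1-h_{W_N})p(t^{l_2})$ fixing $\chi_U$, whence $h_{W_N}p(t^{l_2})\chi_U=0$; multiplying by $\chi_V$ with $V=U\cap T^{l_2}(U)\cap\cdots\cap T^{l_2 m}(U)$ (non-empty because $y$ is $l_2$-periodic and lies in $U$) and using invertibility of $p(t^{l_2})$ in $\calR_{\calA}$ yields $h_{W_N}\chi_V=0$, contradicting $y\in W_N\cap V$. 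You instead shrink $V$ so that $V\cap\calO(y)=\{y\}$, use compactness to get $W_n(y)\subseteq E_n(y)\subseteq V$ for large $n$, and read off the class of $\chi_V=\chi_{W_n(y)}+\chi_{V\setminus W_n(y)}$ in the quotient $\calR_{\calA}/\ol{I}_{\infty}(y)\cong M_{l_2}(K(t^{l_2}))$ as the non-zero matrix unit $e_{00}$; this leans on part (3) of Theorem \ref{theorem-periodic.point.RA} (and on the identification $e_{ij}(W_n)\mapsto e_{ij}$ from Claims 3 and 6 of Proposition \ref{proposition-periodic.point.Rinfty}) rather than on the local-unit structure $\pazP$. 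Your version is arguably more transparent --- it avoids the auxiliary polynomial computation entirely and exhibits a concrete element with non-zero image in the quotient --- at the modest cost of requiring the sharper separation $V\cap\calO(y)=\{y\}$ and the containment $W_n(y)\subseteq V$; your handling of the asymmetry between the two points (intrinsic containment on the $x$-side, explicit quotient on the $y$-side) matches what the paper implicitly does.
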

\begin{proof}
Since $\calO(x) \cap \calO(y) = \emptyset$, we can find a clopen subset $U \subseteq X$ such that $\calO(x) \cap U = \emptyset$ but $\calO(y) \cap U \neq \emptyset$. We can in fact assume that $y \in U$.

Since $\calO(x) \cap U = \emptyset$, we have $\chi_U \in C_{c,K}(X \backslash \{x,...,T^{l_1-1}(x)\}) \subseteq \ol{I}_{\infty}(x)$. Assume for contradiction that $\ol{I}_{\infty}(x) = \ol{I}_{\infty}(y)$, so $\chi_U \in \ol{I}_{\infty}(y)$. By (1) of Theorem \ref{theorem-periodic.point.RA} there exists $p \in \pazP$ satisfying $p \chi_U = \chi_U$. Hence we can find a non-zero polynomial $p(t) \in K[t]$ and $N \geq M$ such that $p = {\rm LP}(e)$ with $e = p(t^{l_2})^{-1}(1-h_{W_N})p(t^{l_2})$. Here the collection $\{W_n\}_{n \geq M}$ is taken with respect to the point $y \in X$ (see Proposition \ref{proposition-periodic.point.A}). In particular $e \chi_U = e p \chi_U = p \chi_U = \chi_U$, so $h_{W_N} p(t^{l_2}) \chi_U = 0$.

Write $p(x) = a_0 + a_1 x + \cdots + a_m x^m$ for some $a_i \in K$. If we consider the non-empty clopen set $V := U \cap T^{l_2}(U) \cap \cdots \cap T^{l_2 \cdot m}(U)$, we obtain
\begin{align*}
0 & = \chi_V \cdot h_{W_N} p(t^{l_2}) \chi_U \\
& = \chi_V \cdot (a_0 h_{W_N} \chi_U + a_1 h_{W_N} \chi_{T^{l_2}(U)} t^{l_2} + \cdots + a_m h_{W_N} \chi_{T^{l_2 \cdot m}(U)} t^{l_2 \cdot m}) \\
& = a_0 h_{W_N} \chi_{V} + a_1 h_{W_N} \chi_{V} t^{l_2} + \cdots + a_m h_{W_N} \chi_{V} t^{l_2 \cdot m} \\
& = h_{W_N} \chi_{V} p(t^{l_2}).
\end{align*}
Since $p(t^{l_2})$ is invertible inside $\calR_{\calA}$, necessarily $h_{W_N} \chi_{V} = 0$. This is a contradiction because $y \in W_N \cap V$.
\end{proof}

It is therefore reasonable to think that, in order to uncover the whole structure of $\calR_{\calA}$ in the case of existence of a periodic point, it is crucial to understand the structure of the ideals $\ol{I}_{\infty}$, and in particular the structure of $\calR_{\infty} = \bigcup_{n \geq 1} \calR_n$, which in turn can be studied by studying their pieces $\calR_n$. Therefore, in the next section we will concentrate on uncovering part of the structure of the $*$-regular closure $\calR_n$, for a fixed $n$.


\subsection{The \texorpdfstring{$*$}{}-regular closure \texorpdfstring{$\calR_{\calB}$}{}}\label{subsection-study.RB}

We return to the general setting we had in Section \ref{section-approx.crossed.product}, with the extra hypothesis that $K$ is now a field with a positive definite involution $\lambda \mapsto \ol{\lambda}$. We fix a clopen subset $E$ of $X$ and a partition $\calP$ of $X \backslash E$ into clopen subsets. Recall that $\calB$ denotes the unital $*$-subalgebra of $\calA$ generated by the partial isometries $\{\chi_Z t\}_{Z \in \calP}$, and we write $\calB = \bigoplus_{i \in \Z} \calB_i t^i$ with $\calB_0 = C_K(X) \cap \calB$,
$$\calB_i = \chi_{X \setminus (E \cup T(E) \cup \cdots \cup T^{i-1}(E))} \calB_0 \quad \text{and} \quad \calB_{-i} = \chi_{X \setminus (T^{-1}(E) \cup \cdots \cup T^{-i}(E))} \calB_0 \qquad \text{for } i > 0.$$
We also write $\pi$ for the map $\pi \colon \calB \to \gotR$ given by $\pi(b) = (h_W \cdot b)_W$, where $\gotR = \prod_{W \in \V} M_{|W|}(K)$.

We aim to follow the same steps as in \cite[Section 6]{AG} to study the $*$-regular closure $\calR_{\calB} := \calR(\calB,\gotR)$\footnote{Note that, in the notation used in Section \ref{section-*.regular.closure}, $\calR_{\calB} = \calR_n$ in case $\calB$ is one of the $*$-subalgebras $\calA_n$.}. However, the situation here is much more involved, and we are only able to determine a (large) $*$-subalgebra of $\calR_{\calB}$.

The first step is to consider, from $\calB$, a skew partial power series ring $\calB_0[[t;T]]$ by taking infinite formal sums
$$\sum_{i \geq 0} b_i (\chi_{X \backslash E} t)^i = \sum_{i \geq 0} b_i t^i , \text{ where } b_i \in \calB_i \text{ for all } i \geq 0.$$
It is worth to point out here that, for $i>0$, the coefficients $b_i$ are restricted to belong to the generally proper ideal $\calB_i$ of $\calB_0$. Similarly we can consider $\calB_0 [[t^{-1}; T^{-1}]]$. Now, given a $W \in \V$, only a finite number of terms in the infinite sum $\sum_{i \geq 0} b_i t^i$ can be non-zero in the factor corresponding to $W$, since the product $h_W \cdot (\chi_{X \backslash E} t)^i$ is exactly zero for $i \geq |W|$. We have a similar situation for $\calB_0 [[t^{-1}; T^{-1}]]$. In this way we obtain faithful representations
$$ \pi_{+} : \calB_0 [[t;T]] \ra \gotR, \quad b \mapsto (h_W \cdot b)_W \quad \text{and} \quad \pi_{-} : \calB_0 [[t^{- 1};T^{- 1}]] \ra \gotR, \quad b \mapsto (h_W \cdot b)_W$$
by lower (resp. upper) triangular matrices. We will be mainly interested in the first one $\pi_+$.

We have the following key property.

\begin{lemma}\label{lemma-invertibility.PS}
Let $x=\sum _{i \geq 0} b_i t^i \in \calB_0[[t;T]]$. Then $x$ is invertible in $\calB_0[[t;T]]$ if and only if $b_0$ is invertible in $\calB_0 $. Analogously for the elements of $\calB_0[[t^{-1},T^{-1}]]$.
\end{lemma}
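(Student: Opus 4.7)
The forward direction is immediate. If $y = \sum_{j \geq 0} c_j t^j \in \calB_0[[t;T]]$ is a two-sided inverse of $x$, then since both series are supported in non-negative degrees of $t$, the only contribution to the constant term of $xy$ (respectively $yx$) comes from the product $b_0 c_0 t^0$ (respectively $c_0 b_0 t^0$) — there are no cross-cancellations from negative powers. Equating these with the constant term of $1$ gives $b_0 c_0 = c_0 b_0 = 1$, so $b_0$ is invertible in $\calB_0$.

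For the converse, I would build $x^{-1}$ by the usual geometric-series trick. Assuming $b_0$ is invertible in $\calB_0$, set $u := 1 - b_0^{-1} x = -\sum_{i \geq 1} (b_0^{-1} b_i) t^i$, noting that each coefficient $b_0^{-1} b_i$ lies in $\calB_i$ because $\calB_i$ is an ideal of $\calB_0$, so $u \in \calB_0[[t;T]]$ has zero constant term. Define $v := \sum_{n \geq 0} u^n$ and tentatively $x^{-1} := v \cdot b_0^{-1}$. Since $u$ starts in degree $\geq 1$, the $n$-th power $u^n$ starts in degree $\geq n$, so for each fixed $i \geq 0$ only $u^0, u^1, \dots, u^i$ contribute to the coefficient of $t^i$ in $v$; hence $v$ is a well-defined formal sum. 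A routine telescoping check then gives $(1-u)v = v(1-u) = 1$, whence $x \cdot (v b_0^{-1}) = b_0(1-u) v b_0^{-1} = 1$ and symmetrically on the other side.

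The only nontrivial verification is that $v$ genuinely lies in $\calB_0[[t;T]]$, i.e. that its $t^i$-coefficient lies in the ideal $\calB_i$ rather than merely in $\calB_0$. Each monomial appearing in $u^n$ is a product $(b_0^{-1} b_{i_1} t^{i_1}) \cdots (b_0^{-1} b_{i_n} t^{i_n})$ with all $i_k \geq 1$, and by the multiplicative structure of the partial crossed product $\calB = \bigoplus_{i \in \Z} \calB_i t^i$ recalled earlier, this product lies in $\calB_{i_1 + \cdots + i_n} t^{i_1 + \cdots + i_n}$ — the ideal structure is already respected inside $\calB$ itself. Summing the finitely many such monomials that contribute to degree $i$ therefore stays inside $\calB_i$, so $v \in \calB_0[[t;T]]$ as required. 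The argument for $\calB_0[[t^{-1}; T^{-1}]]$ is entirely symmetric, with the idempotents $\chi_{X \setminus (T^{-1}(E) \cup \cdots \cup T^{-i}(E))}$ playing the role of the previous ones. I do not expect any real obstacle here; the whole content of the proof is that formal inversion preserves the partial-action ideal filtration, which follows because the multiplication in $\calB$ already does.
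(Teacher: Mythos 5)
Your proof is correct and takes essentially the same route as the paper: the forward direction reads off $b_0c_0 = c_0b_0 = 1$ from the constant term, and the converse inverts $x$ via the geometric series $\sum_{n\ge 0}u^n$ after normalizing by $b_0^{-1}$. Your explicit check that the coefficients of the geometric series remain in the ideals $\calB_i$ (because the grading $\calB_i t^i\cdot\calB_j t^j\subseteq\calB_{i+j}t^{i+j}$ is already respected in $\calB$) is a detail the paper leaves implicit, but it is the same argument.
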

\begin{proof}
Assume first that $x = \sum_{i \geq 0} b_i t^i$ is invertible in $\calB_0[[t;T]]$. There exists then $y = \sum_{i \geq 0} b'_i t^i$ in $\calB_0[[t;T]]$ such that $xy = yx = 1$. In particular $b_0 b'_0 = 1$, and so $b_0$ is invertible in $\calB_0$.

Conversely, assume that $b_0$ is invertible in $\calB_0$. We can then assume that $b_0 = 1$, so that $x =1-y$, where the order of $y$ in $t$ is greater than or equal to $1$. We then have
$$x^{-1}= (1-y)^{-1} = 1 + y + y^2 + \cdots \in \calB_0[[t;T]],$$
hence $x$ is invertible in $\calB_0[[t;T]]$. The same arguments work for elements of $\calB_0[[t^{-1},T^{-1}]]$.
\end{proof}

We now introduce the following definitions.

\begin{definition}\label{definition-divisions.closure}
\hspace{2em}
\begin{enumerate}[a),leftmargin=1cm]
\item We denote by $\calB_+$ the algebra of elements of $\calB$ supported in non-negative degrees in $t$, that is $\calB_+ = \bigoplus_{i \geq 0} \calB_i t^i$. Clearly $\calB_+ \subseteq \calB_0[[t;T]]$. The division closure of $\calB_+$ in $\calB_0[[t;T]]$ will be denoted by $\calD_+$.
\item We set $\calB_- = \bigoplus_{i \geq 0} \calB_{-i} t^{-i}$. Again $\calB_- \subseteq \calB_0[[t^{-1},T^{-1}]]$, and we denote by $\calD_-$ the division closure of $\calB_-$ in $\calB_0[[t^{-1};T^{-1}]]$.
\end{enumerate}
\end{definition}


\noindent In order to study the division closures $\calD_+$ and $\calD_-$, we need the following known lemma.
    
\begin{lemma}[cf. \cite{Kei}]\label{lemma-B0.regular}
Let $\calS$ be a unital $*$-subalgebra of $C_K(X)$ generated by a family of characteristic functions of the form $\{\chi_C\}_C$, where $C$ are clopen subsets of $X$. Then $\calS$ is a $*$-regular ring, and every non-zero element of $\calS$ can be expressed in the form
$$\sum_{i=1}^n \lambda_i \chi_{K_i},$$
where $\lambda_i \in K \backslash \{0\}$ for all $1 \leq i \leq n$, and $\{K_i\}_{i=1}^n$ are mutually disjoint clopen subsets of $X$ such that $\chi_{K_i} \in \calS$ for all $1 \leq i \leq n$.

In particular, the $*$-subalgebra $\calB_0$ of $\calB$ is $*$-regular, and $b_0 \in \calB_0$ is invertible if and only if, when writing $b_0$ in the above form, the family $\{K_i\}_{i=1}^n$ constitutes a partition of $X$.
\end{lemma}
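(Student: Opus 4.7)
The plan is to first establish the normal form for elements of $\calS$ via a Boolean-algebra argument, then read off both $*$-regularity and the invertibility criterion from it.

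First I would reduce to a finite subalgebra. Given a non-zero $x\in \calS$, $x$ is a polynomial with coefficients in $K$ in finitely many of the generators, say $\chi_{C_1},\ldots,\chi_{C_m}$. Consider the Boolean subalgebra $\calB(C_1,\ldots,C_m)$ of the clopen subsets of $X$ generated by $C_1,\ldots,C_m$ (with intersection as meet, symmetric difference as sum); this is a finite Boolean algebra, hence is the power set of its finite set of atoms $A_1,\ldots,A_N$, which form a clopen partition of $X$. By induction on the degree one verifies that each product of the $\chi_{C_j}$ and $1-\chi_{C_j}$ is the characteristic function of an element of $\calB(C_1,\ldots,C_m)$, so $x$ lies in the $K$-linear span of $\chi_{A_1},\ldots,\chi_{A_N}$. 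Since $\chi_{C_j}\in \calS$ and each $\chi_{A_i}$ is (up to sign) an integer polynomial in the $\chi_{C_j}$, each $\chi_{A_i}\in \calS$. Writing $x=\sum_{i=1}^N \lambda_i \chi_{A_i}$ and discarding the indices $i$ with $\lambda_i=0$ yields the desired normal form, with $K_i:=A_i$.

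Next I would deduce $*$-regularity from the normal form. Given $x=\sum_{i=1}^n \lambda_i \chi_{K_i}$ with the $K_i$ pairwise disjoint clopen sets and $\lambda_i\neq 0$, set $y:=\sum_{i=1}^n \lambda_i^{-1}\chi_{K_i}\in \calS$; a direct computation using $\chi_{K_i}\chi_{K_j}=\delta_{ij}\chi_{K_i}$ gives $xyx=x$, so $\calS$ is von Neumann regular. For properness of the involution, if $x=\sum_i \lambda_i \chi_{K_i}$ then $x^*x=\sum_i \ol{\lambda_i}\lambda_i \chi_{K_i}$; multiplying on the right by $\chi_{K_j}$ we get $\ol{\lambda_j}\lambda_j\chi_{K_j}=0$ for each $j$, and since $K_j\neq\emptyset$ this forces $\ol{\lambda_j}\lambda_j=0$, hence $\lambda_j=0$ because the involution on $K$ is positive definite. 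Thus $\calS$ is $*$-regular.

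Finally I would prove the invertibility criterion. Write $b_0=\sum_{i=1}^n \lambda_i\chi_{K_i}$ in normal form with $K_i$ pairwise disjoint and $\lambda_i\neq 0$. If $\{K_i\}_{i=1}^n$ partitions $X$, then $1=\sum_i \chi_{K_i}$ and the element $y:=\sum_i \lambda_i^{-1}\chi_{K_i}$ satisfies $b_0 y=yb_0=1$. Conversely, suppose $b_0$ is invertible in $\calB_0$, and enlarging the family of generators if necessary write $b_0^{-1}=\sum_{j=1}^m \mu_j\chi_{L_j}$ in normal form over a common refinement, so that the $L_j$ may be taken to be unions of the $K_i$; by passing to the common refinement $\{K_i\cap L_j\}$, we may assume both decompositions use the same pairwise disjoint clopen family $\{M_s\}_{s=1}^r$. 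Then $b_0 b_0^{-1}=\sum_s \lambda_s'\mu_s' \chi_{M_s}=1=\chi_X$, which (evaluating at an arbitrary point of $X$) forces $\bigcup_s M_s=X$ and every $M_s$ to have nonzero coefficient, i.e.\ to appear in the support of $b_0$. Hence $\{K_i\}_{i=1}^n$ is already a partition of $X$. The main (very mild) obstacle is only notational bookkeeping in passing to a common refinement of the two normal forms; conceptually everything reduces to the finiteness of a finitely generated Boolean algebra.
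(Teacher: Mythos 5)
Your proof is correct and follows essentially the same route as the paper's: a disjointification of the generating clopen sets (you use the atoms of the finitely generated Boolean algebra where the paper runs an induction on the number of terms in a linear combination), the explicit relative inverse $\sum_i \lambda_i^{-1}\chi_{K_i}$ giving $*$-regularity, and the resulting invertibility criterion. The only cosmetic difference is in the converse direction of the last part, where the paper observes that an inverse of $b_0$ must coincide with its relative inverse $\ol{b_0}$, so that $1=b_0\ol{b_0}=\sum_i\chi_{K_i}$ immediately, avoiding your common-refinement bookkeeping.
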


\begin{proof}
If $a= \sum_{i=1}^n \lambda _i \chi_{K_i}$ is as in the statement, then $\ol{a} = \sum_{i=1}^n \lambda_i^{-1} \chi_{K_i}$ is the relative inverse of $a$, hence $\calS$ is $*$-regular. 
     
We show that each element of $\calS$ can be written in the stated form. It is clear that each element of $\calS$ is a $K$-linear combination of functions of the form $\chi_{L_i}$, where $L_i$ is a clopen subset of $X$ and $\chi_{L_i} \in \calS$, since every product $\chi_{C_1} \chi_{C_2} \cdots \chi_{C_t}$ belongs to $\calS$ and equals $\chi_{L}$, where $L = C_1 \cap \cdots \cap C_t$ is clopen.
Therefore, every non-zero element $a$ of $\calS$ can be written as $a = \sum_{i=1}^n \lambda_i \chi_{L_i}$, with $\{L_i\}_{i=1}^n$ clopen subsets of $X$ such that $\chi_{L_i} \in \calS$. We now show that this sum can be chosen to be an orthogonal sum. This is done by induction on $n$.

The result is clear for $n=1$, so assume that $n \geq1$, that $a = \sum_{i=1}^{n+1} \lambda_i \chi_{L_i}$ with $\{L_i\}_{i=1}^{n+1}$ clopen subsets of $X$ such that $\chi_{L_i} \in \calS$, and that $\sum_{i=1}^n \lambda_i \chi_{L_i} = \sum_{j=1}^m \mu_j \chi_{K_j}$ where now $\{K_j\}_{j=1}^m$ are mutually disjoint clopen subets of $X$ such that $\chi_{K_j} \in \calS$. We compute
$$a = \sum_{j=1}^m \mu_j \chi_{K_j} + \lambda_{n+1} \chi_{L_{n+1}} = \sum_{j=1}^m (\mu_j + \lambda_{n+1}) \chi_{K_j \cap L_{n+1}} + \sum_{j=1}^m \mu_j \chi_{K_j \backslash L_{n+1}} + \lambda_{n+1} \chi_{L_{n+1} \backslash (K_1 \cup \cdots \cup K_m)}.$$
Since the clopen sets $\{K_j \cap L_{n+1}\}_{j=1}^m \cup \{ K_j \backslash L_{n+1} \}_{j=1}^m \cup \{ L_{n+1} \backslash (K_1 \cup \cdots \cup K_m)\}$ are clearly disjoint and 
all their characteristic functions belong to $\calS$, this completes the induction step.

Now, since $\calB_0$ is generated by a family of characteristic functions of the above form, it is $*$-regular.

To conclude, assume that $b_0 = \sum_{i=1}^n \lambda_i \chi_{K_i} \in \calB_0$ is invertible in $\calB_0$. Then necessarily its inverse must be its relative inverse $\ol{b_0} = \sum_{i=1}^n \lambda_i^{-1} \chi_{K_i}$, and so $1 = b_0 \ol{b_0} = \sum_{i=1}^n \chi_{K_i}$. This tells us that $\{K_i\}_{i=1}^n$ forms a partition of $X$. The converse is easily verified, with also $b_0^{-1} = \ol{b_0}$.
\end{proof}


\begin{proposition}\label{proposition-charac.division.closure}
With the preceding notation, we have:
\begin{enumerate}[(i),leftmargin=1cm]
\item $\calD_+$ coincides with the rational closure of $\calB_+$ in $\calB_0[[t;T]]$, and similarly $\calD_-$ coincides with the rational closure of $\calB_-$ in $\calB_0[[t^{-1};T^{-1}]]$.
\item $\pi_+(\calD_+)$ is the division closure of $\pi_+(\calB_+)$ in $\gotR$, and similarly $\pi_-(\calD_-)$ is the division closure of $\pi_-(\calB_-)$ in $\gotR$.
\item $\pi_+(\calD_+) \subseteq \calR_{\calB}$, and similarly $\pi_-(\calD_-) \subseteq \calR_{\calB}$. 
\item $\pi_+(\calD_+)^* = \pi_-(\calD_-)$. 
\end{enumerate}
\end{proposition}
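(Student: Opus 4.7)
The plan is to establish (i) via a power-series / Schur-complement argument, (ii) via the block-triangular structure of the representation $\pi_+$, and then derive (iii) and (iv) as formal consequences.

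For (i), I prove the nontrivial inclusion $\calR at(\calB_+,\calB_0[[t;T]]) \subseteq \calD_+$; the reverse is Lemma \ref{lemma-div.rat.closure}. The matrix version of Lemma \ref{lemma-invertibility.PS} (with the same factorization proof) asserts that $A = \sum_{i \ge 0} A_i t^i \in M_n(\calB_0[[t;T]])$ is invertible precisely when its constant term $A_0 \in M_n(\calB_0)$ is. Given such an $A \in M_n(\calB_+)$, I factor $A = A_0(I_n + N)$ with $N := A_0^{-1}(A - A_0)$ of positive order in $t$. Since $\calB_0 \subseteq C_K(X)$ is commutative and $*$-regular by Lemma \ref{lemma-B0.regular}, the adjugate formula yields $A_0^{-1} = \det(A_0)^{-1} \mathrm{adj}(A_0)$ with entries in $\calD_+$, using that $\det(A_0) \in \calB_0 \subseteq \calB_+$ is invertible in $\calB_0[[t;T]]$ and hence its inverse lies in $\calD_+$. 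To invert $I_n + N$ I induct on $n$ via Schur complement: the $(1,1)$-entry $1 + n_{11}$ has positive-order correction $n_{11}$, is therefore invertible in $\calB_0[[t;T]]$ with inverse in $\calD_+$ (by Lemma \ref{lemma-invertibility.PS}), and the resulting $(n-1)\times(n-1)$ Schur-complement block again has identity constant term and entries in $\calD_+$, so inverts in $M_{n-1}(\calD_+)$ by induction. The base case $n=1$ is trivial, and assembling gives $A^{-1} \in M_n(\calD_+)$.

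For (ii), I show that $\pi_+(x)$ is invertible in $\gotR$ if and only if $x$ is invertible in $\calB_0[[t;T]]$; the forward direction then shows $\pi_+(\calD_+)$ is division-closed in $\gotR$, and as it contains $\pi_+(\calB_+)$ it must be the division closure. One direction is immediate since $\pi_+$ is a ring homomorphism. For the converse, $\pi_+(x)$ is lower triangular in each factor $M_{|W|}(K)$ of $\gotR$, with diagonal $(x_0(T^i(W)))_{0 \le i < |W|}$, these being well-defined scalars because every element of $\calB_0$ is constant on each $T^i(W)$ (implicit in the identification $h_W \calB \cong M_{|W|}(K)$ recalled in Section \ref{section-approx.crossed.product}). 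If $x_0$ were not invertible in $\calB_0$, by Lemma \ref{lemma-B0.regular} its zero locus would be a nonempty clopen $U \subseteq X$, having positive $\mu$-measure by fullness of $\mu$; since the $T^i(W)$ form a quasi-partition of full measure, $U$ would meet some $T^i(W)$, forcing $x_0(T^i(W)) = 0$ by constancy and contradicting invertibility of $\pi_+(x)$. Hence $x_0$ is invertible in $\calB_0$, so by Lemma \ref{lemma-invertibility.PS} $x$ is invertible in $\calB_0[[t;T]]$ with inverse in $\calD_+$. The case of $\pi_-(\calD_-)$ is symmetric.

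Parts (iii) and (iv) follow formally. For (iii), if $r \in \calR_{\calB}$ is invertible in $\gotR$ then the relative inverse $\ol{r} \in \calR_{\calB}$ satisfies $r\ol{r}r = r$, so $\ol{r} = r^{-1}$; thus $\calR_{\calB}$ is closed under inverses taken in $\gotR$, and since $\pi_+(\calB_+) \subseteq \calB \subseteq \calR_{\calB}$ the division closure $\pi_+(\calD_+)$ lies inside $\calR_{\calB}$, and symmetrically for $\pi_-(\calD_-)$. For (iv), the involution of $\calA$ sends $(b_i t^i)^* = T^{-i}(b_i^*) t^{-i} \in \calB_{-i} t^{-i}$, exchanging $\calB_+$ with $\calB_-$ as $*$-subalgebras, so $\pi_+(\calB_+)^* = \pi_-(\calB_-)$; since $*$ on $\gotR$ is an invertibility-preserving anti-isomorphism, $\pi_+(\calD_+)^*$ is the division closure of $\pi_-(\calB_-)$, which by (ii) is $\pi_-(\calD_-)$. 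The main technical obstacle is (i): reducing matrix inversion in $M_n(\calB_0[[t;T]])$ to finitely many element inversions, requiring both the commutativity of $\calB_0$ (for the adjugate formula) and the inductive Schur-complement argument for the positive-order correction $I_n + N$.
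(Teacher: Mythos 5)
Your proof is correct, and it diverges from the paper's in two places. For part (i) the paper simply cites a standard observation (\cite[Observation 1.18]{AB}); you actually prove it, via the matrix version of Lemma \ref{lemma-invertibility.PS}, the adjugate formula over the commutative regular ring $\calB_0$, and a Schur-complement induction. This works, with one expository caveat: the induction hypothesis must be phrased for matrices over $\calD_+$ with invertible constant term (the Schur complement no longer has entries in $\calB_+$), or else you should invoke Cohn's description of the rational closure as the set of entries of inverses of matrices over $\calB_+$. For part (ii) the crucial step is that invertibility of $\pi_+(x)$ in $\gotR$ forces invertibility of the constant term $b_0$ in $\calB_0$: the paper deduces this algebraically from regularity of $\calB_0$ (Lemma \ref{lemma-B0.regular}) together with injectivity of $\pi$ --- from $b_0\ol{b}_0b_0=b_0$ and invertibility of $\pi(b_0)$ one gets $\pi(b_0\ol{b}_0)=1$, hence $b_0\ol{b}_0=1$ --- whereas you argue measure-theoretically, using fullness of $\mu$ and the quasi-partition to locate a set $T^i(W)$ on which $b_0$ is identically zero. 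Both are valid; the paper's version avoids any appeal to the measure, while yours makes visible where the diagonal entries of $\pi_+(x)$ come from. Your (iii) and (iv) match the paper's arguments (the paper transports the division closure through $*$ at the level of $\calB_0[[t;T]]$ rather than of $\gotR$, but the computation is the same). One small elision in (ii): ``it must be the division closure'' also requires the containment $\pi_+(\calD_+)\subseteq \calD(\pi_+(\calB_+),\gotR)$, which holds because $\pi_+$ carries inverses taken in $\calB_0[[t;T]]$ to inverses in $\gotR$; the paper packages this as the general fact that $\calD(R,T)=\calD(R,S)$ whenever $R\subseteq S\subseteq T$ and $S$ is division closed in $T$.
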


\begin{proof}
$(i)$ This is a standard observation (see e.g. \cite[Observation 1.18]{AB}).


$(ii)$ Recall that $\pi_+$ is an injective homomorphism from $\calB_0[[t;T]]$ into $\gotR$. We first show that $\pi_+(\calB_0[[t;T]])$ is division closed in $\gotR$. For this, let $x = \sum_{i \geq 0} b_i t^i$ be an element in $\calB_0[[t;T]]$ such that $\pi_+(x)$ is invertible in $\gotR$. Observe that each component of $\pi_+(x)$ is an invertible matrix, with diagonal coming exclusively from elements of $\calB_0$. It follows that $\pi_+(b_0) = \pi(b_0)$ must be invertible in $\gotR$. But since $\calB_0$ is regular (Lemma \ref{lemma-B0.regular}), there exists $\ol{b}_0$ in $\calB_0$ such that $b_0 \ol{b}_0 b_0 = b_0$. Applying $\pi$ and taking into account that $\pi(b_0)$ is invertible in $\gotR$, we get that $\pi(b_0)^{-1} = \pi(\ol{b}_0)$, and so $b_0$ is in fact invertible in $\calB_0$. It follows from Lemma \ref{lemma-invertibility.PS} that $x$ is invertible in $\calB_0[[t;T]]$, as required.

Now we use the following general fact: if $R \subseteq S \subseteq T$ are unital embeddings of unital rings, and $S$ is division closed in $T$, then the division closure of $R$ in $T$ equals the division closure of $R$ in $S$, that is $\calD(R,T) = \calD(R,S)$. 
Using this and the fact just proved that $\pi_+(\calB_0[[t;T]])$ is division closed in $\gotR$, we deduce that
$$\calD(\pi_+(\calB_+),\gotR) = \calD(\pi_+(\calB_+),\pi_+(\calB_0[[t;T]])) = \pi_+( \calD(\calB_+,\calB_0[[t;T]]) ) = \pi_+(\calD_+),$$
as desired. Analogous arguments give that $\calD(\pi_-(\calB_-),\gotR) = \pi_-(\calD_-)$.

$(iii)$ By $(ii)$, we have that $\pi_+(\calD_+) = \calD(\pi_+(\calB_+),\gotR)$ which is contained in the division closure of $\pi(\calB)$ in $\gotR$, $\calD(\pi(\calB),\gotR)$. This last one is contained in $\calR_{\calB}$ by Lemma \ref{lemma-div.rat.closure}, hence $\pi_+(\calD_+) \subseteq \calR_{\calB}$. Similarly $\pi_-(\calD_-) \subseteq \calR_{\calB}$.

$(iv)$ First observe that $\pi_+(\calB_+)^* = \pi_-(\calB_-)$ and $\pi_+(\calB_0[[t;T]])^* = \pi_-(\calB_0[[t^{-1},T^{-1}]])$. The reason is that, for $x = \sum_{i \geq 0} b_i t^i \in \calB_0[[t;T]]$ (resp. $\in \calB_+$), we have
$$\pi_+(x)^* = \pi_-\Big( \sum_{i \geq 0} t^{-i} b_i^* \Big) = \pi_-\Big( \sum_{i \geq 0} T^{-i}(b_i^*) t^{-i} \Big).$$
The element $b_i^*$ is computed in the $*$-algebra $\calB_0$. Also, by the description of $\calB$ as a partial crossed product (Proposition 3.7 of \cite{AC}), it follows that $T^{-i}(b_i^*) \in \calB_{-i}$ and so $\sum_{i \geq 0} T^{-i}(b_i^*) t^{-i} \in \calB_0[[t^{-1};T^{-1}]]$ (resp. $\in \calB_-$). 
Analogous arguments show the other inclusion(s).

Now,
\begin{equation*}
\pi_+(\calD_+)^* = \calD( \pi_+(\calB_+)^* , \pi_+(\calB_0[[t;T]])^*) = \calD( \pi_-(\calB_-), \pi_-(\calB_0[[t^{-1};T^{-1}]]) ) = \pi_-(\calD_-),
\end{equation*}
as required.
\end{proof}

We have two subalgebras $\pi_+(\calD_+)$ and $\pi_-(\calD_-) = \pi_+(\calD_+)^*$ of $\calR_{\calB}$. We will write $\calD$ for the $*$-subalgebra of $\calR_{\calB}$ generated by $\pi_+(\calD_+)$, which coincides with the subalgebra generated by $\pi_+(\calD_+)$ and $\pi_-(\calD_-)$. Intuitively, we obtain $\calD$ by first adjoining all possible inverses of elements of $\calB_+$ and then taking adjoints in $\gotR$.

\begin{equation*}
\xymatrix{
 & \calB_+ \ar@{^{(}->}[rr] \ar@{^{(}->}[rd] \ar@{<->}[dd]_{*} & & \calD_+ \ar@{^{(}->}[rd] \ar@{<->}[dd]_{\rotatebox{90}{\text{ } \text{ } \text{ }\rotatebox{-90}{$*$}}} & & \\
 \calB_0 \ar@{^{(}->}[ru] \ar@{^{(}->}[rd] & & \calB \ar@{^{(}.>}[rr] & & \calD \ar@{^{(}->}[r] & \calR_{\calB} \\
 & \calB_- \ar@{^{(}->}[rr] \ar@{^{(}->}[ru] & & \calD_- \ar@{^{(}->}[ru] & &
}
\end{equation*}

Note that $\calD$ is indeed contained in the division closure $\calD(\calB,\gotR)$ of $\calB$ in $\gotR$.  

Our plan now is to adjoin to $\calD$ the relative inverses of (some) elements of $\calD$, in a controlled way. This is done as follows.

\begin{definition}[Special terms inside $\calB$]\label{definition-special.terms}
\text{ }
\begin{enumerate}[a),leftmargin=1cm]
\item A monomial $b_i t^i \in \calB_+$ (for $i > 0$) is said to be \textit{special} if the coefficient $b_i \in \calB_i$ is exactly of the form $\chi_S$, with
\begin{equation}\label{equation-special.form1}
S = T^{i-1}(Z'_{i-1}) \cap T^{i-2}(Z'_{i-2})\cap \cdots \cap Z'_0, \qquad \qquad (Z'_0,\ldots , Z'_{i-1}\in \calP ),
\end{equation}
that is $s=i$ and $r=0$ in \eqref{equation-form.of.bs}, and moreover we ask that $E \cap T^{-i}(S) \cap T^{-i-1}(E) \neq \emptyset$. The set of clopen subsets $S \subseteq X$ of the form \eqref{equation-special.form1} with $E \cap T^{-i}(S) \cap T^{-i-1}(E) \neq \emptyset$ will be denoted by $\calW_i$.
\item Similarly, a monomial $b_{-j} t^{-j} \in \calB_-$ (for $j > 0$) is said to be \textit{special} if the coefficient $b_{-j} \in \calB_{-j}$ is exactly of the form $\chi_{S'}$, with
\begin{equation}\label{equation-special.form2}
S' = T^{-1}(Z'_{-1}) \cap T^{-2}(Z'_{-2}) \cap \cdots \cap T^{-j}(Z'_{-j}), \qquad \qquad (Z'_{-1},\ldots , Z'_{-j}\in \calP),
\end{equation}
that is $r=j$ and $s=0$ in \eqref{equation-form.of.bs}, and moreover we ask that $E \cap S' \cap T^{-j-1}(E) \neq \emptyset$. The set of clopen subsets $S' \subseteq X$ of the form \eqref{equation-special.form2} with $E \cap S' \cap T^{-j-1}(E) \neq \emptyset$ will be denoted by $\calW_{-j}$.
\item If $E\cap T^{-1}(E)\ne \emptyset$, there is one term in $\calB_0$ which we call \textit{special}, namely the element $\chi_{S_0 \cup T^{-1}(S_1)}$, where
\begin{equation*}\label{equation-special.form3}
S_0 = E \cup \Big( \bigcup_{\substack{Z \in \calP \\ Z \cap T^{-1}(E) \neq \emptyset}} Z \Big) \quad \text{ and } \quad S_1 = E \cup \Big( \bigcup_{\substack{Z \in \calP \\ T^{-1}(Z) \cap E \neq \emptyset}} Z \Big).
\end{equation*}
In case $E\cap T^{-1}(E) \ne \emptyset$, we set $\calW_0=\{ S_0 \cup T^{-1}(S_1) \}$. If $E\cap T^{-1}(E)= \emptyset$, then there is no special term of degree $0$ and so $\calW_0 =\emptyset $. 
\end{enumerate}
\end{definition}

It is clear that, for $i \geq 1$, the set $\calW_i$ is in bijection with the set of all $W \in \V$ having length $i+1$, through the map
$$S \mapsto W(S) := E \cap T^{-i}(S) \cap T^{-i-1}(E).$$
The inverse map will be written as $W \mapsto S(W)$, so that $S(W(S)) = S$ and $W(S(W)) = W$. Analogously, for $j \geq 1$, the same set of all $W \in \V$ having length $j+1$ is in bijection with $\calW_{-j}$ via
$$S' \mapsto W(S') := E \cap S' \cap T^{-j-1}(E).$$
Again, the inverse map will be denoted by $W \mapsto S'(W)$. When $E\cap T^{-1}(E)\ne \emptyset$, the set $\calW_0$ contains only one element, namely the clopen $S_0 \cup T^{-1}(S_1)$, and therefore is in bijective correspondence with the set consisting of the only clopen $W \in \V$ with length $1$, namely $W = E \cap T^{-1}(E)$.  We will use the notation $W \mapsto S(W) := S_0 \cup T^{-1}(S_1)$ in this case. When $E \cap T^{-1}(E)= \emptyset$ there is no $W\in \mathbb V$ having length $1$ and correspondingly $\calW_0= \emptyset$. 

Note that, by construction, the element $b_0 = \chi_{S_0 \cup T^{-1}(S_1)}$ serves as a unit among the special terms, in the sense that
$$b_0 \cdot b_i t^i = b_i t^i = b_i t^i \cdot b_0 \quad \text{ and } \quad b_0 \cdot b_{-j} t^{-j} = b_{-j} t^{-j} = b_{-j} t^{-j} \cdot b_0,$$
for $b_i t^i, b_{-j}t^{-j}$ special terms of degrees $i,j \geq 1$, respectively.

The special terms are exactly detected by the representation $\pi : \calB \ra \gotR$, as follows.
 
\begin{lemma}\label{lemma-special.terms}
With the previous notation,
\begin{enumerate}[i),leftmargin=1cm]
\item For $i>0$, let $b_it^i = \chi_S t^i$ be a special term, with $S$ as in \eqref{equation-special.form1}. Then $h_W \cdot b_it^i = e_{i,0}(W)$, where $W = W(S)$. Moreover, if $W' \neq W$ is of length $k \geq 1$, then the component of $e_{k-1,0}(W')$ in $h_{W'} \cdot b_it^i$ is $0$.
\item For $j>0$, let $b_{-j}t^{-j} = \chi_{S'}t^{-j}$ be a special term, with $S'$ as in \eqref{equation-special.form2}. Then $h_W \cdot b_{-j}t^{-j} = e_{0,j}(W)$, where $W = W(S')$. Moreover, if $W' \neq W$ is of length $k \geq 1$, then the component of $e_{0,k-1}(W')$ in $h_{W'} \cdot b_{-j}t^{-j}$ is $0$.
\item Suppose that $E\cap T^{-1}(E)\ne \emptyset$.  Let $b_0 = \chi_{S_0 \cup T^{-1}(S_1)}$ be the special term of degree $0$. Then $h_W \cdot b_0 = e_{0,0}(W)$, where $W = E \cap T^{-1}(E)$. Moreover, if $W \neq E \cap T^{-1}(E)$, then the components of $e_{0,0}(W)$ and $e_{|W|-1,|W|-1}(W)$ in $h_W \cdot b_0$ are exactly $1$.
\end{enumerate}
\end{lemma}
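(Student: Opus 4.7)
The strategy is to exploit the explicit matrix-unit formula $e_{ij}(W)=\chi_{T^i(W)}t^{i-j}$ (easily verified from the definition via $t\chi_U=\chi_{T(U)}t$), so that $h_W=\sum_{k=0}^{|W|-1}\chi_{T^k(W)}$, together with the centrality of $h_W$ in $\calB$. For any monomial $\chi_Ut^n\in \calB$ one then has
$$h_W\cdot \chi_U t^n \;=\; \sum_{k=0}^{|W|-1} \chi_{T^k(W)\cap U}\,t^n,$$
which reduces each of (i)--(iii) to identifying which of the sets $T^k(W)\cap U$ are empty.

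For (i), take $U=S$ as in \eqref{equation-special.form1} and $n=i>0$. For $k\in [0,i-1]$ the inclusions $T^k(W)\subseteq T^k(E)$ (because $W\subseteq E$) and $S\subseteq T^k(Z'_k)$ (from the form of $S$), combined with $E\cap Z'_k=\emptyset$, give $T^k(W)\cap S=\emptyset$. When $W=W(S)$ has length $i+1$, only $k=i$ remains, and $T^i(W)=T^i(E)\cap S\cap T^{-1}(E)\subseteq S$ yields $T^i(W)\cap S=T^i(W)$, so $h_W\cdot \chi_St^i=e_{i,0}(W)$. When $W'\neq W$ has length $i+1$ with defining sequence $(Z''_1,\ldots,Z''_i)$, the inclusion $T^i(W')\subseteq T^l(Z''_{i-l})$ for each $l\in [0,i-1]$, combined with the fact that $\calP$ is a partition, shows $T^i(W')\cap T^l(Z'_l)$ equals $T^i(W')$ if $Z''_{i-l}=Z'_l$ and $\emptyset$ otherwise; since $W'\neq W(S)$ this equality must fail for some $l$, forcing $T^i(W')\cap S=\emptyset$ and so the $e_{i,0}(W')$-component vanishes. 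For $|W'|\neq i+1$ the conclusion is automatic since $e_{|W'|-1,0}(W')$ then has $t$-degree different from $i$.

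Part (ii) is entirely symmetric, with $U=S'$ and $n=-j<0$. The vanishing now occurs at $k\in [1,j]$: any point $x\in T^k(W)\cap S'$ would force $T^{-k}(x)\in W\subseteq E$ (since $x=T^k(w)$ for some $w\in W$) and at the same time $T^{-k}(x)\in Z'_{-k}\subseteq X\backslash E$, impossible. Only $k=0$ survives, and $W\cap S'=W$ iff $W\subseteq S'$, which holds precisely when $W=W(S')$, giving $e_{0,j}(W)$. For $W'\neq W=W(S')$ of length $j+1$, $W'\subseteq S'$ would give $W'\subseteq E\cap S'\cap T^{-j-1}(E)=W$, contradicting the disjointness of distinct elements of $\V$.

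Part (iii) is a direct computation. For every $W\in \V$ the inclusion $W\subseteq E\subseteq S_0$ gives $\chi_{W\cap (S_0\cup T^{-1}(S_1))}=\chi_W$, producing the $e_{0,0}(W)$-summand with coefficient $1$. For $W=E\cap T^{-1}(E)$ of length $1$ this is already the entire product. For $|W|\geq 2$, additionally $T^{|W|-1}(W)\subseteq T^{-1}(E)\subseteq T^{-1}(S_1)$ (using $E\subseteq S_1$), giving the $e_{|W|-1,|W|-1}(W)$-coefficient equal to $1$. The only mildly subtle step overall is the partition-matching argument at the end of (i); all the remaining content amounts to tracking inclusions of translates of elements of $\calP\cup\{E\}$ and invoking their disjointness.
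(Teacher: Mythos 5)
Your strategy is the same as the paper's: expand $h_W \cdot \chi_U t^n = \sum_{k=0}^{|W|-1}\chi_{T^k(W)\cap U}\,t^n$ and decide which of the intersections are empty. (The paper writes out only part (i) and declares (ii) and (iii) analogous.) Your parts (i) and (iii) are correct; in the ``moreover'' clause of (i) you replace the paper's one-line containment $T^{k-1}(W')\cap T^i(W')\cap S\subseteq T^{k-1}(W')\cap T^i(W'\cap W)$ (empty because distinct elements of $\V$ are disjoint) by an explicit matching of the defining sequences of $W'$ and $W(S)$; both work.

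There is, however, a genuine error in the vanishing step of part (ii), which is not ``entirely symmetric'' to (i). For $x\in S'=T^{-1}(Z'_{-1})\cap\cdots\cap T^{-j}(Z'_{-j})$, the membership $x\in T^{-k}(Z'_{-k})$ means $T^{k}(x)\in Z'_{-k}$, not $T^{-k}(x)\in Z'_{-k}$; in fact $S'$ imposes no condition whatsoever on $T^{-k}(x)$ for $k\geq 1$, so the advertised clash with $T^{-k}(x)\in W\subseteq E$ simply does not occur. The asymmetry with (i) is that $S$ involves nonnegative powers of $T$ (so membership in $S$ constrains $T^{-l}(x)$ for $0\leq l\leq i-1$, which clashes with $T^k(W)\subseteq T^k(E)$), whereas $S'$ involves negative powers (so membership in $S'$ constrains $T^{m}(x)$ only for $1\leq m\leq j$), and the clash must instead be produced at the right-hand endpoint of $W$: for $1\leq k\leq j$ one has $T^k(W)\subseteq T^{k-j-1}(E)$ because $W\subseteq T^{-j-1}(E)$, while $S'\subseteq T^{-(j+1-k)}(Z'_{-(j+1-k)})=T^{k-j-1}(Z'_{k-j-1})$ with $1\leq j+1-k\leq j$, and $E\cap Z'_{k-j-1}=\emptyset$ since $Z'_{k-j-1}\in\calP$. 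With this substitution the rest of your part (ii) goes through (your argument that $W'\not\subseteq S'$ for $W'\neq W(S')$ of length $j+1$ suffices, since $h_{W'}\cdot\chi_{S'}$ is diagonal in $h_{W'}\calB\cong M_{|W'|}(K)$ and hence $W'\cap S'\in\{\emptyset,W'\}$), so the lemma is not in danger, but the step as written fails.
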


\begin{proof}
We will only prove $i)$, being the other ones analogous. Take $W = W(S) = E \cap T^{-i}(S) \cap T^{-i-1}(E)$, and note that $T^l(W) \cap S = \emptyset$ for $0 \leq l \leq i-1$. For $l=i$, it gives $T^i(W) \cap S = T^i(W)$. Hence
$$h_W \cdot b_i t^i = \sum_{l=0}^i \chi_{T^l(W) \cap S} t^i = \chi_{T^i(W)} t^i = (\chi_{X \backslash E} t)^i \chi_W = e_{i,0}(W).$$
For the second part, it is enough to show that the product $e_{k-1,k-1}(W') \big( h_{W'} \cdot b_i t^i \big) e_{00}(W')$ is zero. This is a straightforward computation:
$$e_{k-1,k-1}(W') \Big( h_{W'} \cdot b_i t^i \Big) e_{00}(W') = \chi_{T^{k-1}(W')} \Big( \sum_{l=0}^{k-1} \chi_{T^l(W') \cap S} t^i \Big) \chi_{W'} = \chi_{T^{k-1}(W') \cap T^i(W') \cap S} t^i.$$
But $W = E \cap T^{-i}(S) \cap T^{-i-1}(E)$, so $T^{k-1}(W') \cap T^i(W') \cap S \subseteq T^{k-1}(W') \cap T^i(W' \cap W)$ which is empty for $W' \neq W$. The result follows.
\end{proof}

Consider the subset $\calS[[t;T]]$ of $\calB_0[[t;T]]$ consisting of those elements
$$\sum_{i \geq 0} b_i (\chi_{X \backslash E}t)^i = \sum_{i \geq 0} b_i t^i$$
such that each $b_i \in \calB_i$ belongs to $\text{span}\{ \chi_S \mid S \in \calW_i\}$. It is always a linear subspace of $\calB_0 [[t;T]]$, but it might not be a subalgebra. We will, however, see in Section \ref{subsection-lamplighter.alg.reg.closure} that in the special case of $\calA$ being the lamplighter group algebra, $\calS[[t;T]]$ is indeed an algebra, and even an integral domain.

As we shall see, $\calS[[t;T]]$ certainly is a subalgebra of $\calB_0[[t;T]]$ when it is endowed with the multiplicative structure given by the Hadamard product $\odot$, defined by the rule
$$\Big( \sum_{i\geq 0} b_i t^i \Big) \odot \Big( \sum_{j\geq 0} b'_j t^j \Big) := \sum_{i\geq 0} (b_i b'_i) t^i.$$

\begin{observation}\label{observation-hadamard.prod}
Each $b_i$ belongs to the linear span of all the $\chi_S$ with $S \in \calW_i$, hence they can be written as $b_i = \sum_{S \in \calW_i} \lambda_S \chi_S$ for $\lambda_S \in K$. Let $b_i, b'_i$ be two given elements of this form:
$$b_i = \sum_{S \in \calW_i}\lambda_S \chi_S, \quad b'_i = \sum_{S' \in \calW_i} \mu_{S'} \chi_{S'}.$$
Since the sets $S \in \calW_i$ are of the form $T^{i-1}(Z'_{i-1}) \cap T^{i-2}(Z'_{i-2}) \cap \cdots \cap Z'_0$ for $i > 0$ or $S_0 \cup T^{-1}(S_1)$ in case $i = 0$, we see that for $S,S' \in \calW_i$, $S \cap S' = \emptyset$ if they are different. Therefore the Hadamard product of $\calS[[t;T]]$ can be written as
$$\Big( \sum_{i\geq 0} b_i t^i \Big) \odot \Big( \sum_{j\geq 0} b'_j t^j \Big) = \sum_{i\geq 0} (b_i b'_i) t^i = \sum_{i\geq 0} \Big( \sum_{S \in \calW_i} \lambda_S \mu_S \chi_{S} \Big) t^i.$$
\end{observation}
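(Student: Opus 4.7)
The plan is to verify the key disjointness claim for $\calW_i$, from which the Hadamard product formula follows by a short linear algebra computation.

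First I would check that for distinct $S, S' \in \calW_i$, we always have $S \cap S' = \emptyset$. In the case $i > 0$, the set $S$ is parametrized by the tuple $(Z'_0, \ldots, Z'_{i-1})$ of elements of $\calP$ appearing in \eqref{equation-special.form1}, so any two distinct $S, S'$ in $\calW_i$ correspond to tuples differing in at least one coordinate $k$; since $\calP$ is a partition of $X \setminus E$, the corresponding $Z'_k$ and $Z''_k$ are disjoint, hence so are $T^k(Z'_k)$ and $T^k(Z''_k)$, and therefore $S \cap S' = \emptyset$. For $i = 0$, the set $\calW_0$ contains at most one element, so there is nothing to check.

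Once this is established, orthogonality of the characteristic functions $\chi_S$ for $S \in \calW_i$ gives, for $b_i = \sum_{S \in \calW_i} \lambda_S \chi_S$ and $b'_i = \sum_{S' \in \calW_i} \mu_{S'} \chi_{S'}$,
$$b_i \, b'_i = \sum_{S, S' \in \calW_i} \lambda_S \mu_{S'} \chi_S \chi_{S'} = \sum_{S \in \calW_i} \lambda_S \mu_S \chi_S,$$
since $\chi_S \chi_{S'} = \chi_{S \cap S'}$ vanishes unless $S = S'$. Plugging this into the Hadamard product definition yields exactly the stated formula.

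The main (and really the only) point requiring care is the disjointness argument itself; once that is in place the remainder is a direct expansion. There is no substantive obstacle, since the observation is essentially a bookkeeping fact about how the idempotents $\chi_S$ multiply inside $\calB_0$.
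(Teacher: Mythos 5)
Your proposal is correct and follows essentially the same route as the paper: the paper's own justification is precisely the one-line observation that distinct $S,S'\in\calW_i$ are disjoint because they are intersections $T^{i-1}(Z'_{i-1})\cap\cdots\cap Z'_0$ built from members of the partition $\calP$ (with $\calW_0$ a singleton), after which the Hadamard formula is immediate from $\chi_S\chi_{S'}=\chi_{S\cap S'}$. You have merely spelled out the coordinatewise disjointness argument in more detail, which is fine.
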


We can also define an involution on $\calS[[t;T]]$ by
$$\overline{\sum_{i\geq 0} \Big( \sum_{S \in \calW_i} \lambda_S \chi_S \Big) t^i} := \sum_{i\geq 0} \Big( \sum_{S \in \calW_i} \ol{\lambda_S} \chi_S \Big) t^i.$$
These operations turn $\calS[[t;T]]$ into a commutative $*$-algebra $(\calS[[t;T]], \odot,-)$. Indeed it is a $*$-algebra isomorphic to $\prod_{W\in \V}K$. In the next proposition we show that it can be identified with the center of the algebra $\gotR = \prod_{W \in \V} M_{|W|}(K)$, and also with a certain corner of $\gotR$. We first fix some notation: we will denote the projections $\pi(\chi_C) \in \gotR$ by $p_C$ for any clopen $C \subseteq X$. So for example $p_E = \pi(\chi_E) = (e_{00}(W))_W \in \gotR$, and $p_{T^{-1}(E)} = \pi(\chi_{T^{-1}(E)}) = (e_{|W|-1,|W|-1}(W))_W \in \gotR$.

\begin{proposition}\label{proposition-S.corner.R}
We have an isomorphism of $*$-algebras $\calS[[t;T]] \overset{\Psi}{\cong} Z(\gotR)$, the center of $\gotR$. In particular, we have a $*$-isomorphism $\calS[[t;T]] \cong p_E \gotR p_E$ given by $d \mapsto \Psi(d) p_E$, $d \in \calS[[t;T]]$.
\end{proposition}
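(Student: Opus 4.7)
The plan is to identify both $\calS[[t;T]]$ and $Z(\gotR)$ (and also $p_E\gotR p_E$) canonically with the direct product $\prod_{W\in\V}K$, using the bijection $W\leftrightarrow S(W)$ between $\V$ and $\bigsqcup_{i\ge 0}\calW_i$ established right after Definition \ref{definition-special.terms}. Since $K$ is a field, the centre of each factor $M_{|W|}(K)$ is $K\cdot\Id_{|W|}$, so $Z(\gotR)=\prod_{W\in\V}K\cdot\Id_{|W|}$, which is $*$-isomorphic to $\prod_{W\in\V}K$ by reading off the scalar (using $(\lambda\Id_{|W|})^*=\overline{\lambda}\Id_{|W|}$). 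In parallel, $p_E\gotR p_E=\prod_{W\in\V}e_{00}(W)M_{|W|}(K)e_{00}(W)=\prod_{W\in\V}K\cdot e_{00}(W)$, also $*$-isomorphic to $\prod_{W\in\V}K$ since each $e_{00}(W)$ is a self-adjoint projection.

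First I would define $\Psi$. For $d=\sum_{i\ge 0}\bigl(\sum_{S\in\calW_i}\lambda_S\chi_S\bigr)t^i\in\calS[[t;T]]$, set
$$\Psi(d):=\bigl(\lambda_{S(W)}\Id_{|W|}\bigr)_{W\in\V}\in Z(\gotR).$$
Well-definedness uses that each $\calW_i$ is finite (the partition $\calP$ is finite) and that the $\chi_S$, $S\in\calW_i$, are linearly independent in $\calB_0$ because the sets $S\in\calW_i$ are pairwise disjoint (Observation \ref{observation-hadamard.prod}); the coefficients $\lambda_S$ are thus uniquely determined by $d$. Linearity is immediate. Multiplicativity with respect to the Hadamard product is exactly the content of Observation \ref{observation-hadamard.prod}: both $\Psi(d\odot d')_W$ and $\Psi(d)_W\Psi(d')_W$ equal $\lambda_{S(W)}\mu_{S(W)}\Id_{|W|}$. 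Compatibility with the involution follows because the defining formula for $\overline{d}$ conjugates each coefficient and the $*$-transpose of a scalar matrix is the conjugate scalar matrix. Bijectivity is clear: an arbitrary family $(c_W)_W\in\prod_W K$ is hit by the unique formal sum whose degree-$i$ part is $\sum_{W:\,|W|=i+1}c_W\chi_{S(W)}$, and if $\Psi(d)=0$ then $\lambda_{S(W)}=0$ for all $W$, forcing $d=0$. This establishes the first $*$-isomorphism.

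For the second assertion, I would verify that multiplication by $p_E$ yields a $*$-isomorphism $Z(\gotR)\to p_E\gotR p_E$, $z\mapsto z p_E$. It is an algebra homomorphism because $p_E=p_E^2$ and every $z\in Z(\gotR)$ commutes with $p_E$:
$(z p_E)(z' p_E)=z z' p_E^2=zz' p_E$. It is $*$-preserving since $p_E=p_E^*$ is central in $Z(\gotR)$: $(z p_E)^*=p_E z^*=z^* p_E$. Componentwise it sends $c_W\Id_{|W|}$ to $c_W e_{00}(W)$, and both $K\cdot\Id_{|W|}$ and $K\cdot e_{00}(W)$ identify with $K$, giving bijectivity. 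Composing with $\Psi$ produces the claimed $*$-isomorphism $d\mapsto\Psi(d)p_E$.

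I do not expect any serious obstacle; the proof is essentially bookkeeping once the identification $\V\leftrightarrow\bigsqcup_i\calW_i$ is in hand. The only minor care needed is the degree-$0$ case, where $\calW_0$ is either empty or the singleton $\{S_0\cup T^{-1}(S_1)\}$ according to whether $E\cap T^{-1}(E)$ is empty or not; this matches exactly the presence or absence of an element $W\in\V$ of length $1$, so the bijection continues to hold in degree $0$ and both sides of the proposed isomorphisms remain in correspondence without further adjustments.
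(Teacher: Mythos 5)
Your proposal is correct and follows essentially the same route as the paper: the paper defines $\Psi$ by exactly the formula you give (writing $\Psi(d)=(\lambda_{S(W)}h_W)_W$, with $h_W$ the identity of the $W$-component), notes $Z(\gotR)=\prod_{W\in\V}K$, and passes to $p_E\gotR p_E$ via $z\mapsto p_Ez$; you have simply written out the routine verifications (well-definedness via disjointness of the $S\in\calW_i$, multiplicativity via Observation \ref{observation-hadamard.prod}, the degree-$0$ case) that the paper declares straightforward.
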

\begin{proof}
Write an element $\sum_{i \geq 0} b_i t^i \in \calS[[t;T]]$ as
$$\sum_{i \geq 0} b_i t^i = \sum_{i \geq 0} \Big( \sum_{S \in \calW_i} \lambda_S \chi_S \Big) t^i = \sum_{W \in \V} (\lambda_{S(W)} \chi_{S(W)}) t^{|W|-1}.$$
Note that $Z(\gotR) = Z(\prod_{W \in \V} M_{|W|}(K)) = \prod_{W \in \V} K$. We define a map $\Psi : \calS[[t;T]] \ra Z(\gotR)$ by
$$\Psi \Big(\sum_{W \in \V} (\lambda_{S(W)} \chi_{S(W)}) t^{|W|-1} \Big) = (\lambda_{S(W)} \cdot h_W)_W.$$
It is straightforward to check that it is indeed an isomorphism of $*$-algebras. Since $Z(\gotR) \cong p_E \gotR p_E$ through $z \mapsto p_E z$, the result follows.
\end{proof}

Our next step is to prove the following formulas, which will be useful later.

\begin{lemma}\label{lemma-formulas.computations}
For $A,B \in \calS[[t;T]]$, the following formulas hold inside $\gotR$:
\begin{align*}
p_E \cdot \pi_+(A)^* \cdot p_{T^{-1}(E)} \cdot \pi_+(B) \cdot p_E & = \Psi(\ol{A} \odot B) p_E,\\
p_{T^{-1}(E)} \cdot \pi_+(A) \cdot p_E \cdot \pi_+(B)^* \cdot p_{T^{-1}(E)} & = \Psi( A \odot \overline{B}) p_{T^{-1}(E)}.
\end{align*}
\end{lemma}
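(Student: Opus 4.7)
My approach is to compute everything component-wise in the product $\gotR=\prod_{W\in\V}M_{|W|}(K)$ and then reassemble via the isomorphism $\Psi$ from Proposition \ref{proposition-S.corner.R}. Fix $W\in\V$ of length $k=|W|$; the $W$-components of $p_E$ and $p_{T^{-1}(E)}$ are the matrix units $e_{00}(W)$ and $e_{k-1,k-1}(W)$ respectively. Therefore, for any $X,Y\in\gotR$, sandwiching in the $W$-component behaves as
\[
e_{00}(W)\,X_W\,e_{k-1,k-1}(W)=(X_W)_{0,k-1}\,e_{0,k-1}(W),\qquad e_{k-1,k-1}(W)\,Y_W\,e_{00}(W)=(Y_W)_{k-1,0}\,e_{k-1,0}(W),
\]
so the whole expression on the left-hand side reduces, in each $W$-slot, to a single scalar multiple of $e_{00}(W)$ (resp.\ $e_{k-1,k-1}(W)$) where the scalar is a product of a $(|W|-1,0)$-entry and (a conjugate of) a $(|W|-1,0)$-entry.

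The central computation will be to identify, for $A=\sum_{i\ge 0}b_it^i\in\calS[[t;T]]$ with $b_i=\sum_{S\in\calW_i}\lambda_S\chi_S$, the entry $(\pi_+(A)_W)_{k-1,0}$. Since $\pi_+$ lands in lower triangular matrices and a monomial $h_W\chi_C t^i$ is supported on the $i$-th subdiagonal, only the summand $b_{k-1}t^{k-1}$ can contribute to position $(k-1,0)$. Lemma \ref{lemma-special.terms}(i) is tailor-made for this: it says that the coefficient of $e_{k-1,0}(W)$ in $h_W\chi_S t^{k-1}$ is $1$ when $W=W(S)$ and $0$ for every other $W'$ of length $k$ when $S\in\calW_{k-1}$. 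Consequently
\[
(\pi_+(A)_W)_{k-1,0}=\lambda_{S(W)},
\]
where $S(W)\in\calW_{k-1}$ is the unique special set associated with $W$. Taking adjoints reverses positions and applies $\overline{\cdot}$, so $(\pi_+(A)^*_W)_{0,k-1}=\overline{\lambda_{S(W)}}$, and similarly for $B$.

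Putting this into the sandwich calculation gives, in the $W$-component,
\[
\bigl(p_E\,\pi_+(A)^*\,p_{T^{-1}(E)}\,\pi_+(B)\,p_E\bigr)_W=\overline{\lambda_{S(W)}}\,\mu_{S(W)}\,e_{00}(W),
\]
\[
\bigl(p_{T^{-1}(E)}\,\pi_+(A)\,p_E\,\pi_+(B)^*\,p_{T^{-1}(E)}\bigr)_W=\lambda_{S(W)}\,\overline{\mu_{S(W)}}\,e_{k-1,k-1}(W).
\]
On the other hand, by the definitions of the Hadamard product and of the involution on $\calS[[t;T]]$, the element $\overline{A}\odot B$ has coefficients $\overline{\lambda_S}\mu_S$ on the same indexing set, so under Proposition \ref{proposition-S.corner.R} the $W$-component of $\Psi(\overline{A}\odot B)p_E$ is $\overline{\lambda_{S(W)}}\mu_{S(W)}\,h_W e_{00}(W)=\overline{\lambda_{S(W)}}\mu_{S(W)}\,e_{00}(W)$; symmetrically for $\Psi(A\odot\overline{B})p_{T^{-1}(E)}$. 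Matching the two expressions on every $W\in\V$ yields the two identities.

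The only genuine obstacle is verifying carefully that no term of $A$ or $B$ other than the diagonal special one contributes to the corner entry $(k-1,0)$ of the $W$-block; this is exactly the content of Lemma \ref{lemma-special.terms}(i) for indices $i=k-1$, and the contributions with $i<k-1$ automatically land off the corner (they sit on a lower subdiagonal), while $i>k-1$ cannot occur in that slot since $\pi_+$ is lower triangular. Once this is in place, the remaining verifications are direct matrix-unit computations.
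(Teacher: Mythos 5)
Your argument is correct and is essentially the paper's own proof: both verify the identity componentwise in each $W$-block, use Lemma \ref{lemma-special.terms} to see that only the special term of degree $|W|-1$ attached to $W$ survives in the corner entry (equivalently, contributes $e_{|W|-1,0}(W)$), and then finish with a matrix-unit computation matched against $\Psi(\ol{A}\odot B)p_E$ via Proposition \ref{proposition-S.corner.R}. The only cosmetic difference is that you phrase the extraction in terms of matrix entries rather than matrix units.
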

\begin{proof}
We will only prove the first formula, being the second one analogous. Note that it is enough to prove that the $W$-component of the left-hand side and the right-hand side of the formula agree. More precisely, we have to check the equality
$$\big( p_E \cdot \pi_+(A)^* \cdot p_{T^{-1}(E)} \cdot \pi_+(B) \cdot p_E \big)_W = \big( \Psi(\ol{A} \odot B) p_E \big)_W$$
for any fixed $W \in \V$. Write $A = \sum_{i \geq 0} \Big(\sum_{S \in \calW_i} \lambda_S \chi_S \Big) t^i$ and $B = \sum_{j \geq 0} \Big(\sum_{S' \in \calW_j} \mu_{S'} \chi_{S'} \Big) t^j$. We first compute, for a fixed $S \in \calW_i$ and using Lemma \ref{lemma-special.terms}, the terms
$$e_{|W|-1,|W|-1}(W) \cdot (h_W \cdot \chi_S t^i) \cdot e_{00}(W) = \delta_{W,W(S)} e_{i0}(W) = \delta_{W,W(S)} e_{|W|-1,0}(W).\footnote{Note that the appearance of the term $\delta_{W,W(S)}$ already encodes the fact that the term is $0$ if $|W| \neq i+1$.}$$
Therefore
\begin{align*}
& \big( p_E \cdot \pi_+(A)^* \cdot p_{T^{-1}(E)} \cdot \pi_+(B) \cdot p_E \big)_W \\
& \qquad = e_{00}(W) \cdot \Big( \sum_{i \geq 0} \sum_{S \in \calW_i} \lambda_S h_W \cdot \chi_S t^i \Big)^* \cdot e_{|W|-1,|W|-1}(W) \cdot \Big( \sum_{j \geq 0} \sum_{S' \in \calW_j} \mu_{S'} h_W \cdot \chi_{S'}t^j \Big) \cdot e_{00}(W) \\
& \qquad = e_{00}(W) \cdot \big(\lambda_{S(W)} e_{|W|-1,0}(W) \big)^* \cdot e_{|W|-1,|W|-1}(W) \cdot \big(\mu_{S(W)} e_{|W|-1,0}(W) \big) \cdot e_{00}(W) \\
& \qquad = \ol{\lambda_{S(W)}} \mu_{S(W)} e_{00}(W) = \big( \Psi(\ol{A} \odot B) p_E \big)_W,
\end{align*}
so the result follows.
\end{proof}

We now proceed to generalize the above formulas. For this purpose, we first define an idempotent map $P : \calB_0[[t;T]] \ra \calS[[t;T]]$ as follows.

\begin{lemma}\label{lemma-the.idempotent.P}
 With the above notation, there exists an idempotent linear map $P : \calB_0[[t;T]] \ra \calS[[t;T]]$ such that for each $x \in \calB_0[[t;T]]$, we have
 $$p_{T^{-1}(E)} \cdot \pi_+(x) \cdot p_E = p_{T^{-1}(E)} \cdot \pi_+(P(x)) \cdot p_E.$$
\end{lemma}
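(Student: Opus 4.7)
The plan is to define $P$ by extracting, for each $W\in\V$, the bottom-left matrix coefficient of the block $h_W\cdot \pi_+(x)$ and reassembling these scalars into a canonical element of $\calS[[t;T]]$. The guiding intuition is that the operation $y\mapsto p_{T^{-1}(E)}\cdot y\cdot p_E$ on $\gotR$ kills everything except the $(|W|-1,0)$-entry in each matrix block, and by Lemma \ref{lemma-special.terms} the special terms $\chi_S t^i$ ($S\in\calW_i$) are precisely those elements of $\calB_0[[t;T]]$ whose image under $\pi_+$ hits those corner positions in a one-to-one fashion.

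Concretely, for each $W\in\V$ I will introduce the linear functional $\ell_W\colon \calB_0[[t;T]]\to K$ sending $x=\sum_{i\ge 0} b_i t^i$ to the $(|W|-1,0)$-entry of $h_W\cdot\pi_+(x)\in M_{|W|}(K)$. This is well-defined on formal series because $h_W\cdot(\chi_{X\setminus E}t)^i=0$ for $i\ge|W|$, so only finitely many summands contribute; moreover $\ell_W(x)\,e_{|W|-1,0}(W)$ is exactly the $W$-component of $p_{T^{-1}(E)}\cdot \pi_+(x)\cdot p_E$. Then I will set
\begin{equation*}
P(x):=\sum_{W\in\V}\ell_W(x)\,\chi_{S(W)}\,t^{|W|-1},
\end{equation*}
where $S(W)\in\calW_{|W|-1}$ is the special set associated to $W$ (and in particular $S(W)=S_0\cup T^{-1}(S_1)$ when $|W|=1$, which only occurs if $E\cap T^{-1}(E)\ne\emptyset$). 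Since $\calP$ is finite, for each fixed $i$ there are only finitely many $W\in\V$ with $|W|=i+1$, so the coefficient of $t^i$ is a finite linear combination of $\chi_S$ with $S\in\calW_i$; hence $P(x)\in\calS[[t;T]]$ and $P$ is linear.

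The required identity then reduces to checking, for each $W\in\V$, that the $W$-component of $p_{T^{-1}(E)}\cdot \pi_+(P(x))\cdot p_E$ equals $\ell_W(x)\,e_{|W|-1,0}(W)$. By Lemma \ref{lemma-special.terms}, among the special terms $\chi_{S(W')}t^{|W'|-1}$ appearing in $P(x)$, only $W'=W$ contributes a nonzero $(|W|-1,0)$-entry in the $W$-block, and its contribution is exactly $\ell_W(x)\,e_{|W|-1,0}(W)$. Idempotence follows from the computation $\ell_W(\chi_{S(W')}t^{|W'|-1})=\delta_{W,W'}$, again by Lemma \ref{lemma-special.terms}, which gives $\ell_W(P(x))=\ell_W(x)$ and hence $P\circ P=P$.

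The construction is essentially forced by Lemma \ref{lemma-special.terms}, so there is no deep obstacle; the main care point is the degree-zero case. One has to match the index conventions (the special element $\chi_{S_0\cup T^{-1}(S_1)}$ corresponds to the unique $W=E\cap T^{-1}(E)$ of length one, and both $\calW_0$ and this set of length-one $W$'s are empty precisely when $E\cap T^{-1}(E)=\emptyset$), and Lemma \ref{lemma-special.terms}(iii) is the relevant ingredient ensuring that the formula $P(x)=\sum_{W}\ell_W(x)\chi_{S(W)}t^{|W|-1}$ behaves correctly and uniformly across both cases.
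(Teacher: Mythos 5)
Your construction is correct, but it takes a genuinely different route from the paper's. The paper builds $P$ algebraically: for each $i$ it exhibits an explicit direct-sum decomposition $\calB_i = V_i \oplus V_i'$, where $V_i = \operatorname{span}\{\chi_S \mid S\in\calW_i\}$ and $V_i'$ is spanned by the characteristic functions of the remaining basic clopen sets (this requires a nontrivial combinatorial argument splitting the sets with $s=i$, $r=0$ but $T^i(E)\cap C\cap T^{-1}(E)=\emptyset$ into complementary pieces, plus a separate discussion of degree $0$ according to whether $E\cap T^{-1}(E)$ is empty); $P$ is then the projection onto $\prod_i V_it^i$ along $\prod_i V_i't^i$, and the displayed identity is verified by checking that every $\chi_Ct^i$ of the complementary type is killed by the compression $p_{T^{-1}(E)}(\cdot)\,p_E$. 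You instead define $P$ representation-theoretically, by reading off the $(|W|-1,0)$-entries $\ell_W(x)$ of the blocks of $\pi_+(x)$ and reassembling them through the bijection $W\leftrightarrow S(W)$; the identity then holds essentially by construction, and both it and idempotence reduce to the orthogonality relations $\ell_W(\chi_{S(W')}t^{|W'|-1})=\delta_{W,W'}$ supplied by Lemma \ref{lemma-special.terms}. Your route is shorter and bypasses the combinatorial decomposition entirely; what the paper's route buys is the explicit complement $V_i'$, i.e.\ an explicit description of $\ker P$, which is what is actually invoked afterwards (Proposition \ref{proposition-formulas.proj} computes $P(s^i)$ and $P(1)$ ``from the computations done in'' this lemma). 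The two maps in fact coincide -- your $\ell_W$ annihilates every $\chi_Ct^i$ with $C$ of type $(*)_i$ by the same corner computation the paper performs at the end of its proof -- so nothing downstream breaks, but with your definition those later evaluations should be redone directly in terms of $\ell_W$ (they are equally easy). The one point you pass over quickly is the length-one blocks: a degree-zero special term is a function and hence acts diagonally in every block, so it cannot contribute to the off-diagonal corner entry when $|W|\ge 2$, which is what makes the $\delta_{W,W'}$ relation uniform across the case $|W|=1$; this is immediate but worth saying, since Lemma \ref{lemma-special.terms}(iii) as stated only records the two diagonal entries.
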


\begin{proof}
For $i \geq 1$, let $V_i$ be the linear subspace of $\calB_i$ given by $\text{span}\{ \chi_S \mid S \in \calW_i\}$, and let $V_i'$ be the linear subspace of $\calB_i$ spanned by all the projections $\chi_C$, where 
\begin{align*}
(*)_i \quad \,  & C \text{ is a non-empty clopen subset of $X$ of the form \eqref{equation-form.of.bs}, with $s\ge i$,}\\
    & \text{ and such that either $s>i$ or $r>0$.}
\end{align*}
\begin{enumerate}[label=\underline{Claim \arabic*}:,leftmargin=*,labelindent=0em]
\item $\calB_i = V_i + V'_i$.
\begin{spaceleft}
\underline{Proof:} Recall that $\calB_i$ is spanned by all the characteristic functions $\chi_C$, where $C$ is a clopen subset of $X$ of the form \eqref{equation-form.of.bs} with $s \geq i$. If $s > i$ or $r > 0$, then $C$ is of the form $(*)_i$, so that $\chi_C \in V'_i$. So we can assume that $s = i$ and $r = 0$. Furthermore, if $T^i(E) \cap C \cap T^{-1}(E)$ is non-empty, then $C \in \calW_i$ and so $\chi_C \in V_i$. So we can further assume that $T^i(E) \cap C \cap T^{-1}(E) = \emptyset$.

We can write
$$C = \Big(T^i(E) \cap C \Big) \sqcup \Big(\bigsqcup_{Z \in \calP} T^i(Z) \cap C \Big).$$
If $T^i(E) \cap C = \emptyset$, then $\chi_C$ is a sum of terms of the form $(*)_i$, so that $\chi_C \in V_i'$. If $T^i(E) \cap C \neq \emptyset$, we can further decompose $C$ as
\begin{align*}
C & = \Big( \bigsqcup_{Z \in \calP} T^i(E) \cap C \cap T^{-1}(Z) \Big) \sqcup \Big(\bigsqcup_{Z \in \calP} T^i(Z) \cap C \Big)
\end{align*}
by using the assumption $T^i(E) \cap C \cap T^{-1}(E) = \emptyset$. Note that for each $Z \in \calP$, either $C \cap T^{-1}(Z)$ is empty or it is of the form $(*)_i$; in the latter case we can write any non-empty $T^i(E) \cap C \cap T^{-1}(Z)$ as 
$$T^i(E) \cap C \cap T^{-1}(Z) = \Big( C \cap T^{-1}(Z) \Big) \Big\backslash \Big( \bigsqcup_{Z' \in \calP} T^i(Z') \cap C \cap T^{-1}(Z) \Big).$$
Therefore $\chi_C$ is a linear combination of terms of the form $(*)_i$, and thus $\chi_C \in V'_i$.\quad\qedsymbol
\end{spaceleft}

\item $V_i \cap V'_i = \{0\}$.
\begin{spaceleft}
\underline{Proof:} Assume that $b \in V_i \cap V_i'$ and write $b= \sum_{S \in \calW_i} \lambda_S \chi_S$, with $\lambda_S\in K$. Since $b \in V_i'$ we have
$$ 0 = \Big( \sum_{S' \in \calW_i} \chi_{T^i(E) \cap S' \cap T^{-1}(E)} \Big) b = \sum_{S \in \calW_i} \lambda_S \chi_{T^i(E) \cap S \cap T^{-1}(E)}.$$
Since $\{\chi_{T^i(E)\cap S \cap T^{-1}(E)} \}_{S \in \calW_i}$ is a family of mutually orthogonal non-zero projections, we get that $b = 0$.\quad\qedsymbol
\end{spaceleft}
\end{enumerate}
Therefore $\calB_i = V_i \oplus V'_i$ for $i \geq 1$. In the base case $i=0$, we need to distinguish between two different scenarios, depending on whether the intersection $E \cap T^{-1}(E)$ is empty or not. 
\begin{enumerate}[label=\underline{Case \arabic*}:,leftmargin=*,labelindent=0em]
\item $E \cap T^{-1}(E) = \emptyset$.
\begin{spaceleft}
In this case we have $\calW_0=\emptyset$. We take $V'_0= \calB_0$ and $V_0=\{0\}$.
\end{spaceleft}
\item $E \cap T^{-1}(E) \neq \emptyset$.
\begin{spaceleft}
Define here $V'_0$ to be the linear subspace of $\calB_0$ spanned by all the projections $\chi_C$, where
$$(*)_0 \quad C \text{ is a non-empty clopen subset of $X$ of the form \eqref{equation-form.of.bs}, }$$
and let $V_0 = K \cdot \chi_{S_0 \cup T^{-1}(S_1)}$ and $V_0''= K \cdot \chi_{E \cap T^{-1}(E)}$. Analogous computations as in the case for $i \geq 1$ show that there is a decomposition $\calB_0 = V_0'' \oplus V'_0$. In particular, $V'_0$ has codimension $1$, so in order to obtain the decomposition $\calB_0 = V_0 \oplus V'_0$ it is enough to show that $\chi_{S_0 \cup T^{-1}(S_1)} \notin V'_0$. But if $\chi_{S_0 \cup T^{-1}(S_1)} \in V'_0$, we could write it as
$$\chi_{S_0 \cup T^{-1}(S_1)} = \sum_{r,s \geq 0} \sum_{\text{some } Z_i} \lambda_{r,s,Z_i} \chi_{T^{-r}(Z_{-r}) \cap \cdots \cap T^{-1}(Z_{-1}) \cap Z_0 \cap \cdots \cap T^{s-1}(Z_{s-1})},$$
and by multiplying the above equality by $\chi_{E \cap T^{-1}(E)}$ we would get 
$$\chi_{E \cap T^{-1}(E)} = \chi_{E \cap T^{-1}(E)} \cdot \chi_{S_0 \cup T^{-1}(S_1)} = 0,$$
a contradiction. Hence $\chi_{S_0 \cup T^{-1}(S_1)} \notin V'_0$, and we have the desired decomposition.
\end{spaceleft}
\end{enumerate}
We can now define $P$ as the projection onto the first component in the decomposition
$$\calB_0[[t;T]] =  \Big( \prod_{i \geq 0} V_i t^i \Big) \oplus \Big( \prod_{i \geq 0} V'_i t^i \Big) = \calS[[t;T]] \oplus \Big( \prod_{i \geq 0} V'_i t^i \Big).$$
We check the formula in the statement. Take $x = \sum_{i \geq 0} b_i t^i \in \calB_0[[t;T]]$. We can write it as
$$x = P(x) + \sum_{i \geq 0} \sum_{\substack{C \text{ as in } (*)_i \\  }} \lambda_C \chi_C t^i$$
in the case $E \cap T^{-1}(E) \neq \emptyset$, and as
$$x = P(x) + b_0 + \sum_{i \geq 1} \sum_{\substack{C \text{ as in } (*)_i \\  }} \lambda_C \chi_C t^i$$
if $E \cap T^{-1}(E) = \emptyset$. Note that in the latter case $\chi_{T^{-1}(E)} \cdot b_0 \cdot \chi_E = 0$; hence to prove the formula of the statement it is enough to check that, for a fixed $W = E \cap T^{-1}(Z_1) \cap \cdots \cap T^{-k+1}(Z_{k-1}) \cap T^{-k}(E) \in \V$ and $C$ of the form $(*)_i$, we have $h_W \cdot \chi_{T^{-1}(E)} \cdot \chi_C t^i \cdot \chi_E = 0$. We compute
$$h_W \cdot \chi_{T^{-1}(E)} \cdot \chi_C t^i \cdot \chi_E = \chi_{T^{k-1}(W) \cap T^{-1}(E) \cap C \cap T^i(E)} t^i.$$
This is zero for $C$ of the form $(*)_i$, since either $C \subseteq T^i(X \backslash E)$ or $C \subseteq T^{-1}(X \backslash E)$. The result follows.
\end{proof}

We can now generalize the formulas in Lemma \ref{lemma-formulas.computations}.

\begin{lemma}\label{lemma-formulas.computations.II}
For $x, y \in \calB_0[[t;T]]$, the following formulas hold:
\begin{align*}
p_E \cdot \pi_+(x)^* \cdot p_{T^{-1}(E)} \cdot \pi_+(y) \cdot p_E & = \Psi( \overline{P(x)} \odot P(y)) p_E,\\
p_{T^{-1}(E)} \cdot \pi_+(x) \cdot p_E \cdot \pi_+(y)^* \cdot p_{T^{-1}(E)} & = \Psi( P(x) \odot \overline{P(y)}) p_{T^{-1}(E)}.
\end{align*}
\end{lemma}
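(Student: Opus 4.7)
The plan is to reduce both formulas to the already-proven Lemma \ref{lemma-formulas.computations}, using the defining property of the projection map $P : \calB_0[[t;T]] \to \calS[[t;T]]$ established in Lemma \ref{lemma-the.idempotent.P}.

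First I would take adjoints in $\gotR$ of the identity
\[
p_{T^{-1}(E)} \cdot \pi_+(x) \cdot p_E = p_{T^{-1}(E)} \cdot \pi_+(P(x)) \cdot p_E
\]
provided by Lemma \ref{lemma-the.idempotent.P}. Since $p_E$ and $p_{T^{-1}(E)}$ are self-adjoint projections (they are images under $\pi$ of the self-adjoint projections $\chi_E$ and $\chi_{T^{-1}(E)}$), and $*$ is an anti-homomorphism of $\gotR$, this yields the companion identity
\[
p_E \cdot \pi_+(x)^* \cdot p_{T^{-1}(E)} = p_E \cdot \pi_+(P(x))^* \cdot p_{T^{-1}(E)}.
\]

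Then, for the first formula, I would apply these two identities (one for each factor) to the expression $p_E \cdot \pi_+(x)^* \cdot p_{T^{-1}(E)} \cdot \pi_+(y) \cdot p_E$, first splitting it as $(p_E \cdot \pi_+(x)^* \cdot p_{T^{-1}(E)}) \cdot (p_{T^{-1}(E)} \cdot \pi_+(y) \cdot p_E)$ via $p_{T^{-1}(E)}^2 = p_{T^{-1}(E)}$, and substituting $P(x), P(y)$ for $x, y$ in the two factors respectively. This rewrites the left-hand side as
\[
p_E \cdot \pi_+(P(x))^* \cdot p_{T^{-1}(E)} \cdot \pi_+(P(y)) \cdot p_E.
\]
Since $P(x), P(y) \in \calS[[t;T]]$, Lemma \ref{lemma-formulas.computations} applies directly and gives exactly $\Psi(\overline{P(x)} \odot P(y)) p_E$, as desired. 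The second formula is obtained by a completely symmetric argument: insert $p_E^2 = p_E$ in the middle, replace $\pi_+(x)$ and $\pi_+(y)^*$ by their $P$-images via Lemma \ref{lemma-the.idempotent.P} (and its adjoint), and then invoke the second formula of Lemma \ref{lemma-formulas.computations}.

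The argument is essentially mechanical once the adjoint version of Lemma \ref{lemma-the.idempotent.P} is in hand, so there is no real obstacle; the only point that requires a moment's care is verifying that taking adjoints of the defining identity for $P$ is legitimate, which is immediate from the self-adjointness of $p_E$ and $p_{T^{-1}(E)}$ in the $*$-regular ring $\gotR$.
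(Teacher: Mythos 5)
Your argument is correct and is essentially identical to the paper's own proof: both take adjoints of the identity from Lemma \ref{lemma-the.idempotent.P}, replace $x$ and $y$ by $P(x)$ and $P(y)$ using idempotency of the projections, and then invoke Lemma \ref{lemma-formulas.computations}. No issues.
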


\begin{proof}
By Lemma \ref{lemma-the.idempotent.P} we have $p_{T^{-1}(E)} \cdot \pi_+(x) \cdot p_E = p_{T^{-1}(E)} \cdot \pi_+(P(x)) \cdot p_E$ for all $x \in \calB_0[[t;T]]$. Taking the involution on both sides, we get that $p_E \cdot \pi_+(x)^* \cdot p_{T^{-1}(E)} = p_{E} \cdot \pi_+(P(x))^* \cdot p_{T^{-1}(E)}$ for all $x \in \calB_0[[t;T]]$. Now we obtain, from Lemma \ref{lemma-formulas.computations},
$$p_E \cdot \pi_+(x)^* \cdot p_{T^{-1}(E)} \cdot \pi_+(y) \cdot p_E = p_E \cdot \pi_+(P(x))^* \cdot p_{T^{-1}(E)} \cdot \pi_+(P(y)) \cdot p_E = \Psi(\overline{P(x)}\odot P(y)) p_E,$$
as desired. The proof of the other equality is similar.
\end{proof}

Recall that $\calS[[t;T]]$ is a unital $*$-regular commutative algebra under the Hadamard product $\odot$, the unit being the element $e = \sum_{i \geq 0} (\sum _{S \in \calW_i} \chi_S) t^i$. We obtain:
 
\begin{proposition}\label{proposition-formulas.proj}
Take $s = \sum_{Z \in \calP} \chi_Z t= \chi_{X \setminus E}t \in \calB_1t$. Then $u := (1-s)^{-1}= 1+s+s^2+ \cdots \in \calD_+$ satisfies that $P(u)= e$, where $e$ is the unit element of $(\calS[[t;T]], \odot)$. As a consequence, we have the formulas
\begin{align*}
p_E \cdot \pi_+(u)^* \cdot p_{T^{-1}(E)} \cdot \pi_+(x) \cdot p_E & = \Psi(P(x)) p_E,\\
p_{T^{-1}(E)} \cdot \pi_+(x) \cdot p_E \cdot \pi_+(u)^* \cdot p_{T^{-1}(E)} & = \Psi(P(x)) p_{T^{-1}(E)}
\end{align*}
for all $x \in \calB_0[[t;T]]$. In particular, inside $\calR_{\calB}$, the ideal generated by $p_E$ coincides with the ideal generated by $p_{T^{-1}(E)}$.
\end{proposition}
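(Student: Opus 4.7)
My plan: the heart of the proposition is the identity $P(u) = e$; once this is established, the two formulas and the ideal equality follow immediately from Lemma \ref{lemma-formulas.computations.II} by substituting $y = u$ or $x = u$.

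First, since $1 - s = 1 - \chi_{X \setminus E}t \in \calB_+$ has constant coefficient $1 \in \calB_0$, Lemma \ref{lemma-invertibility.PS} gives its invertibility in $\calB_0[[t;T]]$ with inverse $u = 1 + s + s^2 + \cdots$; as $\calD_+$ is the division closure of $\calB_+$ in $\calB_0[[t;T]]$, we obtain $u \in \calD_+$.

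For $P(u) = e$ I would use Lemma \ref{lemma-the.idempotent.P}, which gives $p_{T^{-1}(E)} \cdot \pi_+(u) \cdot p_E = p_{T^{-1}(E)} \cdot \pi_+(P(u)) \cdot p_E$. Since $P(u) - e \in \calS[[t;T]]$, it suffices to show that the compression map $\calS[[t;T]] \to \gotR$, $A \mapsto p_{T^{-1}(E)}\pi_+(A)p_E$, is injective and that $p_{T^{-1}(E)}\pi_+(u)p_E = p_{T^{-1}(E)}\pi_+(e)p_E$. For the injectivity and the value on $e$: using Lemma \ref{lemma-special.terms} and the pairwise disjointness of $T^l(W)$ for $0 \leq l \leq |W|-1$, the $W$-component of the compression of $A = \sum_i\sum_{S \in \calW_i}\lambda_S\chi_St^i$ equals $\lambda_{S(W)}\,e_{|W|-1,0}(W)$; taking all $\lambda_S = 1$ then yields $p_{T^{-1}(E)}\pi_+(e)p_E = (e_{|W|-1,0}(W))_W$. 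For $\pi_+(u)$: in each factor $M_{|W|}(K)$, $\pi_+(s)_W = e_{10}(W) + \cdots + e_{|W|-1,|W|-2}(W)$ is a nilpotent shift of index $|W|$, so $\pi_+(u)_W = \sum_{k=0}^{|W|-1}\pi_+(s)_W^k$ is the lower-triangular all-ones matrix, whose $p_{T^{-1}(E)}(-)p_E$ compression is also $e_{|W|-1,0}(W)$. Comparing yields $P(u) = e$.

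Applying Lemma \ref{lemma-formulas.computations.II} with $y = u$ (for the first formula) and $x = u$ (for the second), and using $\overline{P(u)} = \overline{e} = e$ together with the fact that $e$ is the unit of $(\calS[[t;T]], \odot)$, the product $\overline{P(u)} \odot P(x)$ (resp.\ $P(x) \odot \overline{P(u)}$) collapses to $P(x)$, which gives the two displayed formulas. For the ideal claim, substitute $x = u$ into each formula and use $\Psi(e) = 1_{\gotR}$ to obtain $p_E = p_E \pi_+(u)^* p_{T^{-1}(E)} \pi_+(u) p_E$ and $p_{T^{-1}(E)} = p_{T^{-1}(E)} \pi_+(u) p_E \pi_+(u)^* p_{T^{-1}(E)}$. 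Since $\pi_+(u), \pi_+(u)^* \in \calR_{\calB}$ by Proposition \ref{proposition-charac.division.closure}(iii) and the $*$-closure of $\calR_{\calB}$, each of $p_E, p_{T^{-1}(E)}$ lies in the two-sided ideal of $\calR_{\calB}$ generated by the other, giving the equality of ideals. The main obstacle is the compression identity $p_{T^{-1}(E)}\pi_+(e)p_E = (e_{|W|-1,0}(W))_W$: when $|W| = i+1$ but $W \neq W(S)$ one must invoke the ``moreover'' part of Lemma \ref{lemma-special.terms}(i), while when $|W| \neq i+1$ the compression vanishes by the quasi-partition disjointness of the $T^l(W)$; the rest is routine bookkeeping.
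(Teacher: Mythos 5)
Your proposal is correct, and the overall skeleton (reduce everything to $P(u)=e$, then apply Lemma \ref{lemma-formulas.computations.II} and specialize to $x=u$ for the ideal statement) matches the paper; but your proof of the key identity $P(u)=e$ takes a genuinely different route. The paper works entirely on the domain side: it expands $s^i=\sum_{Z_0,\dots,Z_{i-1}\in\calP}\chi_{Z_0\cap T(Z_1)\cap\cdots\cap T^{i-1}(Z_{i-1})}t^i$ and invokes the decomposition $\calB_i=V_i\oplus V_i'$ from the proof of Lemma \ref{lemma-the.idempotent.P} to read off $P(s^i)=\sum_{S\in\calW_i}\chi_S t^i$ directly, handling $P(1)$ separately in the two cases $E\cap T^{-1}(E)=\emptyset$ and $\neq\emptyset$. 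You instead work on the image side: you observe that the compression $A\mapsto p_{T^{-1}(E)}\pi_+(A)p_E$ is injective on $\calS[[t;T]]$ (its $W$-component extracts $\lambda_{S(W)}e_{|W|-1,0}(W)$, which is in effect Proposition \ref{proposition-S.corner.R} combined with Lemma \ref{lemma-special.terms}), compute $\pi_+(u)_W$ explicitly as the all-ones lower-triangular matrix via the nilpotent shift $u_W$, and conclude by comparing compressions through Lemma \ref{lemma-the.idempotent.P}. Your route buys a cleaner argument that avoids re-entering the combinatorics of the $V_i\oplus V_i'$ splitting (only the statement of Lemma \ref{lemma-the.idempotent.P} is used, not its proof), at the cost of needing the injectivity of the corner map and the case bookkeeping for the degree-zero term and the length-one $W$; the paper's computation of $P(s^i)$ is more self-contained on the power-series side and makes the formula $P(s^i)=\sum_{S\in\calW_i}\chi_S t^i$ explicitly available, which is reused elsewhere. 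Both arguments are complete and correct.
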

\begin{proof}
We first note that
$$s^i = (\chi_{X \backslash E}t)^i = \chi_{X \backslash (E \cup \cdots \cup T^{i-1}(E))} t^i = \sum_{Z_0, Z_1, ..., Z_{i-1} \in \calP} \chi_{Z_0 \cap T(Z_1) \cap \cdots \cap T^{i-1}(Z_{i-1})} t^i$$
for $i \geq 1$. It follows from this formula and the computations done in Lemma \ref{lemma-the.idempotent.P} that $P(s^i) = \sum_{S \in \calW_i} \chi_S t^i$. To compute $P(1)$, note first that $P(1)=0$ if $E\cap T^{-1}(E)=\emptyset$. So assume that $E\cap T^{-1}(E)\ne \emptyset$, and observe that
$$1 = \chi_{S_0 \cup T^{-1}(S_1)} + \chi_{(X \backslash S_0) \cap (X \backslash T^{-1}(S_1))} = \chi_{S_0 \cup T^{-1}(S_1)} + \sum_{\substack{Z \in \calP \\ Z \cap T^{-1}(E) = \emptyset}} \sum_{\substack{Z' \in \calP \\ T^{-1}(Z') \cap E = \emptyset}} \chi_{Z \cap T^{-1}(Z')}.$$
By definition, the second part of this expression belongs to the complement $V'_0$, so that $P(1) = \chi_{S_0 \cup T^{-1}(S_1)}$. Putting everything together, it is clear that $P(u) = e$. The desired formulas follow from Lemma \ref{lemma-formulas.computations.II}. In particular, we have
$$p_E \cdot \pi_+(u)^* \cdot p_{T^{-1}(E)} \cdot \pi_+(u) \cdot p_E = \Psi(P(u)) p_E = \Psi(e) p_E = p_E,$$
$$p_{T^{-1}(E)} \cdot \pi_+(u) \cdot p_E \cdot \pi_+(u)^* \cdot p_{T^{-1}(E)} = \Psi(P(u)) p_{T^{-1}(E)} = \Psi(e) p_{T^{-1}(E)} = p_{T^{-1}(E)}.$$
One deduces from this and Proposition \ref{proposition-charac.division.closure} $(iii)$ that the ideal generated by $p_E$ coincides with the ideal generated by $p_{T^{-1}(E)}$ inside $\calR_{\calB}$.
\end{proof}

We now define $*$-algebras $\calQ$ and $\calE$ which resemble the ones defined for a particular case in \cite[Lemma 6.10]{AG} (see Theorem \ref{theorem-computation.*.reg.closure.0} for the exact relation between these constructions). The algebra $\calQ$ is $*$-regular, and the $*$-algebra $\calE$ is contained in the $*$-regular closure $\calR_{\calB}$ of $\calB$ in $\gotR$.

\begin{definition}\label{definition-algebraQ}
With the above notation, we define the $*$-algebra $\calQ$ as the $*$-regular closure of $P(\calD_+)$ in the $*$-regular algebra $(\calS[[t;T]], \odot, -)$. In other words, $\calQ$ is the smallest $*$-regular subalgebra of $\calS[[t;T]]$ containing $P(\calD_+)$. The $*$-algebra $\calE$ is defined as the subalgebra of $\gotR$ generated by $\calD$ and $\Psi(\calQ )p_E$. 
\end{definition}

Let $x \in \calD_+ \subseteq \calB_0[[t;T]]$, $x = \sum_{i \geq 0}b_i(x) t^i$. By Lemma \ref{lemma-the.idempotent.P}, we can decompose each $b_i(x) = b_i^{(1)}(x) + b_i^{(2)}(x) \in V_i \oplus V_i'$, being $b_i^{(1)}(x) = \sum_{S \in \calW_i} \lambda_S(x) \chi_S$. Then the relative inverse of $q := P(x) = \sum_{i \geq 0} \Big( \sum_{S \in \calW_i}\lambda_S(x)\chi_S \Big) t^i$ inside $\calQ$ is given by (see Observation \ref{observation-hadamard.prod})
$$\ol{q} = \sum_{i \geq 0} \Big( \sum_{\substack{S \in \calW_i \\ \lambda_S(x) \neq 0}} \lambda_S(x)^{-1} \chi_S \Big) t^i,$$
so that $q\ol{q} = \sum_{i \geq 0} \Big( \sum_{\substack{S \in \calW_i \\ \lambda_S(x) \neq 0}}\chi_S \Big) t^i$.

We are now ready to show the desired properties of the $*$-algebras $\calQ$ and  $\calE$.

\begin{proposition}\label{proposition-algebraQ.inside.calR}
We have an embedding $\Psi(\calQ) p_E \subseteq p_E \calR_{\calB} p_E$.
Moreover we have 
$$\calB\subseteq \calD \subseteq \calE \subseteq \calR_{\calB},$$ where $\calR_{\calB}$ is the $*$-regular closure of $\calB$ in $\gotR$.
\end{proposition}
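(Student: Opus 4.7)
The plan is to handle the chain $\calB\subseteq \calD\subseteq \calE\subseteq \calR_{\calB}$ first, then establish the embedding $\Psi(\calQ)p_E \subseteq p_E\calR_{\calB}p_E$ as the main technical step, and finally deduce $\calE\subseteq \calR_{\calB}$ from the preceding inclusions.

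For $\calB\subseteq \calD$, I would simply unpack the definitions: $\pi_+(\calB_+)\subseteq \pi_+(\calD_+)\subseteq \calD$ by Definition \ref{definition-divisions.closure}, and applying the involution to the first inclusion (using that $\calD$ is a $*$-subalgebra and that $\pi_+(\calB_+)^*=\pi_-(\calB_-)$, as noted in the proof of Proposition \ref{proposition-charac.division.closure}(iv)) yields $\pi_-(\calB_-)\subseteq \calD$; together these cover all of $\pi(\calB)$. The inclusion $\calD\subseteq \calR_{\calB}$ is immediate from Proposition \ref{proposition-charac.division.closure}(iii) together with the fact that $\calR_{\calB}$ is a $*$-algebra, so it contains the $*$-subalgebra generated by $\pi_+(\calD_+)$.

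The central point is to prove $\Psi(\calQ)p_E\subseteq p_E\calR_{\calB}p_E$. First I would observe that $p_E\calR_{\calB}p_E$ is a $*$-regular subring of the corner $*$-regular ring $p_E\gotR p_E$ (corners of $*$-regular rings are $*$-regular). Next, by Proposition \ref{proposition-formulas.proj}, for every $x\in \calD_+$ we have
\[
\Psi(P(x))\,p_E \;=\; p_E\cdot \pi_+(u)^*\cdot p_{T^{-1}(E)}\cdot \pi_+(x)\cdot p_E,
\]
where $u=(1-s)^{-1}\in \calD_+$. Since $\pi_+(u),\pi_+(x)\in \calR_{\calB}$ by Proposition \ref{proposition-charac.division.closure}(iii), and $p_E,p_{T^{-1}(E)}\in \pi(\calB)\subseteq \calR_{\calB}$, the right-hand side lies in $p_E\calR_{\calB}p_E$. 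Thus $\Psi(P(\calD_+))p_E\subseteq p_E\calR_{\calB}p_E$. Now recall from Proposition \ref{proposition-S.corner.R} that $\Psi(-)p_E$ is a $*$-algebra isomorphism from $(\calS[[t;T]],\odot,-)$ onto $p_E\gotR p_E$. Under this isomorphism, the $*$-regular closure of $P(\calD_+)$ inside $\calS[[t;T]]$, which by Definition \ref{definition-algebraQ} is exactly $\calQ$, corresponds to the $*$-regular closure of $\Psi(P(\calD_+))p_E$ inside $p_E\gotR p_E$. Since $p_E\calR_{\calB}p_E$ is a $*$-regular subring of $p_E\gotR p_E$ containing $\Psi(P(\calD_+))p_E$, it must contain the $*$-regular closure of the latter, namely $\Psi(\calQ)p_E$.

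Finally, for $\calE\subseteq \calR_{\calB}$: by Definition \ref{definition-algebraQ}, $\calE$ is the subalgebra of $\gotR$ generated by $\calD$ and $\Psi(\calQ)p_E$. Since both generating sets have just been shown to lie inside the $*$-subalgebra $\calR_{\calB}$ of $\gotR$, we get $\calE\subseteq \calR_{\calB}$. The main obstacle, and where care is required, is the transfer step where the $*$-regular closure is computed inside $\calS[[t;T]]$ with the Hadamard product but then realized inside $p_E\calR_{\calB}p_E$ with the ordinary product of $\gotR$; the bridge is precisely the $*$-isomorphism $\Psi(-)p_E$ of Proposition \ref{proposition-S.corner.R}, which guarantees that both notions of $*$-regular closure agree under transport.
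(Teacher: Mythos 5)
Your proposal is correct and follows essentially the same route as the paper's proof: the key formula from Proposition \ref{proposition-formulas.proj} expressing $\Psi(P(x))p_E$ as a product of elements of $\calR_{\calB}$, followed by transport of the $*$-regular closure through the $*$-isomorphism $d\mapsto \Psi(d)p_E$ of Proposition \ref{proposition-S.corner.R} into the $*$-regular corner $p_E\calR_{\calB}p_E$. The only differences are cosmetic: you spell out the inclusion $\calB\subseteq\calD$ and the $*$-regularity of the corner, which the paper leaves implicit.
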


\begin{proof}
Let $x \in \calD_+$, and set $u = (1-s)^{-1} \in \calD_+$. Due to Proposition \ref{proposition-formulas.proj}, we have
$$\Psi(P(x)) p_E = p_E \cdot \pi_+(u)^* \cdot p_{T^{-1}(E)} \cdot \pi_+(x) \cdot p_E.$$
By Proposition \ref{proposition-charac.division.closure}, we have that $\pi_+(\calD_+) \subseteq \calR_{\calB}$, so all the factors of the right-hand side of the above equality belong to $\calD\subseteq \calR_{\calB}$.  It follows that $\Psi(P(x)) p_E \in p_E \calR_{\calB} p_E$, and so $\Psi(P(\calD_+)) p_E \subseteq p_E \calR_{\calB} p_E$.

By Proposition \ref{proposition-S.corner.R}, the map $d \mapsto \Psi(d) p_E$ defines a $*$-isomorphism from $\calS[[t;T]]$ onto $p_E \gotR p_E$. It follows that $\Psi(\calQ) p_E$ is the $*$-regular closure of $\Psi(P(\calD_+)) p_E$ in $p_E \gotR p_E$.
Since  $\Psi(P(\calD_+)) p_E \subseteq p_E\calR_{\calB} p_E$ and $p_E\calR_{\calB}p_E$ is $*$-regular, we conclude that  $\Psi(\calQ) p_E\subseteq p_E\calR_{\calB}p_E\subseteq \calR_{\calB}$. From this it is obvious that $\calE \subseteq \calR_{\calB}$. This shows the result.
\end{proof}

We will show in Theorem \ref{theorem-computation.*.reg.closure.0} that we have the equality $\calE= \calR_{\calB}$ for the algebra $\calB$ studied in \cite[Section 6]{AG}.

\section{Group algebras arising as \texorpdfstring{$\Z$}{}-crossed product algebras}\label{section-group.alg}

We start this section by first showing how the Fourier transform (sometimes called Pontryagin duality) describes the group algebras of several semidirect product groups as $\Z$-crossed product algebras. In this setting, the Atiyah problem for the group algebra is translated to a problem on computing ranks inside the corresponding $\Z$-crossed product algebra.

For a topological, second countable, locally compact abelian group $H$ one can define its Pontryagin dual $\wh{H}$ as the set of continuous homomorphisms $\phi : H \ra \T$, also called characters. With the compact-convergence topology, $\wh{H}$ becomes a topological, metrizable, locally compact abelian group. If $H$ is a countable discrete group then $\wh{H}$ is compact, and if moreover $H$ is a torsion group then $\wh{H}$ is totally disconnected. We refer the reader to \cite[Chapter 4]{Fol16} for more information about Pontryagin duality.\\


Suppose now that $H$ is a countable discrete, torsion abelian group. Associated with $H$, we consider the subset $\calO \subseteq \N$ given by
$$\calO = \{ n \in \N \mid \text{there exists an element } g \in H \text{ of order } n\}.$$
This set inherits the structure of a lattice from that of $\N$.

\begin{lemma}\label{lemma-O.lattice}
The set $\calO$ is a sublattice of $(\N,\emph{div},\emph{gcd},\emph{lcm})$. Even more, if $n \in \calO$, then any divisor $d$ of $n$ belongs to $\calO$ too, so $\calO$ is a \emph{hereditary sublattice} of $\N$.
\end{lemma}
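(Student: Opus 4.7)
The plan is to first establish the hereditary property, since it is the most elementary statement and it implies closure under $\gcd$ almost for free. After that, closure under $\operatorname{lcm}$ will follow from the standard trick of splitting $\operatorname{lcm}(m,n)$ into two coprime factors, one dividing $m$ and the other dividing $n$, and exploiting commutativity.

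For the hereditary property, I would pick $n \in \calO$ and $g \in H$ of order $n$, and let $d$ be a divisor of $n$. A direct computation shows that $g^{n/d}$ has order $d$: clearly $(g^{n/d})^d = g^n = e$, and if $(g^{n/d})^k = e$ for some $1 \le k < d$, then $g^{kn/d} = e$ with $kn/d < n$, contradicting the fact that $g$ has order $n$. Hence $d \in \calO$. As an immediate consequence, if $m, n \in \calO$ then $\gcd(m,n)$ divides $m$, so $\gcd(m,n) \in \calO$, proving closure under $\gcd$.

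For closure under $\operatorname{lcm}$, let $m, n \in \calO$ and set $L = \operatorname{lcm}(m,n)$. The key step is to write $L = m_1 n_1$ with $\gcd(m_1, n_1) = 1$, $m_1 \mid m$ and $n_1 \mid n$; this is done prime by prime, assigning to $m_1$ each prime power $p^{\max(v_p(m), v_p(n))}$ for which $v_p(m) \ge v_p(n)$, and to $n_1$ the remaining ones. Pick $g, h \in H$ of orders $m, n$ respectively. By the hereditary property (already proved), $g' := g^{m/m_1}$ has order $m_1$ and $h' := h^{n/n_1}$ has order $n_1$. Since $H$ is abelian and $\gcd(m_1, n_1) = 1$, the standard argument shows that $g' h'$ has order exactly $m_1 n_1 = L$: indeed, $(g'h')^L = e$, and if $(g'h')^k = e$ then $(g')^k = (h')^{-k}$ lies in $\langle g' \rangle \cap \langle h' \rangle = \{e\}$ (by coprimality of orders), forcing $m_1 \mid k$ and $n_1 \mid k$, hence $L \mid k$. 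Thus $L \in \calO$.

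The argument is essentially routine; there is no real obstacle. The only mild subtlety is being careful in the prime-by-prime splitting of $L$ so that the resulting factors $m_1, n_1$ are coprime and divide $m, n$ respectively, and in invoking commutativity of $H$ at exactly the point where we need the order of a product of commuting elements with coprime orders to be the product of their orders.
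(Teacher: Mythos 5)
Your proof is correct and follows essentially the same route as the paper's: establish the hereditary property via the order of $g^{n/d}$ (which gives closure under $\gcd$ for free), then split $\operatorname{lcm}(m,n)$ prime by prime into two coprime factors dividing $m$ and $n$ respectively, and use the fact that a product of commuting elements of coprime orders has order equal to the product. The only difference is cosmetic: you spell out the "standard argument" for the order of $g'h'$ that the paper leaves as an exercise.
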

\begin{proof}
Given $a,b \in \calO$, there exist $h, g \in H$ such that $\text{o}(h) = a$ and $\text{o}(g) = b$. Take first $d$ to be any divisor of $a$, so we write $a = da'$ for some $a' \in \N$. Then the element $h^{a'} \in H$ has order exactly $d$, and so $d \in \calO$. In particular, we have shown that $\text{gcd}\{a,b\}$ belongs to $\calO$.

We must show now that $m := \text{lcm}\{a,b\}$ belong to $\calO$. If both $a$ and $b$ are coprime numbers, it is straightforward to show that the element $hg \in H$ has order exactly $ab = m$, so we assume that $a$ and $b$ are not coprime. If we let $\P$ be the set of prime numbers that appear in the factorizations of $a$ and $b$, we can write
$$a = \prod_{p \in \P} p^{\alpha_p}, \quad b = \prod_{p \in \P} p^{\beta_p},$$
being $\alpha_p,\beta_p$ non-negative integers. Define now
$$a' := \prod_{\substack{p \in \P \\ \alpha_p \geq \beta_p}} p^{\alpha_p}, \quad b' := \prod_{\substack{p \in \P \\ \alpha_p < \beta_p}} p^{\beta_p}$$
so that, by construction, $\text{gcd}\{a',b'\} = 1$ and $\text{lcm}\{a',b'\} = a'b' = m = \text{lcm}\{a,b\}$. Clearly, the order of the element $\wt{h} := h^{\frac{a}{a'}}$ is $a'$, and the order of the element $\wt{g} := g^{\frac{b}{b'}}$ is $b'$. We now use the previous case (since $a', b'$ are coprime) to conclude that the element $\wt{h}\wt{g} \in H$ has order 
exactly $a'b' = m$. This concludes the proof of the lemma.
\end{proof}

For this section, we take $K$ to be any field satisfying the following hypothesis:
\begin{enumerate}[$(a)$,leftmargin=1cm]
\item the characteristic of $K$ must be coprime with all $n \in \calO$;
\item $K$ must contain all the $n^{\text{th}}$ roots of unity, for any $n \in \calO$.
\end{enumerate}
We write
$$\calU(K) = \bigcup_{n \in \calO}\{ \text{all }n^{\text{th}}\text{ roots of unity in }K\} \subseteq K.$$
It is clear that $\calU(K)$ is a subgroup of the multiplicative group $K^{\times}$. We define the $K$-\textit{Pontryagin dual} of $H$ as the set of all homomorphisms
$$\wh{H} := \{\phi : H \ra K \mid \phi \text{ is a morphism of groups}\} = \{\phi : H \ra \calU(K) \mid \phi \text{ is a morphism of groups}\}.$$
This is a group under pointwise product. Note that, since $H$ is endowed with the discrete topology, any element $\phi \in \wh{H}$ will be automatically continuous, whatever would be the topology on $\calU(K)$. Therefore in the case $K = \C$ this definition coincides with the usual Pontryagin dual.

We want to mimic the properties of $\wh{H}$ for the case $K = \C$ to this general setting. Indeed we will show later that $\wh{H}$ does not depend on the particular field  $K$ up to isomorphism. 
To this aim, we need the notion of \textit{compatible family of primitive roots of unity}.

\begin{definition}\label{definition-comp.primitive.roots.unity}
Let $\calX := \{\xi_n\}_{n \in \calO}$ be a family of roots of unity in $K$ consisting of primitive ones (so for each $n \in \calO$, we choose a primitive $n^{\text{th}}$ root of unity $\xi_n$). The family $\calX$ is said to be \textit{compatible} if the equation
$$\xi_{n\cdot m}^n = \xi_m$$
holds for all $n,m \in \calO$ such that $n\cdot m \in \calO$.
\end{definition}

So for example for $K = \C$, the family $\xi_n := e^{\frac{2 \pi i}{n}}$ is compatible.

\begin{lemma}\label{lemma-existence.comp.prim.roots.unity}
A compatible family of primitive roots of unity in $K$ always exists.
\end{lemma}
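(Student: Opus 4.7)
The plan is to reduce the construction to each prime separately and then assemble the pieces via the Chinese Remainder Theorem. For each prime $p$, I would set $\calO_p := \{k \geq 0 : p^k \in \calO\}$; since $\calO$ is hereditary and contains $1$ (the order of the identity of $H$), the set $\calO_p$ is downward closed in $\N$, hence either $\{0,1,\ldots,m_p\}$ for some $m_p \geq 0$, or all of $\N$. Within each such $\calO_p$ I would build a family of primitive $p^k$-th roots of unity $\eta_p^{(k)} \in K$ satisfying $(\eta_p^{(k+1)})^p = \eta_p^{(k)}$: in the bounded case, pick any primitive $p^{m_p}$-th root $\eta_p^{(m_p)} \in K$ (which exists by hypothesis (b)) and set $\eta_p^{(k)} := (\eta_p^{(m_p)})^{p^{m_p-k}}$; in the unbounded case, define the sequence recursively by choosing $\eta_p^{(k+1)}$ to be any root of $X^p - \eta_p^{(k)}$, using hypothesis (b) to produce a root in $K$ and hypothesis (a) to guarantee that this root is automatically primitive of order $p^{k+1}$.

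To assemble the $\xi_n$, I would use the Chinese Remainder Theorem. For $n = \prod_p p^{a_p(n)} \in \calO$, the hypotheses on $K$ yield the isomorphism $\mu_n(K) \cong \prod_{p \mid n} \mu_{p^{a_p(n)}}(K)$, and I define $\xi_n$ as the unique element of $\mu_n(K)$ such that $\xi_n^{n/p^{a_p(n)}} = \eta_p^{(a_p(n))}$ for every $p \mid n$. Since each coordinate is primitive of order $p^{a_p(n)}$, the element $\xi_n$ has order exactly $n$ and is therefore a primitive $n$-th root of unity.

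The compatibility $\xi_{nm}^n = \xi_m$ for $n, m, nm \in \calO$ is then a direct verification in the CRT picture: both sides lie in $\mu_m(K)$, and by uniqueness it suffices to check that they have the same image under each projection $\mu_m(K) \to \mu_{p^{a_p(m)}}(K)$, $\zeta \mapsto \zeta^{m/p^{a_p(m)}}$, for $p \mid m$. Using $a_p(nm) = a_p(n) + a_p(m)$ and iterating $(\eta_p^{(k+1)})^p = \eta_p^{(k)}$, one computes
$$(\xi_{nm}^n)^{m/p^{a_p(m)}} \;=\; \xi_{nm}^{(nm/p^{a_p(nm)})\cdot p^{a_p(n)}} \;=\; \bigl(\eta_p^{(a_p(nm))}\bigr)^{p^{a_p(n)}} \;=\; \eta_p^{(a_p(m))},$$
which equals the corresponding projection of $\xi_m$.

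The main obstacle I expect is the bookkeeping for the assembly step: naive formulas such as $\xi_n := \prod_p \eta_p^{(a_p(n))}$ fail to satisfy the compatibility relation, and it is essential to characterize $\xi_n$ purely through the CRT projection maps, so that the one-dimensional compatibilities $(\eta_p^{(k+1)})^p = \eta_p^{(k)}$ push forward automatically to the required global compatibility. The unbounded case $\calO_p = \N$ invokes dependent choice to produce the sequence $\{\eta_p^{(k)}\}_{k\ge 0}$, but this is routine and can also be viewed as choosing an isomorphism between the two abstractly isomorphic torsion abelian groups $\varinjlim_{n \in \calO} \Z/n\Z$ and $\bigcup_{n \in \calO} \mu_n(K)$.
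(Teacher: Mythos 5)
Your proof is correct, but it follows a genuinely different route from the paper's. The paper enumerates $\calO=\{n_i\}_{i\in\N}$ in increasing order and inductively builds a coherent tower of primitive $(n_1\cdots n_i)^{\text{th}}$ roots of unity $\eta_i$ with $\eta_{i+1}^{n_{i+1}}=\eta_i$; the induction step requires a small number-theoretic adjustment (replacing an arbitrary primitive root $\omega_{i+1}$ by $\omega_{i+1}^l$ for a carefully chosen exponent $l$ congruent to $j^{-1}$ modulo $n_1\cdots n_i$ and coprime to $n_1\cdots n_{i+1}$), and the intermediate elements $\eta_i$ may only live in an algebraic closure of $K$ since $n_1\cdots n_i$ need not belong to $\calO$; the $\xi_n$ are then recovered as suitable powers, and a separate (easier) argument handles finite $\calO$. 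Your prime-by-prime decomposition trivializes the tower-building step --- along each prime $p$ the coherence condition is just the extraction of successive $p^{\text{th}}$ roots, where primitivity is automatic --- and the Chinese Remainder Theorem does the gluing; this keeps every intermediate object inside $K$ (a root of $X^p-\eta_p^{(k)}$ is a $p^{k+1}$-st root of unity, hence lies in $K$ by hypothesis (b) and separability from hypothesis (a)), treats bounded and unbounded $\calO$ uniformly up to a per-prime case split, and, as you correctly emphasize, the compatibility check reduces to a one-line exponent computation once $\xi_n$ is characterized by its CRT projections rather than written as a product of the $\eta_p^{(a_p(n))}$. The one point worth making explicit in a final write-up is that the map $\mu_n(K)\to\prod_{p\mid n}\mu_{p^{a_p(n)}}(K)$, $\zeta\mapsto(\zeta^{n/p^{a_p(n)}})_p$, is an isomorphism (injectivity via $\gcd_{p}(n/p^{a_p(n)})=1$, then a counting argument using that $\mu_n(K)$ has exactly $n$ elements), since both the uniqueness of $\xi_n$ and the final verification rest on it; this is routine and your argument is complete.
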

\begin{proof} If $\calO$ is finite, then it follows from Lemma \ref{lemma-O.lattice} that $\calO$ is the set of divisors of some $n\in \N$. In this case, take a primitive $n^{\text{th}}$ root of unity $\xi_n$ and set $\xi_d := \xi_n^{n/d}$ for any divisor $d$ of $n$.

Now assume that $\calO$ is infinite. Enumerate $\calO = \{n_i\}_{i \in \N}$ such that $n_i < n_{i+1}$ for all $i \in \N$. Assume first that $K$ has characteristic $0$. We first construct a family $\{\eta_i\}_{i \in \N}$ such 
that each $\eta_i$ is a primitive $n_1 \cdots n_i^{\text{th}}$ root of unity, and that $\eta_{i+1}^{n_{i+1}} = \eta_i$. The elements $\eta_i$ may belong to an algebraic closure of $K$, but the elements
$\xi_n$ below belong necessarily to $K$ by our hypothesis.  

Let $\eta_1$ be a primitive $n_1^{\text{th}}$ root of unity, and assume we have constructed $\eta_i$ a primitive $n_1 \cdots n_i^{\text{th}}$ root of unity, such that
$$\eta_i^{n_i} = \eta_{i-1}.$$
To construct $\eta_{i+1}$, take first any primitive $n_1 \cdots n_i \cdot n_{i+1}^{\text{th}}$ root of unity $\omega_{i+1}$; then $\omega_{i+1}^{n_{i+1}}$ is a primitive $n_1 \cdots n_i^{\text{th}}$ root of unity, so of the form $\eta_i^j$ for some $j$ coprime with $n_1 \cdots n_i$. Let $l_0$ be the inverse of $j$ modulo $n_1 \cdots n_i$. If we can find an integer $k$ such that $l := l_0 + n_1 \cdots n_i \cdot k$ is coprime with $n_1 \cdots n_i \cdot n_{i+1}$ we will be done, since $\eta_{i+1} := \omega_{i+1}^l$ will be a primitive $n_1 \cdots n_{i+1}^{\text{th}}$ root of unity, and
$$\eta_{i+1}^{n_{i+1}} = \omega_{i+1}^{l \cdot n_{i+1}} = \eta_i^{j \cdot l} = \eta_i.$$
Let us now proceed to find an integer $k$ such that $l = l_0 + n_1 \cdots n_i \cdot k$ is coprime with $n_1 \cdots n_i \cdot n_{i+1}$. Assume first that $n_{i+1}$ is \textit{not} a prime number; by Lemma \ref{lemma-O.lattice}, its divisors must be in $\calO$, so they are already included in $n_1 \cdots n_i$. Since $l_0$ is coprime with $n_1 \cdots n_i$, it also is coprime with $n_{i+1}$. Thus we may take $k=0$, and we will be done. If $n_{i+1}$ \textit{is} a prime number $p$, then two cases can happen: (1) if $p$ does not divide $l_0$, then $l_0$ is coprime with both $n_1 \cdots n_i$ and $p = n_{i+1}$, and we may take $k=0$; (2) if $p$ does divide $l_0$, then it certainly cannot divide $l_0 + n_1 \cdots n_i$ (if that was the case, $p$ would divide $n_1 \cdots n_i$, which is impossible), we can thus take $k=1$.

Note that, because of our hypotheses on the characteristic of $K$, the above arguments work \textit{mutatis mutandis} for a field $K$ of arbitrary characteristic (assuming, of course, the required hypotheses).\\

Now, given such a family $\{\eta_i\}_{i \in \N}$ of primitive roots of unity, we define the desired compatible one. For each $n = n_i \in \calO$, let $n_j \in \calO$ be any element 
such that $n$ divides the product $n_1 \cdots n_j$, and consider
$$\xi_n := \eta_j^{\frac{n_1 \cdots n_j}{n}}.$$
We claim that this definition does not depend on the choice of $n_j$, for if $n_l \in \calO$ is another element such that $n$ divides $n_1 \cdots n_l$, then by assuming $n_l > n_j$ we compute
$$\eta_l^{n_{j+1} \cdots n_l} = \eta_{l-1}^{n_{j+1} \cdots n_{l-1}} = \cdots = \eta_j.$$
Here we have used the property $\eta_{i+1}^{n_{i+1}} = \eta_i$ for all $i \in \N$. The claim follows straightforwardly:
$$\eta_l^{\frac{n_1 \cdots n_l}{n}} = \eta_l^{\frac{n_1 \cdots n_j}{n} n_{j+1} \cdots n_l} = \eta_j^{\frac{n_1 \cdots n_j}{n}}.$$
It is easily checked that $\xi_n$ defines a primitive $n^{\text{th}}$ root of unity. We now verify the compatibility property: given $n,m \in \calO$ such that $n \cdot m \in \calO$, and $n_j \in \calO$ such that $n \cdot m$ divides $n_1 \cdots n_j$, we compute
$$\xi_{n \cdot m}^n = \eta_j^{\frac{n_1 \cdots n_j}{n \cdot m}n} = \eta_j^{\frac{n_1\cdots n_j}{m}} = \xi_m.$$
This concludes the proof of the lemma.
\end{proof}

Using Lemma \ref{lemma-existence.comp.prim.roots.unity}, we can indeed show that, with our hypothesis on $H$ and $K$, the group $\calU(K)$ does not depend on the field $K$,  as follows.

\begin{lemma}\label{lemma-UK.does.not.depend}
With the above hypothesis and notation, we have that $\calU(K)$ does not depend on $K$. In fact, the map $\Upsilon \colon \calU(K) \to \calU(\C)$ which sends $\xi_n^a$ to $e^{\frac{2 \pi i}{n}a}$ for $n \in \calO$ and $0 \leq a < n$ is a group isomorphism. 
\end{lemma}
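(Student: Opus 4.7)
The plan is to realize $\calU(K)$ as a directed union $\bigcup_{n\in\calO}\mu_n(K)$, where $\mu_n(K)$ denotes the group of $n^{\text{th}}$ roots of unity in $K$, and then show that the compatible family provides canonical isomorphisms $\mu_n(K)\cong\mu_n(\C)$ that glue together into $\Upsilon$. First I would observe that under hypotheses $(a)$--$(b)$, each $\mu_n(K)$ is a cyclic group of order exactly $n$, generated by $\xi_n$: indeed the polynomial $x^n - 1$ is separable in $K[x]$ (since $\mathrm{char}(K)$ is coprime with $n\in\calO$) and splits completely in $K$ (since $K$ contains all $n^{\text{th}}$ roots of unity), so $\mu_n(K)$ has exactly $n$ distinct elements, and $\xi_n$ is by definition of primitive a generator. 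Hence every element of $\mu_n(K)$ can be written uniquely as $\xi_n^a$ with $0\le a<n$, and $\calU(K)=\bigcup_{n\in\calO}\mu_n(K)$ as claimed.

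Next I would define $\Upsilon_n\colon \mu_n(K)\to\mu_n(\C)$ by $\xi_n^a\mapsto e^{2\pi i a/n}$. Since both sides are cyclic of order $n$ and $\xi_n$, $e^{2\pi i/n}$ are generators, each $\Upsilon_n$ is a group isomorphism. The key technical step is to verify that these maps are compatible with the inclusions $\mu_n(K)\hookrightarrow\mu_m(K)$ whenever $n\mid m$ (with both $n,m\in\calO$, which is automatic by Lemma~\ref{lemma-O.lattice}). Writing $m=n\cdot(m/n)$ with $m/n\in\calO$ (again by heredity), the compatibility property of $\calX$ gives $\xi_m^{m/n}=\xi_n$, so for any $\xi_n^a\in\mu_n(K)$ we compute
\[
\Upsilon_m(\xi_n^a)=\Upsilon_m(\xi_m^{a\cdot m/n})=e^{2\pi i(a\cdot m/n)/m}=e^{2\pi i a/n}=\Upsilon_n(\xi_n^a),
\]
using that the family $\{e^{2\pi i/n}\}_{n\in\calO}$ in $\C$ is itself compatible in the sense of Definition~\ref{definition-comp.primitive.roots.unity}.

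With compatibility established, $\Upsilon:=\varinjlim\Upsilon_n$ is an unambiguously defined map $\calU(K)\to\calU(\C)$: for $\zeta=\xi_n^a\in\mu_n(K)\subseteq\calU(K)$ one sets $\Upsilon(\zeta)=e^{2\pi i a/n}$, and if $\zeta$ is also expressible as $\xi_{n'}^{a'}$ with $n'\in\calO$, then passing to the common multiple $m=\mathrm{lcm}(n,n')\in\calO$ (which lies in $\calO$ by Lemma~\ref{lemma-O.lattice}) both expressions agree with $\Upsilon_m(\zeta)$. Since each $\Upsilon_n$ is a group isomorphism onto $\mu_n(\C)$ and $\calU(\C)=\bigcup_{n\in\calO}\mu_n(\C)$ analogously, the direct-limit map $\Upsilon$ is a group isomorphism, proving the lemma.

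The main obstacle is checking well-definedness, i.e.\ the compatibility of $\Upsilon_n$ with $\Upsilon_m$ along divisibility; once one exploits the defining compatibility $\xi_{nm}^n=\xi_m$ of $\calX$ (applied to $n\leftrightarrow m/n$ and $m\leftrightarrow n$) this collapses to a one-line computation, after which the rest is bookkeeping with direct limits.
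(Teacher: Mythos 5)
Your proof is correct and follows essentially the same route as the paper: the heart of both arguments is the reduction to a common multiple $m=\mathrm{lcm}(n,n')\in\calO$ (available by Lemma~\ref{lemma-O.lattice}) together with the defining relation $\xi_{nm}^{n}=\xi_m$ of the compatible family, which is exactly the paper's well-definedness computation. Your packaging of this as a direct limit of isomorphisms $\Upsilon_n\colon\mu_n(K)\to\mu_n(\C)$ of cyclic groups is a tidy but cosmetic reorganization of the same idea.
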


\begin{proof}
We first observe that this map is well-defined. Suppose that $\xi_n^a = \xi_m^b$, for $n,m\in \calO$ and  $0\le a< n$, $0\le b < m$. 
Let $l= \text{lcm}\{n,m\}$, and write $l=nl_1=ml_2$. We have
$$\xi_{l}^{al_1}= \xi _{nl_1}^{al_1}= \xi_n^a = \xi _m^b = \xi_{ml_2}^{bl_2}= \xi_l^{bl_2},$$
so that $al_1-bl_2$ must be a multiple of $l$. Using this we obtain that  
$$\Upsilon (\xi_n^a)= e^{\frac{2 \pi i}{l}al_1} = e^{\frac{2 \pi i}{l}bl_2} = \Upsilon (\xi_m^b).$$
A similar computation shows that $\Upsilon$ is a group homomorphism. Clearly $\Upsilon $ is a bijection and so it is a group isomorphism. 
\end{proof}

Now we can translate the usual metric topology on $\calU(\C) \subseteq \C$ into a metric topology on $\calU(K)$. It turns out that this topology can also be defined directly in terms of the algebraic structure of $\calU(K)$, as follows.

Take a compatible family of primitive roots of unity $\calX = \{\xi_n\}_{n \in \calO}$, so that
$$\calU(K) = \bigcup_{n \in \calO}\{1,\xi_n,...,\xi_n^{n-1}\}.$$
Define a distance $d_{\xi}$ over $\calU(K)$ in the following way: for two elements $\xi_n^a, \xi_m^b \in \calU(K)$ with $0 \leq a < n$ and $0 \leq b < m$ (here both $n,m \in \calO$), define
$$d_{\xi}(\xi_n^a,\xi_m^b) := \begin{cases}
\Big| \frac{a}{n} - \frac{b}{m} \Big| & \text{ if } \Big| \frac{a}{n} - \frac{b}{m} \Big| \leq \frac{1}{2}; \\
1 - \Big| \frac{a}{n} - \frac{b}{m} \Big| & \text{ if } \Big| \frac{a}{n} - \frac{b}{m} \Big| > \frac{1}{2}.
\end{cases}$$
Note that $d_{\xi}$ is bounded by $\frac{1}{2}$. It is direct to prove that $d_{\xi}$ is indeed a well-defined distance. We note also that $d_{\xi}$ is translation invariant, in the sense that $d_{\xi}(g_1g,g_2g)= d_{\xi}(g_1,g_2)$ for $g_1,g_2,g\in \calU(K)$. This defines a metric topology on $\calU(K)$. We prove below that in the case $K = \C$, the subspace topology of $\calU(\C) \subseteq \C$ coincides with the metric topology induced by $d_{\xi}$ when taking the compatible family $\xi_n = e^{\frac{2 \pi i}{n}}$. It then follows that the isomorphism $\Upsilon$ from Lemma \ref{lemma-UK.does.not.depend} is indeed an isomorphism of topological groups.

\begin{lemma}\label{lemma-formula.norm.dist}
We have the formula
$$\left|  \xi_n^a - \xi_m^b \right|^2 = 4 \sin^2(\pi d_{\xi}(\xi_n^a,\xi_m^b)).$$
Here $\left| \cdot \right| $ denotes the complex norm.
\end{lemma}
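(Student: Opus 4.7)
The plan is to reduce everything to the classical identity $|e^{i\theta}-1|^2 = 4\sin^2(\theta/2)$ and then handle the piecewise definition of $d_{\xi}$ by symmetries of $\sin^2$.

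First, write $\alpha := a/n$ and $\beta := b/m$, so that $\alpha,\beta \in [0,1)$. Using $\xi_n = e^{2\pi i/n}$ and $\xi_m = e^{2\pi i/m}$ and factoring out the unimodular number $e^{2\pi i\beta}$, I get
\[
\left|\xi_n^a - \xi_m^b\right|^2 \;=\; \left|e^{2\pi i\alpha} - e^{2\pi i\beta}\right|^2 \;=\; \left|e^{2\pi i(\alpha-\beta)} - 1\right|^2.
\]
Then a direct expansion (or the identity $1-\cos\theta = 2\sin^2(\theta/2)$) gives $\left|e^{i\theta}-1\right|^2 = 2-2\cos\theta = 4\sin^2(\theta/2)$, so
\[
\left|\xi_n^a - \xi_m^b\right|^2 \;=\; 4\sin^2\!\bigl(\pi(\alpha-\beta)\bigr).
\]

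It remains to check that $\sin^2\bigl(\pi(\alpha-\beta)\bigr) = \sin^2\bigl(\pi d_{\xi}(\xi_n^a,\xi_m^b)\bigr)$ in both cases of the definition of $d_{\xi}$. If $|\alpha-\beta|\le 1/2$, then $d_{\xi}(\xi_n^a,\xi_m^b) = |\alpha-\beta|$, and by evenness of $\sin^2$ we have $\sin^2(\pi(\alpha-\beta)) = \sin^2(\pi|\alpha-\beta|) = \sin^2(\pi d_{\xi})$. If $|\alpha-\beta|>1/2$, then $d_{\xi}(\xi_n^a,\xi_m^b) = 1 - |\alpha-\beta|$, and using $\sin(\pi - x) = \sin(x)$ (which also holds up to sign for negative $x$, irrelevant after squaring) we get
\[
\sin^2\!\bigl(\pi d_{\xi}\bigr) \;=\; \sin^2\!\bigl(\pi - \pi|\alpha-\beta|\bigr) \;=\; \sin^2\!\bigl(\pi|\alpha-\beta|\bigr) \;=\; \sin^2\!\bigl(\pi(\alpha-\beta)\bigr).
\]
Combining the two cases yields the stated formula.

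There is really no obstacle here: the only care needed is making sure the representatives $a,b$ satisfy $0\le \alpha,\beta<1$ so that $|\alpha-\beta|<1$ and the two cases of $d_{\xi}$ exhaust all possibilities. The identity is then a routine consequence of the half-angle formula together with the evenness and $\pi$-reflection symmetry of $\sin^2$.
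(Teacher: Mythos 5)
Your proof is correct and follows essentially the same route as the paper's: both reduce to $2-2\cos\bigl(2\pi(\tfrac{a}{n}-\tfrac{b}{m})\bigr)=4\sin^2\bigl(\pi|\tfrac{a}{n}-\tfrac{b}{m}|\bigr)$ via the half-angle identity. You are in fact slightly more explicit than the paper in checking the case $|\tfrac{a}{n}-\tfrac{b}{m}|>\tfrac12$ of the piecewise definition of $d_{\xi}$ using $\sin(\pi-x)=\sin x$, which the paper leaves implicit.
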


\begin{proof}
It is a simple computation:
\begin{align*}
\left| \xi_n^a - \xi_m^b \right|^2 & = \Big[ \cos\Big(2 \pi\frac{a}{n}\Big) - \cos\Big(2 \pi\frac{b}{m}\Big) \Big]^2 + \Big[ \sin\Big(2 \pi\frac{a}{n}\Big) - \sin\Big(2 \pi\frac{b}{m}\Big) \Big]^2 \\
& = 2 - 2 \Big[ \cos\Big(2 \pi \frac{a}{n}\Big) \cos\Big(2 \pi \frac{b}{m}\Big) + \sin\Big(2 \pi \frac{a}{n}\Big) \sin\Big(2 \pi \frac{b}{m}\Big) \Big] \\
& = 2 \Big[ 1 - \cos\Big(2 \pi\frac{a}{n} - 2 \pi\frac{b}{m}\Big) \Big] = 4 \sin^2\Big(\pi \Big| \frac{a}{n} - \frac{b}{m}\Big| \Big) \\
& = 4 \sin^2(\pi d_{\xi}(\xi_n^a,\xi_m^b)).\hfill\qedhere
\end{align*}
\end{proof}

\begin{proposition}\label{proposition-equivalent.top.C}
For $K = \C$ and $\xi_n = e^{\frac{2 \pi i}{n}}$, the subspace topology for $\calU(\C) \subseteq \C$ coincides with the topology induced by the distance $d_{\xi}$.
\end{proposition}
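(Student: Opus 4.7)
The plan is to upgrade Lemma \ref{lemma-formula.norm.dist} from a squared identity to a genuine bi-Lipschitz comparison between the two metrics. Since $d_{\xi}$ takes values in $[0, 1/2]$, the quantity $\pi d_{\xi}(\xi_n^a,\xi_m^b)$ lies in $[0,\pi/2]$, on which sine is non-negative; hence taking square roots in Lemma \ref{lemma-formula.norm.dist} yields the clean equality
$$\left|\xi_n^a - \xi_m^b\right| = 2 \sin\bigl(\pi\, d_{\xi}(\xi_n^a,\xi_m^b)\bigr).$$

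Next, I would invoke the elementary inequalities $\tfrac{2x}{\pi} \le \sin(x) \le x$ valid for $x \in [0,\pi/2]$ (the right bound from $\sin' \le 1$ and $\sin(0)=0$; the left bound from concavity of sine on $[0,\pi/2]$, comparing with the chord through $(0,0)$ and $(\pi/2,1)$). Applied with $x = \pi\, d_{\xi}(\xi_n^a,\xi_m^b)$ this gives
$$4\, d_{\xi}(\xi_n^a,\xi_m^b) \;\le\; \left|\xi_n^a - \xi_m^b\right| \;\le\; 2\pi\, d_{\xi}(\xi_n^a,\xi_m^b).$$

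Finally, I would conclude that the identity map $(\calU(\C), d_{\xi}) \to (\calU(\C), |\cdot|)$ is bi-Lipschitz, so in particular a homeomorphism; therefore any $d_{\xi}$-open set is $|\cdot|$-open and vice versa, proving that the two topologies coincide. There is no real obstacle here: everything reduces to the already-proved Lemma \ref{lemma-formula.norm.dist} together with the standard sine estimates, and the only subtlety worth flagging is the sign when extracting the square root, which is handled by the a priori bound $d_{\xi} \le 1/2$ recorded immediately after the definition of $d_{\xi}$.
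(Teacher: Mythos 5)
Your proof is correct and follows essentially the same route as the paper: both extract the metric comparison $A\, d_{\xi} \le |\xi_n^a - \xi_m^b| \le B\, d_{\xi}$ from Lemma \ref{lemma-formula.norm.dist} together with the standard two-sided bound on $\sin(x)$ for $x \in [0,\pi/2]$, and conclude that equivalent metrics induce the same topology. The only difference is that you make the constants explicit ($A=4$, $B=2\pi$) where the paper leaves them abstract.
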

\begin{proof}
For a real number $|x| \leq \frac{\pi}{2}$, there exist positive constants $c,C > 0$ such that $c|x| \leq |\sin(x)| \leq C|x|$. Applying Lemma \ref{lemma-formula.norm.dist} one gets positive constants $A,B > 0$ such that
$$A \cdot d_{\xi}(\xi_n^a,\xi_m^b) \leq \left| \xi_n^a - \xi_m^b \right| \leq B \cdot d_{\xi}(\xi_n^a,\xi_m^b).$$
The result follows.
\end{proof}

We can now endow $\wh{H}$ with the compact-convergence topology. Recall that, in our setting, this topology has as a basis the sets
$$B_{\{h_1,...,h_n\}}(\phi,\epsilon) := \{ \psi \in \wh{H} \mid d_{\xi}(\psi(h_i),\phi(h_i)) < \epsilon \text{ for all indices }i\},$$
where $\epsilon > 0$, $\phi \in \wh{H}$ and $\{h_1,...,h_n\}$ is a finite subset (hence compact) of $H$. It is easy to show that this indeed defines a basis for a topology $\tau_{cc}$ in $\wh{H}$. 

\begin{proposition} \label{proposition-properties.of.wH}
The topological group $(\wh{H},\tau_{cc})$ is a totally disconnected, compact and metrizable group.  
\end{proposition}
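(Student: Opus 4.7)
The plan is to realize $\widehat{H}$ as a closed subspace of a product of finite discrete groups and then read off all three properties from standard facts about such products. The key leverage we have is that $H$ is torsion: for any $\phi \in \widehat{H}$ and $h \in H$, the identity $\phi(h)^{o(h)} = 1$ forces $\phi(h)$ to lie in the set $\mu_{o(h)}(K) := \{z \in \calU(K) \mid z^{o(h)} = 1\}$. By the standing hypotheses on $K$ (characteristic coprime with $o(h)$ and all $o(h)$-th roots of unity present), each $\mu_{o(h)}(K)$ is a cyclic group of order $o(h)$, hence finite. This yields a natural embedding $\widehat{H} \hookrightarrow P := \prod_{h \in H} \mu_{o(h)}(K)$.

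Next I would compare the compact-convergence topology $\tau_{cc}$ on $\widehat{H}$ with the subspace topology from $P$ (each factor endowed with the discrete topology, $P$ with the product topology). The point is that, with respect to the translation-invariant metric $d_\xi$, distinct elements of any $\mu_{o(h)}(K)$ are separated by a uniform positive distance $\delta_h > 0$. Consequently, for a basic $\tau_{cc}$-neighborhood $B_{\{h_1,\dots,h_n\}}(\phi,\epsilon)$ and any $\epsilon < \min\{\delta_{h_i}\}$, the condition $d_\xi(\psi(h_i),\phi(h_i)) < \epsilon$ collapses to $\psi(h_i) = \phi(h_i)$ for all $i$, which is precisely a basic open set in the product topology restricted to $\widehat{H}$. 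Conversely, every product-basic open set is clearly a $\tau_{cc}$-neighborhood, so the two topologies agree.

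I would then observe that $\widehat{H}$ is closed in $P$: it is cut out by the homomorphism equations $\phi(gh) = \phi(g)\phi(h)$, each of which involves only three coordinates and is a clopen condition in the corresponding (finite, discrete) product of factors. From here the three properties come for free from standard results. The product $P$ is compact by Tychonoff (all factors are finite), metrizable because $H$ is countable (countable product of metrizable spaces), and totally disconnected as a product of discrete spaces; closed subspaces inherit all three properties. Continuity of pointwise multiplication and inversion on $\widehat{H}$ is automatic, since these are coordinate-wise operations in discrete finite groups.

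The main obstacle is strictly the bookkeeping of the second step, namely verifying that $\tau_{cc}$ coincides with the product topology inherited from $P$; everything else is purely formal. This equivalence is the conceptual heart of the statement: it is the torsion hypothesis on $H$, combined with the existence of all the relevant roots of unity in $K$, that forces each coordinate of an element of $\widehat{H}$ into a finite set and thereby turns a compact-convergence topology into the topology of a profinite-like space.
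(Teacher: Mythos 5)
Your argument is correct, but it follows a genuinely different route from the paper. The paper's proof is a one-line reduction to the classical case: it invokes the group isomorphism $\Upsilon\colon \calU(K)\to\calU(\C)$ of Lemma \ref{lemma-UK.does.not.depend} (together with the metric comparison of Proposition \ref{proposition-equivalent.top.C}) to identify $(\wh{H},\tau_{cc})$ with the usual complex Pontryagin dual $\wh{H}_{\C}$ as a topological group, and then quotes the well-known fact that the dual of a countable discrete torsion abelian group is totally disconnected, compact and metrizable. You instead give a self-contained profinite-style argument entirely over $K$: the torsion hypothesis confines each coordinate $\phi(h)$ to the finite group $\mu_{o(h)}(K)$, the uniform separation of distinct $o(h)$-th roots of unity in the metric $d_{\xi}$ (any two are at distance at least $1/o(h)$) shows that $\tau_{cc}$ coincides with the subspace topology from the product $\prod_{h\in H}\mu_{o(h)}(K)$ of finite discrete groups, and $\wh{H}$ is cut out by clopen homomorphism conditions, so Tychonoff, countability of $H$, and total disconnectedness of products of discrete spaces finish the job. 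The paper's route is shorter given the machinery it has already built and outsources the work to classical harmonic analysis; yours is more elementary, avoids any reference to $\C$, and makes the profinite nature of $\wh{H}$ explicit, which is arguably closer to how the space is actually used later (as a totally disconnected compact metrizable space with a basis of clopen sets). Both are complete proofs.
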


\begin{proof}
We have an isomorphism of topological groups $\wh{H}\cong \wh{H}_{\C}$ induced by $\Upsilon$, where $\wh{H}_{\C}$ is the usual Pontryagin dual of $H$, and it is well-known that $\wh{H}_{\C}$ has the stated properties. 
\end{proof}

Suppose now that $\Z$ acts on $H$ by automorphisms via $\rho : \Z \act H$. We write $G$ for the semi-direct product group $H \rtimes_{\rho} \Z$, so $G$ is generated by $t$ and by any set $S$ consisting of generators of $H$ \footnote{This crossed product construction can be generalized by replacing $\Z$ with any other countable discrete group $\Lambda$, as in \cite[Section 2]{Aus}. However, we will stick into the case $\Lambda = \Z$ for our purposes.}. We denote by $\wt{\rho} : \Z \act K H$ the action on the group algebra $K H$ extending $\rho$ by linearity, so that $K G \cong K H \rtimes_{\wt{\rho}} \Z$.

The action $\rho : \Z \act H$ induces another action $\wh{\rho} : \Z \act \wh{H}$ by homeomorphisms, defined in the usual way:
$$\wh{\rho}_n(\phi) := \phi \circ \rho_{-n} \quad \text{ for } n \in \Z \text{ and } \phi \in \wh{H}.$$
If we write $T := \wh{\rho}_1$ then the action $\wh{\rho}$ is generated by $T$, in the sense that
$$\wh{\rho}_n(\phi) = T^n(\phi) \quad \text{ for } \phi \in \wh{H} \text{ and } n \in \Z.$$
Observe that $T : \wh{H} \ra \wh{H}$ defines a homeomorphism of the totally disconnected, compact metrizable group $\wh{H}$. In the next proposition we establish the relationship between the group algebra $K G$ and the $\Z$-crossed product $C_K(\wh{H}) \rtimes_T \Z$ by means of the well-known Fourier transform.

\begin{proposition}\label{proposition-pontryagin.duality}
Assume the previous hypothesis and caveats; that is, with
$$\calO = \{ n \in \N \mid \text{there exists an element } g \in H \text{ of order } n\},$$
assume that the characteristic of $K$ is coprime with all $n \in \calO$, and that, for any $n \in \calO$, $K$ contains all the $n^{\text{th}}$ roots of 1. Let $\calX = \{\xi_n\}_{n \in \calO}$ be a compatible family of primitive roots of unity in $K$ (see Definition \ref{definition-comp.primitive.roots.unity} and Lemma \ref{lemma-existence.comp.prim.roots.unity}).

Then we can identify the group algebra $K G \cong K H \rtimes_{\wt{\rho}} \Z$ with $C_K(\wh{H}) \rtimes_T \Z$ via the Fourier transform $\pazF : K H \ra C_K(\wh{H})$, by sending an element $h \in H$ of order $n$ to the element
$$\sum_{j=0}^{n-1}\xi_n^{-j} \chi_{U_{h,j}},$$
where $U_{h,j} = \{ \phi \in \wh{H} \mid \phi(h) = \xi_n^j \}$ and $\chi_{U_{h,j}}$ denotes the characteristic function of the clopen $U_{h,j}$, and then extending it to a map $\pazF : K G \ra C_K(\wh{H}) \rtimes_T \Z$ by sending the generator $t$ of $\Z$ on $K G$ to the generator $\delta_t$ of $\Z$ on the $\Z$-crossed product.

If moreover $K$ is endowed with an involution $-$ satisfying the compatibility condition $\ol{\xi_n} = \xi_n^{-1}$ for all $n \in \calO$, then the Fourier transform preserves the involutions on both $K G$ and the $\Z$-crossed product.
\end{proposition}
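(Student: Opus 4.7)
The plan is to first verify that $\pazF$ is a well-defined $K$-algebra isomorphism $KH \cong C_K(\widehat{H})$, then extend it across the crossed product structure, and finally handle the involution. For well-definedness, note that the value $\pazF(h)(\phi) = \phi(h)^{-1}$ depends only on the character $\phi$ evaluated at the single element $h$, which ranges over a finite set $\{\xi_n^0,\ldots,\xi_n^{n-1}\} \subseteq \calU(K)$. Hence each $U_{h,j}$ is a basic open set in the compact-convergence topology (take $\epsilon$ smaller than the minimum $d_\xi$-distance between distinct $n^{\text{th}}$ roots of unity), and its complement is a finite union of the same type, so each $U_{h,j}$ is clopen and $\pazF(h)$ is locally constant. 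Multiplicativity $\pazF(h_1h_2) = \pazF(h_1)\pazF(h_2)$ then reduces to the identity $\phi(h_1h_2)^{-1} = \phi(h_1)^{-1}\phi(h_2)^{-1}$.

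To upgrade $\pazF$ from a well-defined homomorphism to a bijection, I would reduce to the finite case via an inductive-limit argument. Since $H$ is a countable torsion abelian group, one can write $H = \bigcup_{L} L$ as the directed union of its finite subgroups. The restriction map $\widehat{H}\to \widehat{L}$ is continuous and surjective (every character of $L$ extends to $H$, since the divisible group $\calU(K) \cong \Q/\Z$ is injective in abelian groups), so the pullback gives an embedding $C_K(\widehat{L})\hookrightarrow C_K(\widehat{H})$. Each clopen of $\widehat{H}$ is a finite Boolean combination of sets of the form $U_{h,j}$ for finitely many $h$, so it is the pullback of a clopen in $\widehat{L}$ for some large enough $L$; hence $C_K(\widehat{H}) = \bigcup_L C_K(\widehat{L})$. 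For each fixed finite $L$, the assumptions on $\mathrm{char}(K)$ and on the roots of unity in $K$ imply Maschke's theorem applies to $KL$, and the classical discrete Fourier transform gives $KL \cong K^{\widehat{L}} = C_K(\widehat{L})$ via $\pazF|_{KL}$. Passing to the union yields the desired $K$-algebra isomorphism.

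Next I would extend $\pazF$ to the crossed product by declaring $\pazF(t) = \delta_t$, and checking compatibility with the two actions on generators. For $h\in H$, one computes
\begin{equation*}
\pazF(\rho_1(h))(\phi) = \phi(\rho_1(h))^{-1} = (\phi\circ\rho_1)(h)^{-1} = \pazF(h)(T^{-1}(\phi)),
\end{equation*}
which is exactly the $T$-translate of $\pazF(h)$ (up to the standard sign convention for the action on $C_K(\widehat{H})$). Hence $\pazF(tht^{-1}) = \delta_t \pazF(h) \delta_t^{-1}$ inside $C_K(\widehat{H})\rtimes_T\Z$, and by universality of the crossed product $\pazF$ extends uniquely to a $K$-algebra isomorphism $KG \cong C_K(\widehat{H})\rtimes_T\Z$.

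Finally, for the $*$-preserving statement, the involution on $KG$ sends $\lambda h$ to $\overline{\lambda} h^{-1}$, while on $C_K(\widehat{H})\rtimes_T\Z$ it is given by pointwise conjugation together with $\delta_t^* = \delta_t^{-1}$. The key computation is that for $\phi \in U_{h,j}$, on one hand $\overline{\pazF(h)}(\phi) = \overline{\xi_n^{-j}} = \xi_n^j$ using the hypothesis $\overline{\xi_n} = \xi_n^{-1}$, and on the other hand $\pazF(h^{-1})(\phi) = \phi(h^{-1})^{-1} = \phi(h) = \xi_n^j$; the two agree. The main obstacle I foresee is the bookkeeping around surjectivity at the level of the full (infinite) group $H$, which requires combining the topological fact that clopens in $\widehat{H}$ factor through finite quotients with the algebraic fact that $\pazF|_{KL}$ is the classical DFT; once these two ingredients are set up cleanly, the rest is a straightforward verification on generators.
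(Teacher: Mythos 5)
The paper does not contain its own proof of this proposition: it explicitly defers to the reference [Claramunt] (the second author's thesis), so there is no in-paper argument to compare yours against. That said, your route is the natural one and is essentially correct: the observation that $\pazF(h)$ is just the locally constant function $\phi\mapsto\phi(h)^{-1}$ makes multiplicativity immediate; bijectivity correctly reduces, via the fact that every clopen subset of $\wh{H}$ is a finite Boolean combination of the sets $U_{h,j}$ and hence is pulled back from $\wh{L}$ for some finite subgroup $L$, to the classical discrete Fourier transform $K L\cong K^{\wh{L}}$, which is available because $\operatorname{char}K$ is prime to $|L|$ (by Cauchy's theorem every prime divisor of $|L|$ lies in $\calO$) and $K$ contains the $\exp(L)$-th roots of unity (Lemma \ref{lemma-O.lattice}); and the equivariance and involution checks on generators are the right ones. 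Your hedge about the sign convention is also appropriate: the paper pins down the crossed-product multiplication only implicitly, and one must fix once and for all whether $\delta_t f\delta_t^{-1}$ equals $f\circ T$ or $f\circ T^{-1}$; either choice yields the isomorphism after possibly replacing $\delta_t$ by $\delta_{t^{-1}}$.

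The one assertion that is wrong as written is your justification for surjectivity of the restriction map $\wh{H}\to\wh{L}$: you claim that $\calU(K)\cong\Q/\Z$ is divisible, hence injective. In general $\calU(K)$ is only the subgroup of $\Q/\Z$ consisting of the elements whose order lies in $\calO$ (Lemma \ref{lemma-UK.does.not.depend}), and this can fail to be divisible; for the lamplighter one has $\calO=\{1,2\}$ and $\calU(K)=\{\pm 1\}$. The conclusion you need is nevertheless true, and the repair is one line: extend a character $L\to\calU(K)\hookrightarrow\Q/\Z$ to all of $H$ using injectivity of $\Q/\Z$, and observe that the extension automatically takes values in $\calU(K)$, because the order of $\chi(h)$ divides the order of $h$ and $\calO$ is hereditary under divisors (Lemma \ref{lemma-O.lattice}). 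Alternatively, a direct Zorn's lemma argument works, using that for a cyclic extension step of index $k$ the relevant $k$-th root always exists in $\calU(K)$ by the hypothesis on roots of unity. With that fix the proof is complete.
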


See also \cite{Aus} and \cite{Gra14}, where the authors state analogous results. For a proof of Proposition \ref{proposition-pontryagin.duality}, see \cite{Claramunt}. Recall that the involution on $K G$ is defined by the rule $(\lambda g)^* = \overline{\lambda} g^{-1}$ for $\lambda \in K, g \in G$ and extended by linearity, and the involution on the $\Z$-crossed product is given by
$$(f \cdot \delta_t)^* = (f^* \circ T^{-1}) \cdot \delta_{t^{-1}},$$
with $f^*(\phi) = \overline{f(\phi)}$ for $f \in C_K(\wh{H})$ and $\phi \in \wh{H}$, and again extended by linearity.

\begin{remark}\label{remark-involutions}
The final condition in Proposition \ref{proposition-pontryagin.duality} that there is an involution on $K$ such that $\ol{\xi_n} = \xi_n^{-1}$ for all $n \in \calO$ is a non-trivial one when the field $K$ has characteristic $p>0$. When $\calO = \{1,2\}$, any involution --in particular the identity involution-- works for any field $K$ of characteristic $p\ne 2$ (in order to have that $p$ is coprime to $2$). If $\calO$ is the set of divisors of $p^n+1$ for some prime $p$ and some $n\ge 1$, then one can take the field $\F_{p^{2n}}$ of $p^{2n}$ elements, with the involution $\varphi^n$, where $\varphi$ is the Frobenius automorphism of $\F_{p^{2n}}$. If $\calO $ is infinite, then there is no field of characteristic $p>0$ with an involution with the property that $\ol{\xi_n}=\xi_n^{-1}$ for all $n\in \calO$.
\end{remark}

Let now $K \subseteq \C$ be a subfield of $\C$ closed under complex conjugation, which will be the involution on $K$. Recall that $\rk_{K G}$ denotes the canonical rank function on $K G$ given by the restriction of the rank function naturally arising from $\calU(G)$ (see Section \ref{subsection-Betti.numbers.Atiyah.problem}). Our question now is whether we can find a measure $\wh{\mu}$ on the space $\wh{H}$ such that, when applying the construction explained in Theorem \ref{theorem-rank.function}, we end up with a rank function $\rk_{\calA}$ on $\calA = C_K(\wh{H}) \rtimes_T \Z$ that coincides with $\rk_{K G}$ under the Fourier transform $\pazF$. The answer to this question is affirmative in the case $\rk_{K G}$ is extremal, and in fact $\wh{\mu}$ coincides with the normalized Haar measure on $\wh{H}$, as we show in the next proposition.

\begin{proposition}\label{proposition-measure.H}
Let $K \subseteq \C$ be a subfield closed under complex conjugation and containing all the $n^{\text{th}}$ roots of $1$, for $n \in \calO$. Then from $\rk_{K G}$ we can construct a full $T$-invariant probability measure $\wh{\mu}$ on $\wh{H}$, which coincides with the normalized Haar measure on $\wh{H}$.

If moreover $\rk_{K G}$ is extremal in $\Ps(K G)$ then $\wh{\mu}$ is ergodic, and when applying the construction from Theorem \ref{theorem-rank.function} to $\wh{\mu}$ we end up with a Sylvester matrix rank function $\rk_{\calA}$ on $\calA = C_K(\wh{H}) \rtimes_T \Z$ such that $\rk_{K G} = \rk_{\calA} \circ \pazF$.
\end{proposition}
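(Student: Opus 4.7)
The plan is to define $\wh{\mu}$ first on the Boolean algebra of clopen subsets of $\wh{H}$ by the formula $\wh{\mu}(U):=\rk_{KG}(\pazF^{-1}(\chi_U))$. Since $\pazF^{-1}(\chi_U)$ is a projection in $\calN(G)$ (as $\pazF$ is a $*$-isomorphism) and $\rk_{KG}$ coincides with the canonical trace on projections of $\calN(G)$, this assignment is finitely additive on disjoint unions of clopens and takes value $1$ on $\wh{H}$. Because $\wh{H}$ is totally disconnected compact, any countable partition of a clopen into clopens is automatically finite, so $\wh{\mu}$ is $\sigma$-additive on the clopen algebra and extends uniquely (by Carath\'eodory) to a Borel probability measure on $\wh{H}$.

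To identify $\wh{\mu}$ with the normalized Haar measure, I would evaluate it on the basic clopens $U_{h,j}=\{\phi\in\wh{H}\mid \phi(h)=\xi_n^j\}$ for $h\in H$ of order $n$. Inverting the Fourier relation $\pazF(h)=\sum_{j=0}^{n-1}\xi_n^{-j}\chi_{U_{h,j}}$ via orthogonality of the $\chi_{U_{h,j}}$ gives $\pazF^{-1}(\chi_{U_{h,j}})=\frac{1}{n}\sum_{k=0}^{n-1}\xi_n^{jk}h^k$, a projection in $KH$ with canonical trace $1/n$, so $\wh{\mu}(U_{h,j})=1/n$. More generally, an intersection $U_{h_1,j_1}\cap\cdots\cap U_{h_\ell,j_\ell}$ corresponds under $\pazF^{-1}$ to a product of commuting projections of $KH_0$ (where $H_0=\langle h_1,\ldots,h_\ell\rangle$), which is either zero or a minimal central idempotent of trace $1/|H_0|$. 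These values agree with the Haar measure of the same clopens, viewed as cosets under the quotient $\wh{H}\to\wh{H_0}$; since such intersections form a base for the topology and generate the clopen algebra, $\wh{\mu}$ coincides with Haar. Fullness then follows from the well-known fullness of Haar on a compact group, and $T$-invariance from the fact that $T=\wh{\rho}_1$ is a continuous group automorphism of $\wh{H}$, so its pushforward of Haar is Haar by uniqueness.

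For the ergodicity statement, I would argue by contradiction. If $\wh{\mu}$ failed to be ergodic, the ergodic decomposition theorem would write it as $\wh{\mu}=\alpha\wh{\mu}_1+(1-\alpha)\wh{\mu}_2$ with $\alpha\in(0,1)$ and distinct $T$-invariant probability measures $\wh{\mu}_i$. Invoking the general constructions of \cite{Li2,virili} (mentioned in the excerpt) I would produce Sylvester matrix rank functions $\rk_i$ on $\calA$ satisfying $\rk_i(\chi_U)=\wh{\mu}_i(U)$ for every clopen $U$. The convex combination $\rk':=\alpha\rk_1+(1-\alpha)\rk_2$ is then a Sylvester matrix rank function on $\calA$ with $\rk'(\chi_U)=\wh{\mu}(U)$. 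Pulling back via $\pazF$ yields a non-trivial decomposition $\rk_{KG}=\alpha(\rk_1\circ\pazF)+(1-\alpha)(\rk_2\circ\pazF)$; to reach a contradiction with extremality of $\rk_{KG}$, I would need $\rk_1\circ\pazF\ne\rk_2\circ\pazF$, which follows since the two rank functions disagree already on some characteristic function $\chi_U$ by $\wh{\mu}_1\ne\wh{\mu}_2$.

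Finally, the identity $\rk_{KG}=\rk_{\calA}\circ\pazF$ is immediate once ergodicity of $\wh{\mu}$ is established: the rank function $\rk_{KG}\circ\pazF^{-1}$ on $\calA$ satisfies the defining property $\rk(\chi_U)=\wh{\mu}(U)$ by construction, so the uniqueness clause of Theorem \ref{theorem-rank.function} forces it to equal $\rk_{\calA}$. The step I expect to be the main obstacle is the ergodicity argument: the measures $\wh{\mu}_i$ appearing in a non-trivial decomposition of $\wh{\mu}$ need not be full, so Theorem \ref{theorem-rank.function} does not apply to them verbatim, and one must invoke the more general rank-function constructions from $T$-invariant (not necessarily full or ergodic) probability measures provided by \cite{Li2,virili}, together with affineness of the assignment $\mu\mapsto\rk_\mu$, to run the contradiction cleanly.
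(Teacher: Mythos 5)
Your construction and extension of $\wh{\mu}$, the identification with the Haar measure, and the final appeal to the uniqueness clause of Theorem \ref{theorem-rank.function} all match the paper's proof in substance. (The paper identifies $\wh{\mu}$ with Haar by the one-line computation $\rk_{KG}(\pazF^{-1}(\chi_U))=\tr_{KG}(\pazF^{-1}(\chi_U))=\pazF^{-1}(\chi_U)(e)=\int_{\wh{H}}\chi_U(\phi)\phi(e)\,d\wh{\mu}(\phi)=\wh{\mu}(U)$, whereas you evaluate explicitly on the basic clopens $U_{h,j}$ and their intersections; both are fine, as are your derivations of fullness and $T$-invariance from properties of Haar measure rather than from faithfulness of $\rk_{KG}$ and invertibility of $t$.)

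The ergodicity step, however, has a genuine gap, and it is not the one you flagged. Even granting that the methods of \cite{Li2,virili} produce Sylvester matrix rank functions $\rk_i$ on $\calA$ with $\rk_i(\chi_U)=\wh{\mu}_i(U)$ for all clopen $U$, your assertion that $\rk_{KG}=\alpha(\rk_1\circ\pazF)+(1-\alpha)(\rk_2\circ\pazF)$ does not follow: all you know is that the two sides agree on the elements $\pazF^{-1}(\chi_U)$, i.e.\ on the commutative subalgebra $KH$. A Sylvester matrix rank function on $C_K(\wh{H})\rtimes_T\Z$ is \emph{not} determined by its restriction to $C_K(\wh{H})$ in general --- that determination is precisely the uniqueness clause of Theorem \ref{theorem-rank.function}, which is available only for full ergodic measures, i.e.\ only after ergodicity has been established. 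So the argument is circular at the decisive step. (For $H$ trivial the point is stark: $K[t,t^{-1}]$ carries many distinct Sylvester rank functions, all agreeing on $K$.) The repair is to manufacture the convex decomposition of $\rk_{KG}$ inside the ambient ring $\calU(G)$ rather than by building new rank functions from the $\wh{\mu}_i$: a $T$-invariant Borel set $A$ with $0<\wh{\mu}(A)<1$ gives a projection $\chi_A\in L^\infty(\wh{H},\wh{\mu})\cong\calN(H)\subseteq\calN(G)$ which is central in $\calN(G)$ (it commutes with the abelian algebra $\calN(H)$ and, by $T$-invariance of $A$, with $t$); then $x\mapsto\rk_{\calU(G)}(\chi_A x)/\tr(\chi_A)$ and $x\mapsto\rk_{\calU(G)}((1-\chi_A)x)/\tr(1-\chi_A)$ are Sylvester rank functions whose convex combination restricts to $\rk_{KG}$, and they differ on some $\pazF^{-1}(\chi_U)$ because the two mutually singular conditional measures must already differ on a clopen set. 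This contradicts extremality, and is the shape of the argument the paper imports from \cite[Proposition 4.10]{AC}.
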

\begin{proof}
We first define a finitely additive probability measure $\ol{\mu}_{K G}$ on the algebra $\K$ of clopen subsets of $\wh{H}$ by the rule $\ol{\mu}_{K G}(U) = \rk_{K G}(\pazF^{-1}(\chi_U))$ for every clopen subset $U$ of $\wh{H}$ which, by the same argument as in the proof of \cite[Proposition 4.8]{AC}, can be uniquely extended to a Borel probability measure $\mu_{K G}$ on $\wh{H}$. Invariance of $\mu_{K G}$ follows from the fact that $t$ is an invertible element, and since $\rk_{K G}$ is a faithful rank function it follows that $\mu_{K G}$ is full. 
If moreover $\rk_{K G}$ is extremal, then again an argument similar to the one given in the proof of \cite[Proposition 4.10]{AC} proves that $\mu_{K G}$ is ergodic. Now Theorem \ref{theorem-rank.function} implies that $\rk_{K G} = \rk_{\calA} \circ \pazF$, as required.

Finally, to prove that $\mu_{K G}$ coincides with the normalized Haar measure $\wh{\mu}$ on $\wh{H}$, just note that $\pazF^{-1}(\chi_U)$ is a projection in $K G$ for any clopen $U \subseteq \wh{H}$, so its rank coincides with its trace and we obtain
\begin{equation*}
\mu_{K G}(U) = \tr_{K G}(\pazF^{-1}(\chi_U)) = \pazF^{-1}(\chi_U)(e) = \int_{\wh{H}} \chi_U(\phi) \phi(e) d\wh{\mu}(\phi) = \wh{\mu}(U).\hfill\qedhere
\end{equation*}
\end{proof}

\begin{remark}\label{remarks-austin}
An important observation is that, once we have proven that the Haar measure $\wh{\mu}$ on $\wh{H}$ is $T$-invariant, this property does not depend on the base field $K$ anymore. So, by assuming now that $K$ is any field of arbitrary characteristic $p$ (with $p$ not dividing any natural number $n \in \calO$) and containing all the $n^{\text{th}}$ roots of unity for any $n \in \calO$, \textit{and} by assuming ergodicity of $\wh{\mu}$, we can invoke Theorem \ref{theorem-rank.function} to obtain a `canonical' Sylvester matrix rank function on $K G$, by simply defining $\rk_{K G} := \rk_{\calA} \circ \pazF$.
\end{remark}

We can use Proposition \ref{proposition-measure.H} to prove that the $*$-regular closure of the group algebra $K G$ inside $\calU(G)$, which we denoted by $\calR_{K G}$, can be identified with $\calR_{\calA}$, the $*$-regular closure of $\calA$ inside the rank-completion $\gotR_{\rk}$ of $\calA$ with respect to its rank function $\rk_{\calA}$ (recall Theorem \ref{theorem-rank.function}).

\begin{theorem}\label{theorem-identify.*reg.closure}
Consider the same notation and hypotheses as in Proposition \ref{proposition-measure.H}, and assume that $\rk_{K G}$ is extremal in $\Ps(K G)$. We then obtain a $*$-isomorphism $\calR_{K G} \cong \calR_{\calA}$. In fact, we have commutativity of the diagram
\begin{equation*}
\vcenter{
	\xymatrix{
	\calA \ar[d]^*[@!270]{\cong} \ar@{^{(}->}[r] & \calR_{\calA} \ar[d]^*[@!270]{\cong} \ar@{^{(}->}[r] & \gotR_{\rk} \ar@{^{(}->}[d] \\
	K G \ar@{^{(}->}[r] & \calR_{K G} \ar@{^{(}->}[r] & \calU(G).
	}
}\label{diagram-comm.diag.4}
\end{equation*}
Moreover, the rank-completions of both $KG$ and $\calR_{KG}$ are $*$-isomorphic to $\calM_K$, the von Neumann continuous factor over $K$.
\end{theorem}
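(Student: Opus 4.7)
The plan is to leverage that, combining Proposition~\ref{proposition-pontryagin.duality} and Proposition~\ref{proposition-measure.H}, the Fourier transform $\pazF\colon KG\to \calA$ is a $*$-isomorphism which is rank-preserving, i.e.\ $\rk_{KG}=\rk_{\calA}\circ \pazF$. Hence $\pazF$ is an isometry between $(KG,\rk_{KG})$ and $(\calA,\rk_{\calA})$ and extends uniquely by uniform continuity to a rank-preserving $*$-isomorphism $\widetilde{\pazF}\colon \overline{KG}^{\rk_{KG}}\to \gotR_{\rk}$ between the rank completions, where Theorem~\ref{theorem-rank.function} identifies the rank completion of $\calA$ with $\gotR_{\rk}$.

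To identify the two $*$-regular closures, I would observe that $\calU(G)$ is itself $*$-regular and complete with respect to the canonical rank induced by the trace. Consequently, the rank completion $\overline{KG}^{\rk_{KG}}$ can be realised concretely as the rank-closure of $KG$ inside $\calU(G)$, and it is a $*$-regular subalgebra of $\calU(G)$ containing $KG$ (being $*$-isomorphic to the $*$-regular ring $\gotR_{\rk}$ via $\widetilde{\pazF}$). By minimality of the $*$-regular closure, $\calR_{KG}\subseteq \overline{KG}^{\rk_{KG}}$. Since $\widetilde{\pazF}$ is a $*$-isomorphism between $*$-regular overrings sending $KG$ onto $\calA$, it must intertwine the $*$-regular closures generated by these $*$-subrings, yielding $\widetilde{\pazF}(\calR_{KG})=\calR_{\calA}$. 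The commutativity of the diagram is immediate from the construction of $\widetilde{\pazF}$ as an extension of $\pazF$.

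For the final assertion, Proposition~\ref{proposition-reg.closure.completion} gives $\gotR_{\rk}\cong \calM_K$ as $*$-algebras, so the rank completion of $KG$ is $\*$-isomorphic to $\calM_K$. For $\calR_{KG}$, its rank completion computed inside $\calU(G)$ coincides with that of $KG$: on the one hand it contains $\overline{KG}^{\rk_{KG}}$; on the other hand it is contained in $\overline{KG}^{\rk_{KG}}$ since $\calR_{KG}\subseteq \overline{KG}^{\rk_{KG}}$ and the latter is already rank-complete. Hence the rank completion of $\calR_{KG}$ is also $*$-isomorphic to $\calM_K$.

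The only delicate point is verifying that the $*$-regular closure operation is respected by the rank-preserving extension $\widetilde{\pazF}$; this reduces to the general fact that any rank-preserving $*$-isomorphism between $*$-regular overrings intertwines the two $*$-regular closures of corresponding isomorphic $*$-subrings, which follows from the inductive description $\calR(S,R)=\bigcup_{n\geq 0}\calR_n(S,R)$ recalled in Subsection~\ref{subsection-regular.rings.*.rank.functions} by a straightforward induction on $n$. Beyond this, the proof is essentially bookkeeping of the identifications already established in Proposition~\ref{proposition-measure.H} and Theorem~\ref{theorem-rank.function}.
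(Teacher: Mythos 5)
Your proposal is correct and follows essentially the same route as the paper: both use the rank-preserving $*$-isomorphism $\pazF$ together with completeness of $\calU(G)$ to identify the rank completion $\gotR_{\rk}$ of $\calA$ with the rank closure of $KG$ inside $\calU(G)$, then observe that the $*$-regular closure of $KG$ in $\calU(G)$ coincides with its $*$-regular closure inside this intermediate $*$-regular ring and is carried onto $\calR_{\calA}$, with the final assertion reduced to Proposition \ref{proposition-reg.closure.completion}. Your explicit justification that a $*$-isomorphism intertwines $*$-regular closures (via the inductive description $\calR(S,R)=\bigcup_n\calR_n(S,R)$) is a detail the paper leaves implicit, but the argument is the same.
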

\begin{proof}
Since $\calU(G)$ is complete with respect to the $\rk_{\calU(G)}$-metric, Proposition \ref{proposition-measure.H} together with Theorem \ref{theorem-rank.function} tell us that $\gotR_{\rk}$ embeds in $\calU(G)$, making the previous diagram commutative. In turn, since $\gotR_{\rk}$ is itself $*$-regular, we see that
$$\calR_{K G} = \calR(K G,\calU(G)) \cong \calR(\calA,\gotR_{\rk}) = \calR_{\calA}$$
as $*$-algebras, as required. The last part follows from Proposition \ref{proposition-reg.closure.completion}.
\end{proof}

\section{The lamplighter group algebra}\label{section-lamplighter.algebra}

In this section we apply the constructions given in Sections \ref{section-approx.crossed.product} and \ref{section-*.regular.closure} to study the lamplighter group algebra. This algebra is of great relevance because, among other things, it gave the first counterexample to the Strong Atiyah Conjecture, see for example \cite{GZ}, \cite{DiSc} and the Introduction. 

\begin{definition}\label{definition-lamplighter}
The lamplighter group $\Gamma$ is the wreath product of the finite group $\Z_2$ of two elements by $\Z$. In other words,
$$\Gamma = \Z_2 \wr \Z = \Big( \bigoplus_{i \in \Z} \Z_2 \Big) \rtimes_{\sigma} \Z$$
where the semidirect product is taken with respect to the Bernoulli shift $\sigma : \Z \act \bigoplus_{i \in \Z} \Z_2$ defined by
$$\sigma_n(x)_i = x_{i+n} \quad \text{ for } x = (x_i) \in \bigoplus_{i \in \Z} \Z_2.$$
In terms of generators and relations, if we denote by $t$ the generator corresponding to $\Z$, and by $a_i$ the generator corresponding to the $i^{\text{th}}$ copy of $\Z_2$, we have the presentation
\begin{equation*}
\Gamma = \langle t, \{ a_i \}_{i \in \Z} \mid a_i^2,\text{ } a_i a_j a_i a_j,\text{ } ta_it^{-1}a_{i-1} \text{ for } i, j \in \Z \rangle.
\end{equation*}
\end{definition}

Now the Fourier transform (if $K$ is any field with involution of characteristic different from $2$) gives a $*$-isomorphism $K \Gamma \cong C_K(X) \rtimes_T \Z$, where $X = \{0,1\}^{\Z}$ is the Cantor set and $T$ is the shift map, namely $T(x)_i = x_{i+1}$ for $x \in X$. The isomorphism is given by the identifications
$$1 \mapsto \chi_X, \quad t \mapsto t, \quad a_i \mapsto \chi_{U_i} - \chi_{X \backslash U_i}$$
where $U_i = \{ x \in X \mid x_i = 0\}$. Note that, in particular, the elements $e_i = \frac{1 + a_i}{2}$ are idempotents in $K \Gamma$, and so are $f_i = 1 - e_i$. They correspond to the characteristic functions of the clopen sets consisting of all the elements in $X$ having a $0$ (resp. a $1$) at the $i^{\text{th}}$ component, respectively.

\subsection{The approximating algebras \texorpdfstring{$\calA_n$}{} for the lamplighter group algebra}\label{subsection-approximating.algebras.lamplighter}

We now give a concrete family of pairs $\{(E_n,\calP_n)\}_{n \geq 0}$ as in Section \ref{section-approx.crossed.product}, together with the corresponding algebras $\calA_n = \calA(E_n,\calP_n)$ for the algebra $\calA = C_K(X) \rtimes_T \Z$ corresponding to the lamplighter group.



We will follow the same notation as in \cite[Section 3]{Gra14}: given $\epsilon_{-k},...,\epsilon_l \in \{0,1\}$, the cylinder set $\{ x = (x_i) \in X \mid x_{-k} = \epsilon_{-k},...,x_l = \epsilon_l \}$ will be denoted by $[\epsilon_{-k} \cdots \underline{\epsilon_0} \cdots \epsilon_l]$. It is then clear that a basis for the topology of $X$ is given by the collection of clopen sets consisting of all the cylinder sets.

By \cite[Example 3.1]{KM}, the usual $\big( \frac{1}{2}, \frac{1}{2} \big)$-product measure on $X$ is an ergodic, full and shift-invariant probability measure. In fact,
\begin{align*}
\rk_{K \Gamma}( \pazF^{-1}(\chi_{[\epsilon_{-k} \cdots \underline{\epsilon_0} \cdots \epsilon_l]}) ) 
& = \frac{1}{2^{l+k+1}} = \mu([\epsilon_{-k} \cdots \underline{\epsilon_0} \cdots \epsilon_l]).
\end{align*}
It follows from Theorem \ref{theorem-rank.function} that $\rk_{K \Gamma} \circ \pazF^{-1}$ coincides with $\rk_{\calA}$, where $\calA = C_K(X) \rtimes_T \Z$. In particular, the set of $\ell^2$-Betti numbers arising from $\Gamma$ with coefficients in $K$ can be also computed by means of $\rk_{\calA}$:
$$\calC(\Gamma,K) = \bigcup_{k \geq 1} \{k - \rk_{\calA}(A) \mid A \in M_k(\calA) \},$$
which is closely related to $\calC(\calA) = \rk_{\calA}\Big(\bigcup_{k \geq 1} M_k(\calA)\Big)$. In fact, they are both subsemigroups of $(\R^+,+)$ which generate the same subgroup $\calG(\Gamma,K) = \calG(\calA)$, see Subsection \ref{subsection-Betti.numbers.Atiyah.problem}.

Let us start our analysis of $K \Gamma \cong C_K(X) \rtimes_T \Z$ using the dynamical approximation from Section \ref{section-approx.crossed.product}. For $n \geq 0$, we take $E_n = [1 \cdots \underline{1} \cdots 1]$ (with $2n+1$ one's) for the sequence of clopen sets, whose intersection gives the point $y = (...,1,\underline{1},1,...) \in X$ which is a fixed point for the shift map $T$ \footnote{It can also be done by taking an even number of one's at each level $n$; we are taking an odd number for notational convenience.}. We take the partitions $\calP_n$ of the complements $X \backslash E_n$ to be the obvious ones, namely
$$\calP_n = \{[\epsilon_{-n} \cdots \underline{\epsilon_0} \cdots \epsilon_n]\, \mid \, \epsilon_i\in \{ 0,1\}\}\setminus \{E_n\}.$$
Write $\calA_n := \calA(E_n, \calP_n)$ for the unital $*$-subalgebra of $\calA = C_K(X) \rtimes_T \Z$ generated by the partial isometries $\chi_Z t$, $Z \in \calP_n$. It is easily seen that $\calA_n$ coincides with the unital $*$-subalgebra of $\calA$ generated by the partial isometries $s_i = e_i t$ for $-n \leq i \leq n$, where recall that each $e_i$ is the projection in $K \Gamma$ given by $\frac{1 + a_i}{2}$ (equivalently, the characteristic function of the clopen set $[0_i]$), and we put $f_i = 1 - e_i$. 
We have, for each $n \geq 0$, inclusions $\calA_n \subseteq \calA_{n+1}$. The quasi-partition $\ol{\calP}_n$ consists of the translates of the sets $W \in \V_n$ of the following types:
\begin{enumerate}[a),leftmargin=1cm]
\item $W_0 = [1 1 \cdots \underline{1} \cdots 1 1 1]$ of length $1$ (there are $2n+2$ one's);
\item $W_1 = [1 1 \cdots \underline{1} \cdots 1 1 0 1 1 \cdots 1 \cdots 1 1]$ of length $2n+2$ (there are $4n+2$ one's, and a zero);
\item $W(*,*,...,*,*) = [1 1 \cdots \underline{1} \cdots 1 1 0 * * \cdots * * 0 1 1 \cdots 1 \cdots 1 1]$ of length $(2n+3)+l$,
\end{enumerate}
where in the last type $l \geq 0$ is the number of $*$, and each $*$ can be either a zero or a one, but with at most $2n$ consecutive one's. It can be checked by hand that indeed $\ol{\calP}_n$ forms a quasi-partition of $X$, namely that
$$\sum_{W \in \V_n} |W| \mu(W) = 1.$$
To this aim, we first need a definition. We write $m = 2n+1$.

\begin{definition}\label{definition-fibonacci.numbers}
For $k \in \Z^+$ we define the $k^{\text{th}}$ $m$-acci number, denoted by $\text{Fib}_m(k)$, recursively by setting
$$\text{Fib}_m(0) = 0, \quad \text{Fib}_m(1) = \text{Fib}_m(2) = 1, \quad \text{Fib}_m(3) = 2, \quad \dots , \quad \text{Fib}_m(m-1) = 2^{m-3},$$
and for $r \in \Z^+$,
$$\text{Fib}_m(r+m) = \text{Fib}_m(r+m-1) + \cdots + \text{Fib}_m(r).$$
This sequence is also known in the literature as the $m$-step Fibonacci sequence, see for example \cite{Flores} and \cite{NoePost}.
\end{definition}

\begin{lemma}\label{lemma-fibonacci.numbers}
For $k \geq 2$, $\emph{Fib}_m(k)$ is exactly the number of possible sequences $(\epsilon_1,...,\epsilon_l)$ of length $l = k-2$ that one can construct with zeroes and ones, but having at most $m-1$ consecutive one's.
\end{lemma}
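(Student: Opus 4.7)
The plan is to prove the lemma by strong induction on $l = k-2$. Let $a_l$ denote the number of binary sequences $(\epsilon_1,\dots,\epsilon_l)$ with no run of $m$ or more consecutive ones; the goal is to show $a_l = \emph{Fib}_m(l+2)$.

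For the base cases $0 \leq l \leq m-2$, no binary sequence of length $l$ can contain $m$ consecutive ones, so every such sequence is valid and $a_l = 2^l$. This matches the prescribed initial values $\emph{Fib}_m(l+2) = 2^l$ for $l+2 \in \{2,\dots,m-1\}$, together with the natural value $\emph{Fib}_m(m) = 2^{m-2}$ at the top of the range (as forced by consistency with the defining recurrence applied at the boundary).

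For the inductive step, I would establish the recurrence $a_l = a_{l-1} + a_{l-2} + \cdots + a_{l-m}$ for $l \geq m$ by a standard combinatorial decomposition: classify each valid sequence of length $l$ by the length $j \in \{0,1,\dots,m-1\}$ of its trailing run of ones. Since $l \geq m > j$, the position $l-j$ must hold a zero, and the initial segment $(\epsilon_1,\dots,\epsilon_{l-j-1})$ can be any valid sequence of length $l-j-1$; conversely, any valid sequence of that length extends uniquely in this fashion. Summing over $j$ yields the recurrence, which by the induction hypothesis rewrites as the defining $m$-acci recurrence for $\emph{Fib}_m(l+2)$.

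The only mild bookkeeping subtlety, which I expect to be the main obstacle, occurs at the junction $l = m-1$: here the decomposition above must additionally account for the all-ones sequence, which is still valid since $m-1 < m$. A direct count gives $a_{m-1} = 2^{m-1}$, and this is matched against $\emph{Fib}_m(m+1) = \sum_{j=1}^m \emph{Fib}_m(j)$ by using the initial condition $\emph{Fib}_m(1) = 1$ to absorb the extra unit contributed by the all-ones sequence. This seals the transition between the base cases and the recurrence regime, after which the recurrence propagates the identity to all larger $l$ and completes the induction.
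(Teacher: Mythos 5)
Your proof is correct, and it is precisely the ``simple combinatorial argument'' that the paper asserts without detail: the decomposition of a valid word by the length of its trailing run of ones yields the $m$-step recurrence, and your careful matching of the base cases (including the junction $l=m-1$, where the all-ones word accounts for the $\mathrm{Fib}_m(1)=1$ term) is exactly the bookkeeping the authors leave to the reader.
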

\begin{proof}
A simple combinatorial argument yields the result.
\end{proof}

By using the summation rules
$$\sum_{k \geq 1} \frac{\text{Fib}_m(k)}{2^k} = 2^{m-1}, \qquad \sum_{k \geq 1} \frac{k \text{Fib}_m(k)}{2^k} = 2^{2m}-(m+1)2^{m-1},$$
whose proofs can be found in \cite[Lemma 3.2.3]{Claramunt}, we compute
$$\sum_{W \in \V_n} |W| \mu(W) = \frac{1}{2^{m+1}} + \frac{1}{2^{2m}} \Big( \sum_{k \geq 1} \frac{m \text{Fib}_m(k)}{2^k} + \sum_{k \geq 1} \frac{k\text{Fib}_m(k)}{2^k} \Big) = \frac{m+1}{2^{m+1}} + \Big( 1 - \frac{m+1}{2^{m+1}} \Big) = 1,$$
as we already know.

Recall from Section \ref{section-approx.crossed.product} that we have faithful $*$-representations $\pi_n : \calA_n \hookrightarrow \gotR_n$, $x \mapsto (h_W \cdot x)_W$. In our situation, we have the concrete expression $\gotR_n = K \times  \prod_{k \geq 1} M_{m+k}(K)^{\text{Fib}_m(k)}$.

If now $K$ is a subfield of $\C$ closed under complex conjugation, then by Theorem \ref{theorem-identify.*reg.closure} we can identify $\calR_{K \Gamma} \cong \calR_{\calA}$, and in fact the $*$-regular closure of each $\calA_n$ inside $\calU(\Gamma)$ coincides with $\calR_n = \calR(\calA_n,\gotR_n)$, and the same for $\calR_{\infty} = \calR(\calA_{\infty}, \gotR_{\infty})$. In particular, Proposition \ref{proposition-reg.closure.completion} applies to give the following result, already proved by Elek in \cite{Elek16} for $K = \C$.

\begin{corollary}\label{corollary-vN.cont.factor}
Let $K$ be a subfield of $\C$ closed under complex conjugation, and let $\calR_{\rk}$ to be the rank-completion of $\calR_{K \Gamma}$ inside $\calU(\Gamma)$ with respect to $\rk_{\calU(\Gamma)}$. Then $\calR_{\rk} \cong \calM_K$ as $*$-algebras over $K$, where $\calM_K$ denotes the von Neumann continuous factor over $K$.
\end{corollary}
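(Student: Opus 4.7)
The plan is to combine Theorem \ref{theorem-identify.*reg.closure} with Proposition \ref{proposition-reg.closure.completion} via the identification $K\Gamma \cong \calA$ furnished by Fourier transform. First I will verify that the hypotheses of Theorem \ref{theorem-identify.*reg.closure} are met in this specific setting; once that is done, the conclusion follows by tracing through the commutative diagram of that theorem and passing to rank-completions.

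To check the hypotheses, recall that the Bernoulli $(\tfrac12,\tfrac12)$-product measure $\mu$ on $X = \{0,1\}^{\Z}$ is full, $T$-invariant, and ergodic, as noted in the text preceding the corollary. By Proposition \ref{proposition-measure.H}, $\mu$ coincides with the normalized Haar measure $\wh{\mu}$ on $\wh{H} = X$, where $H = \bigoplus_{\Z}\Z_2$. The equality $\rk_{K\Gamma}(\pazF^{-1}(\chi_U)) = \mu(U)$ for every clopen $U \subseteq X$, together with the uniqueness clause of Theorem \ref{theorem-rank.function}, then identifies $\rk_{K\Gamma}$ with $\rk_{\calA} \circ \pazF$ and shows in particular that $\rk_{K\Gamma}$ is extremal in $\Ps(K\Gamma)$.

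With extremality secured, Theorem \ref{theorem-identify.*reg.closure} produces a $*$-isomorphism $\calR_{K\Gamma} \cong \calR_{\calA}$ which arises as the restriction of a rank-preserving embedding $\gotR_{\rk} \hookrightarrow \calU(\Gamma)$. Since this embedding is isometric with respect to the two rank metrics in play, it extends continuously and uniquely to a $*$-isomorphism between the corresponding rank-completions. On the left hand side, the rank-completion of $\calR_{K\Gamma}$ inside $\calU(\Gamma)$ is precisely the algebra $\calR_{\rk}$ of the corollary. On the right hand side, Proposition \ref{proposition-reg.closure.completion} identifies the rank-completion of $\calR_{\calA}$ (with respect to $\rk_{\calR_{\calA}}$) with the von Neumann continuous factor $\calM_K$. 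Composing these two identifications yields the desired $*$-isomorphism $\calR_{\rk} \cong \calM_K$.

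The only point requiring a little care is verifying that the rank metric used to complete $\calR_{K\Gamma}$ (inherited from $\rk_{\calU(\Gamma)}$) genuinely corresponds under the isomorphism to the rank metric used to complete $\calR_{\calA}$ (inherited from the restriction of $\ol{\rk_{\infty}}$ to $\calR_{\calA}$). This is essentially tautological from the construction in Theorem \ref{theorem-identify.*reg.closure}: the isomorphism $\calR_{K\Gamma} \cong \calR_{\calA}$ is obtained by restricting the embedding $\gotR_{\rk} \hookrightarrow \calU(\Gamma)$ coming from the equality $\rk_{K\Gamma} = \rk_{\calA} \circ \pazF$, so the two rank functions pull back to one another and Cauchy sequences on one side correspond to Cauchy sequences on the other. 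Thus no further argument is needed, and no serious obstacle arises beyond stating these identifications explicitly.
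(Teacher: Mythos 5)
Your proposal is correct and follows essentially the same route as the paper: verify that $\rk_{K\Gamma}\circ\pazF^{-1}$ agrees with $\rk_{\calA}$ (hence is extremal) via the uniqueness clause of Theorem \ref{theorem-rank.function} applied to the Bernoulli/Haar measure, then invoke Theorem \ref{theorem-identify.*reg.closure} to identify $\calR_{K\Gamma}\cong\calR_{\calA}$ rank-preservingly, and conclude with Proposition \ref{proposition-reg.closure.completion}. The paper treats this as immediate from those two results, so your additional care about the completions matching up is a harmless elaboration of the same argument.
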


\subsection{The algebra of special terms for the lamplighter group algebra}\label{subsection-lamplighter.alg.reg.closure}

We now interpret the results in Subsection \ref{subsection-study.RB} for the lamplighter group algebra. In particular, we show that the corresponding algebra of special terms $\calS_n[[t;T]]$ is an integral domain. Our notation here is a little bit different from that section: we write $\calA_{n,0}[[t;T]]$ instead of $\calB_0[[t,T]]$ to denote the set of infinite sums
$$\sum_{i \geq 0} b_i (\chi_{X \backslash E_n}t)^i = \sum_{i \geq 0} b_i t^i , \quad \text{where } b_i \in \calA_{n,i} := \chi_{X \backslash (E_n \cup \cdots \cup T^{i-1}(E_n))} \calA_{n,0}$$
with $\calA_{n,0} = C_K(X) \cap \calA_n$. We then have a representation $\pi_n : \calA_{n,0}[[t;T]] \to \gotR_n$, $\pi_n(a) = (h_W \cdot a)_W$.

Following Definition \ref{definition-divisions.closure}, we denote by $(\calD_n)_+$ the division closure of $(\calA_n)_+ = \bigoplus_{i \geq 0} \calA_{n,i} t^i$ in $\calA_{n,0}[[t;T]]$. We write $\calS_n[[t;T]]$ to denote the subspace of $\calA_{n,0}[[t;T]]$ consisting of those elements $\sum_{i \geq 0} b_i t^i$ such that each $b_i$ belongs to $\text{span}\{ \chi_S \mid S \in \calW_i\}$ (see Definition \ref{definition-special.terms}). These are easy to describe here: noting that $E_n \cap T^{-1}(E_n) \neq \emptyset$, we have that the special term of degree $0$ is given by
$$S_0 = T^{-1}(S_1) = [\underbrace{1 \cdots \underline{1} \cdots 1 1}_{2n}],$$
which corresponds to $\chi_{S_0} = f_{-n+1} \cdots f_0 \cdots f_{n-1} f_n$;
the special one of degree $i = 2n+1$ is
$$S = [\underbrace{1 1 \cdots 1 \cdots 1}_{2n} 0 \underbrace{1 \cdots \underline{1} \cdots 1 1}_{2n}],$$
corresponding to $\chi_S t^{2n+1} = f_{-3n} f_{-3n+1} \cdots f_{-2n} \cdots f_{-n-1} e_{-n} f_{-n+1} \cdots f_0 \cdots f_{n-1} f_n t^{2n+1}$; finally, 
with degree $i \geq 2n+2$, we have the elements
$$S = [\underbrace{1 1 \cdots 1 \cdots 1}_{2n} 0 \underbrace{* * \cdots * *}_{i - (2n+2)} 0 \underbrace{1 \cdots \underline{1} \cdots 1 1}_{2n}]$$
corresponding to
$$\chi_S t^i = f_{-n-i+1} f_{-n-i+2} \cdots f_{-i+1} \cdots f_{-i+n} e_{-i+n+1} (\ast)_{-i+n+2} \cdots (\ast)_{-n-1} e_{-n} f_{-n+1} \cdots f_0 \cdots f_{n-1} f_n t^i,$$
with $(\ast)_j \in \{e_j,f_j\}$ having no more than $2n$ consecutive $f_j$'s. The next lemma shows that the lamplighter group algebra has some special properties that are reflected in $\calS_n[[t;T]]$.

\begin{lemma}\label{lemma-special.elements.are.algebra}
The space $\calS_n[[t;T]]$ becomes a subalgebra of $\calA_{n,0}[[t;T]]$, and even an integral domain.
\end{lemma}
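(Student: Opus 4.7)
The plan is to prove the two assertions in sequence: closure under multiplication, and absence of zero divisors.

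For closure, I would first compute the product of two monomial special terms $\chi_S t^i \cdot \chi_{S'} t^j$ with $S \in \calW_i$ and $S' \in \calW_j$. Using $t^i \chi_{S'} = \chi_{T^i(S')} t^i$, this equals $\chi_{S \cap T^i(S')} t^{i+j}$, so the key claim becomes: $S \cap T^i(S') \in \calW_{i+j}$, and in particular is nonempty. In the main case $i, j \geq 2n+2$, the cylinder $S$ constrains the positions $[-n+1-i, n]$ with the pattern ``$2n$ ones, zero, middle stars (at most $2n$ consecutive ones), zero, $2n$ ones'', and $T^i(S')$ is the corresponding shifted pattern on $[-n+1-j-i, n-i]$. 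I would check that the overlap region $[-n+1-i, n-i]$ is precisely where $S$ has its leftmost $2n$ ones and $T^i(S')$ has its rightmost $2n$ ones (both all ones), so the two constraints agree and the intersection is a nonempty cylinder on $[-n+1-i-j, n]$. The combined pattern then fits the $\calW_{i+j}$ form: the new star region $[n+2-i-j, -n-1]$ decomposes as ``stars of $S'$, right zero of $S'$, central $2n$-block of ones, left zero of $S$, stars of $S$'', which never has more than $2n$ consecutive ones (the central block is flanked by zeros, and the outer star blocks inherit the $\leq 2n$ constraint from $S$ and $S'$). The boundary cases $i \in \{0, 2n+1\}$ are handled by the same argument, noting that $\calW_k = \emptyset$ for $1 \leq k \leq 2n$ (any such cylinder would force some $Z'_\ell \in \calP_n$ to be all ones, contradicting $Z'_\ell \neq E_n$) and that $\chi_{S_0}$ acts as a two-sided multiplicative identity on all higher-degree special terms. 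Thus every product of monomial special terms is again a monomial special term, and bilinearity gives closure of $\calS_n[[t;T]]$ under multiplication.

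For the integral domain property, I would use a leading-term argument. Take nonzero $a = \sum_i a_i t^i$ and $b = \sum_j b_j t^j$ in $\calS_n[[t;T]]$, and let $i_0, j_0$ be the minimal indices with $a_{i_0}, b_{j_0} \neq 0$. Writing $a_{i_0} = \sum_{S \in \calW_{i_0}} \lambda_S \chi_S$ and $b_{j_0} = \sum_{S' \in \calW_{j_0}} \mu_{S'} \chi_{S'}$, the coefficient of $t^{i_0+j_0}$ in $ab$ expands as
\[
a_{i_0} \cdot T^{i_0}(b_{j_0}) = \sum_{S, S'} \lambda_S \mu_{S'}\, \chi_{S \cap T^{i_0}(S')},
\]
and by the first step each $S \cap T^{i_0}(S')$ belongs to $\calW_{i_0+j_0}$. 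For fixed $i_0, j_0$, the correspondence $(S, S') \mapsto S \cap T^{i_0}(S')$ is injective: from the combined cylinder one recovers $S$ as the restriction to positions $[-n+1-i_0, n]$, and $S'$ as the restriction to positions $[-n+1-i_0-j_0, n-i_0]$ followed by the shift $T^{-i_0}$. Since distinct elements of $\calW_{i_0+j_0}$ are pairwise disjoint clopen cylinders, their characteristic functions are linearly independent, so the expansion is a nontrivial combination of linearly independent terms and is therefore nonzero. Hence $ab \neq 0$.

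The main obstacle will be the careful combinatorial verification for the boundary degrees $i \in \{0, 2n+1\}$---where the cylinder pattern either degenerates to just the $2n$ rightmost ones (for $i = 0$) or has its left and right zeros coinciding at position $-n$ (for $i = 2n+1$)---and confirming that the central $2n$-run of ones produced by the overlap in $S \cap T^i(S')$ is always bounded by zeros on both sides, so that the resulting pattern actually lies in $\calW_{i+j}$ rather than violating the ``at most $2n$ consecutive ones'' constraint.
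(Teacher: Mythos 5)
Your proposal is correct and follows essentially the same route as the paper's proof: show $S \cap T^i(S') \in \calW_{i+j}$ by the explicit cylinder/pattern computation (the overlap being the common block of $2n$ ones), then conclude the domain property from the lowest-degree coefficient of a product. Your version is in fact slightly more careful than the paper's at the last step, where you make explicit the injectivity of $(S,S') \mapsto S \cap T^{i_0}(S')$ and the pairwise disjointness of elements of $\calW_{i_0+j_0}$, which the paper leaves implicit.
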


\begin{proof}
We show that if $S \in \calW_i$, $S'\in \calW_j$ then $S\cap T^i(S') \in \calW_{i+j}$ (here $i,j \geq 2n+2$, the other cases can be also checked in a similar way).
We have 
$$ \chi_S = f_{-n-i+1} \cdots f_{-i+1} \cdots f_{-i+n} e_{-i+n+1} a_{-i+n+2} \cdots a_{-n-1} e_{-n} f_{-n+1} \cdots f_0 \cdots f_n,$$
$$\chi_{S'} = f_{-n-j+1} \cdots f_{-j+1} \cdots f_{-j+n} e_{-j+n+1} b_{-j+n+2} \cdots b_{-n-1} e_{-n} f_{-n+1} \cdots f_0 \cdots f_n,$$
with $a_k, b_k \in \{e_k,f_k\}$ with no more than $2n$ consecutive $f_k$'s, so that 
\begin{align*}
\chi_S t^i \cdot \chi_{S'} t^j & = \chi_{S \cap T^i(S')} t^{i+j} = f_{-n-j-i+1} \cdots f_{-j-i+1} \cdots f_{-j-i+n} e_{-j-i+n+1} b_{-j-i+n+2} \cdots b_{-n-i-1} e_{-n-i} \\
& \quad \cdot f_{-n-i+1} \cdots f_{-i} f_{-i+1} \cdots f_{-i+n} e_{-i+n+1} a_{-i+n+2} \cdots a_{-n-1} e_{-n} f_{-n+1} \cdots f_0 \cdots f_n t^{i+j}.
\end{align*}
Now it is clear that $S \cap T^i(S') \in \calW_{i+j}$.  This shows that $\calS_n[[t;T]]$ is a subalgebra of $\calA_{n,0} [[t;T]]$. To show that $\calS_n[[t;T]]$ is a domain, consider two non-zero elements $a,b \in \calS_n[[t;T]]$ , and let $\chi_S t^i$ and $\chi_{S'} t^j$ be terms in the support of $a$ and $b$ respectively, of smallest degree. By the computation above $\chi_S t^i \cdot \chi_{S'} t^j = \chi_{S \cap T^i(S')} t^{i+j}$ is a non-zero term of smallest degree in $ab$. This shows that $ab \ne 0$. Note that the special term $\chi_{S_0 \cup T^{-1}(S_1)} = \chi_{S_0} = \chi_{T^{-1}(S_1)}$ is the unit of the algebra $\calS_n[[t;T]]$.
\end{proof}

Define $\calS_n[t;T] \subseteq \calS_n[[t;T]]$ as the set of elements of $\calS_n[[t;T]]$ with finite support, i.e. of the form $\sum_{i=0}^r b_i t^i$ with $b_i$ belonging to the linear span of the special elements of degree $i$, and $r$ a positive integer.

\begin{proposition}\label{proposition-free.algebra}
For $n \geq 1$, $\calS_n[t;T]$ is a free $K$-algebra with infinitely many generators, and $\calS_n[[t;T]]$ is a free power series $K$-algebra with infinitely many generators.
\end{proposition}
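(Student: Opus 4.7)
\emph{Plan.} The plan is to encode each special term by a binary word so that multiplication in $\calS_n[t;T]$ becomes a concatenation-with-bridge operation, from which the free-algebra structure is transparent.

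Call a word $u\in\{0,1\}^+$ \emph{admissible} if it begins and ends with $0$ and contains no substring $1^{2n+1}$. To each admissible $u$ I will associate a special term $x_u\in\calS_n[t;T]$ of $t$-degree $|u|+2n$, called its \emph{extended middle}: $u=0$ (length $1$) encodes the unique degree-$(2n+1)$ special term, while an admissible $u=0w0$ of length $\geq 2$ encodes the degree-$(|u|+2n)$ special term with middle word $w$. Together with the unit $\chi_{S_0}$, the family $\{x_u : u\text{ admissible}\}$ is a $K$-basis of $\calS_n[t;T]$. The first step is to check, by direct computation of $\chi_S \cdot T^i(\chi_{S'})$ following the argument in Lemma~\ref{lemma-special.elements.are.algebra}, that
$$x_u\cdot x_{u'} \;=\; x_{u' \cdot 1^{2n}\cdot u} \qquad \text{for all admissible } u,u'.$$
The four sub-cases $|u|,|u'|\in\{1,\geq 2\}$ all collapse into this single formula because the left tail of $S$ and the shifted right tail of $T^i(S')$ always overlap in a block of exactly $2n$ positions that both specify as $1$; these $2n$ ones supply the bridge $1^{2n}$ sandwiched between the two extended middles in the product.

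Next, call an admissible word $u$ \emph{indecomposable} if it contains no substring $0\cdot 1^{2n}\cdot 0$, equivalently if all its maximal runs of $1$'s have length strictly less than $2n$. Let $A$ denote the set of indecomposable admissible words; for $n\geq 1$ this set is infinite since it contains all the constant words $0,00,000,\ldots$. Every admissible word $u$ admits a unique factorisation
$$u \;=\; v_k \cdot 1^{2n} \cdot v_{k-1} \cdot 1^{2n} \cdots 1^{2n} \cdot v_1,\qquad v_1,\ldots,v_k\in A.$$
Existence is obtained by splitting $u$ at every occurrence of the pattern $0\cdot 1^{2n}\cdot 0$; two such occurrences can only overlap by sharing their middle $0$, so the splittings are mutually consistent and each residual piece $v_i$ is forced to be indecomposable. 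Uniqueness follows because omitting any split would leave some $v_i$ containing $0\cdot 1^{2n}\cdot 0$, contradicting indecomposability.

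Combining the preceding steps, the $K$-algebra homomorphism
$$\Phi : K\langle A\rangle \longrightarrow \calS_n[t;T], \qquad \Phi(v) = x_v \text{ for } v \in A,$$
is surjective (since every $x_u$ equals $x_{v_1}\cdots x_{v_k}$ via the factorisation of $u$, and $\Phi$ maps the empty word to $\chi_{S_0}$) and injective (since distinct reduced monomials in $A^*$ map to distinct basis elements of $\calS_n[t;T]$). Hence $\calS_n[t;T]$ is the free $K$-algebra on the infinite set $A$. For the second assertion, grade $K\langle A\rangle$ by $t$-degree (each $v\in A$ having positive degree $|v|+2n\geq 2n+1$); since each $t$-graded piece is then finite-dimensional, the elements of $\calS_n[[t;T]]$ correspond exactly to formal sums $c_0\chi_{S_0}+\sum_u c_u x_u$ which are finite in each $t$-degree, i.e.\ to the completion of $K\langle A\rangle$ along its $t$-degree filtration. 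This is the free power series $K$-algebra on $A$. The main technical hurdle is the case analysis underlying the multiplication rule; everything else is essentially combinatorial bookkeeping.
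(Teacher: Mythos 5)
Your proof is correct and follows essentially the same route as the paper: your ``indecomposable admissible words'' are exactly the paper's ``pure'' special terms (those whose middle block has at most $2n-1$ consecutive ones), and both arguments rest on the unique factorization of an arbitrary special term into such pieces, which you verify via the concatenation-with-$1^{2n}$-bridge multiplication rule already implicit in Lemma~\ref{lemma-special.elements.are.algebra}. The only difference is that you spell out the combinatorial details (the word encoding, the overlap analysis for uniqueness, and the identification of $\calS_n[[t;T]]$ with the degreewise completion) that the paper leaves as assertions.
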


\begin{proof}
We say that a special term $\chi_S t^i$ of the form
$$f_{-n-i+1}\cdots f_{-i+1} \cdots f_{-i+n} e_{-i+n+1} a_{-i+n+2} \cdots a_{-n-1} e_{-n} f_{-n+1}\cdots f_0 \cdots f_n t^i$$
is \textit{pure} if there are no more than $2n-1$ consecutive $f_j$'s in the $a_{-i+n+2}\cdots a_{-n-1}$ sequence. Denote by $\calP_u$ the set of pure elements. Then every special term $\chi_S t^i$ can be written uniquely as a product of pure terms, so we obtain an isomorphism
$$K\langle \langle x_b \mid b \in \calP_u \rangle \rangle \cong \calS_n[[t;T]], \quad x_b \mapsto b, \text{ } 1 \mapsto \chi_{[1 \cdots \underline{1} \cdots 1 1]}$$
which restricts to an isomorphism $K\langle x_b \mid b \in \calP_u \rangle \cong \calS_n[t;T]$.
\end{proof}

In the next subsection we provide the description of $\calS_n[[t;T]]$ for the case $n = 0$.

We now observe that, for $n\ge 1$, the $*$-regular algebra $\calQ_n$ and so the $*$-algebra $\calE_n$, corresponding to the algebra $\calA_n$ (described in Definition \ref{definition-algebraQ}), contain a well-known large $*$-subalgebra.

With the notation used in the proof of Proposition \ref{proposition-free.algebra}, denote by $K_{\mathrm{rat}}\langle x_b\mid b\in \calP_u\rangle $ the algebra of non-commutative rational series, which is by definition the division closure of $K\langle x_b\mid b\in \calP_u\rangle $ in $K \langle\langle  x_b\mid b\in \calP_u\rangle\rangle $, see \cite{BR}. Note that
$$K_{\mathrm{rat}}\langle x_b \mid b \in \calP_u\rangle =\bigcup_F K_{\mathrm{rat}}\langle x_b\mid b\in F\rangle,$$
where $F$ ranges over all the finite subsets of $\calP_u$. We see $K_{\mathrm{rat}}\langle x_b\mid b\in \calP_u\rangle$ as a subalgebra of $\calS_n[[t;T]]$ via the identification $K \langle \langle x_b \mid b\in \calP_u\rangle \rangle \cong \calS_n[[t;T]]$ provided by Proposition \ref{proposition-free.algebra}. By \cite[Theorems 1.5.5 and 1.7.1]{BR}, the algebra $K_{\mathrm{rat}}\langle x_b\mid b\in \calP_u\rangle$ is even a $*$-subalgebra of $(\calS_n[[t;T]],\odot,-)$. We will denote the algebra of rational series endowed with the Hadamard product $\odot$ by $K_{\mathrm{rat}}\langle x_b\mid b\in \calP_u\rangle^{\circ}$. Note that, since $\calP_u$ is infinite, the algebra $K_{\mathrm{rat}}\langle x_b\mid b\in \calP_u\rangle^{\circ}$ is not unital, but it has a local unit, namely the family $\{(1-\sum_{b\in F} x_b)^{-1}\}_F$, where $F$ ranges over all the finite subsets of $\calP_u$.  

\begin{proposition}\label{proposition-rational.series}
Let $n\ge 1$ and let $\calQ_n$ be the $*$-regular closure of $P((\calD_n)_+)$ in $(\calS_n[[t;T]],\odot,-)$. Then $\calQ_n$ contains the $*$-regular closure of $K_{\mathrm{rat}}\langle x_b\mid b\in \calP_u\rangle^{\circ}$ in $(\calS_n[[t;T]],\odot,-)$.
\end{proposition}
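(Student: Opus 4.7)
By the identification of Proposition~\ref{proposition-free.algebra}, $\calS_n[[t;T]]\cong K\langle\langle x_b\mid b\in\calP_u\rangle\rangle$ with $\chi_{S_0}$ playing the role of $1$ and each pure special term $b$ corresponding to the indeterminate $x_b$. I aim to show that every element of $K_{\mathrm{rat}}\langle x_b\mid b\in\calP_u\rangle$ lies in $P((\calD_n)_+)$; the desired inclusion at the level of $*$-regular closures in $(\calS_n[[t;T]],\odot,-)$ is then immediate.

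The key is to build a unital ring section of $P$ defined on a division-closed intermediate subring of $\calA_{n,0}[[t;T]]$. Specifically, introduce
\[
R:=K\chi_X+\calS_n[[t;T]]\subseteq \calA_{n,0}[[t;T]].
\]
Using the explicit form of the special clopens $S\in\calW_i$ displayed in Subsection~\ref{subsection-lamplighter.alg.reg.closure} (which for $i\ge 2n+1$ have $1$'s at both the leftmost $2n$ and rightmost $2n$ window positions, and analogously in degree $0$), one checks directly that $S\subseteq S_0\cap T^i(S_0)$ for every $S\in\calW_i$. This yields the two-sided annihilation
\[
\chi_{X\setminus S_0}\cdot s=0=s\cdot\chi_{X\setminus S_0}\qquad\text{for every }s\in\calS_n[[t;T]],
\]
from which $R$ is a subring of $\calA_{n,0}[[t;T]]$ admitting an internal direct product decomposition
\[
R\;\cong\;K\chi_{X\setminus S_0}\times\calS_n[[t;T]].
\]
Combining this product structure with Lemmas~\ref{lemma-invertibility.PS} and~\ref{lemma-B0.regular}, a short computation shows that $R$ is even division closed in $\calA_{n,0}[[t;T]]$.

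Under the above decomposition, the restriction $P|_R$ is exactly the projection onto the second factor, hence a unital ring homomorphism; and it admits the unital ring homomorphism section
\[
\sigma\colon \calS_n[[t;T]]\longrightarrow R,\qquad \sigma(r)=\mu_r\chi_{X\setminus S_0}+r,
\]
where $\mu_r\in K$ is the coefficient of $\chi_{S_0}$ in the expansion of $r$. The crucial property, which follows at once from the product decomposition, is that $\sigma$ preserves invertibility: if $r$ is invertible in $\calS_n[[t;T]]$ then $\sigma(r)$ is invertible in $R$ and $\sigma(r)^{-1}_R=\sigma(r^{-1})$.

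Now set $R_0:=\sigma(K\langle x_b\mid b\in\calP_u\rangle)=K\langle \chi_X,x_b\mid b\in\calP_u\rangle$, which is a subring of $R\cap (\calA_n)_+$ since each pure special term belongs to $\calA_n$. Because $R$ is division closed in $\calA_{n,0}[[t;T]]$, we have $\calD(R_0,R)=\calD(R_0,\calA_{n,0}[[t;T]])\subseteq(\calD_n)_+$. A straightforward induction on the number of ring operations and inversions used to build an element of $K_{\mathrm{rat}}\langle x_b\mid b\in\calP_u\rangle$ out of $K\langle x_b\mid b\in\calP_u\rangle$, appealing at each inversion step to the fact that $\sigma$ preserves invertibility, gives $\sigma(K_{\mathrm{rat}}\langle x_b\mid b\in\calP_u\rangle)\subseteq \calD(R_0,R)$. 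Applying $P$ and using $P\circ\sigma=\mathrm{id}$ yields
\[
K_{\mathrm{rat}}\langle x_b\mid b\in\calP_u\rangle\;\subseteq\;P(\calD(R_0,R))\;\subseteq\;P((\calD_n)_+)\;\subseteq\;\calQ_n,
\]
and the statement on $*$-regular closures in $(\calS_n[[t;T]],\odot,-)$ follows immediately.

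The principal technical obstacle is establishing the annihilation identity $\chi_{X\setminus S_0}\cdot\calS_n[[t;T]]=0=\calS_n[[t;T]]\cdot\chi_{X\setminus S_0}$, without which $R$ fails to split as a product of rings and neither $P|_R$ nor $\sigma$ are multiplicative. Its verification is combinatorial and relies on the specific shape of the sets $\calW_i$ for the lamplighter; everything else reduces to routine bookkeeping with the ring section $\sigma$.
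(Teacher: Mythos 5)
Your argument is correct and is essentially the paper's own proof in slightly more explicit form: both reduce the statement to showing $K_{\mathrm{rat}}\langle x_b\mid b\in\calP_u\rangle\subseteq P((\calD_n)_+)$, and both do so by passing through the unitalization $K\chi_{X\setminus S_0}\oplus\calS_n[[t;T]]$ of $\calS_n[[t;T]]$ inside $\calA_{n,0}[[t;T]]$ (the paper writes it as $(1-g)K+\calS_n[[t;T]]$ with $g=\chi_{S_0}$ the internal unit), transferring inverses there and then applying the idempotent $P$. Your section $\sigma$ is an explicit repackaging of the paper's inclusion $(1-g)K+\mathcal{D}\subseteq(\calD_n)_+$, and the orthogonality $\chi_{X\setminus S_0}\cdot\calS_n[[t;T]]=0=\calS_n[[t;T]]\cdot\chi_{X\setminus S_0}$ that you single out as the main obstacle is already recorded in Definition \ref{definition-special.terms} and Lemma \ref{lemma-special.elements.are.algebra}, where $\chi_{S_0}$ is shown to act as a two-sided unit on every special term.
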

\begin{proof}
Using the identifications given in the proof of Proposition \ref{proposition-free.algebra},	we only need to check that the division closure $\mathcal D$ of $\calS_n[t;T]$ in $\calS_n[[t;T]]$ is contained in $P((\calD_n)_+)$.

Let us denote by $g$ the unit of $\calS_n [[t;T]]$, that is $g = \chi_{S_0\cap T^{-1}(S_1)} = \chi_{S_0} = \chi_{T^{-1}(S_1)}$. Then the division closure of $(1-g)K+\calS_n[t;T]$ in $(1-g)K+\calS_n[[t;T]]$ is precisely $(1-g)K+\mathcal D$. Moreover we have inclusions of unital algebras
$$(1-g)K+\calS_n[t;T]\subseteq (\calD_n)_+\cap ((1-g)K+\calS_n[[t;T]])\subseteq (1-g)K + \calS_n[[t;T]],$$
and $(\calD_n)_+\cap ((1-g)K+\calS_n[[t;T]])$ is inversion closed in $(1-g)K+ \calS_n[[t;T]]$, so
$$(1-g)K+\mathcal D \subseteq (\calD_n)_+\cap ((1-g)K+\calS_n[[t;T]])\subseteq (\calD_n)_+.$$
We thus get $\mathcal D = P(\mathcal D)\subseteq P((\calD_n)_+)$, as desired.
\end{proof}

To close this subsection, we compute the $*$-regular closure of $K_{\mathrm{rat}}\langle X \rangle^{\circ}$ in $K\langle \langle X\rangle \rangle^{\circ}$. To this end, we first analyze the $*$-regular closure in the setting of commutative rings.

For any unital ring $T$, we denote by $\mathbf{B}(T)$ the Boolean algebra of central idempotents of $T$. Recall that $e\wedge f= ef$ and $e\vee f= e+f-ef$ for $e,f\in \mathbf{B}(T)$.

Let $R$ be a commutative unital $*$-regular ring and let $S$ be a unital $*$-subring of $R$. Observe that the idempotents of $R$ are necessarily self-adjoint. We want to obtain a simplified form of the construction of the $*$-regular closure of $S$ in $R$.

Write
\begin{equation}\label{equation-definition.of.E.idempotents}
E(S)=\{ e\in \mathbf{B}(R) \mid \text{ there exists }a\in S \text{ such that } aR=eR \}.
\end{equation}
We can think of the elements of $E(S)$ as being the supports of the elements of $S$. Note that $\mathbf{B}(S)\subseteq E(S)\subseteq \mathbf{B}(R)$ and that for $a\in S$ and $e\in E(S)$ such that $aR=eR$, we have $(1-e)R= \text{Ann}_R(a)$, the annihilator of $a$ in $R$.

\begin{lemma}\label{lemma-Boolean.prop}
Let $R$ be a commutative unital $*$-regular ring and let $S$ be a unital $*$-subring of $R$. The set $E(S)$ is closed under meets in $\mathbf{B}(R)$. Consequently, $E(S)$ is a Boolean subalgebra of $\mathbf{B}(R)$ if and only if $E(S)$ is closed under complements.
\end{lemma}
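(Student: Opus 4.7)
The plan is to establish meet-closure directly and then observe that the equivalence drops out from de Morgan.

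For the meet part, I would start from two elements $e,f \in E(S)$ with witnesses $a,b \in S$ satisfying $aR = eR$ and $bR = fR$. Since $S$ is a subring we get $ab \in S$, and I claim $ab R = ef R$. One inclusion uses that $a = ea$ and $b = fb$ (because $e,f$ are the units of the principal ideals $aR, bR$), so $ab = ef \cdot ab \in efR$, giving $abR \subseteq efR$. For the reverse inclusion I exploit the regularity and commutativity of $R$: pick $s,t \in R$ with $as = e$ and $bt = f$; then in the commutative ring $R$,
\[
(ab)(st) = (as)(bt) = ef,
\]
so $ef \in abR$, i.e. $efR \subseteq abR$. Hence $ab$ witnesses that $e \wedge f = ef \in E(S)$.

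For the equivalence, note first that $E(S)$ always contains $0$ (via $a = 0$) and $1$ (via $a = 1$). A Boolean subalgebra of $\mathbf{B}(R)$ is by definition closed under complements, so that implication is immediate. Conversely, assume $E(S)$ is closed under complements. Combined with meet-closure just proved, de Morgan's identity
\[
e \vee f \;=\; 1 - (1-e)(1-f)
\]
shows that $E(S)$ is also closed under joins, and since it contains $0$ and $1$ it is a Boolean subalgebra of $\mathbf{B}(R)$.

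There is no real obstacle here; the only point that needs a little care is the reverse inclusion $efR \subseteq abR$, where commutativity of $R$ is essential in order to rewrite $(ab)(st)$ as $(as)(bt)$. Everything else is formal bookkeeping using the fact that $e, f \in \mathbf{B}(R)$ are the unique idempotent generators of the principal ideals $aR, bR$.
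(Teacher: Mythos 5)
Your proof is correct and follows essentially the same route as the paper: the paper establishes $efR=(eR)(fR)=(aR)(bR)=(ab)R$ directly via the product of principal ideals in a commutative ring, and your two-inclusion argument is just an element-level unpacking of that identity, with the De Morgan step handled identically.
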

\begin{proof}
Let $a,b\in S$, and suppose that $aR=eR$ and $bR=fR$ for $e,f\in \mathbf{B}(R)$. We then have:
$$(e\wedge f)R= efR = (eR)(fR)= (aR)(bR)= (ab)R,$$
which shows that $e\wedge f \in E(S)$. The second part follows from the De Morgan laws.
\end{proof}

We can obtain now the description of the $*$-regular closure.

\begin{proposition}	\label{proposition-commutative.reg.closure}
Let $R$ be a unital commutative $*$-regular ring and let $S$ be a unital $*$-subring of $R$. Let $E(S)\subseteq \mathbf{B}(R)$ be the set defined in \eqref{equation-definition.of.E.idempotents}, and let $\mathbf{B}_S$ be the smallest Boolean subalgebra of $\mathbf{B}(R)$ containing $E(S)$.  Let $S\mathbf{B}_S$ be the subring of $R$ generated by $S$ and $\mathbf{B}_S$. Then the $*$-regular closure  $\calR$ of $S$ in $R$ coincides with the classical ring of quotients $Q_{\emph{cl}}(S\mathbf{B}_S)$ of $S\mathbf{B}_S$. Moreover we have $\mathbf{B}(\calR)= \mathbf{B}_S$.
\end{proposition}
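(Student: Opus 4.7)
The plan is to verify the double inclusion $\calR = Q_{\mathrm{cl}}(S\mathbf{B}_S)$ and then read off the description of $\mathbf{B}(\calR)$. The decisive technical fact that will underpin the whole argument is the following: for every element $x$ of $S\mathbf{B}_S$, the support projection $\mathrm{LP}(x)$ (computed in $R$) already lies in $\mathbf{B}_S$. To prove this, I would write $x = \sum_i s_i e_i$ with $s_i \in S$ and $e_i \in \mathbf{B}_S$, and use that any finite subset of the Boolean algebra $\mathbf{B}_S$ admits a common refinement into finitely many pairwise orthogonal idempotents $f_1,\ldots,f_r \in \mathbf{B}_S$. After re-expressing $x = \sum_j s'_j f_j$ with $s'_j \in S$ and orthogonal $f_j$, one computes $\mathrm{LP}(x) = \sum_j \mathrm{LP}(s'_j)\, f_j$, which lies in $\mathbf{B}_S$ because each $\mathrm{LP}(s'_j) \in E(S) \subseteq \mathbf{B}_S$.

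For the inclusion $Q_{\mathrm{cl}}(S\mathbf{B}_S) \subseteq \calR$, observe first that for every $a \in S$ the projection $\mathrm{LP}(a) = a\overline{a}$ lies in $\mathbf{B}(\calR)$, since $\calR$ is $*$-regular. As $\mathbf{B}(\calR)$ is a Boolean subalgebra of $\mathbf{B}(R)$, minimality of $\mathbf{B}_S$ gives $\mathbf{B}_S \subseteq \mathbf{B}(\calR) \subseteq \calR$, whence $S\mathbf{B}_S \subseteq \calR$. If $b \in S\mathbf{B}_S$ is a non-zero-divisor of $S\mathbf{B}_S$, then by the technical fact $1 - \mathrm{LP}(b) \in \mathbf{B}_S \subseteq S\mathbf{B}_S$ and $(1 - \mathrm{LP}(b))\,b = 0$, forcing $\mathrm{LP}(b) = 1$; hence $b$ is invertible in $R$, and since $\calR$ is closed under relative inverses, $b^{-1} \in \calR$. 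The universal property of the classical ring of quotients then yields $Q_{\mathrm{cl}}(S\mathbf{B}_S) \subseteq \calR$.

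For the reverse inclusion, the strategy is to prove that $T := Q_{\mathrm{cl}}(S\mathbf{B}_S)$ is itself a $*$-regular subring of $R$ containing $S$, after which minimality of $\calR$ will give $\calR \subseteq T$. Closure of $T$ under the involution is immediate because $\mathbf{B}_S$ consists of self-adjoint idempotents. For $*$-regularity, given $a \in S\mathbf{B}_S$ with $e = \mathrm{LP}(a) \in \mathbf{B}_S$, the element $a + (1 - e) \in S\mathbf{B}_S$ is invertible in $R$ with inverse $\overline{a} + (1 - e)$, so it is a non-zero-divisor of $S\mathbf{B}_S$; its inverse lies in $T$, and subtracting $1 - e \in S\mathbf{B}_S$ gives $\overline{a} \in T$. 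For a general $x = a b^{-1} \in T$ (with $b$ a non-zero-divisor of $S\mathbf{B}_S$, hence invertible in $R$), the relative inverse in $R$ is $\overline{x} = \overline{a}\,b$, which again lies in $T$. This completes the proof of $\calR = Q_{\mathrm{cl}}(S\mathbf{B}_S)$.

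The identification $\mathbf{B}(\calR) = \mathbf{B}_S$ then follows readily. The inclusion $\mathbf{B}_S \subseteq \mathbf{B}(\calR)$ was already established. Conversely, any idempotent $e \in \calR = T$ equals its own support $\mathrm{LP}(e)$; writing $e = a\, b^{-1}$ with $b$ a non-zero-divisor of $S\mathbf{B}_S$ (hence invertible in $R$) gives $\mathrm{LP}(e) = \mathrm{LP}(a) \in \mathbf{B}_S$ by the key technical fact. The main obstacle throughout the whole argument is precisely this technical fact: the assertion that the support of every element of $S\mathbf{B}_S$ lies in $\mathbf{B}_S$. It is what simultaneously ensures that non-zero-divisors of $S\mathbf{B}_S$ become units in $R$, that $T$ is closed under relative inverses, and that no new idempotents appear in $\calR$ beyond those already in $\mathbf{B}_S$.
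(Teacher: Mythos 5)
Your proposal is correct and follows essentially the same route as the paper's proof: the central technical fact that support projections of elements of $S\mathbf{B}_S$ lie in $\mathbf{B}_S$ (via orthogonal refinement in the Boolean algebra), the consequence that non-zero-divisors of $S\mathbf{B}_S$ are units of $R$, and the $a+(1-e)$ trick to show $Q_{\mathrm{cl}}(S\mathbf{B}_S)$ is $*$-regular (your quasi-inverse $\overline{a}\,b$ is the paper's $eb(a+(1-e))^{-1}$ after simplification). Your explicit verification of $\mathbf{B}(\calR)=\mathbf{B}_S$ via $\mathrm{LP}(ab^{-1})=\mathrm{LP}(a)$ is a welcome addition, as the paper leaves that final claim implicit.
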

\begin{proof}
We first show that $Q_{{\rm cl}}(S\mathbf{B}_S)$ naturally embeds in $R$. Observe that every element $x \in S\mathbf{B}_S$ can be written in the form $x = \sum_{i=1}^n e_is_i$, where $s_i \in S$ for all $i$ and $(e_i)$ is a sequence of pairwise orthogonal elements of $\mathbf{B}_S$. Now let $f_i \in E (S)$ such that $s_iR = f_iR$. We have $(e_is_i)R = (e_if_i)R$, and $h_i := e_if_i = e_i \wedge f_i \in \mathbf{B}_S$, because $\mathbf{B}_S$ is a Boolean subalgebra of $\mathbf{B}(R)$. Hence, by replacing each $e_i$ by $h_i$, we can further assume that $(e_is_i)R = e_iR$. It follows that the support projection of $x$ in $R$ is exactly $\sum_{i=1}^n e_i = \vee_{i=1}^n e_i \in \mathbf{B}_S$. So we have shown that the support projection in $R$ of any element of $S\mathbf{B}_S$ belongs to $\mathbf{B}_S$.

Let $z$ be a non-zero-divisor of $S\mathbf{B}_S$, and write $zR = eR$ for some idempotent $e \in R$. By what we proved above we have $e\in \mathbf{B}_S$. Since $(1-e)z = 0$, we conclude that $e = 1$. Hence $z$ is invertible in $R$ and therefore there is a unique embedding of $Q_{{\rm cl}}(S\mathbf{B}_S)$ into $R$ extending the canonical embedding $S\hookrightarrow R$.

Let $\calR$ denote the $*$-regular closure of $S$ in $R$. We first show that $Q_{{\rm cl}}(S\mathbf{B}_S)\subseteq \calR$. It is clear that $E(S) \subseteq \calR$, so that $\mathbf{B}_S\subseteq \mathbf{B}(\calR)$ and $S\mathbf{B}_S\subseteq \calR$. Now since all non-zero-divisors of $S\mathbf{B}_S$ are invertible in $\calR$, we get that $Q_{{\rm cl}}(S\mathbf{B}_S)\subseteq \calR$.

For the other inclusion  $\calR \subseteq Q_{{\rm cl}}(S\mathbf{B}_S)$, it suffices to show that $ Q_{{\rm cl}}(S\mathbf{B}_S)$ is a $*$-regular subring of $R$. First notice that since $S$ is a $*$-subring of $R$, $S\mathbf{B}_S$ is a $*$-subring of $R$ too. It follows that $Q_{{\rm cl}}(S\mathbf{B}_S)$ is a $*$-subring of $R$. Now let $x= ab^{-1}\in  Q_{{\rm cl}}(S\mathbf{B}_S)$, where $a \in S\mathbf{B}_S$, and $b$ a non-zero-divisor in $S\mathbf{B}_S$. We proved above that the support projection, say $e$, of $a$ belongs to $\mathbf{B}_S$. Hence $a + (1-e)$ is a non-zero-divisor in $S\mathbf{B}_S$, so that it is invertible in $ Q_{{\rm cl}}(S\mathbf{B}_S)$. Now it is easily checked that the quasi-inverse of $x$ is $eb(a+(1-e))^{-1}\in Q_{{\rm cl}}(S\mathbf{B}_S)$, so that $Q_{{\rm cl}}(S\mathbf{B}_S)$ is a $*$-regular ring.
\end{proof}

Let $X$ be an infinite countable set and write $X = \bigcup_{n\ge 1} X_n$, where $X_n \subseteq X_{n+1}$ and $|X_n| = n$ for all $n$. Let $\calR^n_{\text{rat}}$ be the $*$-regular closure of $K_{\mathrm{rat}} \langle X_n \rangle^{\circ}$ in $K \langle \langle X_n\rangle \rangle^{\circ}$, and let $\calR_{\text{rat}}$ be the $*$-regular closure of $K_{\mathrm{rat}} \langle X \rangle^{\circ}$ in $K\langle \langle X \rangle \rangle^{\circ}$. Then we have
$$\calR_{\text{rat}} = \Big( \bigcup_{n\ge 1} \calR^n_{\text{rat}}  \Big) + 1\cdot K.$$
Therefore, it is enough to compute the $*$-algebras $\calR^n_{\text{rat}}$.

For a finite set $X$, denote by $X^*$ the free monoid generated by $X$. For $a = \sum_{w\in  X^*} a_w w \in K \langle \langle X \rangle \rangle$, denote by $\text{supp}(a)$ the {\it support} of $a$, that is, the set of all $w \in X^*$ such that $a_w \ne 0$. The {\it annihilator} $\text{ann}(a)$ is defined as the complement of $\text{supp}(a)$ in $X^*$. The Boolean algebra $\mathbf{B}(K\langle \langle X\rangle \rangle^{\circ})$ is isomorphic to the power set $P(X^*)$ of $X^*$. A subset of $X^*$ is usually called a {\it language} (see \cite[p. 4]{BR}).

The following proposition is the key to obtain some new irrational $\ell^2$-Betti numbers arising from the lamplighter group, see \cite[Subsection 4.3]{AC2}.

\begin{proposition}\label{proposition-reg.closure.rational.series}
Let $K$ be a field with involution, and let $X$ be a finite non-empty set. Let $\mathfrak K$ be the set of all the supports of elements from $K_{{\rm rat}}\langle X\rangle$, and let $\mathbf B$ be the Boolean subalgebra of $P(X^*)$ generated by $\mathfrak K$. Then the support of any element in the $*$-regular closure $\calR (K_{{\rm rat}}\langle X \rangle^{\circ},K\langle \langle X\rangle \rangle ^{\circ})$ of $K_{{\rm rat}}\langle X \rangle^{\circ}$ in $K\langle \langle X\rangle \rangle ^{\circ}$ belongs to $\mathbf B$. Moreover for each $L\in \mathbf B$ the characteristic series $\ul{L}$ belongs to  $\calR (K_{{\rm rat}}\langle X \rangle^{\circ},K\langle \langle X\rangle \rangle ^{\circ})$.
\end{proposition}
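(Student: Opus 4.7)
The plan is to reduce this proposition to a direct application of Proposition \ref{proposition-commutative.reg.closure}, by checking that its hypotheses are met for $R=K\langle\langle X\rangle\rangle^{\circ}$ and $S=K_{\mathrm{rat}}\langle X\rangle^{\circ}$. First I would observe that the Hadamard product is componentwise on the basis $X^*$, so the map sending $\sum_{w\in X^*}a_w w$ to the function $w\mapsto a_w$ gives a $K$-algebra isomorphism $K\langle\langle X\rangle\rangle^{\circ}\cong K^{X^*}$. The ring $K^{X^*}$ is a commutative $*$-regular ring (for any involution on the field $K$, since $\lambda^*\lambda=0\Rightarrow\lambda=0$ in a field), and its Boolean algebra of (necessarily central) idempotents is precisely the power set $P(X^*)$, with each subset $L\subseteq X^*$ corresponding to the characteristic series $\underline{L}$.

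Next I would verify that $K_{\mathrm{rat}}\langle X\rangle^{\circ}$ is a unital $*$-subring of $K\langle\langle X\rangle\rangle^{\circ}$. It is closed under the Hadamard product and under the involution $\sum\lambda_w w\mapsto\sum\overline{\lambda_w}w$ by the cited results \cite[Theorems 1.5.5 and 1.7.1]{BR}, and it contains the element $\underline{X^*}=\big(1-\sum_{x\in X}x\big)^{-1}\in K_{\mathrm{rat}}\langle X\rangle$, which is precisely the unit of the Hadamard product. The key identification is then to compute the set $E(S)$ from \eqref{equation-definition.of.E.idempotents} in this setting: for a rational series $a$, its support projection in $K^{X^*}$ is exactly $\underline{\mathrm{supp}(a)}$, so $E(K_{\mathrm{rat}}\langle X\rangle^{\circ})=\mathfrak K$ under the identification $P(X^*)=\mathbf B(K\langle\langle X\rangle\rangle^{\circ})$.

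With these identifications in place, Proposition \ref{proposition-commutative.reg.closure} immediately yields both assertions. Indeed, the Boolean subalgebra $\mathbf B_S$ generated by $E(S)=\mathfrak K$ coincides with the Boolean algebra $\mathbf B$ of the statement, and by the last sentence of that proposition one has $\mathbf B(\calR)=\mathbf B$. Since $\calR$ is commutative and $*$-regular, the support of any $x\in\calR$ (i.e.\ the smallest idempotent $e$ with $x=ex$) is a central idempotent of $\calR$, hence an element of $\mathbf B(\calR)=\mathbf B$, giving the first claim. For the second claim, every $L\in\mathbf B=\mathbf B(\calR)$ is by definition an idempotent of $\calR$, and under our identification this idempotent is exactly the characteristic series $\underline{L}$, proving $\underline{L}\in\calR$.

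I do not anticipate any serious obstacle: once the identification $K\langle\langle X\rangle\rangle^{\circ}\cong K^{X^*}$ and the unitality of $K_{\mathrm{rat}}\langle X\rangle^{\circ}$ are pinned down, Proposition \ref{proposition-commutative.reg.closure} does all the work. The only point that requires a moment of care is checking that the involution used here (which fixes each monomial $w$) makes $K_{\mathrm{rat}}\langle X\rangle^{\circ}$ into a $*$-subring and that $K^{X^*}$ is $*$-regular with this involution, both of which are routine.
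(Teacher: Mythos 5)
Your proposal is correct and follows essentially the same route as the paper: the paper's proof simply observes that $\mathfrak K$ coincides with the set $E(S)$ of Proposition \ref{proposition-commutative.reg.closure} and invokes that proposition, which is exactly what you do, with the routine verifications (the identification $K\langle\langle X\rangle\rangle^{\circ}\cong K^{X^*}$, unitality of $K_{\mathrm{rat}}\langle X\rangle^{\circ}$ via $\underline{X^*}=(1-\sum_{x\in X}x)^{-1}$, and properness of the involution) spelled out. No gaps.
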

\begin{proof}
Note that the set $\mathfrak K$ corresponds exactly with the set $E(S)$ described in Proposition \ref{proposition-commutative.reg.closure}. Hence the result follows from that proposition.
\end{proof}

\begin{remark}\label{remark-noncomm.series}
By \cite[Section 3.4]{BR}, if $K$ is a field of characteristic zero, and $|X|>1$, then the set $\mathfrak K$ of Proposition \ref{proposition-reg.closure.rational.series} is not closed under complementation, and so it is {\it not} a Boolean subalgebra of subsets of $X^*$. It follows that $\mathbf{B}$ is strictly larger than $\mathfrak K$, and in particular properly contains the Boolean algebra of all the rational languages over $X$, see \cite[Chapter 3]{BR}.
\end{remark}

\subsection{Analysis of the algebra $\calA_0$}\label{subsection-analysis.A.G.example}

Here we analyze the example in \cite[Section 6]{AG} in the light of the theory developed in the present paper. The algebra constructed there coincides with the algebra $\calA_0$, the first of the approximating algebras considered in the preceding two sections. In \cite{AG}, a concrete description of the $*$-regular closure of the $*$-algebra $\calA_0$ is obtained, and it is shown that $\calG(\calA_0) = \Q$. Recall that $\calG(\calA_0)$ denotes the subgroup of $\R$ generated by the set of $\ell^2$-Betti numbers $\calC(\calA_0)$ arising from $\calA_0$. Note that $\calA_0$ coincides with the semigroup algebra $K \pazF$ of the monogenic free inverse monoid $\pazF$ (see \cite{AG}).

We will denote by $\calD_+$ and $\calD _-$ the subalgebras of $\calA_{0,0}[[t;T]]$ and $\calA_{0,0}[[t^{-1};T^{-1}]]$ introduced in Definition \ref{definition-divisions.closure}.

Now recall from \cite{AG} that the algebra $\Sigma^{-1}\calA_0$ embeds naturally in $\gotR_0 = \prod_{i \geq 1} M_i(K)$. Here $\Sigma$ is the set of all the polynomials of the form $f(s)$ for $f(x)\in K[x]$ with $f(0) = 1$, and $s = \chi_{X\setminus E_0}t = \chi_{[\underline{0}]}t$. Denoting by $\pazB$ the image of $\Sigma^{-1}\calA_0$ in $\gotR_0$, the $*$-subalgebra of $\gotR_0$ generated by $\pazB + \pazB^*$ will be denoted by $\pazT$. This algebra was considered in \cite{AG} with the notation $\calD$ (see \cite[Proposition 6.8]{AG}). We will show below that indeed $\pazT = \calD$, where, as in Section \ref{section-*.regular.closure}, $\calD$ denotes the $*$-subalgebra of $\gotR_0$ generated by $\pi_+(\calD_+) + \pi_-(\calD_-)= \pi_+(\calD_+) + \pi_+(\calD_+)^*$. Thus in the end the algebras denoted by $\calD$ in each of the two papers do agree.

\begin{proposition}\label{proposition-agreement.of.D.and.T}
With the above notation, we have $\pazT = \calD$.
\end{proposition}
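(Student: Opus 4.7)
The plan is to establish the two inclusions $\pazT\subseteq \calD$ and $\calD\subseteq \pazT$ in turn.

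For $\pazT\subseteq \calD$, it suffices to prove $\pazB\subseteq \calD$ since $\calD$ is closed under the involution. The algebra $\pazB$ is generated inside $\gotR_0$ by $\pi(\calA_0)$ together with the inverses $\pi(f(s))^{-1}$ for $f(s)\in\Sigma$, so I check that both are in $\calD$. First, $\pi(\calA_0)\subseteq \calD$: any $a\in\calA_0$ decomposes as $a=a_++a_-$ with $a_\pm\in(\calA_0)_\pm\subseteq\calD_\pm$, so $\pi(a)=\pi_+(a_+)+\pi_-(a_-)\in\pi_+(\calD_+)+\pi_-(\calD_-)\subseteq\calD$. Second, each $f(s)\in\Sigma$ lies in $(\calA_0)_+\subseteq \calA_{0,0}[[t;T]]$ with constant term $f(0)=1$, invertible in $\calA_{0,0}$; Lemma \ref{lemma-invertibility.PS} gives $f(s)^{-1}\in\calA_{0,0}[[t;T]]$, and since $\calD_+$ is division closed in the latter we get $f(s)^{-1}\in\calD_+$, so $\pi(f(s))^{-1}=\pi_+(f(s)^{-1})\in\pi_+(\calD_+)\subseteq\calD$. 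This gives $\pazB\subseteq \calD$, and taking adjoints yields $\pazT\subseteq\calD$.

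For the reverse inclusion $\calD\subseteq \pazT$, it is enough to show $\pi_+(\calD_+)\subseteq \pazT$, since by Proposition \ref{proposition-charac.division.closure}(iv) this implies $\pi_-(\calD_-)=\pi_+(\calD_+)^*\subseteq \pazT^*=\pazT$, whence $\calD\subseteq\pazT$. By Proposition \ref{proposition-charac.division.closure}(ii), $\pi_+(\calD_+)$ is the division closure of $\pi_+((\calA_0)_+)$ inside $\gotR_0$. Since $\pi_+((\calA_0)_+)=\pi((\calA_0)_+)\subseteq \pi(\calA_0)\subseteq \pazB\subseteq \pazT$ already, the task reduces to verifying that $\pazT$ is closed under taking inverses (in $\gotR_0$) of its elements coming from $\pi_+(\calA_{0,0}[[t;T]])$. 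Concretely, given $y=\sum_{i\ge 0}b_it^i\in\calA_{0,0}[[t;T]]$ with $\pi_+(y)\in\pazT$ and $b_0$ invertible in $\calA_{0,0}$, one must exhibit $\pi_+(y^{-1})\in\pazT$.

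The main obstacle—where the genuine content of the proof lies—is the last step. Since $\calA_{0,0}$ is $*$-regular by Lemma \ref{lemma-B0.regular}, we have $b_0^{-1}\in\calA_{0,0}\subseteq\calA_0\subseteq\pazT$, so writing $y=b_0(1+z)$ with $z\in\calA_{0,0}[[t;T]]$ of strictly positive $t$-degree reduces the problem to showing $\pi_+((1+z)^{-1})\in\pazT$. But $(1+z)^{-1}=\sum_{k\ge 0}(-z)^k$ is generically an honest infinite power series, so one cannot read it off as a finite algebraic combination of elements of $\pazB\cup\pazB^*$ directly. The natural route is to establish the identification $\calD_+=\Sigma^{-1}(\calA_0)_+$, i.e., that the division closure of $(\calA_0)_+$ in $\calA_{0,0}[[t;T]]$ already coincides with the image of the universal $\Sigma$-localization. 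This identification would present every element of $\calD_+$ as a product of elements of $(\calA_0)_+$ and inverses of elements of $\Sigma$, forcing $\pi_+(\calD_+)\subseteq\pazB\subseteq\pazT$. Proving it is an Ore-type assertion that rests on the partial commutation rule $sa=(T(a)\chi_{[\underline{0}]})s$ for $a\in\calA_{0,0}$, which makes $(\calA_0)_+$ behave as a twisted-polynomial ring in $s$ over $\calA_{0,0}$ and allows one, after suitable multiplications on the right, to rewrite an arbitrary constant-term-invertible element of $(\calA_0)_+$ in a normal form involving only a member of $\Sigma$ and an element of $(\calA_0)_+$.
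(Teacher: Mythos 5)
Your first inclusion $\pazT\subseteq\calD$ is correct and matches the paper's argument, and your reduction of the reverse inclusion to showing that $\pazB_+:=\pazB\cap\calA_{0,0}[[t;T]]$ is closed under inverses in $\calA_{0,0}[[t;T]]$ is also the right move. But at exactly the point where you say the genuine content lies, you stop proving and start describing: the claim that $\calD_+=\Sigma^{-1}(\calA_0)_+$ follows from ``an Ore-type assertion'' resting on the commutation rule $sa=(T(a)\chi_{[\underline{0}]})s$ is never established, and it is not clear it can be established along those lines. The twisting map $a\mapsto T(a)\chi_{[\underline{0}]}$ is a corner endomorphism rather than an automorphism, so $(\calA_0)_+$ is not an honest skew polynomial ring over $\calA_{0,0}$ and no standard Ore normal form is available; in particular there is no reason why a constant-term-invertible $y\in(\calA_0)_+$ should admit $a\in(\calA_0)_+$ with $ya\in\Sigma$.

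The paper takes a different and genuinely nontrivial route for this step. One passes to the quotient of $\pazB$ by the ideal $I$ generated by $1-ss^*$ and $1-s^*s$, which yields a homomorphism $\rho$ onto $K(x)$, extended to $\calA_{0,0}((t;T))\to K((x))$; for $z\in\pazB_+$ invertible in $\calA_{0,0}[[t;T]]$ with constant term $1$, one gets $\rho_+(z)=f(x)g(x)^{-1}$ with $f(0)=g(0)=1$, hence $zg(s)f(s)^{-1}=1+y'$ with $y'\in I_{>0}:=I\cap\bigl(\bigoplus_{i\ge1}\calA_{0,i}t^i\bigr)$. The heart of the proof is then that $I_{>0}$ is a nil ideal, so that $(1+y')^{-1}$ is a finite sum and stays in $\pazB_+$; this is proved using the explicit description of the elements of $I$ from \cite[Lemma 4.7]{AG} together with the combinatorial matrix Lemma \ref{lemma-nilpotent.matrix}. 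None of this (nor any substitute for it) appears in your proposal, so the inclusion $\calD\subseteq\pazT$ remains unproven.
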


\begin{proof}  
Note that for any $f(s) \in \Sigma$, we clearly have that $f(s)^{-1} \in \calD_+$. Since obviously $\calA_0 \subseteq \calD$, we get that $\pazB = \Sigma^{-1} \calA_0 \subseteq \calD$, and since $\calD$ is a $*$-algebra we get $\pazT \subseteq \calD$.

We may consider the algebra $\calA_{0,0}((t;T))$ consisting of skew Laurent power series $\sum_{i \geq -n} b_it^i$, where $n\in \Z^+$ and $b_i\in \calB_i$ for all $i$. Observe that $\pazB\subseteq \calA_{0,0}((t;T))$. Set $\pazB_+ := \pazB \cap \calA_{0,0}[[t;T]]$.  We aim to show that $\pi_+(\calD_+) \subseteq \pazB_+$. This would imply that $\calD \subseteq \pazT$, and thus we would get the desired equality $\calD = \pazT$.

Consider the natural onto homomorphism $\rho \colon \pazB \to K(x)$ sending  $s$ to $x$ and $s^*$ to $x^{-1}$. The kernel of $\rho$ is the ideal of $\pazB$ generated by $1-ss^*$ and $1-s^*s$. We denote by $\rho_+$ the restriction of the map $\rho$ to $\pazB_+$. 

To show that $\pi_+(\calD_+) \subseteq \pazB_+$, it is enough to prove that $\pazB_+$ is closed under inverses in $\calA_{0,0}[[t;T]]$, because $\calD_+$ is the division closure of $\calA_{0,0}[t;T]$ in $\calA_{0,0}[[t;T]]$. 

Observe that $\rho$ restricts to a surjective homomorphism $\rho_0 \colon \calA_{0,0} \to K$, which can be described as follows. Given any expression $z = \sum_{U \in \calP} \lambda_U \chi_U$, where $\calP$ is a partition of $X$ into clopen basic subsets as in \cite[Lemma 3.8]{AC}, there is a unique $U_0 \in \calP$ such that $\chi_{U_0}$ does not belong to the kernel of $\rho$, namely $U_0 = [0 \stackrel{i}{\cdots} 0]$ for some $i$. Then we have $\rho_0(z) = \lambda_{U_0} = z ((...,0,\underline{0},0,...))$. Note that $\rho_0 (T^j(z)) = \rho_0(z)$ for all $j \in \Z$. This enables us to extend $\rho$ to a well-defined homomorphism $\calA_{0,0}((t;T)) \to K((x))$, also denoted by $\rho$, which is given by
$$\rho(\sum_{i=-n}^{\infty} a_it^i)= \sum_{i=-n}^{\infty} \rho_0(a_i) x^i.$$

We have the following commutative diagram
\begin{equation*}
\vcenter{
	\xymatrix{
	\pazB_+ \ar[r] \ar[d]^{\rho_+} & \calA_{0,0}[[t;T]] \ar[d]^{\rho_+} \\
	K[x]_{(x)} \ar[r]  & K[[x]]\, ,
	}
}\label{diagram-comm.extra1}
\end{equation*}
where $K[x]_{(x)}$ denotes the localization of $K[x]$ at the maximal ideal $(x)$, and $\rho_+$ from the right-hand side is the restriction of $\rho : \calA_{0,0}((t;T)) \to K((x))$ to $\calA_{0,0}[[t;T]]$. The image of the map $K[x]_{(x)} \to K[[x]]$ is of course the algebra of rational series. Now let $z = \sum_{i \geq 0} b_it^i \in \pazB_+$ be invertible in $\calA_{0,0}[[t;T]]$. By Lemma \ref{lemma-invertibility.PS}, we can assume without loss of generality that $b_0 = 1$. Then $\rho_+(z)$ must be invertible in $K[x]_{(x)}$, so that there are $f(x), g(x) \in K[x]$ such that $f(0) = g(0) = 1$ and
$\rho_+(z) = f(x)g(x)^{-1}$. Now $f(s)g(s)^{-1} \in \pazB_+$ and we have
$$z= f(s)g(s)^{-1} + y,$$
where $y$ belongs to the ideal $I_{>0} := I \cap \big( \bigoplus_{i \geq 1} \calA_{0,i}t^i \big)$, where $I$ is the ideal of $\pazB$ generated by $1-ss^*$ and $1-s^*s$. Therefore we have that $zg(s) f(s)^{-1} = 1+ y'$, where $y' \in I_{>0}$, and we need to show that $1 + y'$ is invertible. Indeed, we will show that $I_{>0}$ is a nil-ideal, that is, that every element of $I_{>0}$ is nilpotent.

By \cite[Lemma 4.7]{AG}, each element of $I$ can be expressed as a (finite) linear combination of terms of the following forms:
\begin{enumerate}
	\item[(A)] $f^{-1}s^i(1-ss^*)(s^*)^j$, for $i,j \ge 0$ and $f \in \Sigma$,
	\item[(B)] $(s^*)^i(1-s^*s) s^jf^{-1}$, for $i,j \ge 0$ and $f \in \Sigma$,
	\item[(C)] $(s^*)^i(1-s^*s)s^jf^{-1}(1-ss^*)(s^*)^k$, for $i,j,k \ge 0$ and $f \in \Sigma$,
	\item[(D)] elements from $\soc(A)$.
\end{enumerate}
Let $y$ be an element in $I_{>0}$. Let $\{h_n\}_n$ be the canonical central projections of $\calA_0$. Since the constant term of $y$ is $0$, we see that each matrix $h_n \cdot y$ must be a strictly 
lower triangular matrix. We want to show that there is a fixed integer $R$ such that $(h_n \cdot y)^R = 0$ for all $n$. Now taking into account the forms (A), (B),(C), (D) above, we see that there are positive integers $N$ and $r$ such that for all $n \ge N$ the matrix $h_n \cdot y$ has the property that it is strictly lower triangular and that $(h_n \cdot y)_{ij}= 0$ for all $(i,j)$ such that $i \leq n-r$ and $j \geq r+1$. The result then follows from the next straightforward lemma.
\end{proof}

\begin{lemma}\label{lemma-nilpotent.matrix}
Let $0 < r < n$. Suppose that $A = (a_{ij})_{1 \le i,j \le n}$ is a strictly lower triangular matrix such that $a_{ij}= 0$ whenever $(i,j)$ satisfies that $i \leq n-r$ and $j \geq r+1$. Then $A^{2r+1}=0$.
\end{lemma}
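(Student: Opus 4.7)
The plan is to examine an arbitrary entry of $A^{2r+1}$ directly using the combinatorial shape of the matrix. Writing
\begin{equation*}
(A^{2r+1})_{i_0, i_{2r+1}} = \sum_{i_1, \ldots, i_{2r}} a_{i_0 i_1} a_{i_1 i_2} \cdots a_{i_{2r} i_{2r+1}},
\end{equation*}
I would argue that every summand vanishes. For a summand to be nonzero, each factor $a_{i_{l-1}, i_l}$ must be nonzero. Strict lower triangularity forces $i_{l-1} > i_l$ for every $l$, so the indices $i_0, i_1, \ldots, i_{2r+1}$ are pairwise distinct (in fact strictly decreasing). The extra hypothesis says $a_{i_{l-1}, i_l} = 0$ whenever $i_{l-1} \le n-r$ \emph{and} $i_l \ge r+1$; hence for every $l \in \{1, \ldots, 2r+1\}$, at least one of the conditions $i_l \le r$ or $i_{l-1} \ge n-r+1$ must hold.

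From here the argument is a clean pigeonhole count. Let $S_1 = \{l : i_l \le r\}$ and $S_2 = \{l : i_{l-1} \ge n-r+1\}$; by the preceding observation, $S_1 \cup S_2 = \{1, \ldots, 2r+1\}$, so $|S_1| + |S_2| \ge 2r+1$, which forces $|S_1| \ge r+1$ or $|S_2| \ge r+1$. In the first case, the assignment $l \mapsto i_l$ restricted to $S_1$ is injective (the $i_l$'s being distinct) and lands in the $r$-element set $\{1, \ldots, r\}$, which is impossible. In the second case, $l \mapsto i_{l-1}$ restricted to $S_2$ is injective into $\{n-r+1, \ldots, n\}$, again a set of size $r$, again impossible. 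Thus every summand vanishes, giving $A^{2r+1} = 0$. There is no real obstacle beyond setting up the bookkeeping of the two index ranges correctly; the nontrivial hypothesis $r < n$ is only needed to ensure that $\{1,\ldots,r\}$ and $\{n-r+1,\ldots,n\}$ genuinely bound the relevant indices.
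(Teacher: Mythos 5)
Your proof is correct, and it takes a genuinely different route from the paper's. The paper argues by induction on $t$, establishing a rather intricate description of the zero pattern of $A^t$ for $1\le t\le r+1$ (which entries above certain diagonals vanish, plus extra vanishing along the boundary of the lower-left block), concluding that $A^{r+1}$ is supported in the bottom-left $r\times r$ corner and is strictly lower triangular there, whence $A^{2r+1}=0$. You instead expand an entry of $A^{2r+1}$ as a sum over paths $i_0>i_1>\cdots>i_{2r+1}$ and kill every path by a pigeonhole count: each of the $2r+1$ steps must either land in $\{1,\dots,r\}$ or depart from $\{n-r+1,\dots,n\}$, but a strictly decreasing chain of distinct indices can contribute at most $r$ steps of each kind, so $2r+1$ steps are impossible. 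All the individual checks hold up — the deduction ``$a_{i_{l-1}i_l}\ne 0$ implies $i_{l-1}\ge n-r+1$ or $i_l\le r$'' is the correct contrapositive of the hypothesis, the injectivity of $l\mapsto i_l$ on $S_1$ and of $l\mapsto i_{l-1}$ on $S_2$ follows from the indices being pairwise distinct, and both target sets have exactly $r$ elements since $0<r<n$. Your argument is shorter, avoids the delicate bookkeeping of the paper's induction, and makes the exponent $2r+1$ conceptually transparent; what it gives up is the finer structural information about the intermediate powers $A^t$ that the paper's proof produces along the way (though that information is not used elsewhere).
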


\begin{proof}
For $1 \le t \le r+1$, we show by induction that $A^t=(c_{ij})$, with $c_{ij}= 0$ whenever $i \le j+t-1$ and whenever $(i,j)$ satisfies that $i\le n-r+t-1$ and $j\ge r+1$, and also whenever $(i,j)$ satisfies that $i \le n-r$ and $j \ge r-t+2$. Moreover if $t \ge 3$ then 
$$c_{n-r+1,r+3-t'} = c_{n-r+2, r+4-t'} = \cdots = c_{n-r+t'-2,r}=0$$
for $3\le t'\le t$.

Assume the result holds for $t \le r$. We will show it holds for $t+1$. Write $D:=A^{t+1}$. It is a simple matter to show that $d_{ij}= 0$ whenever $i \le j+t$. Now assume that the pair $(i,j)$ satisfies that $i \le n-r+t$ and that $j \ge r+1$. Suppose that there is a non-zero term $c_{ik}a_{kj}$ contributing to the term $d_{ij}$. Then $i > k+t-1$ and $k > j$. Therefore we have
$$k+t-1 < i \le n-r+t,$$
which implies that $k-1 < n-r$ so that $k \le n-r $. But now since $k \le n-r$ and $j \ge r+1$ we have that $a_{kj} = 0$ by hypothesis. So all the products $c_{ik}a_{kj}$ are $0$ and we get that $d_{ij}= 0$.

Now assume that $(i,j)$ satisfies that $i \le n-r$ and $j \ge r-t+1$. Proceeding as above the term $c_{ik}a_{kj}$ is either $0$ or $i > k+t-1$ and $k > j$. We get $k > j \ge r-t+1$ and so $k \ge r-t+2$. Since  $i \le n-r $ and $k \ge r-t+2$, we get that $c_{ik}=0$ by the induction hypothesis, and so $c_{ik}a_{kj}= 0$. Therefore $d_{ij}=0$.

It remains to show the last statement. Suppose first that $t = 3$. We have to show that $d_{n-r+1,r} = 0$. Assume there is a term $c_{n-r+1,k}a_{k,r}$ which is non-zero, where here $A^2= (c_{ij})$. Then we must have $n-r+1 > k+1$ and $k > r$ and thus $k \ge r+1$ so that $c_{n-r+1, k} = 0$ by what we have proved before, and we get a contradiction. Hence $d_{n-r+1,r} = 0$. Using induction, we assume the result true for all $3 \le t'\le t \le r$ and we show it for $t+1$. Setting $A^{t+1} = (d_{ij})$, we need to show that $d_{n-r+s,r+s-t+1} = 0$ for $1 \le s \le t-1$ (this is the case $t' = t+1$ of the statement, the cases where $3 \le t' \le t$ follow the same pattern). Write $A^t = (c_{ij})$ and let $s$ be an integer such that $1\le s \le t-1$. Let $c_{n-r+s,k}a_{k,r+s-t+1}$ be non-zero. By the induction hypothesis and what we proved before, we known that $c_{n-r+s,j}=0$ for $j\ge r+s-t+2$. Therefore we get that $k<r+s-t+2$. We also have that $k>r+s-t+1$, since $a_{k,r+s-t+1}\ne 0$, so $k\ge r+s-t+2$, and we get a contradiction. 

Therefore we have proven that $A^{r+1}$ is a matrix consisting of a $r \times r$ lower diagonal matrix at the left lower corner, and the rest of the entries are $0$. It follows that $A^{2r+1} = 0$. 
\end{proof}

We now proceed to describe the special elements $\calS_0[[t;T]]$ at this level. There is exactly one special term for each degree $i \geq 0$. For $i = 0$, it is given by $S_0 = T^{-1}(S_1) = X$ and the corresponding element inside the algebra is $\chi_X = 1$. For $i \geq 1$ we get the element $S = [0 \stackrel{i}{\cdots} \underline{0}]$, corresponding to $\chi_S t^i = \chi_{[0 \stackrel{i}{\cdots} \underline{0}]}t^i$. Note that $s^i = \chi_{[0 \stackrel{i}{\cdots} \underline{0}]}t^i$ for $i > 0$, where $s = \chi_{X \backslash E_0}t$, and so $\calS_0[[t;T]]$ can be isomorphically identified with the algebra of formal power series $K[[x]]$, the isomorphism sending $s \mapsto x$.

The isomorphism given in Proposition \ref{proposition-S.corner.R} coincides exactly with the isomorphism $\psi$ given in \cite[paragraph preceding Proposition 6.8]{AG}, and formulas (6.3) and (6.4) from \cite{AG} are deduced from Lemma \ref{lemma-formulas.computations}.

As in \cite{AG}, we denote by $\calR^{\circ}$ the subalgebra of rational series in $ K [[x]]$, endowed with the Hadamard product $\odot$. It is not closed under inversion in $(K[[x]],\odot)$. Let $\pazQ$ be the classical ring of quotients of $\calR^{\circ}$ in $(K[[x]],\odot)$, which coincides with its $*$-regular closure $\calR(\calR^{\circ},K[[x]])$ (see \cite[Lemma 6.10]{AG}, where it is denoted by $\calQ$). The algebra $\pazQ$ is not to be confused with our algebra $\calQ$ which, by Definition \ref{definition-algebraQ}, is the $*$-regular closure of $P(\calD_+)$ inside $(K[[x]],\odot)$.

\begin{lemma}\label{lemma-Q'.inside.Q}
We have $\pazQ \subseteq \calQ$.
\end{lemma}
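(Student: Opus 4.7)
The plan is to show the stronger inclusion $\calR^{\circ} \subseteq \calQ$, from which $\pazQ \subseteq \calQ$ follows at once: since $\calQ$ is by Definition \ref{definition-algebraQ} a $*$-regular subalgebra of $(K[[x]],\odot)$ (under the identification $\calS_0[[t;T]] \cong K[[x]]$) and $\pazQ = \calR(\calR^\circ, K[[x]])$ is by definition the smallest such $*$-regular subalgebra containing $\calR^\circ$, the inclusion $\calR^\circ \subseteq \calQ$ forces $\pazQ \subseteq \calQ$.

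First I would make the identification $\calS_0[[t;T]] \cong K[[x]]$ completely explicit by revisiting the list of special terms in the case $n=0$: in degree $0$ one has $S_0 = T^{-1}(S_1) = X$, so the unique degree-$0$ special term is $1 = \chi_X$; in each degree $i \geq 1$, the unique special term is $s^i = \chi_{[0 \stackrel{i}{\cdots}\underline 0]}t^i$. Consequently $\calS_0[[t;T]]$ coincides, as a subset of $\calA_{0,0}[[t;T]]$, with the algebra of formal power series in $s$, and the isomorphism $\Psi$ of Proposition \ref{proposition-S.corner.R} matches the standard identification $s \leftrightarrow x$.

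Next I would produce every rational power series inside $P(\calD_+)$. Given $f\in\Sigma$ (i.e.\ $f \in K[x]$ with $f(0)=1$), the element $f(s)$ belongs to $(\calA_0)_+\subseteq \calD_+$, and by Lemma \ref{lemma-invertibility.PS} it is invertible in $\calA_{0,0}[[t;T]]$, so $f(s)^{-1}\in\calD_+$. More generally, for any $p\in K[x]$ we have $p(s)f(s)^{-1}\in\calD_+$. Since these elements are already power series in $s$, they lie in $\calS_0[[t;T]]$, and therefore $P$ acts on them as the identity; under the isomorphism $\calS_0[[t;T]]\cong K[[x]]$ they are sent to the rational series $p(x)f(x)^{-1}$. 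This shows the containment $\calR^\circ \subseteq P(\calD_+)$.

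Finally, combining the last two steps gives $\calR^\circ \subseteq P(\calD_+) \subseteq \calQ$, and the opening paragraph then yields $\pazQ \subseteq \calQ$. There is no real obstacle: once one observes that in the $n=0$ regime every special term is a power of $s$ and hence $P$ is the identity on the relevant elements of $\calD_+$, the statement becomes a direct consequence of the definitions and the minimality of the $*$-regular closure.
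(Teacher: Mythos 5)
Your proof is correct and follows essentially the same route as the paper's: both reduce the lemma to the inclusion $\calR^{\circ}\subseteq P(\calD_+)$ and then invoke the minimality of the $*$-regular closure $\pazQ=\calR(\calR^{\circ},K[[x]])$ inside the $*$-regular algebra $\calQ$. The only difference is that you spell out why a rational series $p(s)f(s)^{-1}$ lies in $\calD_+$ (via Lemma \ref{lemma-invertibility.PS}) and why $P$ fixes it, details the paper leaves implicit.
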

\begin{proof}
Once we show the inclusion $\calR^{\circ} \subseteq P(\calD_+)$ we will be done since $\calQ$, being the $*$-regular closure of $P(\calD_+)$ in $(K[[s]],\odot)$, will contain $\pazQ$.

So let $p(s) \in \calR^{\circ}$. By definition, $p(s) \in K[[s]]$, so that $P(p(s)) = p(s)$. (Recall that $P$ is the idempotent map given in Lemma \ref{lemma-the.idempotent.P}.) Hence $p(s) \in P(\calD_+)$, as required.
\end{proof}

We are now in a position to prove that the $*$-subalgebra $\calE$ of $\calR_0 = \calR(\calA_0,\gotR_0)$ generated by $\calD$ and $\Psi(\calQ)(1-ss^*)$ coincides with the $*$-subalgebra of $\calR_0$ generated by $\pazT$ and $\Psi(\pazQ)(1-ss^*)$, which in turn coincides with $\calR_0$ by \cite[Theorem 6.13]{AG}. Let $\pazE$ denote the latter $*$-subalgebra.

\begin{theorem}\label{theorem-computation.*.reg.closure.0}
We have $\calE = \pazE = \calR_0$ and $\pazQ=\calQ$.
\end{theorem}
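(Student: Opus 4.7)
The plan is first to establish $\calE = \pazE = \calR_0$ by a short chain of inclusions, and then to deduce $\pazQ = \calQ$ via a direct computation showing $P(\calD_+) \subseteq \calR^{\circ}$. The first part is essentially bookkeeping, while the second part requires a convolution-type analysis of how $P$ interacts with elements of $\pazB_+$.

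For $\calE = \pazE = \calR_0$, I would proceed as follows. The inclusion $\pazE \subseteq \calE$ is immediate from what has already been proved: by Proposition \ref{proposition-agreement.of.D.and.T} we have $\pazT = \calD$, and by Lemma \ref{lemma-Q'.inside.Q} we have $\Psi(\pazQ)p_E \subseteq \Psi(\calQ)p_E$, so every generator of $\pazE$ lies in $\calE$. On the other hand, Proposition \ref{proposition-algebraQ.inside.calR} gives $\calE \subseteq \calR_{\calB} = \calR_0$, while \cite[Theorem 6.13]{AG} asserts $\pazE = \calR_0$. Chaining these, the sequence $\pazE \subseteq \calE \subseteq \calR_0 = \pazE$ collapses, yielding $\calE = \pazE = \calR_0$.

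For $\pazQ = \calQ$, one inclusion is Lemma \ref{lemma-Q'.inside.Q}. For the reverse, the plan is to show the stronger statement $P(\calD_+) \subseteq \calR^{\circ}$; combined with the $*$-regularity of $\pazQ$ in $(K[[x]],\odot,-)$ and the chain $P(\calD_+) \subseteq \calR^{\circ} \subseteq \pazQ$, this will force $\calQ \subseteq \pazQ$, since $\calQ$ is by definition the $*$-regular closure of $P(\calD_+)$. By Proposition \ref{proposition-agreement.of.D.and.T} we have $\pi_+(\calD_+) \subseteq \pazB_+ := \pazB \cap \calA_{0,0}[[t;T]]$, so it suffices to verify that $P(\pazB_+) \subseteq \calR^{\circ}$. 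Given $z = a f(s)^{-1} \in \pazB_+$ with $a = \sum_i a_i t^i \in \calA_0$ and $f \in \Sigma$, the series $f(s)^{-1} = \sum_{j \geq 0} \beta_j s^j$ has Hadamard-rational coefficient sequence because $f(0)=1$. Expanding the degree-$n$ coefficient $b_n$ of $z$ via $t^i s^j = \chi_{T^i(S_j)} t^{i+j}$ and, for $0 \leq i \leq n$, $\chi_{S_i}\chi_{T^i(S_{n-i})} = \chi_{S_n}$, one writes $a_i = \chi_{S_i} a_i'$ with $a_i' \in \calA_{0,0}$, sets $\gamma_i := a_i'(y)$ (with $y$ the all-zeros reference point) and notes that $a_i'' := a_i' - \gamma_i \cdot 1$ is locally constant and vanishes at $y$, hence vanishes on $S_n$ for all sufficiently large $n$. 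Moreover, any contribution from $a_i''$ on small $S_n$, as well as contributions from negative-degree terms $a_{-i} t^{-i}$, land in the complement $V_n'$ of $V_n = K\chi_{S_n}$ and are thus annihilated by $P$. Consequently, for all $n$ large enough, the $V_n$-component of $b_n$ equals $(\beta \ast \gamma)_n \chi_{S_n}$, a convolution of a rational sequence with a finitely supported one and thus itself rational; the finitely many small-$n$ exceptions provide only a polynomial correction. This gives $P(z) \in \calR^{\circ}$, as required.

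The main obstacle is the careful case-analysis in the previous paragraph, tracking which contributions to $b_n$ land in $V_n$ versus $V_n'$: one must verify that the terms coming from $a_i''$, from negative-degree components of $a$, and from the small-$n$ formulas for $T^i(S_j)$ either vanish under $P$ or produce only a polynomial correction to the convolution tail. Once this bookkeeping is settled, the chain $P(\calD_+) \subseteq P(\pazB_+) \subseteq \calR^{\circ} \subseteq \pazQ$, together with Lemma \ref{lemma-Q'.inside.Q}, completes the identification $\pazQ = \calQ$.
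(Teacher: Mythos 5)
The first half of your argument --- the chain $\pazE \subseteq \calE \subseteq \calR_0 = \pazE$ obtained from Proposition \ref{proposition-agreement.of.D.and.T}, Lemma \ref{lemma-Q'.inside.Q}, Proposition \ref{proposition-algebraQ.inside.calR} and \cite[Theorem 6.13]{AG} --- is exactly the paper's argument and is fine. For $\pazQ = \calQ$, however, you depart from the paper: the paper never computes $P(\calD_+)$ at all, but instead uses the other half of \cite[Theorem 6.13]{AG}, namely $\Psi(\pazQ)(1-ss^*) = (1-ss^*)\calR_0(1-ss^*)$, to squeeze $\Psi(\pazQ)(1-ss^*) \subseteq \Psi(\calQ)(1-ss^*) \subseteq (1-ss^*)\calR_0(1-ss^*) = \Psi(\pazQ)(1-ss^*)$ and conclude by injectivity of $x \mapsto \Psi(x)(1-ss^*)$. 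Your alternative --- proving $P(\calD_+) \subseteq \calR^{\circ}$ directly --- would indeed suffice (and the inclusion appears to be true), but as executed it has genuine gaps.

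First, you assume every element of $\pazB_+$ has the form $a f(s)^{-1}$ with $a \in \calA_0$ and $f \in \Sigma$. Nothing justifies this: $\Sigma$ is not known to be an Ore set in $\calA_0$, and the description of the ideal $I$ via \cite[Lemma 4.7]{AG} (already invoked in the proof of Proposition \ref{proposition-agreement.of.D.and.T}) involves left fractions $f^{-1}s^i(1-ss^*)(s^*)^j$ and two-sided expressions $(s^*)^i(1-s^*s)s^jf^{-1}(1-ss^*)(s^*)^k$ that are not of your form. Any honest proof of $P(\pazB_+) \subseteq \calR^{\circ}$ has to run through that case list. Second, and more seriously, the key step of your convolution analysis is false. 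The sets $S_n = \{x \mid x_{-n+1} = \cdots = x_0 = 0\}$ do \emph{not} shrink to the all-zeros point ($\bigcap_n S_n$ is the uncountable set of sequences vanishing in all coordinates $\leq 0$), so a locally constant function vanishing at the all-zeros point need not vanish on any $S_n$; e.g.\ $f_1 = \chi_{\{x_1 = 1\}}$ does not. Correspondingly, $\gamma_i := a_i'(\text{all-zeros point})$ is \emph{not} the coefficient of the $V_n$-component of $\chi_{S_n} a_i'$: for $z = s^*s\, f(s)^{-1} = e_1 f(s)^{-1} \in \pazB_+$ the degree-$n$ coefficient is $\beta_n \chi_{\{x_{-n+1} = \cdots = x_1 = 0\}}$, which lies entirely in $V_n'$ (it has $r = 1 > 0$), so $P(z) = 0$, whereas your recipe produces $\gamma_0 = 1$ and hence $P(z) = f(x)^{-1}$. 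The relevant evaluation must kill every block $\chi_C$ with $r > 0$, i.e.\ it is evaluation at a point with zeros in coordinates $\leq 0$ and a $1$ in coordinate $1$, not at the all-zeros point. In these examples the output happens to remain rational, so your final conclusion is not contradicted, but the argument as written does not establish it; either repair the $V_n$-versus-$V_n'$ bookkeeping term by term through the forms (A)--(D) of \cite[Lemma 4.7]{AG}, or switch to the paper's corner-squeeze argument, which avoids the computation entirely.
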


\begin{proof}
By Proposition \ref{proposition-agreement.of.D.and.T}, we have $\calD=\pazT$, so $\calR_0$ coincides with the $*$-subalgebra $\pazE$ generated by $\calD$ and $\Psi (\pazQ)(1-ss^*)$ by \cite[Theorem 6.13]{AG}.

Now the inclusion $\pazE \subseteq \calE$ is clear by Lemma \ref{lemma-Q'.inside.Q}. Since $\calE \subseteq \calR_0=\pazE$, we get the equality  $\calE = \pazE = \calR_0$.
 
Finally $\Psi(\pazQ) (1-ss^*) = (1-ss^*)\calR_0(1-ss^*)$ by \cite[Theorem 6.13]{AG}. Since $$\Psi (\pazQ)(1-ss^*)\subseteq \Psi (\calQ)(1-ss^*)\subseteq (1-ss^*)\calR_0(1-ss^*)$$ by Lemma \ref{lemma-Q'.inside.Q} and Proposition \ref{proposition-algebraQ.inside.calR}, and the map $x\mapsto \Psi(x)(1-ss^*)$ is injective, we conclude that $\pazQ = \calQ$.  
\end{proof}

\end{document}